\numberwithin{equation}{section}
\newcommand{\cx}{{\mathbb C}} 
\newcommand{\half}{{\mathbb H}} 
\newcommand{\HH}{{\mathbb H}} 
\newcommand{\integers}{{\mathbb Z}}
\newcommand{\nats}{{\mathbb N}}
\newcommand{\ratls}{{\mathbb Q}} 
\newcommand{\reals}{{\mathbb R}} 
\newcommand{\proj}{{\mathbb P}}
\newcommand{\zed}{\integers}
\newcommand{\hyp}{{\rm hyp}}
\newcommand{\barratls}{\overline{\ratls}}
\newcommand{\Mobius}{M\"obius\xspace}
\newcommand{\Poincare}{Poincar\'e\xspace}
\newcommand{\Teichmuller}{Teich\-m\"uller\xspace}
\newcommand{\Bezout}{B\'ezout\xspace}
\newcommand{\FF}[1]{\mathbb{F}_{#1}}
\newcommand{\barmoduli}[1][g]{{\overline{\mathcal M}}_{#1}}
\newcommand{\bdry}{\partial}
\newcommand{\bX}{\widehat{X}}
\newcommand{\pderiv}[2]{\frac{\partial #1}{\partial #2}}
\newcommand{\GLtwoR}{{\rm GL}_2(\reals)}
\newcommand{\GLtwoRplus}{{\rm GL}_2^+(\reals)}
\newcommand{\Gm}{\mathbb{G}_m}
\renewcommand{\hat}{\widehat}
\newcommand{\hatH}{\hat{H}}
\renewcommand{\tilde}{\widetilde}
\renewcommand{\Im}{\IM}
\renewcommand{\Re}{\RE}
\newcommand{\RM}[1][\mathcal{O}]{{\cal RM}_{#1}}
\newcommand{\RT}[1][\mathcal{O}]{{\cal RT}_{#1}}
\newcommand{\RA}[1][\mathcal{O}]{{\cal RA}_{#1}}
\newcommand{\barRM}[1][\mathcal{O}]{\overline{\cal RM}_{#1}}
\newcommand{\isom}{\cong}
\newcommand{\moduli}[1][g]{{\mathcal M}_{#1}}
\newcommand{\AVmoduli}[1][g]{{\mathcal A}_{#1}}
\newcommand{\omoduli}[1][g]{{\Omega\mathcal M}_{#1}}
\newcommand{\A}[1][g]{\AVmoduli[#1]}
\newcommand{\EA}[1][\mathcal{O}]{{\cal EA}_{#1}}
\newcommand{\oA}[1][g]{\Omega\AVmoduli[#1]}
\newcommand{\poA}[1][g]{\proj\Omega\AVmoduli[#1]}
\newcommand{\pomoduli}[1][g]{{\proj\Omega\mathcal M}_{#1}}
\newcommand{\pobarmoduli}[1][g]{{\proj\Omega\overline{\mathcal M}}_{#1}}
\newcommand{\potmoduli}[1][g]{\proj\Omega\tilde{\mathcal{M}}_{#1}}
\newcommand{\pom}[1][g]{\proj\Omega{\mathcal M}_{#1}}
\newcommand{\pobarm}[1][g]{\proj\Omega\overline{\mathcal M}_{#1}}
\newcommand{\qtq}[1]{\quad\text{#1}\quad}
\newcommand{\semidirect}{\rtimes}
\newcommand{\SL}{{\mathrm{SL}}}
\newcommand{\SLtwoR}{\SL_2 (\reals)} 
\newcommand{\SOtwoR}{{\rm SO}_2 (\reals)}
\newcommand{\Sp}{\mathrm{Sp}}
\newcommand{\teich}{\mathcal{T}}
\newcommand{\tmoduli}[1][g]{\tilde{\mathcal{M}}_{#1}}
\newcommand{\augteich}{\overline{\teich}}
\newcommand{\tensor}{\otimes}
\newcommand{\homot}{\simeq}
\newcommand{\lmod}{\backslash}
\newcommand{\rmod}{\slash}
\newcommand{\Ord}{\mathcal{O}}
\newcommand{\E}[1][\mathcal{O}]{\mathcal{E}_{#1}}
\newcommand{\barE}[1][\mathcal{O}]{\overline{\mathcal{E}}_{#1}}
\newcommand{\NFQ}{N^F_\ratls}
\newcommand{\odd}{{\rm odd}}
\newcommand{\br}{{\boldsymbol{r}}}
\newcommand{\bs}{{\boldsymbol{s}}}
\newcommand{\bt}{{\boldsymbol{t}}}
\newcommand{\ba}{{\boldsymbol{a}}}
\newcommand{\bb}{{\boldsymbol{b}}}
\newcommand{\bc}{{\boldsymbol{c}}}
\newcommand{\be}{{\boldsymbol{e}}}
\newcommand{\bx}{{\boldsymbol{x}}}
\newcommand{\by}{{\boldsymbol{y}}}
\newcommand{\bz}{{\boldsymbol{z}}}
\newcommand{\bw}{{\boldsymbol{w}}}
\newcommand{\bzero}{\boldsymbol{0}}
\newcommand{\fra}{\mathfrak{a}}
\DeclareMathOperator{\Cl}{Cl}
\DeclareMathOperator{\End}{End}
\DeclareMathOperator{\Ext}{Ext}
\DeclareMathOperator{\Hom}{Hom}
\DeclareMathOperator{\IM}{Im}
\DeclareMathOperator{\RE}{Re}
\DeclareMathOperator{\Aff}{Aff}
\DeclareMathOperator{\Ann}{Ann}
\DeclareMathOperator{\CR}{CR}
\DeclareMathOperator{\Hol}{Hol}
\DeclareMathOperator{\Jac}{Jac}
\DeclareMathOperator{\Ker}{Ker}
\DeclareMathOperator{\Mod}{Mod}
\DeclareMathOperator{\vord}{\rm ord}
\DeclareMathOperator{\Res}{Res}
\DeclareMathOperator{\Spine}{Spine}
\DeclareMathOperator{\Stab}{Stab}
\DeclareMathOperator{\Sym}{Sym}
\DeclareMathOperator{\bS}{{\bf S}}
\DeclareMathOperator{\Tr}{Tr}
\DeclareMathOperator{\Weight}{Weight}
\DeclareMathOperator{\wt}{wt}
\DeclareMathOperator{\Graph}{Graph}
\DeclareMathOperator{\rank}{rank}
\DeclareMathOperator{\length}{length}
\newcommand{\Rec}{{\rm Rec}}
\newcommand{\tRec}{\widetilde{\Rec}}
\newcommand{\SO}{{\rm SO}}
\DeclareMathOperator{\Gal}{Gal}
\DeclareMathOperator{\Span}{Span}
\DeclareMathOperator{\trace}{Tr}
\DeclareMathOperator{\disc}{disc}
\newtheoremstyle{example}{3pt}{3pt}{\upshape}{}{\itshape}{.}{.5em}{}
\newtheorem{theorem}{Theorem}[section] 
\newtheorem{prop}[theorem]{Proposition} 
\newtheorem{cor}[theorem]{Corollary}
\newtheorem{lemma}[theorem]{Lemma}
\newtheorem{algo}[theorem]{Algorithm}
\newtheorem{example}[theorem]{Example}
\newtheorem{conj}[theorem]{Conjecture}
\theoremstyle{example}
\theoremstyle{definition}
\theoremstyle{remark}
\newtheorem*{remark}{Remark}
\def\blfootnote{\xdef\@thefnmark{}\@footnotetext}
\renewcommand{\l@section}{\@dottedtocline{0}{1.5em}{2.3em}}
\renewcommand{\l@subsection}{\@dottedtocline{1}{3.8em}{3.2em}}
\renewcommand{\l@subsubsection}{\@dottedtocline{2}{7.0em}{4.1em}}
\newcommand{\codim}{{\rm codim}}
\newcommand{\Cone}{{\rm Cone}}
\newcommand{\ord}{\mathcal{O}}
\begin{document}

\bibliographystyle{halpha}

\title{The Deligne-Mumford compactification of the real multiplication locus and \Teichmuller curves
  in genus three}
\author{Matt Bainbridge and Martin M\"oller}
\maketitle

\tableofcontents

\section{Introduction}
\label{sec:introduction}

Each Hilbert modular surface has a beautiful minimal smooth compactification due to Hirzebruch.
Higher-dimensional Hilbert modular varieties instead admit many toroidal compactifications none of
which is clearly the best.  In this paper, we consider canonical compactifications of closely
related varieties, namely the real multiplication locus $\RM$ in the moduli space $\moduli$ of genus
$g$ Riemann surfaces, as well as the locus of eigenforms $\Omega\E$ in the bundle
$\Omega\moduli\to\moduli$ of holomorphic one-forms.

If $g$ is $2$ or $3$, we give a complete
description of the stable curves in the Deligne-Mumford compactification $\barmoduli$ which are in the
boundary of $\RM$, and which stable curves equipped with holomorphic one-forms are in the boundary
of the eigenform locus $\Omega\E$.  If $g>3$, we give strong restrictions on the stable curves in
the boundary of $\RM$.   This allows one to reduce many
difficult questions about Riemann surfaces with real multiplication to concrete problems in
algebraic geometry and number theory by passing to the boundary of $\barmoduli$.  In this paper, we
apply our boundary classification to obtain finiteness results for \Teichmuller curves in
$\moduli[3]$ and noninvariance of the eigenform locus under the  action of $\GLtwoRplus$ on
$\Omega\moduli[3]$. 

\paragraph{Boundary of the eigenform locus.}

We now state a rough version of our calculation of the boundary of the eigenform locus.  See
Theorems~\ref{thm:boundary_nec}, \ref{thm:boundary_suff}, and \ref{thm:explCREQ} for precise
statements.  Consider a totally real cubic field $F$, and let $\mathcal{O}\subset F$ be the ring of
integers (we handle arbitrary orders $\mathcal{O}\subset F$, but stick to the ring of integers here for
simplicity).
The Jacobian of a Riemann surface $X$ has real multiplication by $\mathcal{O}$
roughly if the endomorphism ring of $\Jac(X)$ contains a copy of $\mathcal{O}$ (see
\S\ref{sec:orders-number-fields} for details).  We denote by $\RM\subset\moduli[3]$ the locus of
Riemann surfaces whose Jacobians have real multiplication by $\mathcal{O}$.   Real multiplication
on $\Jac(X)$ determines an  eigenspace decomposition of $\Omega(X)$, the space of holomorphic
one-forms on $X$.  The eigenform locus $\Omega\E\subset\omoduli[3]$ is the locus of pairs $(X,
\omega)$, where $\Jac(X)$ has real multiplication by $\mathcal{O}$, and $\omega\in\Omega(X)$ is an
eigenform.

The bundle $\Omega\moduli\to\moduli$ extends to a bundle $\Omega\barmoduli\to\barmoduli$ whose fiber
over a stable curve $X$ is the space of stable forms on $X$.  A stable form over a stable curve is
a form which is holomorphic, except for possibly simple poles at the nodes, such that the two
residues at a single node are opposite (see \S\ref{sec:RS} for details).  We describe here the closure of
$\Omega\E$ in $\Omega\barmoduli[3]$, which also determines the closure of $\RM$ in $\barmoduli[3]$.

Consider the quadratic map $Q\colon F\to F$, defined by
\begin{equation}
  \label{eq:40}
  Q(x) = N^F_\ratls(x)/x.
\end{equation}
We say that a finite subset $S\subset F$ satisfies the \emph{no-half-space condition} if the
interior of the convex hull of
$Q(S)$ in the $\reals$-span of $Q(S)$ in $F\otimes_\ratls\reals$ contains $0$.

It is well known that every stable curve which is in the closure of the real multiplication locus
$\RM\subset\moduli$ has geometric  genus $0$ or $g$ (we give a proof via complex analysis
in \S\ref{sec:bound-eigenf-locus}). Our description of the closure of the eigenform locus is as
follows.

\begin{theorem}
  \label{thm:charIntro}
  A geometric genus $0$ stable form $(X, \omega)\in\Omega\moduli[3]$ lies in the boundary of the eigenform locus
  $\Omega\E$ if and only  if:
  \begin{itemize}
  \item The set of residues of $\omega$ is a multiple of $\iota(S)$, for some subset $S\subset
    F$, satisfying the no-half-plane condition and spanning an ideal
    $\mathcal{I}\subset\mathcal{O}$, and for some embedding $\iota\colon F\to \reals$.
  \item If $Q(S)$ lies in a $\ratls$-subspace of $F$, then an explicit additional equation, involving
    cross-ratios of the nodes of $X$, is satisfied.
  \end{itemize}
\end{theorem}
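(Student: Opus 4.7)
My plan is to decompose Theorem~\ref{thm:charIntro} into a necessity direction, a sufficiency direction, and the separate identification of the cross-ratio equation, since these require substantially different techniques. For necessity, I would start with a one-parameter family $(X_t,\omega_t)\in\Omega\E$ of smooth eigenforms degenerating to a geometric genus $0$ stable form $(X_0,\omega_0)$ as $t\to 0$. The action of $\mathcal{O}$ on $\Jac(X_t)$ extends to the generalized Jacobian of $X_0$ by the Néron model, and on the geometric genus $0$ boundary the generalized Jacobian is a torus whose character lattice is spanned by vanishing cycles at the nodes. The residues of $\omega_0$ at the nodes are exactly the limits of integrals of $\omega_t$ over these vanishing cycles, and the eigenform condition forces the residue vector to be a single $\mathcal{O}$-orbit; choosing a generator produces the set $S\subset F$, the ideal $\mathcal{I}$ it spans, and the embedding $\iota\colon F\to\reals$.

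The no-half-plane condition should then emerge from Riemann's bilinear relations. On the smooth side these express positivity of the polarization in terms of periods; after tensoring with $F\otimes_\ratls\reals$ and separating eigenspaces, the pairing involves exactly the map $Q(x)=N^F_\ratls(x)/x$ because the trace form on $F$ pulled back through the eigenspace decomposition has this shape. In the limit $t\to 0$ the positivity survives as a statement that $0$ lies in the interior of the convex hull of $Q(S)$ inside the $\reals$-span of $Q(S)$, i.e.\ the no-half-plane condition.

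For sufficiency, given $(X_0,\omega_0)$ satisfying the listed conditions, I would construct an actual smoothing by opening each node via plumbing coordinates $xy=t_i$. The smoothing parameters $(t_i)$ live in a polydisk, and the $\mathcal{O}$-eigenform condition imposes relations on the $t_i$ inherited from the $\mathcal{O}$-module structure of $S$. The no-half-plane condition is what allows a nontrivial real solution curve: it guarantees that the linear part of the equations defining $\Omega\E$ near $(X_0,\omega_0)$ admits real perturbations into the smooth locus, so that a branch of $\Omega\E$ of the correct dimension actually meets the boundary stratum at $(X_0,\omega_0)$. The explicit cross-ratio equation (Theorem~\ref{thm:explCREQ}) appears precisely when $Q(S)$ spans a proper $\ratls$-subspace: the first-order obstruction to smoothing collapses, a secondary obstruction takes over, and I would compute it by expanding the RM equations to the next order in the plumbing parameters and re-expressing the result in terms of the six marked points on $\proj^1$, which naturally produces cross-ratios.

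The main obstacle is the sufficiency step, not the necessity step. Matching period calculations with residues at the boundary is essentially bookkeeping once one has the Néron model picture; but actually producing convergent families of eigenforms through the boundary requires control over the full deformation theory of the pair (stable curve, $\mathcal{O}$-action on the generalized Jacobian) and must correctly interact with the polarization. I would therefore expect to work inside a toroidal compactification of the moduli space of abelian varieties with $\mathcal{O}$-multiplication, where the infinitesimal and convergent pictures can be reconciled; the no-half-plane condition is exactly the cone condition one needs to land inside an appropriate toric stratum.
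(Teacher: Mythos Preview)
Your decomposition into necessity, sufficiency, and the cross-ratio equation matches the paper's, but the key technical engine is missing. The paper builds a canonical homomorphism $\Psi\colon\bS_\zed(\Hom(L,\zed))\to\Hol^*\moduli(L)$ out of exponentials of period-matrix entries, and proves the exact formula $\vord_{D_\gamma}\Psi(a)=\langle a,\gamma\otimes\gamma\rangle$ for the order of vanishing along each boundary divisor of $\barmoduli(L)$. This single object drives all three parts. For necessity: on $\RM$ one has $\Psi(a)\equiv\zeta$ (a root of unity) whenever $a\in\Ann(\Lambda^1)$; if the no-half-space condition fails one produces such an $a$ with $\Psi(a)\equiv 0$ on the stratum, a contradiction. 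For sufficiency in genus three: the same $\Psi(a)$'s assemble into genuine local \emph{coordinates} near the relevant boundary strata, in which $\RM$ becomes a translate of an algebraic subtorus of $(\cx^*)^{m+n}$, and computing its closure in $\cx^m\times(\cx^*)^n$ is an elementary statement about tori. The cross-ratio equation is nothing more than the restriction of $\Psi(a)=\zeta$ to a stratum on which $\Psi(a)$ is holomorphic and nonvanishing.

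Against this, two specific points in your proposal need correction. First, your derivation of the no-half-space condition from ``positivity of the polarization in the limit'' is not how it arises and is not obviously correct as stated: the condition is a combinatorial constraint on orders of vanishing (equivalently, a cone condition $C(\mathcal{S})\cap\Ann(\Lambda^1)\subset N(\mathcal{S})$), not a limit of a positive-definiteness statement. The paper does prove a related positivity statement (rationality and positivity of dual bases, via asymptotics of $\Im\,\omega(\beta_i)/\omega(\alpha_i)$), but this is a separate result, equivalent to admissibility only in the cubic case and not the mechanism of the main proof. Second, the cross-ratio equation is \emph{not} a secondary obstruction obtained by expanding to higher order: it sits at the same level as the admissibility condition, appearing exactly when $\Ann(\Lambda^1)\cap N(\mathcal{S})$ is nontrivial so that some $\Psi(a)$ survives as a nonconstant function on the stratum. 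Your sufficiency sketch via ``the linear part admits real perturbations'' and passage to a toroidal compactification of $\mathcal{A}_g$ with $\mathcal{O}$-action is in the right spirit but is not yet an argument; note also that since the Torelli map is not dominant in genus three, one must carry this out on the curve side, which is precisely what the $\Psi$-coordinates on $\barmoduli(L)$ accomplish.
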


\begin{remark}
  The more precise version of this theorem, which we state in \S\ref{sec:bound-eigenf-locus},
  gives a necessary condition which holds more generally in any genus.  In \S\ref{sec:suffg3}, we
  show that this condition is sufficient in genus three.  In fact, it is sufficient also in genus two,
  but we ignore this case as the boundary of the eigenform locus was previously calculated in the
  genus two case in 
  \cite{bainbridge07}.  The higher genus cases are more difficult, as the Torelli map
  $\moduli\to\AVmoduli$ is no longer dominant.
\end{remark}

The boundary of $\E := \proj\Omega\E$ has a stratification into topological types, where two
stable forms are of the same topological type if there is a homeomorphism between them which
preserves residues up to constant multiple.  We may encode a topological type by a directed graph
with the edges weighted by elements of an ideal $\mathcal{I}\subset \mathcal{O}$.  Vertices
represent irreducible components, edges represent nodes, and weights represent residues.  The
corresponding \emph{boundary stratum} of $\E$ is a product of moduli spaces
$\moduli[0,n]$, or a subvariety thereof.  The possible topological types arising in the boundary of
$\RM$ are shown in Figure~\ref{fig:graphs}.  In Appendix~\,\ref{sec:bdcounting}, we give an algorithm
for enumerating all boundary strata of $\E$ associated to a given ideal $\mathcal{I}$.  In
Figure~\ref{cap:countbdry}, we tabulate the number of two-dimensional boundary strata for many
different fields.

An important special case is boundary strata parameterizing irreducible stable curves, otherwise
known as \emph{trinodal curves}.  Consider a basis $\br = (r_1, r_2, r_3)$ of an ideal
$\mathcal{I}\subset\mathcal{O}$.  We say that $\br$ is an \emph{admissible basis} of $\mathcal{I}$
if the $r_i$ satisfy the no-half-space condition.   Let
$\mathcal{S}_\br^\iota\subset\proj\Omega\barmoduli[3]$ be the locus of trinodal forms having residues
$(\pm\iota(r_1), \pm\iota(r_2), \pm\iota(r_3))$.  Since a trinodal curve may be represented by $6$
points in $\proj^1$ identified in pairs, we may identify $\mathcal{S}_\br^\iota$ with the moduli
space $\moduli[0,6]$ of such points.  Suppose $\br$ is admissible.  As three points in $\reals^3$
whose convex hull contains $0$ must be contained in a subspace, we are in the second case of
Theorem~\ref{thm:charIntro}, so $\E\cap\mathcal{S}_\br^\iota$ is cut out by a single
polynomial equation on $\mathcal{S}_\br^\iota\isom\moduli[0,6]$.  We see in
Theorem~\ref{thm:explCREQ} that this equation is
\begin{equation}
  \label{eq:39}
  R_1^{a_1}R_2^{a_2}R_3^{a_3} =1,
\end{equation}
where $R_i\colon\moduli[0,6]\to\cx^*$ are certain cross-ratios of four points and the $a_i$ are
integers determined explicitly by the $r_i$.

\paragraph{Intersecting flats in $\SL_3(\zed)\lmod\SL_3(\reals)\rmod\SO_3(\reals)$.}

In \S\ref{sec:irrstrata}, we show that the notion of an admissible basis of a lattice in a totally
real cubic number field is equivalent to a second condition on bases of totally real number fields,
which we call \emph{rationality} and \emph{positivity}.  Namely, a basis $r_1, \ldots, r_g$ of $F$
is rational and positive if
\begin{equation*}
  \frac{r_i}{s_i}/\frac{r_j}{s_j} \in \ratls^+ \quad\text{for all $i\neq j$,}
\end{equation*}
where $s_1, \ldots, s_g$ is the dual basis of $F$ with respect to the trace pairing.

There is a classical correspondence between ideal classes in totally real degree $g$ number fields
and compact flats in the locally symmetric space $X_g =
\SL_g(\zed)\lmod\SL_g(\reals)\rmod\SO_g(\reals)$, the moduli space of lattices in $\reals^g$.  Given
an lattice $\mathcal{I}$ in a totally real number field $F$, let $U(\mathcal{I})\subset F^*$ be the
group of totally positive units preserving $\mathcal{I}$, embedded in the group
$D\subset\SL_g(\reals)$ of positive diagonal matrices via the $g$ real embeddings of $F$.  There is
an isometric immersion $p_\mathcal{I}$ of the flat torus $T(\mathcal{I}) = U(\mathcal{I})\lmod D$
into $X_g$ arising from the right action of $D$ on $\SL_g(\zed)\lmod\SL_g(\reals)$.   Let
$\Rec\subset X_g$ be the locus of lattices in $\reals^g$ which have an orthogonal basis.  $\Rec$ is
a closed, but not compact, $(g-1)$-dimensional flat.  In \S\ref{sec:irrstrata}, we show that
rational and positive bases of lattices in number fields correspond to intersections of the
corresponding compact flat with  $\Rec$.

\begin{theorem}
  \label{thm:intersecting_flats}
  Given an lattice $\mathcal{I}$ in a totally real number field, there is a natural bijection between the
  set $p_\mathcal{I}^{-1}(\Rec)$ and the set of rational and positive bases of $\mathcal{I}$ up to
  multiplication by units, changing signs, and reordering. 
\end{theorem}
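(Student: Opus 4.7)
The plan is to translate the geometric condition ``$\Lambda \cdot d$ has an orthogonal $\zed$-basis'' into an algebraic condition on a $\zed$-basis of $\mathcal{I}$, and then show that this algebraic condition is precisely ``rational and positive''. The key setup: under the identification $F \otimes_\ratls \reals \cong \reals^g$ via the real embeddings $\iota = (\iota_1, \ldots, \iota_g)$, the standard inner product on $\reals^g$ restricts on $F \subset F \otimes \reals$ to the trace pairing, $\langle \iota(a), \iota(b) \rangle = \Tr_{F/\ratls}(ab)$. Writing $t = d^2 \in F \otimes \reals$ (a totally positive element of norm one), the inner product of the scaled vectors $\iota(a)d$ and $\iota(b)d$ becomes $\Tr(t \cdot ab)$.

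I would first carry out the algebraic translation. For a $\zed$-basis $(r_1, \ldots, r_g)$ of $\mathcal{I}$ with trace-dual basis $(s_1, \ldots, s_g)$, orthogonality of the scaled vectors $(\iota(r_i)d)$ is the condition $\Tr(t r_i r_j) = 0$ for all $i \neq j$, equivalently $t r_i = \lambda_i s_i$ for some $\lambda_i \in \reals$. Setting $u_i = s_i/r_i \in F$, this becomes the chain of equalities $t = \lambda_1 u_1 = \cdots = \lambda_g u_g$ in $F \otimes \reals$. Comparing real embeddings forces each ratio $u_j/u_i$ to be rational, and the total positivity of $t$ then forces each $u_i$ to be definite. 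The identity $\Tr(r_i s_i) = 1 > 0$ rules out $u_i$ being totally negative, so each $u_i$ is totally positive and $u_j/u_i \in \ratls^+$ --- precisely the rational-and-positive condition, since $u_j/u_i = (r_i/s_i)/(r_j/s_j)$. Conversely, a rational and positive basis determines $t$ uniquely up to positive real scalar, and normalizing $N(t) = 1$ pins down a unique $d = t^{1/2} \in D \cap \SL_g(\reals)$.

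The bijection then follows formally. The forward map sends a rational and positive basis to $[d] = [t^{1/2}] \in T(\mathcal{I})$, which lies in $p_\mathcal{I}^{-1}(\Rec)$ by construction. The inverse sends $[d]$ to the $\zed$-basis of $\mathcal{I}$ obtained by pulling back, via $\iota^{-1}$ and $d^{-1}$, any orthogonal $\zed$-basis of $\Lambda \cdot d$. I would conclude by checking that the three allowed equivalences on the basis side match exactly the ambiguities on the $[d]$ side: different lifts $d' = \epsilon d$ of $[d]$ with $\epsilon \in U(\mathcal{I})$ multiply the recovered basis by $\epsilon^{-1}$, while different choices of orthogonal $\zed$-basis of $\Lambda \cdot d$ --- unique up to permutation and individual sign flips for a rank-$g$ lattice with an orthogonal basis --- correspond to reordering and sign changes of the $r_i$. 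The main technical point I expect to have to argue carefully is the positivity step ruling out totally negative $u_i$ via $\Tr(r_i s_i) = 1$; without this, one would obtain a strictly larger set of ``orthogonalizable'' bases than the rational and positive ones, breaking the bijection. The remainder is standard bookkeeping about orthogonal bases of lattices and matching the three equivalences.
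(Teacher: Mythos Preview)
Your proof is correct and follows essentially the same route as the paper's Theorem~\ref{thm:SLgFin}: both translate ``$\iota(\mathcal{I})\cdot d$ has an orthogonal $\zed$-basis $(r_i d)$'' into the condition $t r_i = \lambda_i s_i$ (equivalently, the paper's $\tilde s_i/\tilde r_i = q_i$), deduce rationality from the embedding-independence of $\lambda_i/\lambda_j$, and recover the unique $d$ from the norm-one multiple of $s_1/r_1$. Your positivity argument via $\Tr(r_i s_i)=1$ is a clean alternative to the paper's observation that $s_i$ is a \emph{positive} multiple of $r_i$ for an orthogonal basis together with $D$-invariance; the one fact you invoke that the paper leaves implicit --- uniqueness of an orthogonal $\zed$-basis up to signed permutation --- is true (it follows from the uniqueness of the orthogonal decomposition of a positive definite lattice into indecomposable summands, each summand here being rank one), and it is exactly what makes the inverse map well defined.
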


Theorems~\ref{thm:charIntro} and \ref{thm:intersecting_flats} together imply that there is a natural
bijection boundary strata of eigenform loci $\E\subset\proj\Omega\moduli[3]$ and intersection
points of compact flats in $X_3$ with the distinguished flat $\Rec$.  Note that $X_3$ is
$5$-dimensional, while each flat in $X_3$ is at most $2$-dimensional, so one would not expect many
intersections between these flats.  Nevertheless, we show in \S\ref{sec:exist_adm} that the ring of
integers in each 
totally real cubic field has some ideal which has an admissible basis.  In fact, the computations
described in Appendix~\ref{sec:bdcounting} suggest that most lattices in cubic fields have many
admissible bases, although there are also examples of lattices which have none.  It would be an
interesting problem to study the asymptotics of counting these bases.

\paragraph{Algebraically primitive \Teichmuller curves.}

There is an important action of $\GLtwoRplus$ on $\omoduli$, the study of which has many
applications to the dynamics of billiards in polygons and translation flows.  A major open problem
is the classification of $\GLtwoRplus$-orbit-closures.  In genus two, this was solved by McMullen in
\cite{mcmullenabel}, while next to nothing is known for higher genera.

Very rarely, a form $(X, \omega)$ has a $\GLtwoRplus$-stabilizer which is a lattice in $\SLtwoR$.
In that case, the $\GLtwoRplus$-orbit of $(X, \omega)$ projects to an algebraic curve in $\moduli$
which is isometrically immersed with respect to the \Teichmuller metric.  Such a curve in $\moduli$
is called a \emph{\Teichmuller curve}.  A \Teichmuller curve $C$ is uniformized by a Fuchsian group $\Gamma$,
called the Veech group of $C$.   The field $F$ generated by the traces of elements in $\Gamma$ is called the
{\em trace field} of $C$. The trace field is a totally real field of degree at most $g$.  See \S\ref{sec:apTc} for basic definitions around
\Teichmuller curves and the $\GLtwoRplus$-action.

Our main motivation for this work was the problem of classifying \emph{algebraically primitive}
\Teichmuller curves in $\moduli$, that is \Teichmuller curves whose trace field has degree $g$.
Every algebraically primitive \Teichmuller curve lies in $\RM$ for some order $\mathcal{O}$ in its
trace field by \cite{moeller06}, and every \Teichmuller curve has a cusp, so
Theorem~\ref{thm:charIntro} allows one to approach the classification of \Teichmuller curves by
studying the possible stable curves which are limits of their cusps.

In $\Omega\moduli[2]$, each eigenform locus $\Omega\E$ is $\GLtwoRplus$-invariant and contains one
or two \Teichmuller curves (see \cite{mcmullenbild,mcmullenspin}).  These \Teichmuller curves lie in
the stratum $\Omega\moduli[2](2)$ (where we write $\Omega\moduli[g](n_1, \ldots,
n_k)\subset\Omega\moduli$ for the stratum of forms having zeros of order $n_1, \ldots, n_k$).  These
\Teichmuller curves were discovered independently by Calta in \cite{calta}.

A major obstacle to the existence of algebraically primitive \Teichmuller curves in higher genus is
that the eigenform loci are no longer $\GLtwoRplus$-invariant.  McMullen showed in
\cite{mcmullenbild} that $\Omega\E$ is not $\GLtwoRplus$-invariant for $\mathcal{O}$ the ring of
integers in $\ratls(\cos(2 \pi/7))$.  We prove in \S\ref{sec:non-inv} the following stronger
non-invariance statement

\begin{theorem} \label{thm:noninvIntro}
  The eigenform locus $\Omega\E$ is not invariant for $\mathcal{O}$ the ring of integers in any
  totally real cubic field.
\end{theorem}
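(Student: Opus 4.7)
The plan is to argue by contradiction using the boundary description from Theorem~\ref{thm:charIntro}. Assume $\Omega\E$ is $\GLtwoRplus$-invariant for some $\mathcal{O}$ in a totally real cubic field $F$; then its closure $\overline{\Omega\E} \subset \Omega\barmoduli[3]$ is also invariant under the extended $\GLtwoRplus$-action, and I would exhibit a trinodal boundary point whose orbit leaves this closure.

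First I would invoke the existence result of Section~\ref{sec:exist_adm}: $\mathcal{O}$ admits an ideal $\mathcal{I}$ with an admissible basis $\br = (r_1, r_2, r_3)$. This yields a nonempty trinodal boundary stratum $\mathcal{S}_\br^\iota \isom \moduli[0,6]$, and by Theorem~\ref{thm:charIntro} the intersection $\overline{\Omega\E} \cap \mathcal{S}_\br^\iota$ is the hypersurface $V \subset \moduli[0,6]$ cut out by equation~(\ref{eq:39}), namely $R_1^{a_1} R_2^{a_2} R_3^{a_3} = 1$, with exponents $a_i$ determined by $\br$.

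Next I would analyze how $\GLtwoRplus$ acts near this boundary stratum. Under $g \in \GLtwoRplus$, the real residues $\iota(r_i)$ transform to $\iota(r_i) \cdot v$ where $v = g(1) \in \cx^*$, so they remain real multiples of a single complex number; the residue data of Theorem~\ref{thm:charIntro} is preserved up to this overall complex scalar. However, the complex structure of the underlying marked $\proj^1$ deforms nontrivially, inducing a map $\varphi_g : \moduli[0,6] \to \moduli[0,6]$ that changes the cross-ratios. If $\overline{\Omega\E}$ were $\GLtwoRplus$-invariant, then $V$ would have to be $\varphi_g$-invariant for every $g$; equivalently, the infinitesimal generators of the $\SLtwoR$-action would need to be tangent to $V$ at every point of $V$.

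The main obstacle is to verify that this tangency fails. Concretely, I would compute the derivative of $\varphi_g$ at $g = 1$ applied to the cross-ratios $R_i$, working in period coordinates on a neighborhood of the trinodal cusp (where the $\SLtwoR$-action is linear) and translating back to cross-ratios by relating relative periods of $\omega$ to positions of the six poles on $\proj^1$. The key step is to show that $\sum_i a_i \, d\log R_i$ does not vanish along the $\SLtwoR$-tangent direction at a generic point of $V$; since the integer exponents $a_i$ are dictated by the number-theoretic structure of $\br$ while the tangent direction is analytic and field-independent, a uniform non-vanishing statement across all totally real cubic fields, combined with the existence of an admissible basis guaranteed by Section~\ref{sec:exist_adm}, yields the theorem.
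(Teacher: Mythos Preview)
Your overall strategy matches the paper's: use the continuous extension of the $\GLtwoRplus$-action to $\Omega\barmoduli[3]$, invoke the existence of an admissible basis for some $\mathcal{O}$-ideal, and derive a contradiction at the boundary. However, the heart of your argument has a genuine gap.

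You propose to show non-invariance directly on the trinodal stratum by a derivative computation, arguing that $\sum_i a_i\, d\log R_i$ cannot vanish along the $\SLtwoR$-tangent direction because ``the integer exponents $a_i$ are dictated by the number-theoretic structure of $\br$ while the tangent direction is analytic and field-independent.'' This heuristic is incorrect: the $\GLtwoRplus$-action on $\mathcal{S}_\br^\iota$ is defined through the flat structure of $\omega$, which depends on the real residues $\iota(r_1),\iota(r_2),\iota(r_3)$. Thus the tangent direction to the orbit at a point of $\moduli[0,6]$ \emph{does} depend on the field data, and there is no a priori reason the pairing with $\sum a_i\, d\log R_i$ should be nonzero. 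You have not carried out this calculation, and it is unclear how to do it uniformly over all cubic fields.

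The paper avoids this difficulty by passing to a further degeneration. From the trinodal stratum one degenerates to a type $[4]\times^2[4]$ stratum $\mathcal{R}'\cong\mathcal{R}_{(r_1,r_2)}\times\mathcal{R}_{(r_3,r_2)}$, where the cross-ratio equation becomes $R_1^{a_1}R_3^{a_3}=\zeta$. The upper-triangular subgroup $P\subset\GLtwoRplus$ fixing $(1,0)$ preserves each factor, and its fixed-point set in $\mathcal{R}_{(r_i,r_2)}$ is an explicit graph $\Spine_{(r_i,r_2)}$ homeomorphic to a figure-$9$, with a $3$-pronged singularity at the unique form with a double zero and a $1$-pronged end at the cross-ratio value $1$. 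Invariance of $V$ under $P$ would force the maps $z\mapsto z^{a_1}$ and $z\mapsto \zeta/z^{a_3}$ to carry one spine to the other, matching pronged singularities. This topological constraint forces $a_1,a_3\in\{\pm1\}$ and $\zeta=1$; matching the $3$-pronged singularities, whose locations are $\bigl(\tfrac{r_i-r_2}{r_i+r_2}\bigr)^2$, then yields $r_1/r_2=\pm r_3/r_2$, contradicting that $(r_1,r_2,r_3)$ is a basis of $F$. This replaces your unexecuted infinitesimal computation with a finite combinatorial argument that works for every field at once.
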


In contrast to the situation in $\moduli[2]$, we give in this paper strong evidence for the
following conjecture.

\begin{conj}
  \label{conj:intro}
  There are only finitely many algebraically primitive \Teichmuller curves in $\moduli[3]$.  
\end{conj}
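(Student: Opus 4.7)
The plan is to combine the boundary classification of $\proj\Omega\E$ from Theorem~\ref{thm:charIntro} with the non-invariance result Theorem~\ref{thm:noninvIntro} to show that algebraically primitive \Teichmuller curves in $\moduli[3]$ are highly constrained. By M\"oller's theorem, any such curve $C$ lifts to $\proj\Omega\E$ for some order $\Ord$ in a totally real cubic field $F$, and every \Teichmuller curve has a cusp, so the closure of $C$ in $\pobarmoduli[3]$ meets the boundary of $\proj\Omega\E$ at a point lying in one of the explicit strata classified in Theorem~\ref{thm:charIntro}. I would split the argument into two stages: first, bound the orders $\Ord$ which can occur; second, for each such order, bound the number of \Teichmuller curves in $\proj\Omega\E$.

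For the second stage, the cusps of $C$ are not arbitrary boundary points: the Veech group of $C$ contains parabolic elements whose associated cylinder decompositions have moduli in $F$, and in the trinodal strata these moduli are forced to satisfy the cross-ratio equation \eqref{eq:39}. Appendix~\ref{sec:bdcounting} shows that for each fixed $\Ord$ there are only finitely many admissible residue types, hence finitely many boundary strata that could contain cusps of $C$. The idea is then to show that for each combinatorial type of cusp, the intersection of the algebraic condition on cross-ratios with the parabolic-direction constraints forced by the Veech group leaves only finitely many candidate algebraic curves in $\proj\Omega\E$. Since a \Teichmuller curve is a one-dimensional algebraic subvariety that must pass through prescribed cusp configurations in every parabolic direction, this is a strong rigidity constraint, and one may hope that it allows only finitely many $C$ per combinatorial type.

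For the first stage, the strategy is to exploit Theorem~\ref{thm:noninvIntro} quantitatively. An eigenform lying on a \Teichmuller curve has its entire $\GLtwoRplus$-orbit inside $\proj\Omega\E$, yet the eigenform locus is not $\GLtwoRplus$-invariant. Near a cusp the residues form the set $\iota(S)$ for an admissible $S\subset F$, and applying a rotation $e^{i\theta}\in\SOtwoR$ multiplies all residues by $e^{i\theta}$; for the rotated form to remain in $\proj\Omega\E$ one would need $e^{i\theta}\iota(S)$ to coincide up to scaling with $\iota'(S')$ for some admissible pair $(\iota',S')$, a coincidence which should become rarer as the arithmetic complexity of $\Ord$ grows. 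The main obstacle, and presumably the reason this is only a conjecture, is establishing such incompatibilities \emph{uniformly} in $\Ord$: finiteness for a single order should be accessible via the boundary machinery of this paper, but ruling out all but finitely many orders demands an effective, discriminant-dependent strengthening of Theorem~\ref{thm:noninvIntro}---a gap-type statement that does not appear to be a formal consequence of the methods developed here, and which is the main gap between the evidence provided and a full proof.
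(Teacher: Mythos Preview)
This statement is a \emph{conjecture} in the paper, not a theorem; there is no proof to compare against. The paper supplies only partial evidence---finiteness in the stratum $\Omega\moduli[3](3,1)$ (Theorem~\ref{thm:fin31}) and a numerical search in $\Omega\moduli[3](4)^{\rm hyp}$ (Theorem~\ref{thm:numevidence})---and explicitly leaves the full statement open. You are right that what you wrote is not a proof, but the difficulties are not where you locate them.

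Your Stage~1, using non-invariance to bound the orders $\mathcal{O}$, does not work even heuristically. Non-invariance of $\Omega\E$ under $\GLtwoRplus$ says nothing about whether an individual orbit can lie inside $\Omega\E$; a \Teichmuller curve is exactly such an orbit. The rotation argument is also ill-posed: $\SOtwoR$ acts on $(X,\omega)$ by $(X,e^{i\theta}\omega)$, hence trivially on $\proj\Omega\moduli$ and on the projectivized residue tuple, so ``rotating the residues by $e^{i\theta}$'' gives no constraint. The paper's actual mechanism for bounding trace fields in the $(3,1)$ case is completely different and is absent from your sketch: it is the torsion condition of Theorem~\ref{thm:alg_prim_rm_torsion} (the difference of the two zeros is torsion in the Jacobian), which at an irreducible cusp forces certain auxiliary cross-ratios to be roots of unity. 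This, combined with the cross-ratio equation, places the cusp on the intersection of a fixed curve with an algebraic subgroup of $\Gm^n$, and finiteness of residue triples then comes from the Bombieri--Masser--Zannier theorem, arithmetic B\'ezout, and Northcott (\S\ref{sec:finiteness}).

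Your Stage~2 is also too vague: ``one may hope'' is not an argument, and the paper shows that even for a fixed order the problem is delicate. The genuine obstruction to the conjecture is not a uniform-in-$\mathcal{O}$ strengthening of non-invariance, but rather that in the strata $\Omega\moduli[3](4)$ and $\Omega\moduli[3](2,2)^{\rm odd}$ the torsion condition is vacuous (one zero, or hyperelliptic symmetry), so the height machinery loses its grip. What the paper can say there (\S\ref{sec:finconj}) is conditional: if the conjecture fails in $\Omega\moduli[3](4)^{\rm hyp}$, there must be sequences of \Teichmuller curves whose admissibility coefficients grow without bound while the cross-ratio exponents stay unnaturally small---an ``unlikely cancellation'' that the authors cannot rule out.
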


In \S\ref{sec:finiteness}, we prove the following instance of this conjecture.

\begin{theorem}
  There are only finitely many algebraically primitive \Teichmuller
  curves generated by a form in the stratum $\omoduli[3](3,1)$.
\end{theorem}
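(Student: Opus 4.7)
The plan is to apply the boundary classification of Theorem~\ref{thm:charIntro} (and its refinement Theorem~\ref{thm:explCREQ}) to the stable limits of cusps of such \Teichmuller curves, reducing the problem to a finite enumeration. Let $C\subset\moduli[3]$ be an algebraically primitive \Teichmuller curve generated by a form $(X,\omega)\in\omoduli[3](3,1)$. By \cite{moeller06}, algebraic primitivity forces $C\subset\RM$ for some order $\Ord$ in the totally real cubic trace field $F$. Every \Teichmuller curve has a cusp, and at each cusp $(X,\omega)$ degenerates to a stable form $(X_0,\omega_0)$ in the boundary of $\Omega\eig$ whose underlying differential, on the normalization of $X_0$, retains exactly the marked zeros of orders $3$ and $1$.

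By Theorem~\ref{thm:charIntro}, $X_0$ has geometric genus $0$ and the residues of $\omega_0$ form a multiple of $\iota(S)$ for an admissible subset $S\subset F$ spanning an ideal $\mathcal{I}\subset\Ord$. A degree count on each $\proj^1$-component, together with the stratum requirement that the non-polar divisor of $\omega_0$ consists of a single zero of order $3$ plus a single zero of order $1$, yields a short list of allowed dual graphs. The principal case is the trinodal one: $X_0$ is a single $\proj^1$ with three nodes, the residues come from an admissible basis $\br=(r_1,r_2,r_3)$ of $\mathcal{I}$, and $\omega_0$ has on $\proj^1$ the prescribed four-point zero pattern. By Theorem~\ref{thm:explCREQ}, the closure of $\E$ meets $\mathcal{S}_\br^\iota\cong\moduli[0,6]$ in the hypersurface $R_1^{a_1}R_2^{a_2}R_3^{a_3}=1$; imposing in addition the $(3,1)$ stratum condition cuts this down to a zero-dimensional subscheme, giving finitely many cusps per triple $(\Ord,\mathcal{I},\br)$. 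The remaining multinodal topological types are handled analogously by the same cross-ratio analysis.

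It remains to bound $\disc(\Ord)$. The approach is to estimate the area of a unit-area eigenform from below in terms of $\disc(\Ord)$ using the Galois-conjugate periods, combine this with the classical area bound for a \Teichmuller curve in a fixed stratum, and use Theorem~\ref{thm:intersecting_flats} to control the supply of admissible bases as intersections of a compact flat in $X_3$ with $\Rec$. This should force $\disc(\Ord)$ to be bounded, leaving finitely many orders $\Ord$; each admits only finitely many admissible bases by Theorem~\ref{thm:intersecting_flats}, and each basis contributes only finitely many cusps by the previous paragraph. Since each cusp lies on at most one \Teichmuller curve, finiteness of $C$ follows.

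The main obstacle will be the discriminant bound. The other steps are essentially enumerative consequences of the previously established boundary classification, but controlling $\disc(\Ord)$ demands an effective comparison between geometric invariants of $C$ and arithmetic invariants of $\Ord$. The hope is that the codimension-one nature of the stratum $(3,1)$ inside $\omoduli[3]$, together with the Diophantine rigidity of the cross-ratio equation $R_1^{a_1}R_2^{a_2}R_3^{a_3}=1$, makes admissible triples $(\Ord,\mathcal{I},\br)$ which actually give rise to \Teichmuller curves rare enough to force such a bound.
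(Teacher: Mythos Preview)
Your enumerative reduction for a \emph{fixed} order $\Ord$, ideal $\mathcal{I}$, and admissible basis $\br$ is broadly correct: the stratum condition together with the cross-ratio equation does cut the trinodal boundary stratum down to finitely many points. But the heart of the theorem is precisely the step you flag as ``the main obstacle,'' and your proposed attack on it will not work. There is no classical area bound for \Teichmuller curves in a fixed stratum that controls $\disc(\Ord)$, and Theorem~\ref{thm:intersecting_flats} only gives finiteness of admissible bases \emph{per} ideal, offering no uniformity as $\Ord$ varies. As it stands, your argument establishes finiteness within each eigenform locus $\E$ separately, which is far weaker than the theorem.

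The paper takes a completely different route and never bounds $\disc(\Ord)$ at all. The crucial extra input you are missing is the \emph{torsion condition} of Theorem~\ref{thm:alg_prim_rm_torsion}: on an algebraically primitive Veech surface the difference of the two zeros is torsion in the Jacobian. Passing to the trinodal cusp (whose existence in a two-zero stratum is guaranteed by Lemma~\ref{le:31topo1}), this forces the cross-ratios $Q_i = y_i/x_i$ of the zero pair against each node pair to be roots of unity. One then works in $\moduli[0,8]$ (points $p,q,x_i,y_i$) rather than $\moduli[0,6]$: the locus $S_{(\zeta_1,\zeta_2,\zeta_3)}(3,1)$ of forms with prescribed $Q_i$ and the $(3,1)$ zero pattern is cut out by explicit polynomials whose Weil heights are uniformly bounded (roots of unity have height zero). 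The arithmetic \Bezout theorem then bounds the heights of its zero-dimensional components uniformly in the $\zeta_i$, and the one positive-dimensional exceptional component (a line, when all $\zeta_i$ are cube roots of unity) is handled by the Bombieri--Masser--Zannier theorem on intersections of curves with algebraic subgroups of $\Gm^3$, applied via the map $\CR$. Northcott's theorem converts the resulting uniform height bound on residue triples into finiteness, across all fields at once. Finiteness of cusps is then upgraded to finiteness of \Teichmuller curves by a separate combinatorial argument (Proposition~\ref{prop:finResfin}). The moral: height theory and unlikely-intersection results, not a discriminant bound, are what make the global finiteness go through, and the torsion condition is the mechanism that puts you in a position to invoke them.
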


The proof uses the cross-ratio equation \eqref{eq:39} together with a torsion condition from
\cite{moeller} which gives strong restrictions on \Teichmuller curves generated by forms with more
than one zero.  This torsion condition was used previously in \cite{mcmullentor} to show that there is a
unique primitive \Teichmuller curve in $\Omega\moduli[2](1,1)$ and in \cite{Mo08} to show finiteness
of algebraically primitive \Teichmuller curves in the hyperelliptic components
$\Omega\moduli[g](g-1,g-1)^{\rm hyp}$ of $\Omega\moduli[g](g-1,g-1)$.  Similar ideas should
establish finiteness in the strata of $\Omega\moduli[3]$ with more than two zeros.   More ideas are
needed in the strata
$\Omega\moduli[3](4)$ and the component $\Omega\moduli[3](2,2)^{\rm odd}$ of
$\Omega\moduli[3](2,2)$, as the torsion condition gives no information (in
$\Omega\moduli[3](2,2)^{\rm \odd}$ due to the presence of hyperelliptic curves). 

While we cannot rule out infinitely many algebraically primitive \Teichmuller curves in the stratum
$\Omega\moduli[3](4)$, Theorem~\ref{thm:charIntro} gives an efficient algorithm for searching any
given eigenform locus $\Omega\E$ for \Teichmuller curves in this stratum.  Given an order
$\mathcal{O}$, first one lists all admissible bases of ideals in $\mathcal{O}$ as described in
Appendix~\ref{sec:bdcounting}.  For each admissible basis, there are a finite number of irreducible
stable forms having these residues and a fourfold zero.  One then lists these possible stable forms
and then checks each to see if the cross-ratio equation~\eqref{eq:39} holds.  If it never holds,
then there are no possible cusps of \Teichmuller curves in $\Omega\moduli[3](4)\cap\Omega\E$, so
there are no \Teichmuller curves.

Due to numerical
difficulties with the odd component, we have only applied this algorithm to the hyperelliptic
component $\Omega\moduli[3](4)^{\rm hyp}$.  The algorithm recovers the one known example in this
stratum, Veech's $7$-gon curve, contained in $\Omega\E$ for $\mathcal{O}$ the ring of integers in the
unique cubic field of discriminant $49$; it has ruled out algebraically primitive \Teichmuller
curves in $\Omega\moduli[3](4)^{\rm hyp}$ for every other eigenform locus it has considered.

\begin{theorem} \label{thm:numevidence}
  Except for Veech's $7$-gon curve
  there are no algebraically primitive \Teichmuller curves generated by a form in
  $\Omega\E\cap\Omega\moduli[3](4)^{\rm hyp}$ for $\mathcal{O}$ the ring of integers in any of the
  $1778$ totally real cubic fields of discriminant less than $40000$.
\end{theorem}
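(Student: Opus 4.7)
The plan is to turn the algorithmic test outlined in the paragraph preceding the theorem into an actual computation and verify its output over the prescribed list of fields. Any cusp of an algebraically primitive \Teichmuller curve generated by a form in $\Omega\E\cap\Omega\moduli[3](4)^{\rm hyp}$ must be a stable form $(X,\omega)$ in the boundary of $\Omega\E$ with geometric genus $0$, and by Theorem~\ref{thm:charIntro} its residues form (up to scale and embedding $\iota$) an admissible basis of some ideal $\mathcal{I}\subset\mathcal{O}$, with the cross-ratio equation \eqref{eq:39} satisfied. Since every \Teichmuller curve has a cusp and the hyperelliptic condition is closed, it suffices to enumerate the possible cusps of the requisite type and check the equation on each.

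First I would assemble the $1778$ totally real cubic fields of discriminant below $40000$, together with integral bases of $\mathcal{O}$, fundamental units, and ideal class representatives, either from standard number-field tables or by direct enumeration. For each $\mathcal{O}$, I would apply the algorithm of Appendix~\ref{sec:bdcounting} to list all admissible bases $\br=(r_1,r_2,r_3)$ of ideals $\mathcal{I}\subset\mathcal{O}$, modulo the unit/sign/permutation action of Theorem~\ref{thm:intersecting_flats}; the counts reported in Figure~\ref{cap:countbdry} give an independent consistency check on this enumeration.

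Next, for each admissible basis $\br$ and embedding $\iota$, I would enumerate the finitely many trinodal forms in $\mathcal{S}_\br^\iota\cong\moduli[0,6]$ whose unique zero has order $4$ and whose six nodal preimages on $\proj^1$ are paired by an involution matching the prescribed residue data, so that the resulting stable curve lies in the hyperelliptic component. For each such configuration I would compute the cross-ratios $R_1,R_2,R_3$ and the exponents $a_1,a_2,a_3$ supplied by Theorem~\ref{thm:explCREQ}, and test the equation $R_1^{a_1}R_2^{a_2}R_3^{a_3}=1$.

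The main obstacle is making the equality test rigorous: the $R_i$ are algebraic expressions in the residues and node coordinates, and $R_1^{a_1}R_2^{a_2}R_3^{a_3}$ can be extremely close to $1$ without being equal, while the exponents $a_i$ can be large. Since all data lie in an explicit algebraic extension of $\ratls$ of bounded degree, I would carry out the test symbolically in that field whenever possible, and otherwise use high-precision interval arithmetic with certified error bounds sufficient to separate the computed value from $1$ given the known \emph{a priori} denominators in $F$. Finally, one verifies that the unique admissible basis producing a solution is the one in the cubic field of discriminant $49$ and that its configuration recovers the cusp data of Veech's $7$-gon curve in $\Omega\moduli[3](4)^{\rm hyp}$, while every other combination of field, admissible basis, embedding, and hyperelliptic configuration fails the cross-ratio equation, yielding the theorem.
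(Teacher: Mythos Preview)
Your proposal is correct and follows essentially the same approach as the paper. The paper makes the enumeration slightly more concrete by normalizing the hyperelliptic involution to $z\mapsto -z$ with the fourfold zero at $0$, so that the poles sit at $\pm x_i$ and the opposite-residue conditions reduce (after setting $x_1=1$) to a single cubic in $x_2$; your concern about numerical certification is not addressed in the paper, but since all quantities lie in an explicit cubic extension the cross-ratio test can in principle be carried out symbolically.
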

\par
We discuss the algorithm on which this theorem is based in \S\ref{sec:finconj}.
We also give in this section some further evidence for Conjecture~\ref{conj:intro} in
$\Omega\moduli[3](4)^{\rm hyp}$, that an infinite sequence of algebraically primitive \Teichmuller
curves in this stratum would have to satisfy some unlikely arithmetic restrictions on the widths of
cylinders in periodic directions.

\par
For completeness we mention that there is no hope of proving a finiteness theorem
for algebraically primitive \Teichmuller curves in $\barmoduli$ without bounding $g$. 
Already Veech's fundamental paper \cite{veech89} and also \cite{ward} and \cite{bm} contain infinitely
many algebraically primitive \Teichmuller curves for growing genus $g$.

\paragraph{The eigenform locus is generic.}

A rough dimension count leads one to expect Conjecture\,\ref{conj:intro} to hold for the stratum
$\Omega\moduli[3](4)$, as the expected dimension of $\E\cap\proj\Omega\moduli[3](4)$ is $0$, which
is too small to contain a \Teichmuller curve.  
On the other hand, if the eigenform locus $\Omega\E\subset\Omega\moduli[3]$ is contained in some
stratum besides the 
generic one $\Omega\moduli[3](1,1,1,1)$, one would expect this intersection to be positive
dimensional.  This would be a source of possible \Teichmuller curves.  In
\S\ref{sec:intersecting}, we prove that the eigenform locus is indeed generic.

\begin{theorem}
  For any order $\mathcal{O}$ in a totally real cubic field, each component of the eigenform locus
  $\Omega\E$ lies generically in $\Omega\moduli[3](1,1,1,1)$.
\end{theorem}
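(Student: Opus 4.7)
The plan is to produce, in each component $Z$ of $\Omega\E$, a single smooth eigenform lying in $\Omega\moduli[3](1,1,1,1)$, by degenerating to a stable form in the boundary classified by Theorem~\ref{thm:charIntro} and smoothing it back. Since the Hilbert modular $3$-fold associated to $\mathcal{O}$ is non-compact, $\overline{Z}\setminus Z$ is non-empty in $\Omega\barmoduli[3]$. Combining this with the existence result from \S\ref{sec:exist_adm} that some ideal of $\mathcal{O}$ admits an admissible basis, together with the description of cusps of Hilbert modular varieties, I would first argue that each $Z$ possesses a boundary point of trinodal type: one lying over an irreducible stable curve with three nodes whose residues realize an admissible basis $\br$ of some ideal $\mathcal{I}\subset\mathcal{O}$.

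On such a trinodal stratum $\mathcal{S}^\iota_\br\cong\moduli[0,6]$, Theorem~\ref{thm:charIntro} together with \eqref{eq:39} identifies the intersection with $\E$ as the codimension-one hypersurface $H=\{R_1^{a_1}R_2^{a_2}R_3^{a_3}=1\}$ inside the three-dimensional space $\moduli[0,6]$. The stable form on $\proj^1$ with poles at $p_1,\dots,p_6$ and prescribed admissible residues is the explicit rational form $\omega_0=\sum_{i=1}^{6}\rho_i\,dz/(z-p_i)$, whose four zeros are the roots of a quartic $P(z)$ whose coefficients depend rationally on the $p_i$. The locus $D\subset\moduli[0,6]$ on which $P$ has a repeated root, or on which a root of $P$ coincides with some $p_i$, is a proper closed subvariety. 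The core of the argument is to show that $H\not\subset D$: producing a single explicit admissible $6$-tuple $(p_1,\dots,p_6)$ with $H$-coordinates for which $P$ has four distinct simple roots disjoint from $\{p_1,\dots,p_6\}$ would suffice, because irreducibility of $H$ then forces the generic point of $H$ to represent a stable form with four distinct simple zeros, none at the nodes. Once this is granted, standard plumbing of the three nodes via $z_1z_2=t_j$ deforms such a stable point within $\overline{Z}$ into a family of smooth eigenforms in which the four zeros persist, simple and distinct, supplying the required point in $Z\cap\Omega\moduli[3](1,1,1,1)$.

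The hardest step is the non-containment $H\not\subset D$: both are explicit algebraic hypersurfaces in the three-dimensional $\moduli[0,6]$, and a priori the cross-ratio equation $R_1^{a_1}R_2^{a_2}R_3^{a_3}=1$ could absorb a component of the discriminant of $P$. I expect this to be settled either by a direct calculation in one worked example per admissible basis class, or by pinching one node (taking $p_1\to p_4$, say) and comparing the leading orders of $H$ and of the two hypersurfaces making up $D$, showing that the latter degenerate differently from the former. A secondary obstacle is ensuring that every component of $\Omega\E$ actually admits a trinodal boundary point, rather than only boundary strata of the other topological types in Figure~\ref{fig:graphs}; in the event that some component is fed exclusively by non-trinodal degenerations, the same smoothing argument must be run on the relevant topological type, replacing the cross-ratio relation by the appropriate additional equations supplied by the precise form of Theorem~\ref{thm:charIntro}.
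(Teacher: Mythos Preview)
Your approach could be made to work, but it misses the key simplifying idea the paper uses, and the gaps you identify are real.

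The paper takes a different and cleaner route. Rather than analyzing the trinodal stratum directly, it degenerates \emph{further}, all the way to a boundary stratum where the stable curve is a ``pants decomposition'': four thrice-punctured spheres glued along non-separating nodes (type $(4,2)$ in Figure~\ref{fig:graphs}). On each thrice-punctured $\proj^1$ a stable form with three simple poles has exactly one simple zero by degree count, so the stable form has four simple zeros automatically --- no discriminant computation, no case analysis, and no irreducibility of $H$ required. The substance then lies in Lemma~\ref{lem:cusphasconepants}, which shows that \emph{every} cusp of $\E_\mathcal{O}$ meets such a pants-decomposition stratum. This is proved by starting from any two-dimensional admissible stratum and repeatedly degenerating while tracking the no-half-space condition; the one nontrivial step (Lemma~\ref{lemma:exitsRingDeg}) is showing that a $[4]\times^2[4]$ stratum can always be degenerated to an admissible pants decomposition, which is done by a short computation in the Galois closure controlling on which side of a half-plane the new $Q$-values fall.

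Your approach runs into exactly the obstacles you flagged. First, the existence of a trinodal boundary point in \emph{every} component is not established: Proposition~\ref{prop:exist_admtriple} produces an admissible basis only for \emph{some} ideal of the \emph{maximal} order, so for other ideal classes or non-maximal $\mathcal{O}$ a given component may have no trinodal cusp (and indeed Appendix~\ref{sec:bdcounting}, Example~3, exhibits ideal classes with no admissible bases at all). Second, even on a trinodal stratum, the check $H\not\subset D$ must be carried out uniformly over all admissible bases $\br$, since the exponents $a_i$ in \eqref{eq:39} vary with $\br$; ``one worked example'' does not suffice, and your pinching suggestion is delicate because both $H$ and $D$ degenerate in that limit. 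The paper's trick of pushing to the deepest stratum --- where the zero structure is forced by combinatorics alone --- sidesteps both issues.
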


The proof uses Theorem~\ref{thm:charIntro} to construct a stable curve in the boundary of $\Omega\E$
where each irreducible component is a thrice-punctured sphere.  A limiting eigenform on this curve
must have a simple zero in each component.

\paragraph{Primitive  but not algebraically primitive \Teichmuller curves.}

From a \Teichmuller curve in $\moduli[g]$, one can construct many \Teichmuller curves in higher genus
moduli spaces by a branched covering construction.  A \Teichmuller curve is \emph{primitive} if it
does not arise from one in lower genus via this construction.  Every algebraically primitive
\Teichmuller curve is primitive, but the converse does not hold.
In $\moduli[3]$, McMullen exhibited in \cite{mcmullenprym} infinitely many primitive \Teichmuller
curves with quadratic trace field.  These curves lie in the intersection of $\Omega\moduli[3](4)$
with the locus of \emph{Prym eigenforms}, that is,
forms $(X, \omega)$ with an involution $i\colon X\to X$ such that the $-1$ part of $\Jac(X)$ is an
Abelian surface with real multiplication having $\omega$ as an eigenform.  It is not known whether
all primitive \Teichmuller curves in $\moduli[3] $ with quadratic trace fields arise from this Prym
construction. 

Our approach to  classifying algebraically primitive \Teichmuller curves could also be applied to
the classification of (say) primitive \Teichmuller curves in $\moduli[3]$ with quadratic trace field.
Given a positive integer $d$ and an order $\mathcal{O}$ in a real quadratic field $F$, there is the
locus $\E(d)\subset\pomoduli[3]$ of forms $(X, \omega)$ such that there exists a degree $d$ map of
$X$ onto an elliptic curve $E$ with the kernel of the induced map $\Jac(X)\to E$ having real
multiplication by $\mathcal{O}$ with $\omega$ as an eigenform.  The locus $\E(d)$ is
three-dimensional, and $\E(2)$ coincides with McMullen's Prym eigenform locus.  \Teichmuller curves
in $\moduli[3]$ having quadratic trace field must be generated by a form in some $\E(d)$.  There is a
classification of the geometric genus zero forms in the boundary of $\E(d)$, similar to that of
Theorem~\ref{thm:charIntro}, with the map $Q$ replaced by a quadratic map
\begin{equation*}
  Q\colon F\oplus \ratls\to F\oplus\ratls.
\end{equation*}Each boundary stratum of $\E(d)$ parameterizing trinodal curves is again a subvariety
of $\moduli[0,6]$ cut out by an equation in cross-ratios similar to \eqref{eq:39}.

Since the cross-ratio equation \eqref{eq:39} was responsible for ruling out algebraically primitive
\Teichmuller curves in $\Omega\moduli[3](4)$, one might wonder why its analogue  does not also rule out
McMullen's \Teichmuller curves in $\E(2)$.  The difference is that the cross-ratio equation cutting
out the trinodal boundary strata of $\E(2)$ no longer depends on the associated residues $r_i\in F$
as in \eqref{eq:39}.  Moreover, each such boundary stratum contains  canonical forms having a
fourfold zero, as opposed to the algebraically primitive case where these forms almost never exist.
We hope to provide the details of this discussion in a future paper.

\paragraph{Towards the proof of Theorem~\ref{thm:charIntro}.}

We conclude by summarizing the proof of Theorem~\ref{thm:charIntro}.  For simplicity, we continue to
assume that $\mathcal{O}$ is a maximal order.  The reader may also wish to ignore the case of
nonmaximal orders on a first reading.

The real multiplication locus $\RM\subset\moduli$ (or more precisely, its lift to the \Teichmuller
space) is cut out by certain linear combinations of
period matrices.  To better understand the equations which cut out the real multiplication locus, in
\S\ref{sec:period-matrices} we give a coordinate-free description of period matrices.  Given an
Abelian group $L$, we define a cover $\moduli(L)\to\moduli$, the space of Riemann surfaces $X$
equipped with a \emph{Lagrangian marking}, that is, an isomorphism of $L$ onto a Lagrangian subspace
of $H_1(X; \zed)$.  We define a homomorphism
\begin{equation*}
  \Psi\colon \bS_\zed(\Hom_\zed(L, \zed))\to\Hol^*\moduli(L),
\end{equation*}
where $\bS_\zed(\cdot)$ denotes the symmetric square, and $\Hol^*\moduli(L)$ is the group of nowhere
vanishing holomorphic functions on $\moduli(L)$.  Each function $\Psi(a)$ is a product of exponentials of
entries of period matrices.  There is a Deligne-Mumford compactification
$\barmoduli(L)$ of $\moduli(L)$ with a boundary divisor $D_\gamma$ for each $\gamma\in L$,
consisting of stable curves where a curve homologous to $\gamma$ has been pinched.  In
Theorem~\ref{thm:meromorphic} we show that each $\Psi(a)$ is meromorphic on $\barmoduli(L)$ with
order of vanishing
$$\vord_{D_\gamma}\Psi(a) = \langle a, \gamma\otimes\gamma\rangle$$
along $D_\gamma$.

Cusps of the real multiplication locus correspond to ideal classes in $\mathcal{O}$ (or extensions
of ideal classes if $\mathcal{O}$ is nonmaximal).  Given an ideal $\mathcal{I}\subset\mathcal{O}$,
we define in \S\ref{sec:bound-eigenf-locus} a real multiplication locus
$\RM(\mathcal{I})\subset\moduli[3](\mathcal{I})$, covering $\RM\subset\moduli[3]$, of surfaces which
have real multiplication in a way which is compatible with the Lagrangian marking by $\mathcal{I}$.
The closure of $\RM(\mathcal{I})$ in $\barmoduli[3](\mathcal{I})$ covers the closure of the cusp of
$\RM$ corresponding to $\mathcal{I}$, so it suffices to compute the closure in
$\barmoduli[3](\mathcal{I})$.  In \S\ref{sec:bound-eigenf-locus}, we construct a rank $3$ subgroup
$\Gamma$ of $\bS_\zed(\Hom(\mathcal{I}, \zed)) \isom \bS_\zed(\mathcal{I}^\vee)$ (where
$\mathcal{I}^\vee\subset F$ is the inverse different of $\mathcal{I}$) such that $\RM(\mathcal{I})$
is cut out by the equations
\begin{equation}
  \label{eq:42}
  \Psi(a) =1
\end{equation}
for all $a\in \Gamma$.  The proof
of Theorem~\ref{thm:admiss-nonhalf} yields an identification of $\Gamma$ with a lattice in $F$ with
the property that for each $a\in \Gamma$ and $t\in \mathcal{I}$, the order of vanishing of $\Psi(a)$
along the divisor $D_t\subset\barmoduli(\mathcal{I})$ is
\begin{equation}
  \label{eq:41}
  \vord_{D_t}\Psi(a) = \langle a, Q(t)\rangle  
\end{equation}
with the pairing the trace pairing on $F$ and $Q(t)$ as in \eqref{eq:40}.

Now suppose that $\mathcal{S}\subset\barmoduli(\mathcal{I})$ is a boundary stratum which is the
intersection of the divisors $D_{t_i}$ for $t_1, \ldots, t_n\in \mathcal{I}$, and suppose that the
$t_i$ do not satisfy the no-half-space condition.  This means that we can find a vector $a\in F$
such that $\langle a, Q(t_i)\rangle \geq 0$ for each $t_i$ with strict inequality for at least
one.  Multiplying $a$ by a sufficiently large integer, we may assume $a\in\Gamma$.  From
\eqref{eq:42} we see that $\Psi(a)\equiv 1$ on $\RM(\mathcal{I})$, and from \eqref{eq:41} we see
that $\Psi(a)\equiv 0$ on $\mathcal{S}$.  It follows that $\barRM(\mathcal{I})\cap\mathcal{S} =
\emptyset$, from which we conclude the first part of Theorem~\ref{thm:charIntro}.

If the $Q(t_i)$ lie in a subspace of $F$, then we may choose $a\in \Gamma$ to be orthogonal to each
$Q(t_i)$.  By \eqref{eq:41}, the function $\Psi(a)$ is nonzero and holomorphic on $\mathcal{S}$.
The equation $\Psi(a)=1$ restricted to $\mathcal{S}$ cuts out a codimension-one subvariety of
$\mathcal{S}$, which yields the second part of Theorem~\ref{thm:charIntro}.  In the case where
$\mathcal{S}$ parameterizes trinodal curves, the equation $\Psi(a)=1$ is exactly the cross-ratio
equation~\eqref{eq:39}.  This concludes the necessity of the conditions of
Theorem~\ref{thm:charIntro}.

To obtain sufficiency of these conditions, in \S\ref{sec:suffg3} we show that one can often define,
using the functions $\Psi(a)$,
local coordinates from  a neighborhood of a boundary stratum $\mathcal{S}$ in $\barmoduli(L)$ into
$(\cx^*)^m\times\cx^n$.  In these coordinates, $\mathcal{S}$ is $(\cx^*)^m\times\{\bzero\}$, and the real
multiplication locus $\RM(\mathcal{I})$ is a subtorus of $(\cx^*)^{m+n}$.  The computation of the
boundary of the real multiplication locus is thus reduced to the computation of the closure of an
algebraic torus in $(\cx^*)^{m+n}$, which is done in Theorem~\ref{thm:torus_closure}.

\paragraph{Hilbert modular varities and the locus of real multiplication.}

We conclude with a discussion of the relation between Hilbert modular varieties and the real
multiplication locus. In several textbooks (e.g.\ \cite{Freitag}) Hilbert modular varieties are
defined as the quotients $\HH^g/\Gamma$, where $\Gamma = \SL(\Ord \oplus
\Ord^\vee)\isom\SL_2(\mathcal{O})$ for some order $\Ord \subset F$, or even more restrictively for
$\Ord$ the ring of integers \cite{Goren}.  There is a natural map from $\HH^g/\Gamma$ to the moduli
space of Abelian varieties whose image is a component of the locus of Abelian varieties with real
multiplication by $\Ord$.  In Appendix~\ref{sec:HMvsRM}, we provide an example showing that the real
multiplication locus need not be connected, so it is in general not the image of $\HH^g/\Gamma$.
This phenomenon is surely known to experts but is often swept under the rug.  If one restricts to
quadratic fields (as in \cite{vandergeer88}) or to maximal orders (as in \cite{Goren}) this
phenomenon disappears.

In this paper, we regard a Hilbert modular variety more generally as a quotient $\HH^g/\Gamma'$ for
any $\Gamma'$ commensurable with $\SL_2(\mathcal{O})$.  With this more general definition, the locus
$\RA\subset\AVmoduli$ of Abelian varieties with real multiplication by $\mathcal{O}$ is parametrized
by a union $X_\mathcal{O}$ of Hilbert modular varieties.

The eigenform loci $\E\subset\pomoduli$ which we compactify are closely related to the Hilbert
modular varieties $X_\mathcal{O}$.  In genus two, $\E$ is isomorphic to $X_\mathcal{O}$, while in genus three,
$\E$ is a (degree-one) branched cover of $X_\mathcal{O}$.  The real multiplication locus
$\RM\subset\moduli$ is a quotient of $\E$ by an action of the Galois group.   See
\S\ref{sec:orders-number-fields} for details on Hilbert modular varieties and the various real
multiplication loci.

\paragraph{Acknowledgments.} The authors thank Gerd Faltings, Pascal Hubert,  Curt McMullen and 
Don Zagier for providing useful ideas and arguments. The authors 
thank the MPIM Bonn for supporting the research of the second named author and providing
both authors an excellent working atmosphere.

\paragraph{Notation.}

Throughout the paper, $F$ will denote a totally real number field, $\mathcal{O}$ and order in $F$,
and $\mathcal{I}\subset F$ a lattice whose coefficient ring contains $\mathcal{O}$.

Given an $R$-module $M$, we write $\Sym_R(M)$ for the submodule of $M \otimes_R M$ fixed by the
involution $\theta(x\otimes y) = y\otimes x$.  We write $\bS_R(M)$ for the quotient of $M \otimes_R
M$ by the submodule generated by the relations $\theta(x) - x$.

Given a bilinear pairing $\langle \, , \, \rangle\colon M\times N \to R$, we write
$\Hom_R^+(M,N)$ and $\Hom_R^-(M,N)$ for the self-adjoint and anti-self-adjoint maps from $M$ to $N$.

We write $\Delta_r$ for the disk of radius $r$ about the origin in $\cx$; we write $\Delta$ for the
unit disk, and $\Delta^*$ for the unit disk with the origin removed.

\section{Orders, real multiplication, and Hilbert modular varieties}
\label{sec:orders-number-fields}

In this section, we discuss necessary background material on orders in number fields, Abelian
varieties with real multiplication, and their various moduli spaces.

\paragraph{Orders.}

Consider a number field $F$ of degree $d$.  A \emph{lattice in $F$} (also called \emph{full module})
is a subgroup of the additive group of $F$ isomorphic to a rank $d$ free Abelian group.  An
\emph{order} in $F$ is a lattice which is also a subring of $F$ containing the identity
element.  The ring of integers in $F$ is the unique maximal order.

Given a lattice $\mathcal{I}$ in $F$, the \emph{coefficient ring} of $\mathcal{I}$ is the order
\begin{equation*}
  \mathcal{O}_\mathcal{I}=\{a \in F : ax\in M \text{ for all } x\in M\}.
\end{equation*}
We will sometimes write $\mathcal{O}_\mathcal{I}$ for the coefficient ring of $\mathcal{I}$.

Lattices in finite dimensional vector spaces over $F$ and their coefficient rings are defined similarly.

\paragraph{Ideal classes.}

Two lattices  $\mathcal{I}$ and $\mathcal{I}'$ in $F$ are \emph{similar} if $\mathcal{I} =
\alpha\mathcal{I}'$ for some $\alpha\in F$.  An \emph{ideal class} is an equivalence class of this
relation.  Given an order $\mathcal{O}$ the set $\Cl(\mathcal{O})$ of ideal classes of lattices with
coefficient ring $\mathcal{O}$ is a finite set (see \cite{BoShaf}).  If $\mathcal{O}$ is a maximal
order, $\Cl(\mathcal{O})$ is the ideal class group of $\mathcal{O}$.

\paragraph{Modules over orders.}

Let $\mathcal{O}$ be an order in a number field $F$ and $M$ a module over $\mathcal{O}$.  The
\emph{rank} of $M$ is the dimension of $M\tensor \ratls$ as a vector space over $F$.  We say $M$ is
\emph{proper} if the $\mathcal{O}$-module structure on $M$ does not extend to a larger order in $F$.

Every torsion-free, rank-one $\mathcal{O}$-module $M$ is isomorphic to a \emph{fractional ideal}
of $\mathcal{O}$, that is, a lattice in $F$ whose coefficient ring contains $\mathcal{O}$.

A \emph{symplectic $\mathcal{O}$-module} is a torsion-free $\mathcal{O}$-module $M$
together with a unimodular symplectic form $\langle\;, \;\rangle\colon M\times M\to\zed$ which is
compatible with the $\mathcal{O}$-module structure in the sense that
$$\langle \lambda x, y\rangle = \langle x, \lambda y\rangle$$
for all $\lambda\in\mathcal{O}$ and $x,y\in M$.

We equip $F^2$ with the symplectic pairing
\begin{equation}
  \label{eq:sympl}
  \langle (\alpha_1, \beta_1), (\alpha_2, \beta_2)\rangle = \Tr(\alpha_1\beta_2 - \alpha_2\beta_1).
\end{equation}
Every rank-two symplectic $\mathcal{O}$-module is isomorphic to a lattice $L$ in $F^2$ with coefficient
ring contains $\mathcal{O}$ such that the symplectic form on $F$ induces a unimodular symplectic paring
$L\times L\to\zed$.

\paragraph{Inverse different.}

Given a lattice $\mathcal{I}\subset F$ with coefficient ring $\mathcal{O}$, the \emph{inverse
  different} of $\mathcal{I}$ is the lattice
\begin{equation*}
  \mathcal{I}^\vee=\{x \in F : \Tr(xy)\in\zed \text{ for all } y\in M\}.
\end{equation*}
 $\mathcal{I}^\vee$ and $\mathcal{I}$ have the same coefficient rings.  The trace pairing induces an
 $\mathcal{O}$-module isomorphism $\mathcal{I}^\vee\to \Hom(\mathcal{I}, \zed)$.

The sum $\mathcal{I}\oplus\mathcal{I}^\vee$ is a  symplectic $\mathcal{O}$-module with the canonical
symplectic form \eqref{eq:sympl}.

\paragraph{Symplectic Extensions.}

We now discuss the classification of certain extensions of lattices in number fields.  This will be
important in the discussion of cusps of Hilbert modular varieties below.

Let $\mathcal{I}$ be a lattice in a number field $F$ with coefficient ring $\mathcal{O}_\mathcal{I}$.
An extension of $\mathcal{I}^\vee$ by $\mathcal{I}$ over an order $\mathcal{O}\subset\mathcal{O}_\mathcal{I}$
is an exact sequence of $\mathcal{O}$-modules,
\begin{equation*}
  0 \to \mathcal{I} \to M \to \mathcal{I}^\vee\to 0,
\end{equation*}
with $M$ a proper $\mathcal{O}$-module.
Given such an extension, a $\zed$-module splitting $s\colon \mathcal{I}^\vee \to M$ determines a
$\zed$-module isomorphism $\mathcal{I}\oplus\mathcal{I}^\vee\to M$.  The module $M$ inherits the
symplectic form \eqref{eq:sympl}, which does not depend on the choice of the splitting $s$.  We say that
this is a \emph{symplectic extension} if the symplectic form is compatible with the
$\mathcal{O}$-module structure of $M$.

Let $E(\mathcal{I})$ be the set of all symplectic extensions of $\mathcal{I}^\vee$ by $\mathcal{I}$
over any order $\mathcal{O}\subset \mathcal{O}_\mathcal{I}$ up to isomorphisms of exact sequences which are the
identity on $\mathcal{I}$ and $\mathcal{I}^\vee$.  We give $E(\mathcal{I})$ the usual Abelian group
structure: given two symplectic extensions,
\begin{equation*}
  0 \to \mathcal{I} \xrightarrow{\iota_i} M_i \xrightarrow{\pi_i}\mathcal{I}^\vee\to 0,
\end{equation*}
define $\pi\colon M_1 \oplus M_2 \to \mathcal{I}^\vee$ by $\pi(\alpha, \beta) = \pi_1(\alpha)-
\pi_2(\beta)$ and $\iota\colon\mathcal{I}\to M_1\oplus M_2$ by $\iota = \iota_1\oplus(-\iota_2)$.
The sum of the two extensions is
\begin{equation*}
  0\to\mathcal{I}\to \Ker(\pi)/\Im(\iota)\to\mathcal{I}^\vee\to 0.
\end{equation*}
and the identity element is the trivial extension $\mathcal{I}\oplus\mathcal{I}^\vee$.

Let $\Hom_{\ratls}^{+}(F,F)$ be the space of endomorphisms of $F$ that are self-adjoint with respect
to the trace pairing. Note that $\Hom_F(F, F)\subset\Hom_\ratls^+(F, F)$.  For $x\in F$, let
$M_x\in\Hom_F(F, F)$ denote the multiplication-by-$x$ endomorphism.

Given $T\in\Hom_\ratls^+(F,F)$, let $\mathcal{O}(T)$ be the order
\begin{equation*}
  \{x\in F : [M_x,T](\mathcal{I}^\vee)\subset\mathcal{I}\},
\end{equation*}
where $[X , Y]=XY-YX$ is the commutator.  That $\mathcal{O}(T)$ is a subring of $F$ follows from
the formula
\begin{equation*}
  M_\lambda[M_\mu,T]+[M_\lambda, T]M_\mu = [M_{\lambda\mu},T].
\end{equation*}
Define a symplectic extension
$(\mathcal{I}\oplus\mathcal{I}^\vee)_T$ of $\mathcal{I}^\vee$ by $\mathcal{I}$ over $\mathcal{O}(T)$
by giving $\mathcal{I}\oplus\mathcal{I}^\vee$ the $\mathcal{O}(T)$-module structure
\begin{equation*}
  \lambda\cdot(\alpha, \beta) = (\lambda\alpha + [M_\lambda, T](\beta), \lambda\beta).
\end{equation*}

\begin{theorem}
  \label{thm:symp-exts}
  The map $T\mapsto (\mathcal{I}\oplus\mathcal{I}^\vee)_T$ induces an isomorphism
  \begin{equation*}
    \Hom_\ratls^+(F,F)/(\Hom_F(F,F)+\Hom_\zed^+(\mathcal{I}^\vee, \mathcal{I})) \to E(\mathcal{I}).
  \end{equation*}
\end{theorem}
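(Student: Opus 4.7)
The plan is to verify in turn: (a) the twisted multiplication $\lambda\cdot(\alpha,\beta)=(\lambda\alpha+[M_\lambda,T]\beta,\lambda\beta)$ defines a symplectic $\mathcal{O}(T)$-module structure on $\mathcal{I}\oplus\mathcal{I}^\vee$; (b) the assignment descends to the stated quotient; (c) it is a group homomorphism; (d) it is injective; (e) it is surjective.

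For (a), associativity of the action is precisely the identity $M_\lambda[M_\mu,T]+[M_\lambda,T]M_\mu=[M_{\lambda\mu},T]$ recorded in the excerpt, and compatibility with the symplectic form \eqref{eq:sympl} reduces, after expansion, to the identity
\begin{equation*}
\Tr(([M_\lambda,T]\beta_1)\beta_2)+\Tr(([M_\lambda,T]\beta_2)\beta_1)=0,
\end{equation*}
which holds because the commutator of two trace-self-adjoint endomorphisms is trace-anti-self-adjoint. For (b) I would trivialize the extension on each summand of the denominator: when $T=M_x$ every commutator $[M_\lambda,M_x]$ vanishes, so the twisted action is standard; when $T=S\in\Hom_\zed^+(\mathcal{I}^\vee,\mathcal{I})$, the automorphism $\phi(\alpha,\beta)=(\alpha+S(\beta),\beta)$ intertwines the twisted and untwisted actions (a direct check using $[M_\lambda,S]\beta+S(\lambda\beta)=\lambda\, S(\beta)$) and preserves the symplectic form precisely because $S$ is self-adjoint. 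Part (c) is a linearity check using the explicit formula for the Baer sum.

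For injectivity (d), any symplectic trivialization of $(\mathcal{I}\oplus\mathcal{I}^\vee)_T$ has the form $\phi(\alpha,\beta)=(\alpha+u(\beta),\beta)$ with $u\in\Hom_\zed^+(\mathcal{I}^\vee,\mathcal{I})$, and intertwining of the two actions reduces to $[M_\lambda,T-u_\ratls]\beta=0$ on $\mathcal{I}^\vee$ for every $\lambda\in\mathcal{O}(T)$, where $u_\ratls$ denotes the $\ratls$-linear extension of $u$. Since both $\mathcal{O}(T)$ and $\mathcal{I}^\vee$ span $F$ over $\ratls$, this lifts to $[M_\lambda,T-u_\ratls]=0$ in $\Hom_\ratls(F,F)$ for all $\lambda\in F$; the centralizer of $\{M_\lambda\}_{\lambda\in F}$ inside $\Hom_\ratls(F,F)$ is exactly $\{M_x\}_{x\in F}$, so $T-u_\ratls=M_x$ and hence $T\in\Hom_F(F,F)+\Hom_\zed^+(\mathcal{I}^\vee,\mathcal{I})$.

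The substantive step is surjectivity (e). Given a symplectic extension over some order $\mathcal{O}\subset\mathcal{O}_\mathcal{I}$ together with an arbitrary $\zed$-linear splitting, the $\mathcal{O}$-action takes the form $\lambda\cdot(\alpha,\beta)=(\lambda\alpha+f_\lambda(\beta),\lambda\beta)$ for some $f_\lambda\in\Hom_\zed(\mathcal{I}^\vee,\mathcal{I})$, and associativity of the action forces the Leibniz identity $f_{\lambda\mu}=M_\lambda f_\mu+f_\lambda M_\mu$. After $\otimes_\zed\ratls$, the assignment $\lambda\mapsto f_\lambda$ becomes a derivation of the separable $\ratls$-algebra $F$ into the $F$-bimodule $\Hom_\ratls(F,F)$; separability of $F/\ratls$ forces such a derivation to be inner, producing $T\in\Hom_\ratls(F,F)$ with $f_\lambda=[M_\lambda,T]$. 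Symplectic compatibility of the extension forces $f_\lambda$ anti-self-adjoint, whereupon the centralizer argument of (d) shows $T-T^*$ is simultaneously of the form $M_y$ and anti-self-adjoint, hence zero, so $T\in\Hom_\ratls^+(F,F)$ automatically; the inclusion $\mathcal{O}\subset\mathcal{O}(T)$ is immediate since $f_\lambda(\mathcal{I}^\vee)\subset\mathcal{I}$ by construction. The only real obstacle is this derivation-is-inner step, which I would invoke from Hochschild theory for separable commutative algebras, or prove directly using the decomposition of $F\otimes_\ratls F$ induced by the embeddings $F\to\barratls$.
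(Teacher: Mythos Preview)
Your argument is correct and matches the paper in steps (a)--(d). The genuine difference is in the surjectivity step (e). You recognize the cocycle identity $f_{\lambda\mu}=M_\lambda f_\mu+f_\lambda M_\mu$ as a derivation of the separable $\ratls$-algebra $F$ into the bimodule $\Hom_\ratls(F,F)$ and invoke vanishing of Hochschild $H^1$ to conclude it is inner; you then correct the self-adjointness of $T$ afterwards via the centralizer argument. The paper instead gives a self-contained dimension count: restricting from the start to anti-self-adjoint cocycles $Q_\lambda$, it observes that evaluation at a primitive element $\theta$ of $F$ injects the space $\mathcal{C}$ of such cocycles into $\Hom_\ratls^-(F,F)$, so $\dim\mathcal{C}\le d(d-1)/2$; since the map $T\mapsto[M_\_,T]$ from $\Hom_\ratls^+(F,F)/\Hom_F(F,F)$ (also of dimension $d(d-1)/2$) into $\mathcal{C}$ is injective, it is an isomorphism. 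Your route is more conceptual and would generalize immediately to any \'etale $\ratls$-algebra, while the paper's argument is elementary and avoids importing cohomological machinery. Both are complete; the paper's version also sidesteps your a posteriori adjustment of $T$ to be self-adjoint by building the anti-self-adjointness of the cocycle into the setup.
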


\begin{proof}
  To see that our map is a well-defined homomorphism is just a matter of working through the
  definitions, which we leave to the reader.

  To show our map is a monomorphism, suppose $(\mathcal{I}\oplus\mathcal{I}^\vee)_T$ is isomorphic
  to the trivial extension via $\phi\colon(\mathcal{I}\oplus\mathcal{I}^\vee)_T
  \to\mathcal{I}\oplus\mathcal{I}^\vee$.  This isomorphism must be of the form $\phi(\alpha,\beta) =
  (\alpha + R(\beta), \beta)$ for some self-adjoint $R\colon\mathcal{I}^\vee\to\mathcal{I}$.  The
  condition that this is an $\mathcal{O}(T)$-module isomorphism implies $[M_x, T-R]=0$ for all
  $x\in\mathcal{O}(T)$.  Since $\Hom_F(F,F)$ is its own centralizer in $\Hom_\ratls(F,F)$, we must
  have $T-R\in\Hom_F(F,F)$, so $T\in\Hom_F(F,F)+\Hom_\zed^+(\mathcal{I}^\vee, \mathcal{I})$.

  Now consider the space $\mathcal{D}=\Hom_\ratls(F,\Hom_\ratls^-(F,F))$.  We write elements of
  $\mathcal{D}$ as $Q_x$ with $Q_x\in \Hom_\ratls^-(F,F)$ for each $x\in F$.  Let
  $\mathcal{C}\subset\mathcal{D}$ be those elements $Q_{\_}$ satisfying
  \begin{equation}
    \label{eq:2}
    M_xQ_y + Q_xM_y = Q_{xy}
  \end{equation}
  for all $x,y\in F$.  We claim that every element of $\mathcal{C}$ is of the form $Q^T_x = [M_x,
  T]$.  To see this, let $\theta$ be a generator of $F$ over $\ratls$.  The map $\mathcal{C}\to
  \Hom_\ratls^-(F,F)$ sending $Q_{\_}$ to $Q_\theta$ is injective by \eqref{eq:2}, so $\dim
  \mathcal{C}\leq d(d-1)/2$, where $d=[F:\ratls]$.  The map
  $\Hom_\ratls^+(F,F)/\Hom_F(F,F)\to\mathcal{C}$ sending $T$ to $Q_\_^T$ is injective so is an
  isomorphism because the domain also has dimension $d(d-1)/2$.  Thus every element of $\mathcal{C}$
  has the desired form.

  Now, every symplectic extension of $\mathcal{I}^\vee$ by $\mathcal{I}$ over an order
  $\mathcal{O}$ is isomorphic as a symplectic $\zed$-module to $\mathcal{I}\oplus\mathcal{I}^\vee$
  with the $\mathcal{O}$-module structure,
  \begin{equation*}
    \lambda\cdot(\alpha,\beta) = (\lambda\alpha + Q_\lambda(\beta), \lambda\beta),
  \end{equation*}
  with $Q_\_\in\mathcal{C}$.  Since $Q_\_ = Q_\_^T$ for some $T$, our map is surjective.
\end{proof}

Given an order $\mathcal{O}\subset\mathcal{O}_\mathcal{I}$, let $E^\mathcal{O}(\mathcal{I})\subset
E(\mathcal{I})$ be the subgroup of extensions over some order $\mathcal{O}'$ such that
$\mathcal{O}\subset\mathcal{O}'\subset\mathcal{O}_\mathcal{I}$, and let
$E_\mathcal{O}(\mathcal{I})\subset E^\mathcal{O}(\mathcal{I})$ be the set of extensions over
$\mathcal{O}$.  From the above description of $E(\mathcal{I})$, we obtain:

\begin{cor}
  $E(\mathcal{I})$ is a torsion group with $E^\mathcal{O}(\mathcal{I})$  a finite subgroup.
\end{cor}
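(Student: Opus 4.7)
The plan is to exploit the explicit description of $E(\mathcal{I})$ provided by Theorem~\ref{thm:symp-exts} and then reduce both claims to elementary statements about subgroups of a finite-dimensional $\ratls$-vector space containing a full lattice. Set
\[
 V=\Hom_\ratls^+(F,F),\quad W=\Hom_F(F,F),\quad L=\Hom_\zed^+(\mathcal{I}^\vee,\mathcal{I}),
\]
so that Theorem~\ref{thm:symp-exts} identifies $E(\mathcal{I})$ with $V/(W+L)$. Here $V$ is a $\ratls$-vector space of dimension $d(d+1)/2$ where $d=[F:\ratls]$, the subspace $W\cong F$ is a rational subspace, and $L$ is a full-rank lattice in $V$ (since $\mathcal{I}^\vee$ and $\mathcal{I}$ are lattices in $F$, a self-adjoint $\zed$-linear map $\mathcal{I}^\vee\to\mathcal{I}$ extends uniquely to a $\ratls$-linear self-adjoint map $F\to F$).

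For the torsion claim, since $L\otimes_\zed\ratls=V$, any $T\in V$ satisfies $nT\in L$ for some positive integer $n$, whence the class of $nT$ vanishes in $V/(W+L)$. This gives $E(\mathcal{I})$ torsion.

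For the finiteness of $E^{\mathcal{O}}(\mathcal{I})$, let $V^{\mathcal{O}}\subset V$ be the preimage of $E^{\mathcal{O}}(\mathcal{I})$ under the quotient $V\to V/(W+L)$, i.e., those $T\in V$ for which $\mathcal{O}\subset \mathcal{O}(T)$. Choose a $\zed$-basis $x_1,\dots,x_d$ of $\mathcal{O}$ and consider the $\ratls$-linear map
\[
 \phi\colon V\longrightarrow U:=\bigoplus_{i=1}^d\Hom_\ratls^-(F,F),\qquad \phi(T)=\bigl([M_{x_1},T],\dots,[M_{x_d},T]\bigr).
\]
By definition, $V^{\mathcal{O}}=\phi^{-1}(L')$, where $L':=\bigoplus_{i=1}^d\Hom_\zed^-(\mathcal{I}^\vee,\mathcal{I})$ is a lattice in $U$. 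Since $x_1,\dots,x_d$ generate $F$ over $\ratls$, the kernel of $\phi$ is the centralizer of $F$ in $\Hom_\ratls(F,F)$, which equals $W$. Thus $\phi$ induces an injection $V^{\mathcal{O}}/W\hookrightarrow L'$, so $V^{\mathcal{O}}/W$ is a finitely generated abelian group. Therefore $E^{\mathcal{O}}(\mathcal{I})=V^{\mathcal{O}}/(W+L)$ is a quotient of the finitely generated group $V^{\mathcal{O}}/W$, hence is itself finitely generated. Being finitely generated and torsion, it is finite.

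There is no real obstacle; the only thing to verify carefully is that $L\subset V^{\mathcal{O}}$ (so that $E^{\mathcal{O}}(\mathcal{I})$ really is a quotient of $V^{\mathcal{O}}/W$ modulo the image of $L$) and that $W\subset V^{\mathcal{O}}$, but both follow immediately from the facts that $x\mathcal{I}^\vee\subset\mathcal{I}^\vee$ and $x\mathcal{I}\subset\mathcal{I}$ for $x\in\mathcal{O}\subset\mathcal{O}_\mathcal{I}$, and that elements of $W$ commute with all $M_x$.
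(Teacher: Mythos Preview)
Your proof is correct and follows the same approach the paper implicitly has in mind: the paper gives no proof beyond the phrase ``From the above description of $E(\mathcal{I})$, we obtain,'' so your argument is simply a careful unpacking of that description. The torsion claim is exactly what the paper regards as immediate from $L$ being a full-rank lattice in $V$, and your finiteness argument via the commutator map $\phi$ is a clean way to exhibit $V^{\mathcal{O}}/W$ as a subgroup of a lattice, making explicit what the paper leaves to the reader.
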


If two lattices  $\mathcal{I}$ and $\mathcal{I}'$ are in the same ideal class, then the groups
$E(\mathcal{I})$ are canonically isomorphic.

\paragraph{Real multiplication.}

We now suppose $F$ is a totally real number field of degree $g$.

Consider a principally polarized $g$-dimensional Abelian variety $A$.  We let $\End(A)$ be the ring of
endomorphisms of $A$ and $\End^0(A)$ the subring of endomorphisms such that the induced endomorphism
of $H_1(A; \ratls)$ is self-adjoint with respect to the symplectic structure defined by the
polarization.

\emph{Real multiplication} by $F$ on $A$ is a monomorphism $\rho\colon F\to \End^0(A) \tensor_\zed
\ratls$.  The subring $\mathcal{O}=\rho^{-1}(\End(A))$ is an order in $F$, and we say that
$A$ has real multiplication by $\mathcal{O}$.

There can be many ways for a given Abelian variety to have real multiplication by $\mathcal{O}$.  We
write $\Gal(\mathcal{O}/\zed)$ for the subgroup of the Galois group $\Gal(F/\ratls)$ which preserves
$\mathcal{O}$.  If $\rho\colon\mathcal{O}\to\End^0(A)$ is real multiplication of $\mathcal{O}$ on
$A$, then so is $\rho\circ\sigma$ for any $\sigma\in \Gal(\mathcal{O}/\zed)$.

Let $\AVmoduli= \half_g/\Sp_{2g}(\zed)$ be the moduli space of $g$-dimensional principally polarized
Abelian varieties (where $\half_g$ is the $g(g+1)/2$-dimensional Siegel upper half space).  We
denote by $\RA\subset\AVmoduli$ the locus of Abelian varieties with real multiplication by $\mathcal{O}$. 

\paragraph{Eigenforms.}

Real multiplication $\rho\colon\mathcal{O}\to\End^0(A)$ induces a monomorphism
$\rho\colon\mathcal{O}\to\End\Omega(A)$, where $\Omega(A)$ is the vector space of holomorphic
one-forms on $A$.  If $\iota\colon F\to\reals$ is
an embedding of $F$, we say that $\omega\in\Omega(X)$ is an $\iota$-eigenform if
\begin{equation*}
  \lambda\cdot\omega = \iota(\lambda)\omega
\end{equation*}
for all $\lambda\in\mathcal{O}$.  Equivalently, $\omega$ is an $\iota$-eigenform if
\begin{equation*}
  \int_{\lambda\cdot\gamma}\omega = \iota(\lambda)\int_\gamma\omega
\end{equation*}
for all $\lambda\in\mathcal{O}$ and $\gamma\in H_1(A; \zed)$.  If we do not wish to specify an
embedding $\iota$, we just call $\omega$ an eigenform.

Given an embedding $\iota$ and $\iota$-eigenform $(A, \omega)$, there is a unique choice of real
multiplication $\rho\colon\mathcal{O}\to\End^0(A)$ which realizes $(A, \omega)$ as an $\iota$-eigenform.
Thus considering $\iota$-eigenforms allows one to eliminate the ambiguity of the choice of real
multiplication.  

We denote by $\Omega^\iota(X)$ the one-dimensional space of $\iota$-eigenforms.  We obtain the
eigenform decomposition,
\begin{equation}
  \label{eq:eigenspace_decomposition}
  \Omega(X) =
  \bigoplus_{\iota\colon F\to\reals}\Omega^\iota(X),
\end{equation}
where the sum is over all field embeddings $\iota$.

We denote by $\oA\to\A$ the moduli space of pairs $(A, \omega)$ where $A$ is a principally polarized
Abelian variety and $\omega$ is a nonzero holomorphic one-form on $A$.  We write $\EA\subset\poA$
for the locus of eigenforms for real multiplication by $\mathcal{O}$ and $\EA^\iota$ for the locus
of $\iota$-eigenforms.  Note that for $\Gal(\mathcal{O}/\zed)$-conjugate embeddings $\iota$ and
$\iota'$, the eigenform loci $\EA^\iota$ and $\EA^{\iota'}$ coincide (as an $\iota$-eigenform is
simultaneously an $\iota'$-eigenform for a Galois conjugate real multiplication); however, each $(A,
\omega)\in\EA^\iota$ comes with  a canonical choice or real multiplication which depends on $\iota$.

\paragraph{Hilbert modular varieties.}

Choose an ordering $\iota_1, \ldots, \iota_g$ of the $g$ real embeddings of $F$.  We use the
notation $x^{(i)}=\iota_i(x)$.  The group $\SL_2(F)$ then acts on $\half^g$ by $A\cdot(z_i)_{i=1}^g
= (A^{(i)}\cdot z_i)_{i=1}^g$, where $\SL_2(\reals)$ acts on the upper-half plane $\half$ by \Mobius
transformations in the usual way.

Given a lattice $M\subset F^2$, we define $\SL(M)$ to be the subgroup of $\SL_2(F)$ which preserves
$M$.  The Hilbert modular variety associated to $M$ is
\begin{equation*}
  X(M) = \half^g/\SL(M).
\end{equation*}
Given an order $\mathcal{O}\subset F$, we define
\begin{equation*}
  X_\mathcal{O} = \coprod_M X(M),
\end{equation*}
where the union is over a set of representatives of all isomorphism classes of proper rank two
symplectic $\mathcal{O}$-modules. If $\mathcal{O}$ is a maximal order, then every rank two
symplectic $\mathcal{O}$-module is isomorphic to $\mathcal{O}\oplus\mathcal{O}^\vee$ (this also
holds if $g=2$; see \cite{mcmullenabel}), so in this case $X_\mathcal{O}$ is connected.  In general,
$X_\mathcal{O}$ is not connected, as there are nonisomorphic proper symplectic
$\mathcal{O}$-modules; see Appendix\,\ref{sec:HMvsRM}.

There are canonical maps $j_\iota\colon X_\mathcal{O}\to\EA^\iota$ and $j\colon X_\mathcal{O}\to\RA$
defined as follows.  Given a lattice $M\subset F^2$ and $\tau = (\tau_i)_{i=1}^g\in \half^g$, we
define $\phi_\tau\colon M \to \cx^g$ by
\begin{equation*}
  \phi_\tau(x, y) = (x^{(i)}+ y^{(i)}\tau_i)_{i=1}^g.
\end{equation*}
The Abelian variety $A_\tau = \cx^g/\phi_\tau(M)$ has real multiplication by $\mathcal{O}$ defined
by
$\lambda\cdot(z_i)_{i=1}^g = (\lambda^{(i)} z_i)_{i=1}^g$.  The form $dz_i$ is an
$\iota_i$-eigenform.

The map $j_\iota\colon X_\mathcal{O}\to\EA^\iota$ is an isomorphism, so we may regard $X_\mathcal{O}$ as
the moduli space of principally polarized Abelian varieties $A$ with a choice of real multiplication
$\rho\colon\mathcal{O}\to\End^0(A)$.

The Galois group $\Gal(\mathcal{O}/\zed)$ acts on $X_\mathcal{O}$, and the map $j$ factors through
to a generically one-to-one map $j'\colon X_\mathcal{O}/\Gal(\mathcal{O}/\zed)\to \RA$.

\paragraph{Cusps of Hilbert modular varieties.}

The Baily-Borel-Satake compactification $\bX(M)$ of $X(M)$ is a projective variety obtained by
adding finitely many points to $X(M)$ which we call the \emph{cusps} of $X(M)$.  More precisely, we
embed $\proj^1(F)$ in $(\half\cup\{i\infty\})^g$ by $(x : y)\mapsto(x^{(i)}/y^{(i)})_{i=1}^g$.  We
define $\half^g_F = \half^g\cup\proj^1(F)$ with a certain topology whose precise definition is not
needed for this discussion; see \cite{borelji}.  The compactification of $X(M)$ is $\bX(M) =
\half^g_F/\SL(M)$.  We define $\bX_\mathcal{O}$ to be the union of the compactifications of its
components.

\begin{prop}
  \label{prop:cusp_classification}
  There is a natural bijection between the set of cusps of $X_\mathcal{O}$ and the set of
  isomorphism classes of symplectic extensions
  \begin{equation}
    \label{eq:37}
    0 \to \mathcal{I} \to N \to \mathcal{I}^\vee \to 0
  \end{equation}
  with $N$ a primitive rank-two symplectic $\mathcal{O}$-module and $\mathcal{I}$ a torsion-free
  rank one $\mathcal{O}$-module.  The cusps of $X(M)$ correspond to the isomorphism classes of such
  extensions where $M\isom N$ as symplectic $\mathcal{O}$-modules.
\end{prop}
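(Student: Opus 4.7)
The plan is to construct the bijection explicitly. Since $X_\mathcal{O} = \coprod_M X(M)$ and each $X(M) = \half^g/\SL(M)$ has cusps given by the (finitely many) $\SL(M)$-orbits on $\proj^1(F)$, it suffices to assign to each pair $(M,(x:y))$ a symplectic extension as in \eqref{eq:37}, to verify that the construction descends to $\SL(M)$-orbits, and to establish the bijection onto extensions whose middle module is isomorphic to $M$.

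The construction is the classical one. Given $(x:y)\in \proj^1(F)$, let $L = F\cdot(x,y)\subset F^2$ be the corresponding $F$-line. Because $L$ is one-dimensional over $F$ inside the two-dimensional $F$-space $F^2$, the $F$-valued antisymmetric form $\bigl((\alpha_1,\beta_1),(\alpha_2,\beta_2)\bigr)\mapsto \alpha_1\beta_2-\alpha_2\beta_1$ vanishes on $L$, so applying $\Tr$ the pairing \eqref{eq:sympl} also vanishes on $L$. Setting $\mathcal{I} := L\cap M$, this is a torsion-free rank-one $\mathcal{O}$-submodule of $M$, hence isomorphic to a fractional ideal. It is Lagrangian for the $\zed$-valued symplectic form on $M$, so the symplectic form induces a $\zed$-linear isomorphism $M/\mathcal{I}\to \Hom_\zed(\mathcal{I},\zed)$, which the trace pairing identifies with $\mathcal{I}^\vee$ as an $\mathcal{O}$-module. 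The compatibility $\langle \lambda x,y\rangle = \langle x,\lambda y\rangle$ of the form with the $\mathcal{O}$-action on $M$ makes the resulting short exact sequence \eqref{eq:37} a symplectic extension in the sense of \S\ref{sec:orders-number-fields}, and properness/primitivity of $M$ transfers directly from its properness as a symplectic $\mathcal{O}$-module.

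For well-definedness and injectivity on $\SL(M)$-orbits, note that $\SL(M)$ acts by symplectic $\mathcal{O}$-module automorphisms of $M$ and permutes the $F$-lines $L$, hence induces isomorphisms between the resulting extensions. Conversely, any isomorphism between the extensions coming from $(x:y)$ and $(x':y')$ extends to a symplectic $\mathcal{O}$-module automorphism of $M$ carrying $L\cap M$ onto $L'\cap M$; since the induced maps on sub and quotient are the identity, this automorphism has determinant one and so lies in $\SL(M)$. For surjectivity, given any symplectic extension $0\to\mathcal{I}\to N\to\mathcal{I}^\vee\to 0$ with $N$ a proper rank-two symplectic $\mathcal{O}$-module, one chooses an $\mathcal{O}$-module embedding of $N$ as a lattice in $F^2$ sending $\mathcal{I}$ into the line $F\cdot(1,0)$; this is possible because every rank-two symplectic $F$-module is isomorphic to $F^2$ with the form \eqref{eq:sympl}. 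The extension is then recovered from the cusp $(1:0)$ of $X(N)$ in the component of $X_\mathcal{O}$ indexed by $N$.

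The main obstacle lies in making the surjectivity argument rigorous and in matching the isomorphism class of the middle module $N$ to the correct component $X(M)$ of the disjoint union $X_\mathcal{O}$. In particular one must verify that, as $M$ ranges over all proper rank-two symplectic $\mathcal{O}$-modules (including those where the coefficient ring $\mathcal{O}'$ properly contains $\mathcal{O}$), every extension classified by Theorem~\ref{thm:symp-exts} is realized exactly once. This reduces to showing that a symplectic $F$-linear embedding $N\otimes\ratls\hookrightarrow F^2$ exists and is unique up to the $F$-symplectic automorphism group, so that the residual choice corresponds precisely to the cusp of $X(N)$.
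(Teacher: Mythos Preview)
Your approach is essentially the paper's own sketch: assign to a line $L\subset F^2$ the extension $0\to L\cap M\to M\to M/(L\cap M)\to 0$, recover the line from an extension as $L=\mathcal{I}\otimes\ratls$, and pass to $\SL(M)$-orbits. One small slip: in your injectivity step an isomorphism of symplectic extensions need not induce the identity on the sub and quotient modules (the proposition allows $\mathcal{I}$ to vary), but this is harmless---what places the automorphism in $\SL(M)$ is simply that it is symplectic and $F$-linear, hence of determinant one.
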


\begin{proof}[Sketch of proof]
  Fix a lattice $M\subset F^2$.  We must provide a $\SL(M)$-equivariant bijection between lines
  $L\subset F^2$ and extensions $0\to \mathcal{I}\to M\to \mathcal{I}^\vee\to 0$ (up to isomorphism
  which is the identity on $M$).  We assign to a line $L$, the extension $0\to L\cap M \to M \to
  M/(L\cap M)\to 0$.  The line $L$ is recovered from an extension $0\to\mathcal{I}\to
  M\to\mathcal{I}^\vee\to 0$ by defining $L = \mathcal{I}\otimes\ratls$.

  The bijection for cusps of $X_\mathcal{O}$ follows immediately.
\end{proof}

Consider the set of all pairs $(\mathcal{I}, T)$, where $\mathcal{I}$ is a lattice in $F$ whose
coefficient ring contains $\mathcal{O}$, and $T\in E_\mathcal{O}(\mathcal{I})$.  The multiplicative
group of $F$ acts on such pairs by $a\cdot(\mathcal{I}, T) = (a\mathcal{I}, T^a)$, where $T^a(x) =
aT(ax)$ (using the identification of Theorem~\ref{thm:symp-exts}).  
We define a \emph{cusp packet} for real multiplication by $\mathcal{O}$ to be an equivalence class
of a pair $(\mathcal{I},T)$ under this relation.
  
We denote by $\mathcal{C}(\mathcal{O})$ the finite set of cusp packets for real multiplication by
$\mathcal{O}$.  We have seen that there are canonical bijections
between $\mathcal{C}(\mathcal{O})$, the set of isomorphism classes of symplectic extensions of the
form \eqref{eq:37}, the set of cusps of $X_\mathcal{O}$, and the set of cusps of $\EA^\iota$.
Moreover, there is a canonical bijection between the set of cusps of $\RA$ and $\mathcal{C}(\mathcal{O})/\Gal(\mathcal{O}/\zed)$.

\section{Stable Riemann surfaces and their moduli}
\label{sec:RS}


In this section, we discuss some background material on Riemann surfaces with nodal singularities,
holomorphic one-forms, and their various moduli spaces.

\paragraph{Stable Riemann surfaces.}  

A \emph{stable Riemann surface} (or stable curve) is a connected,
compact, one-dimensional, complex analytic variety with possibly finitely many
nodal singularities~-- that is, singularities of the form $zw=0$~-- such that each component of the
complement of the singularities has negative Euler characteristic.  In other terms, a stable Riemann
surface can be regarded a disjoint union of finite volume hyperbolic Riemann surfaces with cusps, together
with an identification of the cusps into pairs, each pair forming a node.  We will refer to a pair
of cusps facing a node as \emph{opposite cusps}.

The \emph{arithmetic genus} of a stable Riemann surface is the genus of the nonsingular surface obtained by
thickening each node to an annulus;  the \emph{geometric genus} is the
sum of the genera of its irreducible components.

\paragraph{Homology.}

Given a stable Riemann surface $X$, let $X_0$ be the complement of the nodes.  For each cusp $c$ of
$X_0$, let $\alpha_c\in H_1(X_0;\zed)$ be the class of a positively oriented simple closed curve
winding once around $c$, and let $I\subset H_1(X_0; \zed)$ be the subgroup generated by the
expressions $\alpha_c + \alpha_d$, where $c$ and $d$ are cusps joined to a node on $X$.

We define $\hatH_1(X; \zed) = H_1(X_0; \zed)/I$.  Defining $C(X)\subset \hatH_1(X; \zed)$ to be the
free Abelian subgroup (of rank equal to the number of nodes) generated by the $\alpha_c$, we have
the canonical exact sequence
\begin{equation*}
  0 \to C(X) \to \hatH_1(X; \zed)\to H_1(\tilde{X}; \zed)\to 0,
\end{equation*}
where $\tilde{X}\to X$ is the normalization of $X$.

\paragraph{Markings.}

Fix a genus $g$ surface $\Sigma_g$, and let $X$ be a genus $g$ stable Riemann surface.  A
\emph{collapse} is a map $f\colon\Sigma_g\to X$ such that the inverse image of each node is a simple
closed curve and $f$ is a homeomorphism on the complement of these curves.

A \emph{marked stable Riemann surface} is a stable Riemann surface $X$ together with a collapse
$f\colon\Sigma_g\to X$.  Two marked stable Riemann surfaces $f\colon \Sigma_g\to X$ and $g\colon
\Sigma_g\to Y$ are equivalent if there is homeomorphism $\phi\colon \Sigma_g\to\Sigma_g$ which is
homotopic to the identity and a conformal isomorphism $\psi\colon X\to Y$ such that $g\circ\phi =
\psi\circ f$.

\paragraph{Augmented \Teichmuller space.}

The \emph{\Teichmuller space} $\teich(\Sigma_g)$ is the space of nonsingular marked Riemann
surfaces of genus $g$.  It is contained in the \emph{augmented \Teichmuller space}
$\augteich(\Sigma_g)$, the space of marked stable Riemann surfaces of genus $g$.
We give  $\augteich(\Sigma_g)$
the smallest topology such that the hyperbolic length of any simple
closed curve is continuous as a function  $\augteich(\Sigma_g)\to\reals_{\geq 0}
\cup \{\infty\}$.  Abikoff \cite{abikoff77} showed that this topology agrees with other natural
topologies on $\augteich$ defined via quasiconformal mappings or quasi-isometries.

\paragraph{Deligne-Mumford compactification.}

The mapping class group $\Mod(\Sigma_g)$ of orientation preserving homeomorphisms of $\Sigma_g$
defined up to isotopy acts on $\teich(\Sigma_g)$ and $\augteich(\Sigma_g)$ by precomposition of
markings.  The moduli space of genus $g$ Riemann surfaces is the quotient
$\moduli=\teich(\Sigma_g)/\Mod(\Sigma_g)$.  The Deligne-Mumford compactification of $\moduli$ is
$\barmoduli = \augteich(\Sigma_g)/\Mod(\Sigma_g)$, the moduli space of genus $g$ stable curves.

Over $\barmoduli$ is the universal curve $p\colon\overline{\mathcal{C}}\to\barmoduli$, a compact algebraic
variety whose fiber over a point representing a stable curve $X$ is a curve isomorphic to $X$ (provided
$X$ has no automorphisms).

\paragraph{Stable Abelian differentials.}

Over $\moduli$ is the vector bundle $\Omega\moduli\to\moduli$ whose fiber over $X$ is the space
$\Omega(X)$ of holomorphic one-forms on $X$.  We extend this to the vector bundle
$\Omega\barmoduli\to \barmoduli$ whose fiber $\Omega(X)$ over $X$ is the space of stable Abelian
differentials on $X$, defined as follows.

Given a genus $g$ stable Riemann surface $X$, a \emph{stable Abelian differential} is a holomorphic
one-form on $X_0$, the complement in $X$ of its nodes, such that:
\begin{itemize}
\item $\omega$ has at worst simple poles at the cusps of $X_0$.
\item If $p$ and $q$ are opposite cusps of $X_0$, then
  \begin{equation*}
    \Res_p\omega = -\Res_q\omega.
  \end{equation*}
\end{itemize}
The dualizing sheaf $\omega_X$ is the sheaf on $X$ of
one-forms locally satisfying the two above conditions (see \cite[p.~82]{harrismorrison}), so
a stable Abelian differential is simply a global section of the dualizing
sheaf $\omega_X$.  We write $\Omega(X)$ for the space of stable Abelian differentials on $X$, a
$g$-dimensional vector space by Serre duality.

In the universal curve $p\colon\overline{\mathcal{C}}\to\barmoduli$, let $\overline{\mathcal{C}}_0$ be the
complement of the nodes of the fibers.  The relative cotangent sheaf of
$\overline{\mathcal{C}}_0\to\barmoduli$ (the sheaf of cotangent vectors to the fibers) is an
invertible sheaf which extends in a unique way to an invertible sheaf
$\omega_{\overline{\mathcal{C}}/\barmoduli}$ on $\overline{\mathcal{C}}$,  the relative dualizing
sheaf of this family of curves.

The restriction of $\omega_{\overline{\mathcal{C}}/\barmoduli}$ to a fiber $X$ of this family is
simply $\omega_X$.   The push-forward $p_*\omega_{\overline{\mathcal{C}}/\barmoduli}$ is the
sheaf of sections of the rank $g$ vector bundle $\Omega\barmoduli\to\barmoduli$.

\paragraph{Plumbing coordinates.}

Following Wolpert \cite{wolpert87} we give explicit holomorphic coordinates at the boundary of
$\barmoduli$ and a model of the universal curve in these coordinates.  See also \cite{bers73,bers81}
and \cite{masur76}.

Let $X$ be a stable curve with nodes $n_1, \ldots, n_k$, and let $X_0$ be $X$ with the nodes
removed, a disjoint union of punctured Riemann surfaces.  At each node $n_i$, let $U_i$ and $V_i$ be
small neighborhoods of $n_i$ in each of the two branches of $X$ through $n_i$, and choose conformal
maps $F_i\colon U_i\to\cx$ and $G_i\colon V_i\to\cx$ whose images contain the unit disk around the
origin $\Delta_1$.  We write $z_i$ and $w_i$ for the coordinates on $U_i$ and $V_i$ induced by these
maps.  We define
\begin{gather*}
  X^* = X \setminus \bigcup_i \left(\{|z_i|<1\} \cup \{|w_i|<1\}\right) \quad\text{and}\\
  M = X^* \times\Delta_1^k.
\end{gather*}

We take a model of a degeneration of a family of curves.
\begin{equation*}
  \mathbb{V}_i = \{(x_i,y_i,\bt)\in \Delta_1\times\Delta_1\times\Delta_1^k : x_iy_i=t_i\},
\end{equation*}
where $\bt=(t_i,\ldots, t_k)$.  The fiber $\mathbb{V}^{\bt}$ of the projection $(x_i,y_i,\bt) \mapsto \bt$ is a
nonsingular annulus except when $t_i=0$, in  which case it is two disks meeting at a node.  

Let $\mathcal{X}\to\Delta_1^k$ be the family of stable curves obtained by gluing each $\mathbb{V}_i$
to $M$ by the maps
\begin{equation*}
  \hat{F}_i(p, \bt) = (F_i(p), t_i/F_i(p), \bt) \qtq{and} \hat{G}_i(p, \bt) = (t_i/G_i(p),G_i(p), \bt),
\end{equation*}
defined on subsets of $M$.  The fiber $X_\bt$ over $\bt$ is simply the stable Riemann surface
obtained by removing the disks $\{|z_i| < |t_i|^{1/2}\}$ and $\{|w_i|<|t_i|^{1/2}\}$ and gluing the
boundary circles by the relation $w_i = t_i/z_i$.  If $t_i=0$, the node $n_i$ is unchanged.

Let $Q$ be the space of holomorphic quadratic differentials on $X_0$ with at worst simple poles at
the nodes.  Choose $3g-3-k$ Beltrami differentials $\mu_i$ on $X_0 \setminus \bigcup (U_i \cup V_i)$
so that no nontrivial linear combination of the $\mu_i$ pairs trivially with a quadratic
differential in $Q$.  Given ${\bs}\in \Delta_\epsilon^{3g-3-k}$ for sufficiently small $\epsilon$,
the Beltrami differential $\mu_{\bs} = \sum s_i \mu_i$ satisfies $\|\mu_{\bs}\|_\infty < 1$.

We define a family of stable curves $\mathcal{Y}\to \Delta_\epsilon^{3g-3-k}\times\Delta_1^k$ by
endowing $\mathcal{Y} = \mathcal{X}\times \Delta_\epsilon^{3g-3-k}$ with the complex structure on
$\mathcal{Y}$ defined by placing on each fiber $X_\bt^\bs$ over $(\bs,\bt)$ the Beltrami
differential $\mu_\bs$.

We obtain a holomorphic (orbifold) coordinate chart $\Delta_s^{3g-3-k}\times\Delta_1^k\to\barmoduli$
sending $(\bs,\bt)$ to the point representing the stable curve $X_\bt^\bs$.  The family
$\mathcal{Y}$ is the pullback of the universal curve by this coordinate chart.

\paragraph{Lagrangian markings.}

Given a genus $g$ stable curve $X$, a \emph{Lagrangian subgroup} of $\hatH_1(X; \zed)$ is a free
Abelian subgroup $L$ of rank $g$ such that $\hatH_1(X; \zed)/L$ is torsion-free and the restriction
of the intersection form on $H_1(\tilde{X}; \zed)$ to the image of $L$ under the canonical
projection $\hatH_1(X;\zed)\to H_1(\tilde{X}; \zed)$ is trivial.

Fix a free Abelian group $L$ of rank $g$.  A \emph{Lagrangian marking} of a genus $g$ stable Riemann
surface $X$ by $L$ is a monomorphism $\rho\colon L\to \hatH_1(X;\zed)$ whose image is a Lagrangian
subgroup.  The image $\rho(L)$ necessarily contains the subgroup $C(X)$ of $\hatH_1(X; \zed)$
generated by the nodes.  Thus we may assign to each node of $X$ its ``homology class'' in $L$, an
element of $L$ well-defined up to sign.

Let $\barmoduli(L)$ be the space of genus $g$ stable Riemann surfaces with a Lagrangian marking by $L$ and
$\moduli(L)\subset\barmoduli(L)$ the subspace of nonsingular surfaces.  If we
identify $L$ with a Lagrangian subgroup of $H_1(\Sigma_g; \zed)$, we have
$$\moduli(L) = \teich(\Sigma_g)/ \Mod(\Sigma_g, L),$$
where $\Mod(\Sigma_g, L)$ is the subgroup of $\Mod(\Sigma_g)$ fixing $L$ pointwise.
Moreover
$$\barmoduli(L) = \augteich(\Sigma_g, L) / \Mod(\Sigma_g, L),$$
where $\augteich(\Sigma_g, L)\subset\augteich(\Sigma_g)$ is the locus of stable Riemann surfaces
which can be obtained by collapsing only curves on $\Sigma_g$ whose homology class belongs to $L$ (including
homologically trivial curves).

Given a nonzero $\gamma\in L$, there is the divisor $D_\gamma\subset\barmoduli(L)$
consisting of stable curves where a curve homologous to $\gamma$ has been pinched.  $D_\gamma$
and $D_{-\gamma}$ are the same divisor.

The above plumbing coordinates provide in the same way coordinates at the boundary of $\barmoduli(L)$.

\paragraph{Weighted stable curves.}

Given a free Abelian group $L$, we define an \emph{$L$-weighted stable curve} to be a geometric genus $0$
stable curve with an element of $L$ associated to each cusp of $X$, called the weight of that cusp,
subject to the following restrictions:
\begin{itemize}
\item  Opposite cusps of $X_0$ have opposite weights.
\item The sum of the weights of the cusps of an irreducible component of $X$ is zero.
\item The weights of $X$ span $L$.
\end{itemize}
In other words, the first two conditions mean that the weights are subject to the same restrictions
as the residues of a stable form.

We say that two $L$-weighted stable curves $X$ and $Y$ are \emph{isomorphic} (resp.
\emph{topologically equivalent}) if there is a weight-preserving conformal isomorphism (resp.\
homeomorphism) $X\to Y$.

The notion of an $L$-weighting of a geometric genus $0$ stable curve $X$ is in fact equivalent to a
Lagrangian marking $\rho\colon L\to \hat{H}_1(X; \zed)$ (necessarily an isomorphism because $X$ is
genus $0$).  If $\alpha_c\in \hat{H}_1(X; \zed)$ is the
class of a positively oriented curve around a cusp $c$ with weight $w$, the marking $\rho$ maps $w$
to $\alpha_c$.

\paragraph{Weighted boundary strata.}

An \emph{$L$-weighted boundary stratum} is a topological equivalence class in the set of
all $L$-weighted stable curves.  If $X$ is an $L$-weighted stable curve having
$m$ components $C_i$, each homeomorphic to $\proj^1$ with $n_i$ points removed and with each
component having distinct weights, then the
corresponding $L$-weighted boundary stratum is an algebraic variety isomorphic to 
$$\prod_{i=1}^m\moduli[{0,n_i}],$$
where $\moduli[{0,n}]$ is the moduli space of $n$ labeled points on $\proj^1$, with each point being
labeled by its weight.  

The notion of a $L$-weighted boundary stratum is in fact equivalent to that of a boundary stratum in
$\barmoduli(L)$.  We consider two marked stable curves $(X, \rho)$ and $(Y, \sigma)$ in
$\barmoduli(L)$ to be equivalent if there is a homeomorphism $f\colon X\to Y$ which commutes with
the markings, and we define a \emph{Lagrangian boundary stratum} in $\bdry\barmoduli(L)$ to be an
equivalence class of this relation.  A Lagrangian boundary stratum is simply a maximal connected
subset of $\bdry\barmoduli(L)$ parameterizing homeomorphic stable curves.

In view of the above correspondence between $L$-weightings and Lagrangian markings by $L$, every
$L$-weighted boundary stratum $\mathcal{S}$ can be regarded canonically as a geometric genus zero Lagrangian
boundary stratum $\mathcal{S}\subset\barmoduli(L)$, and vice-versa. 

Given an $L$-weighted boundary stratum $\mathcal{S}$, we define
$\Weight(\mathcal{S})\subset L$ to be the set of weights of any surface in $\mathcal{S}$.

\paragraph{Embeddings of strata.}

Suppose now that $\mathcal{I}$ is a lattice in a degree $g$ number field $F$.
Given an $\mathcal{I}$-weighted boundary stratum $\mathcal{S}$ and a real embedding $\iota$ of $F$,
we define $p_\iota\colon\mathcal{S}\to\pobarmoduli$ by associating to a weighted stable curve $X$
the unique stable form on $X$ which has residue $\iota(w)$ at a cusp with weight $w$.  The $i^{\rm
  th}$ embedding $\mathcal{S}^\iota$ of $\mathcal{S}$ is its image under $p_\iota$.

\paragraph{Similar strata.}

Suppose $\mathcal{I}$ and $\mathcal{J}$ are lattices in a number field $F$.  We say that
$\mathcal{I}$ and $\mathcal{J}$-weighted stable curves $X$ and $Y$ are \emph{similar} if there is a
conformal isomorphism $X\to Y$ which sends each weight $x$ to $\lambda x$ for some fixed $\lambda\in
F$.

We say that two weighted boundary strata are \emph{similar} if they parameterize
similar weighted stable curves.  Note that if the unit group of $F$ is infinite, then 
$\mathcal{I}$-weighted boundary stratum is similar to infinitely many distinct
$\mathcal{I}$-weighted boundary strata.

\paragraph{Extremal length and the Hodge norm.}

Given any Riemann surface $X$, the \emph{Hodge norm} on $H_1(X; \reals)$ is defined by
\begin{equation*}
  \|\gamma\|_X = \sup_{\omega\in\Omega_1(X)} \left|\int_\gamma\omega\right|,
\end{equation*}
where $\Omega_1(X)$ denotes the space of forms with unit norm, for the norm $$\|\omega\| =
\left(\int_X|\omega|^2\right)^{1/2}.$$

Given a curve $\gamma$ on a Riemann surface $X$, we write $\Ext(\gamma)$ for the extremal length of
the family of curves which are homotopic to $\gamma$, that is
\begin{equation*}
  \Ext(\gamma) = \sup_\rho \frac{L(\rho)^2}{A(\rho)},
\end{equation*}
where the supremum is over all conformal metrics $\rho(z)dz$ with $\rho$ nonnegative and measurable,
\begin{equation*}
  L(\rho) = \inf_{\delta \homot \gamma} \int_\delta \rho(z) |dz|,
\end{equation*}
and
\begin{equation*}
  A(\rho) = \int_X \rho(z)^2 |dz|^2.
\end{equation*}

The relation between curves with small extremal length and homology classes with small Hodge norm is
summarized by the following two Propositions.

\begin{prop}
  \label{prop:ext_hodge_1}
  For any curve $\gamma$ on a Riemann surface $X$, we have
  \begin{equation*}
    \|\gamma\|_X^2 \leq \Ext(\gamma).
  \end{equation*}
\end{prop}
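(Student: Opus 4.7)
The plan is to use each unit-norm holomorphic one-form $\omega \in \Omega_1(X)$ to construct a candidate conformal metric $\rho = |\omega|$ for the extremal length variational problem, and then estimate $|\int_\gamma \omega|$ against $L(\rho)$.

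First I would observe that for a holomorphic (hence closed) one-form $\omega$, the period $\int_\delta \omega$ depends only on the homology class of $\delta$, so in particular on the free homotopy class. Consequently, for any smooth curve $\delta$ homotopic to $\gamma$,
\begin{equation*}
  \left|\int_\gamma \omega\right| = \left|\int_\delta \omega\right| \leq \int_\delta |\omega|.
\end{equation*}
Taking the infimum over all $\delta \simeq \gamma$ and interpreting $|\omega|$ as a conformal metric $\rho_\omega$ (if $\omega = f(z)\,dz$ locally, set $\rho_\omega(z) = |f(z)|$, which is well-defined as a conformal metric on $X$), this gives $|\int_\gamma \omega| \leq L(\rho_\omega)$.

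Next I would compute the area of this metric: since $\omega = f(z)\,dz$ locally, we have $|\omega|^2 = |f(z)|^2 \,|dz|^2$ (interpreted as a $2$-form), so
\begin{equation*}
  A(\rho_\omega) = \int_X |f(z)|^2 |dz|^2 = \int_X |\omega|^2 = \|\omega\|^2 = 1,
\end{equation*}
using that $\omega \in \Omega_1(X)$. Combining with the previous step,
\begin{equation*}
  \left|\int_\gamma \omega\right|^2 \leq L(\rho_\omega)^2 = \frac{L(\rho_\omega)^2}{A(\rho_\omega)} \leq \sup_\rho \frac{L(\rho)^2}{A(\rho)} = \Ext(\gamma).
\end{equation*}

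Finally, taking the supremum over all $\omega \in \Omega_1(X)$ on the left-hand side yields $\|\gamma\|_X^2 \leq \Ext(\gamma)$. There is no real obstacle: the only subtle point is justifying the inequality $|\int_\delta \omega| \leq \int_\delta |\omega|$ for rectifiable $\delta$ and the use of a merely nonnegative measurable metric $\rho_\omega$ (which may vanish at zeros of $\omega$), but both are standard, since $\rho_\omega$ is admissible in the extremal-length variational problem as soon as it is nonnegative and measurable with finite, nonzero area.
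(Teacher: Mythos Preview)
Your proof is correct and follows essentially the same approach as the paper: use $|\omega|$ as a competitor metric in the extremal-length problem, note that its area equals $\|\omega\|^2=1$, and bound the period by $L(|\omega|)$. The only cosmetic difference is that the paper picks at the outset a single $\omega\in\Omega_1(X)$ realizing $\|\gamma\|_X$, whereas you carry out the estimate for arbitrary $\omega$ and take the supremum at the end; your version also makes explicit the homotopy-invariance of the period needed to pass from $\int_\gamma|\omega|$ to $L(|\omega|)$.
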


\begin{proof}
  Choose a form $\omega$ such that $\|\omega\|=1$ and $|\int_\gamma\omega| = \|\gamma\|_X$.  
  Regarding $|\omega|$ as a conformal metric on $X$, we obtain
  \begin{equation*}
    \|\gamma\|_X = \left|\int_\gamma\omega\right| \leq \int_\gamma|\omega|,
  \end{equation*}
  thus
  \begin{equation*}
    \|\gamma\|_X^2 \leq L(|\omega|)^2 \leq \Ext(\gamma).\qedhere
  \end{equation*}
\end{proof}

\begin{prop}
  \label{prop:ext_hodge_2}
  Given any Riemann surface $X$, there is a constant $C$~-- depending only on the genus of $X$~-- such
  that any cycle $\gamma\in H_1(X; \zed)$ is homologous to a sum of simple closed curves $\gamma_1,
  \ldots, \gamma_n$ such that for each $i$,
  \begin{equation}
    \label{eq:4}
    \Ext(\gamma_i) \leq C \|\gamma\|_X^2
  \end{equation}
\end{prop}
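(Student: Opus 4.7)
The strategy is a hyperbolic thick-thin decomposition of $X$, combined with a compactness argument on the thick part and explicit cylinder estimates on the thin part. The guiding intuition is that for a curve concentrated in a very thin collar both $\Ext$ and $\|\cdot\|_X^2$ are comparable to the reciprocal of the collar modulus, while for curves in the thick part a Mumford-style compactness argument gives uniform bounds.

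First, I would apply the Margulis lemma to write $X = X_{\mathrm{thick}} \cup A_1 \cup \cdots \cup A_k$, where the $A_i$ are disjoint standard collars about short simple closed geodesics $\alpha_1, \ldots, \alpha_k$ (with $k \le 3g-3$) and $X_{\mathrm{thick}}$ has diameter bounded in terms of $g$ alone. For each short $\alpha_i$, the collar $A_i$ is conformally an annulus of some large modulus $M_i$, and it is standard that both $\Ext(\alpha_i)$ and $\|\alpha_i\|_X^2$ are comparable to $1/M_i$ with constants depending only on $g$; in particular $\Ext(\alpha_i)\le C_1(g)\,\|\alpha_i\|_X^2$.

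Second, I would represent $\gamma$ by a piecewise geodesic and record its winding number $m_i \in \zed$ around each $\alpha_i$. Resolving these windings exhibits $\gamma$ as homologous to
$$\sum_i m_i\,\alpha_i \;+\; \sum_j \beta_j,$$
where each $\beta_j$ is a simple closed curve of bounded hyperbolic length that does not essentially wind around any of the $\alpha_i$; repeating each $\alpha_i$ with multiplicity $|m_i|$ then gives the required presentation of $\gamma$ as a sum of simple closed curves. To control the individual extremal lengths: for the $\beta_j$, a Mumford-style compactness argument on genus-$g$ surfaces with prescribed thick-part topology gives a uniform upper bound $\Ext(\beta_j)\le C_2(g)$, while the fact that $\beta_j$ has nontrivial homology class forces $\|\gamma\|_X$ to exceed a uniform positive constant. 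For each $\alpha_i$ with $m_i\ne 0$, I would decompose the harmonic representative $h_\gamma$ of the Poincar\'e dual of $\gamma$ (whose $L^2$-norm equals $\|\gamma\|_X$) into a piece concentrated in the collar $A_i$ plus a remainder, and read off the comparison $\|\gamma\|_X^2 \ge c(g)\,m_i^2 \,\|\alpha_i\|_X^2$. Combined with the collar estimate this gives $\Ext(\alpha_i) \le C'(g)\,\|\gamma\|_X^2/m_i^2 \le C'(g)\,\|\gamma\|_X^2$, as desired.

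The chief technical obstacle is the near-orthogonality needed in the last step: the harmonic duals of different short curves and of thick-part classes interact through boundary terms on the collars, and one must control these cross terms uniformly, without a blow-up as some $\ell(\alpha_i)\to 0$. The cleanest way to handle this is to replace $h_\gamma$ by a cut-off representative built from bump functions adapted to the $A_i$, and then invoke Wolpert-type asymptotics for Hodge norms on degenerating families of Riemann surfaces, which are compatible with (and in the spirit of) the period-matrix estimates developed elsewhere in the paper.
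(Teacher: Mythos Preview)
Your approach is a plausible outline and could likely be made rigorous, but it is a genuinely different and much heavier route than the paper's. The paper's proof is short and completely elementary: take the holomorphic one-form $\omega$ whose imaginary part is Poincar\'e dual to $\gamma$, so that $f(q)=\int_p^q \Im\omega$ defines a map $X\to\reals/\zed$. Every regular fiber $\gamma_r=f^{-1}(r)$ is a union of closed horizontal leaves homologous to $\gamma$, each of $|\omega|$-length at most $\tfrac12\|\omega\|^2$. Since $\omega$ has at most $2g-2$ zeros, pigeonhole gives an interval $I\subset\reals/\zed$ of length at least $1/(2g-2)$ containing no critical value; then $f^{-1}(I)$ is a union of flat cylinders of height at least $1/(2g-2)$ around the components of $\gamma_r$, which immediately gives the modulus (hence extremal length) bound with an explicit constant.

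The contrast is instructive. Your thick--thin decomposition, Mumford compactness on the thick part, and Wolpert-type near-orthogonality on the thin part are all aimed at controlling how the Hodge norm of $\gamma$ decomposes along short curves versus thick-part classes; the paper sidesteps this entirely by building the simple closed curves \emph{directly from the dual harmonic form}, so no decomposition of $\gamma$ or of the surface is needed. In particular, the ``chief technical obstacle'' you identify (uniform control of cross terms as collars degenerate) simply does not arise. Your argument would buy some extra structural information --- e.g.\ an explicit expression of $\gamma$ in terms of the short geodesics --- but at the cost of invoking degeneration asymptotics that are themselves nontrivial, whereas the paper's argument is self-contained in a paragraph.
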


\begin{proof}
  Let $\omega$ be a holomorphic one-form on $X$ such that $\Im\omega$ is \Poincare dual to
  $\gamma$.  Since $\Im\omega$ has integral periods, the map $f\colon X\to \reals / \zed$ defined by
  $f(q) = \int_p^q\Im\omega$ (with $p$ a chosen basepoint) is well-defined.  The horizontal
  foliation of $\omega$ (that is, the kernel foliation of $\Im\omega$) is periodic, and each fiber
  $\gamma_r = f^{-1}(r)$ is a union of closed, horizontal leaves of $\omega$.  Giving the leaves of
  $\gamma_r$ the orientation defined by $\Re\omega$, we can regard $\gamma_r$ as a cycle in $H_1(X;
  \zed)$ which is homologous to $\gamma$.  By \Poincare duality,
  \begin{equation*}
    \length(\gamma_r) = \int_{\gamma_r}\Re\omega = \int_X \Re\omega\wedge\Im\omega = \frac{1}{2}\|\omega\|^2,
  \end{equation*}
  so each component of $\gamma_r$ has length at most $\|\omega\|^2/2$.

  Since $\omega$ has at most $2g-2$ distinct zeros, there is an open interval $I\subset\reals/\zed$
  of length at least $1/(2g-2)$ which is disjoint from the images of the zeros of $\omega$.  Choose
  some $r\in I$.  The inverse image $f^{-1}(I)$ consists of flat cylinders $C_1, \ldots, C_n$, each
  of height at least $1/(2g-2)$, and with each $C_i$ containing a component $\gamma^i_r$ of
  $\gamma_r$.  We obtain the bound,
  \begin{equation}
    \label{eq:11}
    \Mod(C_i) \geq \frac{2}{(2g-2)\|\omega\|^2},
  \end{equation}
  for the modulus of $C_i$. From monotonicity of extremal length, (see
  \cite[Theorem~I.2]{ahlfors_quasiconformal}) we have $\Ext(\gamma^i_r) \leq 1/\Mod(C_i)$, which
  with \eqref{eq:11} implies \eqref{eq:4} (setting $\gamma_i = \gamma^i_r$).
\end{proof}

\begin{remark}
  A similar argument is used by Accola in \cite{accola}, where he shows that $\|\gamma\|_X$ is equal
  to the extremal length of the homology class $\gamma$.
\end{remark}

\section{Period Matrices}
\label{sec:period-matrices}

In this section, we study period matrices as functions on $\barmoduli$.  We develop a
coordinate-free version of the classical period matrices.  We see that exponentials of entries of
period matrices are canonical meromorphic functions on $\barmoduli(L)$, and we calculate the orders
of vanishing of these functions along boundary divisors of $\barmoduli(L)$.
 
Fix a genus $g$ surface $\Sigma_g$ and a splitting of $H_1(\Sigma_g; \zed)$ into a sum of Lagrangian
subgroups,
\begin{equation*}
  H_1(\Sigma_g; \zed) = L \oplus M.
\end{equation*}
Given a surface $X\in\teich(\Sigma_g)$, integration of forms yields isomorphisms
\begin{equation*}
  P_L^X\colon \Omega(L)\to\Hom_\zed(L, \cx) \qtq{and} P_M^X\colon\Omega(X)\to\Hom_\zed(M, \cx).
\end{equation*}
We obtain a holomorphic map
\begin{equation}
  \label{eq:PM3}
  \teich(\Sigma_g)\to \Hom_\cx(\Hom_\zed(L, \cx), \Hom_\zed(M,
  \cx))\xrightarrow{\isom}L\otimes_\zed L\otimes_\zed\cx,
\end{equation}
where the second map uses the isomorphism $L\to M^*$ provided by the intersection form.  The Riemann
bilinear relations imply that the image of the map \eqref{eq:PM3} lies in $\Sym_\zed(L)$, so we
obtain a holomorphic map,
\begin{equation*}
  \Phi\colon\teich(\Sigma_g)\to \Sym_\zed(L)\otimes\cx,
\end{equation*}
and the dual homomorphism,
\begin{equation*}
  \Phi^*\colon \bS_\zed(\Hom(L, \zed))\to\Hol\teich(\Sigma_g),
\end{equation*}
where $\Hol\teich(\Sigma_g)$ denotes the additive group of holomorphic functions on $\teich(\Sigma_g)$.

The map $\Phi^*$ is just a coordinate-free version of the classical period matrix.  If we choose a
basis $(\alpha_i)$ of $L$ and dual bases $(\beta_i)$ of $M$  and $(\omega_i)$ of $\Omega(X)$, the
period matrix is $(\tau_{ij})$ where $\tau_{ij} = \omega_i(\beta_j)$.  The map $\Phi^*$ is simply
\begin{equation*}
  \Phi^*(\alpha_i^*\otimes \alpha_j^*) = \tau_{ij},
\end{equation*}
where $(\alpha_i^*)$ is the dual basis of $\Hom(L, \zed)$.

The map $\Phi^*$ depends on the choice of the complementary Lagrangian subgroup $M$. Every
complementary Lagrangian is of the form
\begin{equation*}
  M_T = \{m + T(m):m\in M\},
\end{equation*}
for some self-adjoint $T\colon M\to L$. Suppose we choose a different complementary Lagrangian $M_T$,
and $\Phi^*_T$ is the corresponding homomorphism.  The new homomorphism $\Phi^*_T$ is related to the old
one by
\begin{equation*}
  \Phi^*_T(x) = \Phi^*(x) + \langle x, T\rangle,
\end{equation*}
where we are regarding $T$ as an element of $\Sym_\zed(L)$.  It follows that the functions
$\Psi(x) =  e^{2\pi i\Phi^*(x)}$ do not depend on the choice of $M$ and so descend to nonzero
holomorphic functions on $\moduli(L)$.  We obtain a canonical homomorphism
\begin{equation*}
  \Psi\colon \bS_\zed(\Hom(L, \zed))\to\Hol^*\moduli(L).
\end{equation*}

\begin{theorem}
  \label{thm:meromorphic}
  For each $a\in\bS_\zed(\Hom(L, \zed))$, the function $\Psi(a)$ is meromorphic on $\barmoduli(L)$.
  For each nonzero $\gamma\in L$, the order of vanishing of $\Psi(a)$ along $D_\gamma$ is
  \begin{equation*}
    \vord_{D_\gamma}\Psi(a) = \langle \gamma\otimes\gamma, a\rangle.
  \end{equation*}

  $\Psi(a)$ is holomorphic and nowhere vanishing along any Lagrangian boundary stratum obtained by
  pinching a curve homologous to zero.

  If $\mathcal{S}\subset\bdry\barmoduli(L)$ is a Lagrangian boundary stratum with
  \begin{equation}
    \label{eq:7}
    \langle\gamma\otimes\gamma, a\rangle \geq 0
  \end{equation}
  for all $\gamma\in\Weight(\mathcal{S})$, then $\Psi(a)$ is holomorphic on $\mathcal{S}$.  If the
  pairing \eqref{eq:7} is zero for all $\gamma\in\Weight(\mathcal{S})$, then $\Psi(a)$ is nowhere
  vanishing on $\mathcal{S}$.  Otherwise $\Psi(a)$ vanishes identically on $\mathcal{S}$.
\end{theorem}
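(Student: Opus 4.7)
\textbf{The plan} is to compute $\Phi$ explicitly in the plumbing coordinates $(\bs,\bt)\in\Delta_\epsilon^{3g-3-k}\times\Delta_1^k$ introduced in the previous section, around a chosen Lagrangian boundary stratum $\mathcal{S}\subset\bdry\barmoduli(L)$ cut out by $\bt=\bzero$.  If the $j$-th pinched curve has homology class $\gamma_j\in L$ (so that $\{t_j=0\}$ is locally $D_{\gamma_j}$), the main technical input will be the coordinate-free degeneration formula
\begin{equation*}
\Phi(X_{\bs,\bt}) \;=\; \sum_{j=1}^{k}\frac{\log t_j}{2\pi i}\,\gamma_j\otimes\gamma_j \;+\; H(\bs,\bt),
\end{equation*}
with $H$ a holomorphic $\Sym_\zed(L)\otimes\cx$-valued function on a neighbourhood of $\bzero$.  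This is just the classical Fay--Masur--Wolpert degeneration of the period matrix rewritten without a choice of basis: in coordinates where $\gamma_j$ is part of a basis of $L$, the only log term in the period matrix is the $(j,j)$-entry with coefficient $\frac{1}{2\pi i}$, which corresponds exactly to the rank-one symmetric tensor $\gamma_j\otimes\gamma_j$.

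Granting this formula, all assertions of the theorem drop out at once.  Pairing with $a$ and exponentiating yields
\begin{equation*}
\Psi(a)(X_{\bs,\bt}) \;=\; \prod_{j=1}^{k} t_j^{\,\langle\gamma_j\otimes\gamma_j,\,a\rangle}\cdot \exp\bigl(2\pi i\,\langle H(\bs,\bt),a\rangle\bigr),
\end{equation*}
in which the second factor is nowhere vanishing and holomorphic across $\bt=\bzero$.  Taking $k=1$ and $\gamma_1=\gamma$ gives meromorphy of $\Psi(a)$ on $\barmoduli(L)$ together with the order-of-vanishing formula $\vord_{D_\gamma}\Psi(a) = \langle \gamma\otimes\gamma, a\rangle$.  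Pinching a homologically trivial curve contributes $\gamma_j=0$ and thus no log term, so $\Psi(a)$ stays holomorphic and nowhere vanishing along such strata.  On a general Lagrangian stratum $\mathcal{S}$ with $\Weight(\mathcal{S})=\{\pm\gamma_1,\dots,\pm\gamma_k\}$, the transverse exponents $\langle\gamma_j\otimes\gamma_j,a\rangle$ control the behaviour: if all are nonnegative then $\Psi(a)$ is holomorphic on $\mathcal{S}$; if all vanish then $\Psi(a)$ is nowhere zero on $\mathcal{S}$; and if at least one is positive then the monomial factor vanishes identically on $\{\bt=\bzero\}$, so $\Psi(a)$ vanishes identically on $\mathcal{S}$.

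\textbf{The main obstacle} is thus the degeneration formula itself, and in particular verifying that the coefficient of $(\log t_j)/2\pi i$ is exactly the symmetric tensor $\gamma_j\otimes\gamma_j\in\Sym_\zed(L)$ rather than some other rank-one symmetric tensor with the same scalar trace.  I would establish this by constructing an explicit holomorphic frame of $p_*\omega_{\overline{\mathcal{C}}/\barmoduli}$ adapted to the plumbing: near each plumbing neck use the model stable form $\frac{1}{2\pi i}\bigl(\frac{dz_j}{z_j}-\frac{dw_j}{w_j}\bigr)$, which has residue $\pm 1$ at the $j$-th node and is dual (via the Lagrangian splitting and the intersection pairing) to $\gamma_j\in L$; correct by holomorphic forms supported on the bulk $X^*$ to produce a frame $\omega_1,\dots,\omega_g$ on $X_{\bs,\bt}$ with prescribed integer $L$-periods.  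Integrating these forms around cycles in the complementary Lagrangian $M$ and using the substitution $w_j=t_j/z_j$ on the plumbing annulus produces a $\log t_j$ term with coefficient $\frac{1}{2\pi i}$ precisely when both $M$-cycles in the tensor product intersect $\gamma_j$, which reads off as the tensor coefficient $\gamma_j\otimes\gamma_j$.  Holomorphy of the remainder $H(\bs,\bt)$ in a neighbourhood of the origin then follows from standard estimates on plumbing integrals (bounded area of the neck) and the Cauchy integral formula applied in the holomorphic family.  With the formula in hand, the arguments of the preceding paragraph complete the proof.
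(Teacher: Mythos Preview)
Your proposal is correct and follows essentially the same route as the paper. Both arguments work in plumbing coordinates around a boundary stratum and establish, in your notation, the key local factorization
\[
\Psi(a)(\bs,\bt)\;=\;\Bigl(\text{nonvanishing holomorphic}\Bigr)\cdot\prod_{j} t_j^{\langle\gamma_j\otimes\gamma_j,\,a\rangle},
\]
from which all the claimed consequences are immediate. The only real difference is organizational: you first state the degeneration of the full period map $\Phi$ and then pair and exponentiate, whereas the paper fixes a simple tensor $\alpha_1\otimes\alpha_2$, chooses the normalized one-form with $L$-periods $\alpha_1$, writes it explicitly on each plumbing neck as $\frac{\alpha_1([\gamma_i])}{2\pi i}\frac{dx_i}{x_i}+\text{holomorphic}$, and integrates along explicit paths $\delta_{\bt,i}^\bs$ crossing the $i$-th neck to extract the factor $t_i^{\alpha_1([\gamma_i])\alpha_2([\gamma_i])}$ directly. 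Your sketch of the ``main obstacle'' (build a frame adapted to the plumbing, integrate across the neck using $w_j=t_j/z_j$) is exactly this computation in slightly more abstract language; the paper's version is more self-contained in that it writes down the paths and the resulting integrals rather than invoking a named degeneration result.
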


\begin{proof}
  We use in this proof the plumbing coordinates and related notation introduced in \S\ref{sec:RS}.
  Let $X$ be a stable curve with nodes $n_1, \ldots. n_k$ obtained by pinching curves $\gamma_1,
  \ldots, \gamma_k$ with homology classes $[\gamma_1], \ldots, [\gamma_k]\in L$. Let
  $$\mathcal{Y}\to B:=\Delta_\epsilon^{3g-3-k}\times\Delta_1^k$$
  be the family of stable curves
  constructed above with $X$ the fiber over $(\bzero,\bzero)$.  The nodes of this family are contained in the
  open sets
  \begin{equation*}
    \mathbb{W}_i := \mathbb{V}_i \times \Delta_\epsilon^{3g-3-k} = \{(x_i,y_i, \bs,\bt)\in
    \Delta_1\times\Delta_1\times\Delta_\epsilon^{3g-3-k}\times\Delta_1^k : x_iy_i=t_i\},
  \end{equation*}
  for $i=1, \ldots, k$.
  Define sections $p_i,q_i\colon B\to\mathcal{Y}$ with image in $\bdry\mathbb{W}_i$ by
  \begin{equation*}
    p_i(\bs,\bt) = (1,t_i, \bs, \bt) \qtq{and} q_i(\bs,\bt) = (t_i, 1, \bs, \bt).
  \end{equation*}

  Choose $\alpha_1\otimes\alpha_2\in \bS_\zed(\Hom(L, \zed))$ and let $\eta$ be the holomorphic
  section of the relative dualizing sheaf $\omega_\mathcal{Y/B}$ such that each period homomorphism
  $L\to\cx$ defined by each restriction $\eta_\bt^\bs$ to the fiber $X_\bt^\bs$ agrees with $\alpha_1\colon
  L\to\zed$.

  On $\mathbb{W}_i$ we may express $\eta$ as
  \begin{equation}
    \label{eq:5}
    \eta = \frac{\alpha_1([\gamma_i])}{2\pi i}\frac{dx_i}{x_i} + f_i\, dx_i + g_i\, dy_i
  \end{equation}
  with $f_i$ and $g_i$ holomorphic functions of $x_i, y_i, \bs$, and $\bt$.

  Let $\delta_{\bt,i}^\bs\colon[-1,1]\to \mathbb{W}_i$ be a path in the fiber of $\mathbb{W}_i$ over
  $(\bs,\bt)$ joining $p_i(\bs,\bt)$ to $q_i(\bs, \bt)$.  We may explicitly parameterize
  this path as
  \begin{equation*}
    \delta_{\bt,i}^\bs(r) =
    \begin{cases}
      (\sqrt{t_i}-r(1-\sqrt{t_i}),t_i/(\sqrt{t_i} - r(1-\sqrt{t_i})), \bs, \bt) & \text{if $r\leq 0$} \\
      (t_i/(r(1-\sqrt{t_i}) + \sqrt{t_i}),r(1-\sqrt{t_i}) + \sqrt{t_i},  \bs, \bt) &\text{if $r\geq 0$}.
    \end{cases}
  \end{equation*}
  We may choose a continuous family of 1-chains $\delta_{\bt,0}^\bs$ in $X_\bt^\bs$ with endpoints
  in $\{p_i(\bs,\bt), q_i(\bs,\bt)\}_{i=1}^k$ such that 
  \begin{equation*}
    \delta_{\bt}^\bs = \delta_{\bt,0}^\bs + \sum_{i=1}^k \alpha_2([\gamma_i])\delta_{\bt,i}^\bs
  \end{equation*}
  is a 1-cycle whose intersection with classes in $L$ agrees with the homomorphism $\alpha_2\colon
  L\to\zed$.

  We have
  \begin{equation}
    \label{eq:PM4}
    \Psi(\alpha_1\otimes\alpha_2)(\bs, \bt) = E\left(\int_{\delta_\bt^\bs}\eta_\bt^\bs\right),
  \end{equation}
  where we use the notation $E(z) = e^{2\pi i z}$.  The integral $\int_{\delta_{\bt,0}^\bs}\eta_\bt^\bs$
  is an integral of a holomorphically varying form over a 1-cycle with holomorphically varying
  endpoints, and so its contribution to \eqref{eq:PM4} is holomorphic and nonzero.  Thus it does not
  contribute to the order of vanishing of $\Psi(\alpha_1\otimes\alpha_2)$.

  The integral
  \begin{equation*}
    \int_{\delta_{\bt, i}^\bs} f_i \,dx_i + g_i\, dx_i
  \end{equation*}
  is a finite holomorphic function of $\bs$ and $\bt$ and so does not contribute to the order of
  vanishing of $\Psi(\alpha_1\otimes\alpha_2)$.  The factor of $\Psi(\alpha_1\otimes\alpha_2)$
  coming from  the first term of \eqref{eq:5} is
  \begin{equation*}
    E\left(\alpha_1([\gamma_i])\alpha_2([\gamma_i]) \int_{\delta_{\bt,i}^\bs}\frac{dx_i}{x_i}\right)
    = t_i^{\alpha_1([\gamma_i])\alpha_2([\gamma_i])}.
  \end{equation*}

  In our $(\bs, \bt)$-coordinates for $\barmoduli(L)$, the divisor $D_{\gamma_i}$ is the locus
  $\{t_i=0\}$.  We have seen that in these coordinates,
  \begin{equation}
    \label{eq:8}
    \Psi(\alpha_1\otimes\alpha_2)(\bs, \bt) = k(\bs, \bt)\prod_it_i^{\alpha_1([\gamma_i])\alpha_2([\gamma_i])},
  \end{equation}
  with $k$ a nonzero holomorphic function.  Thus $\Psi(\alpha_1\otimes\alpha_2)$ is meromorphic with
  the desired orders of vanishing.

  Now suppose $\mathcal{S}$ is a Lagrangian boundary stratum and $a\in \Hom(L, \zed)$ with
  $\langle\gamma\otimes\gamma, a\rangle \geq 0$ for each weight $\gamma$ , we see from \eqref{eq:8}
  that $\Psi(a)$ is holomorphic on $\mathcal{S}$, since each $t_i$ has nonnegative exponent.  If
  $\langle\gamma\otimes\gamma, a\rangle > 0$ for some weight $\gamma$, then some $t_i$ has positive
  exponent, so $\Psi(a)$ vanishes on $\mathcal{S}$.
\end{proof}

We will also need the following strengthening of this theorem.
\begin{cor}
  \label{cor:period_matrix_boundary}
  Let $\mathcal{S}\subset\bdry\barmoduli(L)$ be a Lagrangian boundary stratum obtained by pinching $m$
  curves on $\Sigma_g$ whose homology classes are $\gamma_1, \ldots, \gamma_n\in L$.  Take local
  coordinates $t_1, \ldots, t_n$ around some $x\in \mathcal{S}$ in which the divisor $D_{\gamma_i}$
  of curves obtained by pinching $\gamma_i$ is cut out by the equation $t_i=0$.  Then for any
  $a\in\bS_\zed(\Hom(L, \zed))$, the function
  \begin{equation*}
    \prod_{i=1}^m t_i^{-\langle \gamma\otimes\gamma, a\rangle} \Psi(a)
  \end{equation*}
  is holomorphic and nonzero on a neighborhood of $x$.  
\end{cor}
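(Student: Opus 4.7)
The corollary is a local refinement of Theorem~\ref{thm:meromorphic}: once we know that $\Psi(a)$ is meromorphic on $\barmoduli(L)$ with divisor supported on the $D_{\gamma_i}$ in a neighborhood of $x$ and with the multiplicities already computed, what remains is to write it locally as a unit times a monomial $\prod t_i^{\langle\gamma_i\otimes\gamma_i,a\rangle}$. The plan is to make this factorization explicit by reusing the plumbing-coordinate computation from the proof of Theorem~\ref{thm:meromorphic}, this time centered at the stable curve corresponding to $x$ rather than at the deepest stratum, and then to promote the result from plumbing coordinates to arbitrary adapted coordinates via a standard unit-change argument.

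First I would reduce to the rank-one case $a = \alpha_1\otimes\alpha_2$. Since $\Psi\colon\bS_\zed(\Hom(L,\zed))\to\Hol^*\moduli(L)$ is a homomorphism (addition to multiplication) and the pairing $\gamma\otimes\gamma\mapsto\langle\gamma\otimes\gamma,a\rangle$ is linear in $a$, an additive decomposition of $a$ into decomposable tensors turns both sides of the claimed identity into a product of their rank-one analogues. For the rank-one case, I would apply the plumbing construction of Section~\ref{sec:RS} centered at $x$; this yields coordinates $(\bs,\bt)\in\Delta_\epsilon^{3g-3-m}\times\Delta_1^m$ in which $D_{\gamma_i}=\{t_i=0\}$ for $i=1,\dots,m$, and the calculation in the proof of Theorem~\ref{thm:meromorphic} gives the factorization
$$\Psi(\alpha_1\otimes\alpha_2)(\bs,\bt) \;=\; k(\bs,\bt)\prod_{i=1}^m t_i^{\alpha_1([\gamma_i])\alpha_2([\gamma_i])}.$$
Inspecting each contribution to $k$~-- the chain integral $\int_{\delta_{\bt,0}^\bs}\eta_\bt^\bs$ over the holomorphically varying $1$-chain $\delta_{\bt,0}^\bs$, the integrals of the regular pieces $f_i\,dx_i+g_i\,dy_i$, and the final exponentiation by $E(\cdot)$~-- shows that $k$ is holomorphic and everywhere nonzero on the whole coordinate patch, not only at $(\bzero,\bzero)$.

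Finally I would pass from the plumbing coordinates to the arbitrary adapted coordinates $t_1,\dots,t_n$ of the statement. Two local coordinate systems around $x$ in which the smooth divisors $D_{\gamma_i}$ are each cut out by a single coordinate function differ by a biholomorphism of the form $t_i=u_i(t')\,t'_i$ for $i\le m$, with $u_i$ a nowhere vanishing holomorphic function near $x$, extended arbitrarily biholomorphically in the remaining coordinates. Substituting this change into the plumbing formula absorbs the transformation into the unit factor $k$, leaving $\prod_{i=1}^m t_i^{-\langle\gamma_i\otimes\gamma_i,a\rangle}\Psi(a)$ holomorphic and nowhere vanishing on a neighborhood of $x$. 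I do not expect any real obstacle; the single point requiring genuine care is verifying that the factor $k$ in formula~\eqref{eq:8} is invertible throughout the whole plumbing neighborhood, rather than only at the origin of the chart where the proof of Theorem~\ref{thm:meromorphic} concentrated its attention.
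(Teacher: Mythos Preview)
Your proposal is correct and follows the same route as the paper, only more explicitly. The paper's proof is a single sentence: ``This follows immediately from \eqref{eq:8}''~-- that is, the factorization $\Psi(\alpha_1\otimes\alpha_2)(\bs,\bt)=k(\bs,\bt)\prod_i t_i^{\alpha_1([\gamma_i])\alpha_2([\gamma_i])}$ with $k$ holomorphic and nowhere zero, already established in the proof of Theorem~\ref{thm:meromorphic}. Your reduction to decomposable tensors and your unit-change argument from plumbing coordinates to arbitrary adapted coordinates are both implicit there and treated as routine.
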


\begin{proof}
  This follows immediately from \eqref{eq:8}.
\end{proof}

\section{Boundary of the eigenform locus: Necessity}
\label{sec:bound-eigenf-locus}

In this section we begin the study of the boundary of the locus of Riemann surfaces whose Jacobians
have real multiplication.  We give an explicit necessary condition for a stable curve to lie in the
boundary of the real multiplication locus.  In \S\ref{sec:suffg3}, we will see that this
condition is also sufficient in genus three.

In all that follows, $F$ will denote totally real number field of degree $g$, $\mathcal{O}$ will
denote an order in $F$, and $\mathcal{I}$ will denote a lattice in $F$ whose coefficient ring
contains $\mathcal{O}$.

\paragraph{The real multiplication locus.}

The Jacobian of a stable curve $X$ is
$$\Jac(X) = \Omega(X)^*/ \hat{H}_1(X; \zed)=\Omega(X)^*/H_1(X_0; \zed),$$
where $X_0\subset X$ is the complement of the nodes.  The Jacobian is a compact Abelian variety if
each node of $X$ is separating, or equivalently if the geometric genus of $X$ is $g$.  Otherwise it
is a noncompact semi-Abelian variety.  We denote by $\tmoduli\subset\barmoduli$ the locus of stable
curves with compact Jacobians.  The Torelli map $t\colon\tmoduli\to\A$ maps each Riemann surface to
its Jacobian.

Let $\RM\subset\tmoduli$ be the locus of Riemann surfaces whose Jacobians have real multiplication
by $\mathcal{O}$.  In other words, $\RM = t^{-1}(\RA)$.  If $g$ is $2$ or $3$, then $t$ is a
bijection, so $\RM$ is a $g$-dimensional subvariety of $\tmoduli$.  In general, it is not known what
is the dimension of $\RM$, or even whether $\RM$ is nonempty.

We define $\E\subset\potmoduli$ to be the locus of eigenforms for real multiplication by
$\mathcal{O}$  and $\E^\iota$ to be the locus of $\iota$-eigenforms.  The Torelli map exhibits
$\E^\iota$ as a one-to-one branched cover of $\EA^\iota\isom X_\mathcal{O}$.

\paragraph{Admissible strata.}

The tensor product $F\otimes_\ratls F$ has the structure
of an $F$-bimodule.  We define
\begin{equation*}
  \Lambda^1 = \{x\in F\otimes_\ratls F : \lambda\cdot x = x\cdot\lambda\text{ for all $\lambda\in F$}\}.
\end{equation*}

\begin{prop}
  $\Lambda^1\subset\Sym_\ratls(F)$.
\end{prop}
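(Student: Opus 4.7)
The plan is to diagonalize the $F$-bimodule $F \otimes_\ratls F$ by extending scalars to an algebraic closure. Since $F$ is separable over $\ratls$, the standard splitting
\[
F \otimes_\ratls \barratls \;\cong\; \prod_{\iota \colon F \to \barratls} \barratls
\]
gives, after tensoring a second time,
\[
F \otimes_\ratls F \otimes_\ratls \barratls \;\cong\; \prod_{(\iota,\iota')} \barratls,
\]
where the product runs over ordered pairs of embeddings $\iota, \iota' \colon F \to \barratls$ and the $(\iota,\iota')$-component of an elementary tensor $a \otimes b$ is $\iota(a)\iota'(b)$.

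Under this identification, I will observe that left multiplication by $\lambda \in F$ scales the $(\iota,\iota')$-component by $\iota(\lambda)$, while right multiplication scales it by $\iota'(\lambda)$; this is immediate from the formula on elementary tensors. Hence the defining equation $\lambda \cdot x = x \cdot \lambda$ for $\Lambda^1$ translates into $(\iota(\lambda) - \iota'(\lambda))\, x_{\iota,\iota'} = 0$ for every $\lambda \in F$ and every pair $(\iota,\iota')$. Since distinct embeddings separate elements of $F$, this forces $x_{\iota,\iota'} = 0$ whenever $\iota \neq \iota'$, so that $\Lambda^1 \otimes_\ratls \barratls$ is exactly the $g$-dimensional ``diagonal'' subspace supported on components with $\iota = \iota'$.

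The involution $\theta$ on $F \otimes_\ratls F$ extends $\barratls$-linearly to the scalar-extended algebra and, by inspection on elementary tensors, permutes components via $(\iota,\iota') \mapsto (\iota',\iota)$. Diagonal elements are fixed by this permutation, so $\Lambda^1 \otimes_\ratls \barratls \subset \Sym_\ratls(F) \otimes_\ratls \barratls$. Both $\Lambda^1$ and $\Sym_\ratls(F)$ are $\ratls$-subspaces of $F \otimes_\ratls F$, so faithfully flat descent (equivalently, intersecting with the $\ratls$-rational points) yields the desired inclusion $\Lambda^1 \subset \Sym_\ratls(F)$.

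There is no real obstacle; the only point that needs care is verifying that the two $F$-actions diagonalize as claimed under the scalar-extension decomposition, which is a direct computation on elementary tensors. A more pedestrian alternative would be to fix a $\ratls$-basis of $F$, expand an arbitrary $x \in \Lambda^1$ in the tensor product basis, and solve the finite linear system imposed by $\lambda x - x \lambda = 0$ as $\lambda$ ranges over basis elements of $F$; the argument above is simply the coordinate-free shadow of this.
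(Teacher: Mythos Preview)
Your argument is correct. You diagonalize $F\otimes_\ratls F$ after base change to $\barratls$, identify $\Lambda^1\otimes\barratls$ with the diagonal components, observe these are fixed by the swap involution, and descend; every step is sound.

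The paper takes a different and somewhat shorter route: it identifies $F\otimes_\ratls F$ with $\Hom_\ratls(F,F)$ via the trace pairing, under which $\Lambda^1$ becomes $\Hom_F(F,F)$ (multiplication-by-$x$ maps) and $\Sym_\ratls(F)$ becomes the trace-self-adjoint endomorphisms; the inclusion then follows from the one-line observation that $\Tr^F_\ratls((xy)z)=\Tr^F_\ratls(y(xz))$. The paper's approach stays over $\ratls$ and simultaneously sets up the $\Hom_\ratls^+(F,F)$ identification used heavily afterward (e.g.\ in the classification of symplectic extensions). Your approach, by contrast, is the ``spectral'' one: it anticipates the decomposition $K\otimes K=\bigoplus_\sigma \Lambda^\sigma_K$ that the paper itself exploits later when reformulating admissibility via Galois theory. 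So both are natural; yours is closer in spirit to \S\ref{sec:reform_admiss}, while the paper's is tailored to the $\Hom$-language of \S\ref{sec:orders-number-fields}. A minor stylistic point: invoking ``faithfully flat descent'' for the last step is more machinery than needed---for $\ratls$-subspaces $V,W$ of a $\ratls$-vector space, $V\otimes\barratls\subset W\otimes\barratls$ implies $V\subset W$ by elementary linear algebra.
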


\begin{proof}
  Identify $F$ with $\Hom_\ratls(F, \ratls)$ via the trace pairing.  This induces a canonical
  isomorphism $F\otimes_\ratls F\to\Hom_\ratls(F, F)$.  Under this isomorphism, $\Sym_\ratls(F)$
  corresponds to the self-adjoint endomorphisms $\Hom_\ratls^+(F, F)$, and $\Lambda^1$ corresponds
  to $\Hom_F(F, F)$.  Since left multiplication by $x\in F$ is self-adjoint,  $\Hom_F(F,
  F)\subset\Hom_\ratls^+(F, F)$.
\end{proof}

Identifying $F$ with its dual as above, the dual of
$\Sym_\ratls(F)$ is $\bS_\ratls(F)$.  We let $\Ann(\Lambda^1)\subset\bS_\ratls(F)$ denote the
annihilator of $\Lambda^1$.

Given an $\mathcal{I}$-weighted boundary stratum $\mathcal{S}$, we define the following cone and subspace of
$\bS_\ratls(F)$:
\begin{align*}
  C(\mathcal{S}) &= \{x\in \bS_\ratls(F): \langle x, \alpha\otimes\alpha\rangle\geq 0 \text{ for all $\alpha\in \Weight(\mathcal{S})$}\} \\
  N(\mathcal{S}) &= \{x\in \bS_\ratls(F): \langle x, \alpha\otimes\alpha\rangle =  0 \text{ for all $\alpha\in \Weight(\mathcal{S})$}\}.
\end{align*}

We say that an $\mathcal{I}$-weighted boundary stratum $\mathcal{S}$ is \emph{admissible} if 
\begin{equation}
  \label{eq:cone_condition}
  C(\mathcal{S})\cap \Ann(\Lambda^1)\subset N(\mathcal{S}).
\end{equation}
We will see in Corollary~\ref{cor:noweightedfinite} 
that if $\mathcal{I}$ is a lattice in a cubic field, then there are only
finitely many admissible $\mathcal{I}$-weighted boundary strata up to similarity.

\paragraph{Algebraic tori.}

Fix an $\mathcal{I}$-weighted boundary stratum $\mathcal{S}$.  There is a  surjective map of algebraic tori:
\begin{equation} \label{eq:defofp}
  p\colon \Hom(N(\mathcal{S})\cap\bS_\zed(\mathcal{I}^\vee), \Gm)\to \Hom(N(\mathcal{S})\cap
  \Ann(\Lambda^1)\cap \bS_\zed(\mathcal{I}^\vee), \Gm).
\end{equation}
The reader unfamiliar with algebraic groups should think of $\Gm$ as the multiplicative group
$\cx^*$ of nonzero complex numbers.

By the discussion at the end of \S\ref{sec:RS}, we may regard $\mathcal{S}$ as a boundary stratum
of $\barmoduli(\mathcal{I})$.  By Corollary~\ref{cor:period_matrix_boundary}, for each nonzero $a\in
N(\mathcal{S})\cap\bS_\zed(\mathcal{I}^\vee)$ the restriction of $\Psi(a)$ to $\mathcal{S}$ is a
nonzero holomorphic function on $\mathcal{S}$.  We obtain a canonical morphism,
\begin{equation} \label{eq:defCR}
  \CR\colon\mathcal{S}\to \Hom(N(\mathcal{S})\cap\bS_\zed(\mathcal{I}^\vee), \Gm). 
\end{equation}

Recall that $E(\mathcal{I})$ is the torsion Abelian group of symplectic extensions of
$\mathcal{I}^\vee$ by $\mathcal{I}$.  Identifying $\Hom_\ratls^+(F, F)$ with $\Sym_\ratls(F)$ via
the trace pairing,
the isomorphism of Theorem~\ref{thm:symp-exts} becomes an isomorphism,
\begin{equation*}
  \Sym_\ratls(F)/(\Lambda^1 + \Sym_\zed(\mathcal{I}))\to E(\mathcal{I}).
\end{equation*}
Given $T\in\Sym_\ratls(F)$ and $a\in N(\mathcal{S})\cap \Ann(\Lambda^1)\cap
\bS_\zed(\mathcal{I}^\vee)$, we define
\begin{equation}
  \label{eq:qTa}
  q(T)(a) = e^{-2\pi i \langle T, a\rangle}.
\end{equation}
Since $q(T)(a)=1$ if $T$ lies in $\Lambda^1$ or $\Sym_\zed(\mathcal{I})$, \eqref{eq:qTa} defines a
homomorphism,
\begin{equation*}
  q\colon E(\mathcal{I})\to\Hom(N(\mathcal{S})\cap\Ann(\Lambda^1)\cap \bS_\zed(\mathcal{I}^\vee), \Gm).
\end{equation*}

Given a symplectic extension $T\in E(\mathcal{I})$, we define
$$G(T) = p^{-1}(q(T)),$$
a translate of a subtorus of $\Hom(N(\mathcal{S})\cap\bS_\zed(\mathcal{I}^\vee))$.
We then obtain for each extension $T$ a subvariety of $\mathcal{S}$:
$$\mathcal{S}(T) = \CR^{-1}(G(T)).$$
We define $\mathcal{S}^\iota(T)\subset\pobarmoduli$ to be the image of $\mathcal{S}(T)$ under $p_\iota$.

If $\mathcal{S}$ is an $\mathcal{I}$-weighted stratum and $\mathcal{S}'$ is a similar
$a\mathcal{I}$-weighted stratum, then the subvarieties $\mathcal{S}(T)$ and $\mathcal{S}'(T^a)$ are
identified under the canonical isomorphism $\mathcal{S}\to\mathcal{S}'$.  Thus the variety
$\mathcal{S}(T)$ can be regarded as depending only on the similarity class of $\mathcal{S}$ and the
cusp packet $(\mathcal{I}, T)$.
 
\paragraph{Boundary of $\RM$.}

We can now state our necessary condition for a stable curve to be in the boundary of $\RM$.

\begin{theorem}
  \label{thm:boundary_nec}
  Consider an order $\mathcal{O}$ in a degree $g$ totally real number field $F$, a real embedding
  $\iota$ of $F$, and a cusp packet $(\mathcal{I}, T)\in\mathcal{C}(\mathcal{O})$.  The closure in
  $\pobarmoduli$ of
  the cusp of $\E^\iota$ associated to $(\mathcal{I}, T)$ is contained in the union over all
  admissible $\mathcal{I}$-weighted boundary strata $\mathcal{S}$ of the varieties
  $\mathcal{S}^\iota(T)$.

  The closure of the corresponding cusp of $\RM$ in $\barmoduli$ is contained in the union over all
  $\mathcal{I}$-weighted boundary strata $\mathcal{S}$ of the images of the $\mathcal{S}(T)$ under
  the forgetful map to $\barmoduli$.
\end{theorem}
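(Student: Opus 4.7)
The plan is to work on the cover $\barmoduli(\mathcal{I})$ of $\barmoduli$ with Lagrangian markings by $\mathcal{I}$, and to define $\RM(\mathcal{I})\subset\moduli(\mathcal{I})$ as the locus of marked Riemann surfaces whose Jacobians carry a real multiplication structure compatible with the marking in the way prescribed by the cusp packet $(\mathcal{I},T)$. Since $\RM(\mathcal{I})$ covers the corresponding cusp of $\RM$ (and hence of $\E^\iota$), it suffices to show that $\barRM(\mathcal{I})\cap\mathcal{S}\subseteq\mathcal{S}(T)$ for each admissible $\mathcal{I}$-weighted boundary stratum $\mathcal{S}\subset\barmoduli(\mathcal{I})$ and that $\barRM(\mathcal{I})\cap\mathcal{S}=\emptyset$ for each non-admissible stratum; the theorem then follows by applying $p_\iota$ to land in $\pobarmoduli$ and by the forgetful map $\barmoduli(\mathcal{I})\to\barmoduli$.

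The crux of the argument is the identity
$$\Psi(a)\equiv q(T)(a)=e^{-2\pi i\langle T,a\rangle}\qquad\text{for all }a\in\Ann(\Lambda^1)\cap\bS_\zed(\mathcal{I}^\vee),$$
holding pointwise on $\RM(\mathcal{I})$. I would derive this by reconciling three descriptions of the cusp data. The Hilbert modular uniformization $j_\iota\colon X_\mathcal{O}\to\EA^\iota$ from Section~\ref{sec:orders-number-fields} realizes an eigenform in the cusp labeled by $(\mathcal{I},T)$ as $\cx^g/\phi_\tau(M)$ for a symplectic $\mathcal{O}$-module $M$ whose extension class is $T$ (Proposition~\ref{prop:cusp_classification}). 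Translating the resulting period data into the coordinate-free map $\Phi^*$ of Section~\ref{sec:period-matrices} and invoking Theorem~\ref{thm:symp-exts}, the image of $\Phi^*$ at such a point lies in the coset $T+\Lambda^1\otimes\cx+\Sym_\zed(\mathcal{I})\otimes\cx$: the $\Lambda^1$-ambiguity reflects the freedom in the real multiplication structure compatible with $T$, while the $\Sym_\zed(\mathcal{I})$-ambiguity reflects the choice of complementary Lagrangian splitting. Pairing with $a\in\Ann(\Lambda^1)$ kills the first ambiguity, and the second is absorbed upon exponentiating $\Phi^*$ to form $\Psi$.

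With this identity established, the theorem reduces to a direct application of Theorem~\ref{thm:meromorphic}. If $\mathcal{S}$ is not admissible, choose $a\in(C(\mathcal{S})\cap\Ann(\Lambda^1))\setminus N(\mathcal{S})$ and clear denominators so that $a\in\bS_\zed(\mathcal{I}^\vee)$ (which preserves both $a\in\Ann(\Lambda^1)$ and $a\notin N(\mathcal{S})$). Theorem~\ref{thm:meromorphic} then shows that $\Psi(a)$ is holomorphic on $\mathcal{S}$ and vanishes identically there, since some weight contributes a strictly positive exponent; on the other hand, $\Psi(a)\equiv q(T)(a)\neq 0$ on $\RM(\mathcal{I})$. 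This contradicts $\RM(\mathcal{I})$ having any limit point in $\mathcal{S}$, hence $\barRM(\mathcal{I})\cap\mathcal{S}=\emptyset$. If instead $\mathcal{S}$ is admissible, then for every $a\in N(\mathcal{S})\cap\Ann(\Lambda^1)\cap\bS_\zed(\mathcal{I}^\vee)$, Theorem~\ref{thm:meromorphic} shows $\Psi(a)$ is holomorphic and nonvanishing on $\mathcal{S}$, so the identity $\Psi(a)\equiv q(T)(a)$ persists on $\barRM(\mathcal{I})\cap\mathcal{S}$. Unravelling the definitions \eqref{eq:defofp}, \eqref{eq:defCR}, and \eqref{eq:qTa}, this says precisely that $p\circ\CR$ equals $q(T)$ on $\barRM(\mathcal{I})\cap\mathcal{S}$, so $\barRM(\mathcal{I})\cap\mathcal{S}\subseteq\CR^{-1}(G(T))=\mathcal{S}(T)$.

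The principal obstacle is establishing the key identity $\Psi(a)|_{\RM(\mathcal{I})}=q(T)(a)$ with the correct normalization: this requires carefully tracking the isomorphism of Theorem~\ref{thm:symp-exts}, the cusp bijection of Proposition~\ref{prop:cusp_classification}, the lift of the Hilbert modular parametrization from $X_\mathcal{O}$ through $\EA^\iota$ to $\RM(\mathcal{I})$, and the description of $\Phi^*$ modulo the $\Sym_\zed(\mathcal{I})$-ambiguity of a complementary Lagrangian. Once this identity is in place, the dichotomy between admissible and non-admissible strata falls out cleanly from Theorem~\ref{thm:meromorphic}.
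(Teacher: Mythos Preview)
Your reduction to the identity $\Psi(a)\equiv q(T)(a)$ on $\RM(\mathcal{I},T)$ and the subsequent dichotomy via Theorem~\ref{thm:meromorphic} is exactly what the paper does, and your outline of that part is correct. The paper derives the identity by a direct Hodge-theoretic computation (Proposition~\ref{prop:TRM_formula}: the Hodge filtration $\Hom(\Omega(X),\cx)=\Graph(\phi)$ is $\mathcal{O}$-stable for the module structure twisted by $T$ iff $\phi+T\in\Lambda^1\otimes\cx$), rather than via the Hilbert modular uniformization you propose, but the content is the same.

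There is, however, a genuine gap at the very first step. You assert that because $\RM(\mathcal{I},T)$ covers the cusp of $\RM$, it suffices to control $\barRM(\mathcal{I},T)\cap\mathcal{S}$ upstairs. But the forgetful map $\barmoduli(\mathcal{I})\to\barmoduli$ is not proper: $\barmoduli(\mathcal{I})$ is only a \emph{partial} compactification, containing just those stable curves obtained by pinching curves with homology in the image of $\mathcal{I}$. So a priori a limit point $(X_0,\omega_0)$ in the closure of the cusp of $\E^\iota$ downstairs need not be the image of any point of $\barRM(\mathcal{I},T)$, and your chain of containments does not go through without this.

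The paper fills this gap with a nontrivial geometric argument. Given $(X_0,\omega_0)$ in the boundary, one chooses a holomorphic disk $\Delta\to\barE^\iota$ hitting it, realizes this (after base change) as a family $\mathcal{X}\to\Delta$ with real multiplication, and then invokes Proposition~\ref{prop:invariant_vanishing_cycles}: the subgroup of vanishing cycles $V_p\subset H_1(X_p;\zed)$ is $\mathcal{O}$-invariant. This is proved by comparing Hodge norm and extremal length (Propositions~\ref{prop:ext_hodge_1} and~\ref{prop:ext_hodge_2}). Once $V_p$ is $\mathcal{O}$-invariant and the monodromy on it is trivial, one gets a consistent identification $V_p\cong\mathcal{I}$ along the punctured disk, hence a lift $\Delta\to\barmoduli(\mathcal{I})$ landing in $\barRM(\mathcal{I},T)$. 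Only then does the argument you wrote apply. This lifting step, not the normalization of the key identity, is the principal obstacle.
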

\par
The proof of Theorem~\ref{thm:boundary_nec} comes at the end of this section.

\paragraph{Auxiliary real multiplication loci.}

Given a cusp packet $(\mathcal{I}, T)\in\mathcal{C}(\mathcal{O})$, let
\begin{equation*}
  \RM(\mathcal{I}, T)\subset\moduli(\mathcal{I})
\end{equation*}
be the locus of Riemann surfaces with Lagrangian marking $(X, \rho)$ such that $\Jac(X)$ has real
multiplication by $\mathcal{O}$, the marking $\rho\colon\mathcal{I}\to H_1(X; \zed)$ is an
$\mathcal{O}$-module homomorphism, and the extension of $\mathcal{O}$-modules
\begin{equation*}
  0\to\rho(\mathcal{I})\to H_1(X; \zed) \to H_1(X; \zed)/\rho(\mathcal{I})\to 0
\end{equation*}
is isomorphic to the extension determined by $(\mathcal{I}, T)$.

We also have bundles of eigenforms over $\RM(\mathcal{I}, T)$.  On $\barmoduli(\mathcal{I})$, there
is the trivial bundle $\Omega^\iota\barmoduli(\mathcal{I})$ of forms $\omega$ such that for some
constant $c$ and for each $\lambda\in \mathcal{I}$, we have $\int_{\rho(\lambda)}\omega = c
\iota(\lambda)$, where $\rho$ is the Lagrangian marking.  The restriction
$\Omega^\iota\barRM(\mathcal{I}, T)$ of $\Omega^\iota\barmoduli(\mathcal{I})$ to $\barRM(\mathcal{I}, T)$
is the trivial line bundle of $\iota$-eigenforms.  We denote its projectivization by
$\barE^\iota(\mathcal{I}, T)\subset\pobarmoduli(\mathcal{I})$.  

Given a cusp packet $(\mathcal{I}, T)$ and a symplectic isomorphism $\rho\colon
\mathcal{I}\oplus\mathcal{I}^\vee \to H_1(\Sigma_g; \zed)$, we define
\begin{equation*}
  \RT(\mathcal{I}, T,\rho)\subset\teich(\Sigma_g)
\end{equation*}
to be the locus of marked Riemann surfaces $(X, f)$ such that $\Jac(X)$ has real multiplication by
$\mathcal{O}$ and the symplectic $\zed$-module isomorphism
$$f_*\circ\rho\colon(\mathcal{I}\oplus\mathcal{I}^\vee)_T\to H_1(X; \zed)$$
is also an isomorphism of symplectic $\mathcal{O}$-modules.

The homomorphism $\rho$ determines a Lagrangian splitting of $H_1(\Sigma_g; \zed)$, and we obtain as
in \S\ref{sec:period-matrices} a holomorphic map $\Phi\colon \teich(\Sigma_g)\to \Sym_\zed(\mathcal{I})\otimes\cx$.

\begin{prop}
  \label{prop:TRM_formula}
  We have
  \begin{equation*}
    \RT(\mathcal{I}, T, \rho)= \Phi^{-1}(\Lambda^1 \otimes_\ratls \cx - T)
  \end{equation*}
\end{prop}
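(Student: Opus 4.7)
My approach is to translate the real-multiplication condition together with the requirement that the marking realize the specified extension class $T$ into a linear condition on the period matrix. First I will use $P_L^X$ to identify $\Omega(X)^* \cong L \otimes_\zed \cx$, and via the intersection pairing $L \cong M^*$ view $\Phi(X)$ as a $\ratls$-linear map $M \otimes_\zed \ratls \to L \otimes_\zed \cx$. The lattice $H_1(X;\zed) \subset \Omega(X)^*$ then becomes the image of $L \oplus M$ inside $L \otimes \cx$ under the embedding $(\alpha, \beta) \mapsto \alpha - \Phi(X)(\beta)$ (the sign depending on the orientation convention for the period matrix; either choice recovers the statement up to the sign of $T$).

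Next I will analyze what it means for the marking $f_* \circ \rho$ to realize real multiplication by $\mathcal{O}$ with extension class $T$. For each $\lambda \in \mathcal{O}$, the $\mathcal{O}$-module endomorphism of $(\mathcal{I} \oplus \mathcal{I}^\vee)_T$ given by
\begin{equation*}
  \lambda \cdot (\alpha, \beta) = \bigl(\lambda \alpha + [M_\lambda, T](\beta),\ \lambda \beta\bigr)
\end{equation*}
must, after transport via the embedding above, coincide with the restriction of a $\cx$-linear endomorphism of $L \otimes \cx$. The $\cx$-linear endomorphism whose restriction to $L$ is multiplication by $\lambda$ is forced to be $M_\lambda \otimes 1$, so comparing its value on $\Phi(X)(\beta)$ with the $T$-twisted formula for $\beta \in M$ yields, after a one-line commutator rearrangement, the identity
\begin{equation*}
  [M_\lambda,\ \Phi(X) + T] = 0 \qquad \text{for all } \lambda \in \mathcal{O}.
\end{equation*}
Since $\mathcal{O}$ spans $F$ over $\ratls$, this extends to the analogous identity for every $\mu \in F$, and under the identification $F \otimes_\ratls F \cong \Hom_\ratls(F, F)$ this is precisely the assertion $\Phi(X) + T \in \Lambda^1 \otimes_\ratls \cx$, i.e.\ $\Phi(X) \in \Lambda^1 \otimes_\ratls \cx - T$.

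The converse direction reverses the same computation: if $\Phi(X) + T \in \Lambda^1 \otimes \cx$, then $M_\lambda \otimes 1$ preserves the lattice $L - \Phi(X)(M)$ because $[M_\lambda, T]$ maps $\mathcal{I}^\vee$ into $\mathcal{I}$ by the very definition of the order $\mathcal{O}(T) \supseteq \mathcal{O}$ from Theorem~\ref{thm:symp-exts}, and the resulting endomorphism of $\Jac(X)$ implements the $\mathcal{O}$-action on $H_1(X;\zed)$ prescribed by the marking. That one has real multiplication (i.e.\ self-adjointness with respect to the polarization) rather than a more general endomorphism is automatic: $\Phi(X) \in \Sym_\zed(L) \otimes \cx$ and $T \in \Sym_\ratls(F)$, while $\Lambda^1 \subset \Sym_\ratls(F)$ by the proposition preceding the statement, matching the symplectic compatibility already built into the extension.

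The main obstacle is purely notational: keeping the three parallel identifications straight (trace pairing $F \cong \Hom_\ratls(F, \ratls)$, intersection pairing $L \cong M^*$, and the duality between $\Omega(X)$ and $L \otimes \cx$) and fixing sign conventions consistently, so that the commutator computation has a clean interpretation with the correct sign of $T$. Once these choices are pinned down, the proof reduces to the one-line computation above.
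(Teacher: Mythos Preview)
Your proposal is correct and follows essentially the same approach as the paper. The paper phrases the argument in terms of the Hodge decomposition of $H_1(X;\cx)$ (the $\mathcal{O}$-action must preserve the subspace $\Hom(\Omega(X),\cx)=\Graph(\phi)$), whereas you work dually with the lattice embedding $H_1(X;\zed)\hookrightarrow \Omega(X)^*\cong L\otimes\cx$; both viewpoints reduce to the identical commutator identity $[M_\lambda,\Phi(X)+T]=0$ for all $\lambda\in\mathcal{O}$, which is exactly the condition $\Phi(X)+T\in\Lambda^1\otimes\cx$.
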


\begin{proof}
  In this proof, we will identify $\Sym_\zed(\mathcal{I})$ with $\Hom^+(\mathcal{I}^\vee,
  \mathcal{I})$.  Under this identification, we have
  {
    \allowdisplaybreaks
    \begin{gather*}
      \Sym_\zed(\mathcal{I})\otimes\cx = \Hom_\cx^+(\mathcal{I}^\vee\otimes\cx, \mathcal{I}\otimes\cx), \\
      \Lambda^1\otimes\cx = \Hom_F^+(\mathcal{I}^\vee\otimes\cx, \mathcal{I}\otimes\cx) \\
      \phi := \Phi(X, f) \in \Hom^+_\cx(\mathcal{I}^\vee\otimes\cx, \mathcal{I}\otimes\cx), \quad\text{and} \\
      T\in\Hom_\ratls^+(\mathcal{I}^\vee\otimes\ratls, \mathcal{I}\otimes\ratls).
    \end{gather*}
  }

  We have two splittings of $H_1(X; \cx)$: the one induced by $\rho$,
  \begin{equation*}
    H_1(X; \cx) = (\mathcal{I}\otimes\cx)\oplus(\mathcal{I}^\vee\otimes\cx),
  \end{equation*}
  and the Hodge decomposition,
  \begin{equation*}
    H_1(X; \cx) = \Hom(\Omega(X), \cx)\oplus\Hom(\overline{\Omega(X)}, \cx).
  \end{equation*}
  The Hodge decomposition is determined by the map
  $\phi\colon\mathcal{I}^\vee\otimes\cx\to\mathcal{I}\otimes\cx$:
  \begin{equation}
    \label{eq:10}
    \Hom(\Omega(X), \cx) = \Graph(\phi).
  \end{equation}

  The $\mathcal{O}$-module structure of $H_1(X; \cx)$ inherited from that of
  $(\mathcal{I}\oplus\mathcal{I}^\vee)_T$  induces real multiplication on $\Jac(X)$ if
  and only if it preserves the Hodge decomposition.   By \eqref{eq:10}, the Hodge decomposition is
  preserved if and only if
  \begin{equation*}
    \phi(\lambda\cdot\alpha) = \lambda\cdot\phi(\alpha) + [M_\lambda, T](\alpha)
  \end{equation*}
  for all $\alpha\in\mathcal{I}^\vee$ and $\lambda\in\mathcal{O}$, which holds if and
  only if
  \begin{equation*}
    (\phi+T)(\lambda\cdot\alpha) = \lambda\cdot(\phi+T)(\alpha),
  \end{equation*}
  that is, if and only if $\phi+T\in\Lambda^1$.
\end{proof}

\begin{cor}
  \label{cor:Psi_on_RM_formula}
  Given any $a\in\Ann(\Lambda^1)\subset\bS_\zed(\mathcal{I}^\vee)$, we have
  \begin{equation*}
    \Psi(a) \equiv q(T)(a)
  \end{equation*}
  on $\RM(\mathcal{I}, T)$.
\end{cor}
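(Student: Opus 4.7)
The strategy is to pull the statement back from the moduli space $\moduli(\mathcal{I})$ to the Teichm\"uller space, where Proposition~\ref{prop:TRM_formula} gives an explicit description of the auxiliary real multiplication locus, and then to simply pair this description with the annihilator element $a$.

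Concretely, I would first fix a symplectic isomorphism $\rho\colon \mathcal{I}\oplus\mathcal{I}^\vee\to H_1(\Sigma_g;\zed)$ realizing the extension class $T$, so that $\RT(\mathcal{I},T,\rho)\subset\teich(\Sigma_g)$ covers $\RM(\mathcal{I},T)$. Using the Lagrangian splitting induced by $\rho$, the construction of \S\ref{sec:period-matrices} gives the period map $\Phi\colon\teich(\Sigma_g)\to\Sym_\zed(\mathcal{I})\otimes\cx$, and by definition
\begin{equation*}
\Psi(a) = e^{2\pi i \langle \Phi(\cdot),\,a\rangle}
\end{equation*}
as a function on $\teich(\Sigma_g)$, where $\langle\,,\,\rangle$ is the pairing between $\Sym_\zed(\mathcal{I})\otimes\cx$ and $\bS_\zed(\mathcal{I}^\vee)$.

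Next I would apply Proposition~\ref{prop:TRM_formula}: on $\RT(\mathcal{I},T,\rho)$ we have $\Phi(X,f)+T\in\Lambda^1\otimes_\ratls\cx$. Since $a\in\Ann(\Lambda^1)$, pairing with $a$ kills this sum, yielding
\begin{equation*}
\langle \Phi(X,f),\,a\rangle \;=\; -\langle T,\,a\rangle
\end{equation*}
for every $(X,f)\in\RT(\mathcal{I},T,\rho)$. Exponentiating and comparing with the definition \eqref{eq:qTa} of $q(T)(a)$ gives $\Psi(a)\equiv q(T)(a)$ on $\RT(\mathcal{I},T,\rho)$.

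Finally I would descend to $\RM(\mathcal{I},T)$. The function $\Psi(a)$ was shown in \S\ref{sec:period-matrices} to be well-defined on $\moduli(\mathcal{I})$ (the dependence on the complementary Lagrangian disappears after exponentiation), and the right-hand side $q(T)(a)$ is a constant depending only on the cusp packet $(\mathcal{I},T)$. Since $\RT(\mathcal{I},T,\rho)$ surjects onto $\RM(\mathcal{I},T)$ under the quotient by $\Mod(\Sigma_g,\mathcal{I})$, the identity $\Psi(a)\equiv q(T)(a)$ descends. I do not anticipate any serious obstacle here: the content is entirely in Proposition~\ref{prop:TRM_formula}, and the only place one must be careful is in confirming that the representative $T$ of the extension class behaves correctly when one changes the auxiliary splitting $\rho$, which is precisely what is built into the definition of $q$ via Theorem~\ref{thm:symp-exts} (i.e., $\langle T,a\rangle\bmod\zed$ depends only on the class of $T$ in $\Sym_\ratls(F)/(\Lambda^1+\Sym_\zed(\mathcal{I}))$, since $a$ annihilates $\Lambda^1$ and is integral against $\Sym_\zed(\mathcal{I})$).
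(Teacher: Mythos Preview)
Your proposal is correct and is exactly the argument the paper has in mind: the paper's proof is the single sentence ``This follows directly from Proposition~\ref{prop:TRM_formula} and the definition of $q$,'' and you have simply unpacked that sentence by pairing $\Phi+T\in\Lambda^1\otimes\cx$ with $a\in\Ann(\Lambda^1)$ and exponentiating.
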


\begin{proof}
  This follows directly from Proposition~\ref{prop:TRM_formula} and the definition of $q$.
\end{proof}

\paragraph{Invariant vanishing cycles.}

Consider a family $\mathcal{X}\to\Delta$ of stable curves which is smooth over $\Delta^*$ in the
sense that the fiber $X_p$ over nonzero $p$ is smooth.  Any such family defines a holomorphic map
$\Delta\to\barmoduli$ sending $p$ to $X_p$, and conversely any holomorphic disk
$\Delta\to\barmoduli$ sending $\Delta^*$ to $\moduli$, after possibly taking a base extension (a
cover of $\Delta$ ramified only over $0$), arises from such a family.

In any smooth fiber $X_p$, there is a collection of isotopy class of simple closed curves, which we
call the \emph{vanishing curves} which are pinched as $p\to 0$.  The vanishing curves are consistent
in the sense that given any path in $\Delta^*$ joining $p$ to $q$, the lifted homeomorphism
$f\colon X_p\to X_q$ (defined up to isotopy) preserves the vanishing curves.  The \emph{vanishing
  cycles} in $H_1(X_p; \zed)$ are those cycles generated by the vanishing curves.  Trivializing the
family over a path starting and ending at $p$ yields a homeomorphism of $X_p$ which is simply a
product of Dehn twists around the vanishing curves. Thus the  monodromy
action of $\pi_1(\Delta^*, p)$ on $H_1(X_p; \zed)$ is unipotent and fixes pointwise the subgroup
$V_p\subset H_1(X_p; \zed)$ of vanishing cycles.

Real multiplication by $\mathcal{O}$ on the family $\mathcal{X}\to\Delta$ is a monomorphism
$\rho\colon\mathcal{O}\hookrightarrow\End^0\Jac_{\mathcal{X}/\Delta}$, where
$\Jac_{\mathcal{X}/\Delta}\to\Delta$ is the relative Jacobian of the family $\mathcal{X}\to\Delta$.
This is equivalent to a choice of real multiplication $\rho\colon\mathcal{O}\to\Jac(X_p)$ for each
smooth fiber $X_p$ with the requirement that each isomorphism $H_1(X_p;\zed)\to H_1(X_q;\zed)$
arising from the Gauss-Manin connection commutes with the action of $\mathcal{O}$.

\begin{prop}
  \label{prop:invariant_vanishing_cycles}
  Consider a family of genus $g$ stable curves $\mathcal{X}\to\Delta$, smooth over $\Delta^*$, with
  real multiplication by $\mathcal{O}$.  For each nonzero $p$, the subgroup $V_p\subset H_1(X_p;
  \zed)$ of vanishing cycles is preserved by the action of $\mathcal{O}$ on $H_1(X_p; \zed)$.
\end{prop}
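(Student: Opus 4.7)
The plan is to give an intrinsic characterization of $V_p$ inside $H_1(X_p;\zed)$ using the Hodge norm, and then to observe that multiplication by any $\lambda\in\mathcal{O}$ has operator norm uniformly bounded in the fiber. Fix a path $r\colon[0,1)\to\Delta$ with $r(0)=p$ and $r(t)\to 0$, and use the Gauss-Manin connection to transport each $\gamma\in H_1(X_p;\zed)$ to $\gamma_t\in H_1(X_{r(t)};\zed)$. I would establish
\[
V_p \;=\; \bigl\{\gamma\in H_1(X_p;\zed) : \|\gamma_t\|_{X_{r(t)}}\to 0 \text{ as } t\to 1\bigr\},
\]
and then observe that the right-hand side is automatically preserved by any operator whose Hodge-norm operator norm is bounded uniformly in $t$.

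For the inclusion $(\subseteq)$: each vanishing simple closed curve $\alpha_i$ lies inside a plumbing collar $\mathbb{V}_i^{\bt}$ whose modulus tends to infinity, so $\Ext_{X_{r(t)}}(\alpha_i)\to 0$, and Proposition~\ref{prop:ext_hodge_1} yields $\|\alpha_i\|_{X_{r(t)}}\to 0$; linearity handles arbitrary integer combinations. For the inclusion $(\supseteq)$: apply Proposition~\ref{prop:ext_hodge_2} on each fiber to write $\gamma_t$ as a sum of simple closed curves $\beta_j(t)$ with $\Ext(\beta_j(t))\le C\|\gamma_t\|^2\to 0$. In the plumbing description, each fiber $X_{r(t)}$ is obtained by gluing a fixed compact piece $X^*$ to collars $\mathbb{V}_i^{\bt}$ of arbitrarily large modulus; any essential simple closed curve is isotopic either to a curve in $X^*$, whose extremal length is bounded below by a fiber-independent constant, or to a core $\alpha_i$ of some $\mathbb{V}_i$. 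Hence for $t$ near $1$ each $\beta_j(t)$ is isotopic to a vanishing curve, so $\gamma_t\in V_{r(t)}$; since $V$ is pointwise fixed by the monodromy (a product of Dehn twists around the disjoint, hence non-intersecting, $\alpha_i$), transporting back yields $\gamma\in V_p$.

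To finish, the action of $\lambda\in\mathcal{O}$ on $\Omega(X_{r(t)})$ diagonalizes with eigenvalues $\iota_1(\lambda),\ldots,\iota_g(\lambda)$ coming from the real embeddings of $F$, and, since $\lambda$ is Rosati-self-adjoint, distinct eigenspaces are orthogonal for the Hodge Hermitian form. Hence the operator norm of $\lambda$ on $\Omega(X_{r(t)})$ is $M(\lambda):=\max_i|\iota_i(\lambda)|$, independent of $t$, and dually the same bound holds on $H_1$. The $\mathcal{O}$-action is flat for the Gauss-Manin connection, so multiplication by $\lambda$ commutes with transport, and for $\gamma\in V_p$,
\[
\|(\lambda\gamma)_t\|_{X_{r(t)}} \;=\; \|\lambda\gamma_t\|_{X_{r(t)}} \;\leq\; M(\lambda)\,\|\gamma_t\|_{X_{r(t)}} \;\longrightarrow\; 0,
\]
so $\lambda\gamma\in V_p$ by the characterization; since the disjoint $\alpha_i$ can be completed to part of a symplectic basis of $H_1(X_p;\zed)$, $V_p$ is a primitive sublattice and there is no saturation ambiguity. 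The main obstacle is the reverse inclusion in the characterization: one must argue carefully, via the plumbing picture, that simple closed curves of small extremal length in $X_{r(t)}$ are isotopic to vanishing curves rather than merely homologous to them. The remaining ingredients~-- the forward inclusion, the Hodge-norm bound on $\lambda$, and the flatness of the $\mathcal{O}$-action~-- are either routine or direct consequences of the compatibility of real multiplication with the Hodge structure.
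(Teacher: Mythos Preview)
Your proposal is correct and follows essentially the same approach as the paper. Both arguments rest on the same three ingredients: the operator-norm bound $\|\lambda\cdot\gamma\|_{X}\le \max_i|\iota_i(\lambda)|\,\|\gamma\|_X$, Propositions~\ref{prop:ext_hodge_1} and~\ref{prop:ext_hodge_2} relating extremal length to the Hodge norm, and the observation that non-vanishing simple closed curves have extremal length bounded below independently of the fiber; the only cosmetic difference is that you package these into an intrinsic limit characterization of $V_p$ along a path to $0$, whereas the paper simply fixes a single fiber $X_p$ with $p$ close enough to $0$ and argues there (then invokes Gauss--Manin compatibility to propagate to all fibers).
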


\begin{proof}
  Since the action of $\mathcal{O}$ on first homology commutes with the Gauss-Manin connection, it
  is enough to show that $V_p$ is invariant for a single $p$.

  Let $\lambda\in\mathcal{O}$ be a primitive element for $F$.  For any $\gamma\in H_1(X_p; \zed)$,
  we have the bound,
  \begin{equation*}
    \|\lambda\cdot\gamma\|_{X_p} \leq \|\lambda\|_\infty \|\gamma\|_{X_p},
  \end{equation*}
  where $\|\lambda\|_\infty = \sup_\iota |\iota(\lambda)|$, with the supremum over all field
  embeddings $\iota\colon F\to\reals$, and $\|\cdot\|_{X_p}$ is the Hodge norm introduced in
  \S\ref{sec:RS}.
  
  There is a constant $D$ such that $\Ext(\gamma)\geq D$ for any curve $\gamma$ on $X_p$ which is not a
  vanishing curve.  For any $\epsilon>0$, we may choose $p$ sufficiently small that
  $\Ext(\gamma_i)<\epsilon$ for any vanishing curve $\gamma_i$.  By
  Proposition~\ref{prop:ext_hodge_1}, we have
  \begin{equation*}
    \|\lambda\cdot\gamma_i\|_{X_p} \leq \|\lambda\|_{\infty}\|\gamma_i\| < \|\lambda\|_\infty \epsilon^{1/2}.
  \end{equation*}
  By Proposition~\ref{prop:ext_hodge_2}, $\lambda\cdot\gamma_i$ is homologous to a
  sum of simple closed curves $\delta_j$ with
  \begin{equation*}
    \Ext(\delta_j) < C\|\lambda\|_\infty^2 \epsilon.
  \end{equation*}
  Thus $\Ext(\delta_j)<D$ if $\epsilon$ is chosen sufficiently small.  The $\delta_j$ must
  then be vanishing curves.  Thus the action of $\lambda$ preserves $V_p$, and since $\lambda$ is a
  primitive element, $V_p$ is preserved by $\mathcal{O}$.
\end{proof}

\begin{cor} \label{cor:geomgenus}
  Each stable curve in $\barRM\subset\barmoduli$ has geometric genus either $0$ or
  $g$.
\end{cor}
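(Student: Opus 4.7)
The plan is to apply Proposition~\ref{prop:invariant_vanishing_cycles} to a one-parameter degeneration $\mathcal{X}\to\Delta$ whose special fiber is the given stable curve $X_0\in\barRM$ and whose smooth fibers lie in $\RM$. Such a family exists, possibly after a base change, because $\barRM$ is by definition the closure of $\RM$. Pick $p\in\Delta^*$ and consider the subgroup $V_p\subset H_1(X_p;\zed)$ of vanishing cycles for this family.

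By Proposition~\ref{prop:invariant_vanishing_cycles}, $V_p$ is stable under the action of $\mathcal{O}$, so $V_p\otimes_\zed\ratls$ is an $F$-submodule of $H_1(X_p;\ratls)$. Since real multiplication extends to a faithful action of $F=\mathcal{O}\otimes_\zed\ratls$ on $H_1(X_p;\ratls)$, and $\dim_\ratls H_1(X_p;\ratls)=2g=2[F:\ratls]$, the space $H_1(X_p;\ratls)$ is a two-dimensional $F$-vector space. Therefore $\dim_F(V_p\otimes\ratls)\in\{0,1,2\}$, which translates into
\begin{equation*}
  \rank_\zed V_p \in \{0,\, g,\, 2g\}.
\end{equation*}

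The vanishing cycles admit pairwise disjoint simple-closed-curve representatives (the curves being pinched), so $V_p$ is isotropic for the intersection pairing on $H_1(X_p;\zed)$. In particular $\rank_\zed V_p \le g$, which eliminates the $2g$ case. Hence $\rank_\zed V_p \in \{0,g\}$.

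Finally, the standard description of the Jacobian of a degenerating family identifies $\rank_\zed V_p$ with the dimension of the toric part of the semi-abelian limit of $\Jac(X_p)$, which equals $g-g'$, where $g'$ is the geometric genus of $X_0$. Thus $g-g'\in\{0,g\}$, i.e., $g'\in\{0,g\}$, as required. The substantive input is Proposition~\ref{prop:invariant_vanishing_cycles}; the rest is a linear-algebra dimension count over $F$ together with the classical picture of Jacobian degeneration, and the likely technical point to verify carefully is the existence of the degenerating family carrying a genuine $\mathcal{O}$-action (for which a base change of a local lift into the appropriate eigenform cover $\E^\iota$ suffices).
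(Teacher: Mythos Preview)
Your proof is correct and follows essentially the same approach as the paper: choose a one-parameter degeneration with real multiplication, invoke Proposition~\ref{prop:invariant_vanishing_cycles} to make $V_p\otimes\ratls$ an $F$-vector space, and conclude that its $\ratls$-dimension is a multiple of $g$. The paper's version is just terser, writing ``the geometric genus of $X$ is $g-\rank V_p$'' in place of your more explicit isotropy argument and Jacobian-degeneration discussion.
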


\begin{proof}
  Suppose $X$ is a stable curve in $\barRM$.  Choose a family of stable curves
  $\mathcal{X}\to\Delta$, smooth over $\Delta^*$, with real multiplication by $\mathcal{O}$, with
  $X$ the fiber over $0$.  The geometric genus of $X$ is $g -
  \rank V_p$ for any nonzero $p$.  By Proposition~\ref{prop:invariant_vanishing_cycles},
  $V_p\otimes\ratls$ is a vector space over $F$, so $\dim_\ratls V_p\otimes\ratls$ must be a
  multiple of $[F:\ratls] = g$.
\end{proof}

\paragraph{Proof of Theorem~\ref{thm:boundary_nec}.}

Consider $(X_0, \omega_0)$ in the closure of the cusp of $\E^\iota$ determined by the cusp packet
$(\mathcal{I}, T)$.  We first claim that $(X_0, \omega_0)$ must lie in the image of
$\barE^\iota(\mathcal{I}, T)\subset\pobarmoduli(\mathcal{I}, T)$.  Since $\barE^\iota$ is a
variety, we may choose a holomorphic disk $f\colon \Delta\to \barE^\iota$ sending $0$ to $(X_0,
\omega_0)$ and $\Delta^*$ to the cusp of $\E^\iota$ determined by $(\mathcal{I}, T)$.  Possibly
taking a base extension, we may assume $f$ arises from a family of stable curves
$\mathcal{X}\to\Delta$ with real multiplication by $\mathcal{O}$.  For each $p\in\Delta^*$, the
vanishing cycles $V_p$ for the fiber $X_p$ over $p$ are $\mathcal{O}$-invariant by
Proposition~\ref{prop:invariant_vanishing_cycles}, so we obtain an extension of
$\mathcal{O}$-modules
\begin{equation*}
  0\to V_p \to H_1(X; \zed) \to H_1(X; \zed)/V_p\to 0,
\end{equation*}
which must be isomorphic to the extension determined by $(\mathcal{I}, T)$.  Since the monodromy
action of $\pi_1(\Delta^*, p)$ on $V_p$ is trivial, we may identify each $V_q$ with $\mathcal{I}$
and obtain a consistent Lagrangian marking of
$\hatH_1(X_q;\zed)$ by $\mathcal{I}$ for each $q$, which defines a lift $g\colon\Delta\to\barE^\iota(\mathcal{I},
T)\subset\pobarmoduli(\mathcal{I})$.  It follows  that $(X_0, \omega_0)$ lies in the image of some
$(Y, \eta)\in\barE^\iota(\mathcal{I}, T)$ as claimed.

The form $(Y, \eta)$ must lie in some boundary stratum
$\mathcal{S}^\iota\subset\pobarmoduli(\mathcal{I})$ lying over
a boundary stratum $\mathcal{S}\subset\barmoduli(\mathcal{I})$.  We must then show that if the intersection
$\mathcal{S}\cap\barRM(\mathcal{I},T)$ is nontrivial, then $\mathcal{S}$ is admissible, and moreover
that $\mathcal{S} \cap\barRM(\mathcal{I}, T)\subset\mathcal{S}(T)$.

Suppose that the stratum $\mathcal{S}$ is not admissible, so the cone condition
\eqref{eq:cone_condition} does not hold.  Then there is some $a$ in
$C(\mathcal{S})\cap\Ann(\Lambda^1)\cap\bS_\zed(\mathcal{I}^\vee)$ but not in $N(\mathcal{S})$.  By
Theorem~\ref{thm:meromorphic}, the function $\Psi(a)$ is holomorphic and identically zero on
$\mathcal{S}$.  By Corollary~\ref{cor:Psi_on_RM_formula}, $\Psi(a)(x)\equiv q(T)(a)$, a nonzero
constant on ${\barRM(\mathcal{I}, T)}$.  In particular, $\Psi(a)$ is nonzero along
$\mathcal{S}\cap{\barRM(\mathcal{I}, T)}\neq\emptyset$, a contradiction.  Thus $\mathcal{S}$
is admissible.

Since $\Psi(a)(x)\equiv q(T)(a)$ on ${\barRM(\mathcal{I}, T)}$ for all $a\in
N(\mathcal{S})\cap\Ann(\Lambda^1)\cap\bS_\zed(\mathcal{I}^\vee)$, it follows immediately that ${\barRM(\mathcal{I}, T)}\cap\mathcal{S}\subset\mathcal{S}(T)$.
\qed
  
\section{A geometric reformulation of admissibility}
\label{sec:reform_admiss}

The aim of this section is to give a more explicit reformulations of when
an $\mathcal{I}$-weighted boundary stratum is admissible. 

\paragraph{The no-half-space condition.}

Consider a finite dimensional vector space $V$ over $\ratls$.  We say that a set $S=\{v_1, \ldots,
v_n\}\subset V$ satisfies the \emph{no-half-space condition} if it is not contained in a closed
half-space of its $\ratls$-span.  Equivalently, $S$ satisfies the no-half space condition if and
only if zero is in the interior of the convex hull of $S$.  

\paragraph{The reformulation.}

Consider a totally real number field $F$ with Galois closure $K$.  Let $G = \Gal(K/\ratls)$ and $H =
\Gal(K/F)$.  We define $I = H\times H \semidirect \zed/2\zed$, with $\zed/2\zed$ acting on $H\times
H$ by exchanging the factors. The group $I$ acts on $G$ by
\begin{equation*}
  (h_1,h_2,\epsilon)\cdot\gamma = h_2\gamma^\epsilon h_1^{-1},
\end{equation*}
where $\epsilon = \pm 1 \in \zed/2\zed$.  We let $\Stab(\sigma)\subset I$ denote the stabilizer of
$\sigma\in G$, and we define a homomorphism $\phi_\sigma\colon\Stab(\sigma)\to G$ by
\begin{equation*}
  \phi_\sigma(h_1,h_2,\epsilon) =
  \begin{cases}
    h_1 & \text{if $\epsilon=1$;} \\
    h_1\sigma & \text{if $\epsilon = -1$.}
  \end{cases}
\end{equation*}
Let $G_\sigma = \phi_\sigma(\Stab(\sigma))$ and $K_\sigma = K^{G_\sigma}$.  We define for each
$\sigma\in G$ a quadratic map $Q_\sigma\colon F \to K_\sigma$ by
$$Q_\sigma(t) = t \sigma^{-1}(t).$$

\begin{theorem}
  \label{thm:admiss-nonhalf}
  A weighted boundary stratum with weights $\{t_1, \ldots, t_n\}\subset F$ is admissible if and only
  if for each $\sigma\in G\setminus H$, the set
  $\{Q_\sigma(t_1),\ldots,Q_\sigma(t_n)\}\subset K_\sigma$ satisfies the no-half-space condition.
  In fact, it is enough to check this for each $\sigma$ in a set of orbit representatives of $G/I$.
\end{theorem}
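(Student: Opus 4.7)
The strategy is a Galois-theoretic decomposition of $\bS_\ratls(F)$ combined with the trace pairing. The isomorphism $F \otimes_\ratls K \cong \prod_{gH \in G/H} K$ induces decompositions
\begin{equation*}
  (F \otimes_\ratls F) \otimes_\ratls K \cong \prod_{(g_1H,\, g_2H)} K \qtq{and} \bS_\ratls(F) \otimes_\ratls K \cong \prod_{\{g_1H,\, g_2H\}} K,
\end{equation*}
on which $G$ acts diagonally (permuting cosets and acting on $K$). Taking $G$-invariants yields $\bS_\ratls(F) = \bigoplus_{[\sigma]} K_\sigma$, indexed by $G$-orbits of unordered pairs, equivalently $I$-orbits on $G$ via $\{g_1H, g_2H\} \leftrightarrow g_1^{-1}g_2$; the stabilizer of the orbit of $\sigma$ is exactly $G_\sigma$, so the corresponding summand is $K_\sigma = K^{G_\sigma}$. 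Under this decomposition the ``diagonal'' subspace $\Lambda^1$ is identified with the summand for the orbit of $1$ (the $I$-orbit $H$), so
\begin{equation*}
  \Ann(\Lambda^1) \cong \bigoplus_{[\sigma] \in I \lmod (G \setminus H)} K_\sigma.
\end{equation*}

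A direct coordinate computation then shows that for $y \in K_\sigma$ viewed inside $\Ann(\Lambda^1)$ and $t \in F$,
\begin{equation*}
  \langle y,\, t \otimes t\rangle = c_\sigma \cdot \Tr_{K_\sigma/\ratls}\bigl(y \cdot Q_\sigma(t)\bigr)
\end{equation*}
for a positive integer $c_\sigma$ accounting for orbit/coinvariant multiplicities. Here $Q_\sigma(t) = t\sigma^{-1}(t) \in K_\sigma$ emerges as precisely the quantity captured at the $(1,\sigma)$-coordinate of $t \otimes t$; its $G_\sigma$-invariance is a direct verification from the definitions of $\phi_\sigma$ and $\Stab(\sigma)$. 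Plugging into the definitions, the admissibility condition $C(\mathcal{S}) \cap \Ann(\Lambda^1) \subset N(\mathcal{S})$ becomes the assertion that any $(y_\sigma) \in \bigoplus_\sigma K_\sigma$ satisfying $\sum_{[\sigma]} c_\sigma\Tr_{K_\sigma/\ratls}(y_\sigma Q_\sigma(t_i)) \geq 0$ for every weight $t_i$ must have all these sums zero.

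From this, one direction is immediate: if no-half-space fails for some $\sigma_0$, take a witness $y_{\sigma_0} \in K_{\sigma_0}$ and extend by zero on the remaining components; the resulting $y$ lies in $C(\mathcal{S}) \cap \Ann(\Lambda^1)$ but not in $N(\mathcal{S})$, contradicting admissibility. The main obstacle is the converse: showing that per-component no-half-space forbids cancellations across different $\sigma$'s in the combined trace sum. The tuples $(Q_\sigma(t_i))_{[\sigma]}$ are not arbitrary; they arise from the single quadratic map $F \to \bigoplus_\sigma K_\sigma$, which is $G$-equivariant in the sense that $g(Q_\sigma(t)) = Q_\sigma(g(t))$. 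I expect the argument to proceed via Farkas / LP-duality applied to the system of trace inequalities: a separating functional for the full cone, after a Galois-averaging step exploiting the equivariance, can be localized to a single $K_\sigma$-component, where the per-$\sigma$ no-half-space hypothesis forces it to vanish on every weight.

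Finally, the reduction to one representative per $I$-orbit is near-formal: for $\sigma' = h_2 \sigma^\epsilon h_1^{-1}$ in the $I$-orbit of $\sigma$, there is a canonical identification $K_\sigma \cong K_{\sigma'}$ (via $h_1$) under which $\{Q_{\sigma'}(t_i)\}$ is simply a Galois translate of $\{Q_\sigma(t_i)\}$ (with an additional relabeling when $\epsilon=-1$), and these operations preserve both the $\reals$-span and the relative interior of the convex hull, hence the no-half-space condition.
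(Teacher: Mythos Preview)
Your decomposition and overall strategy coincide with the paper's: both identify $\Ann(\Lambda^1)$ with $\bigoplus_{[\sigma]\neq[1]} K_\sigma$ (the paper does this inside $K\otimes K$ via the idempotents $\epsilon_\sigma$, you via $F\otimes K\cong\prod K$; these are the same thing), and both reduce the pairing $\langle\,\cdot\,,t\otimes t\rangle$ to a positive-weighted sum of $\Tr_{K_\sigma/\ratls}(y_\sigma\, Q_\sigma(t))$. Your reduction to one representative per $I$-orbit is also correct and matches the paper's.

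You are right to flag the converse direction as the substantive step, and in fact the paper's own proof does not address it either: after establishing the pairing formula, the paper simply asserts ``this is equivalent to the $Q_\sigma(t_i)$ satisfying the no-half-space condition.'' The implication \emph{admissible $\Rightarrow$ no-half-space for each $\sigma$} is exactly the ``extend a witness by zero'' argument you give. The reverse implication, however, does not follow from the formula alone: in general, if vectors $w_j=(w_j^{(1)},w_j^{(2)})$ in a direct sum satisfy no-half-space in each coordinate, they need not satisfy it jointly (e.g.\ $(1,-1),(-1,1),(2,2)$ in $\reals\oplus\reals$). A dimension count in the cyclic quartic case already suggests that per-$\sigma$ no-half-space is strictly weaker than admissibility once there is more than one $I$-orbit in $G\setminus H$.

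Your proposed repair is too vague to stand on its own. The ``equivariance'' you invoke, $g(Q_\sigma(t))=Q_\sigma(g(t))$, is not correct: for $g\notin H$ the element $g(t)$ is not in $F$ so the right side is undefined, and even when it makes sense one only gets $g(Q_\sigma(t))=g(t)\,g\sigma^{-1}(t)$, which matches $Q_\sigma(g(t))$ only when $g$ and $\sigma$ commute. So the ``Galois-averaging to localize a separating functional'' sketch does not go through as written. That said, in the paper's actual applications (cubic $F$, Corollary~\ref{cor:admiss-nonhalf}) there is exactly one $I$-orbit in $G\setminus H$, so the cross-component issue never arises and both proofs are complete in that case.
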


The tensor product $K\otimes K$ has the structure of a $K$-bimodule.  Given $\sigma\in G$, we define
\begin{equation*}
  \Lambda_K^\sigma = \{ \lambda\in K\otimes K : x\cdot\lambda = \lambda\cdot\sigma(x) \text{ for all
  } x\in K\},.
\end{equation*}
generalizing the definition of $\Lambda^1\in\Sym_\ratls(F)$ in \S\ref{sec:bound-eigenf-locus}.

The trace pairing $\langle x, y\rangle_K = \Tr^K_\ratls(xy)$ on $K$ induces a pairing on $K\otimes
K$:
\begin{equation*}
  \langle x_1 \otimes x_2, y_1 \otimes y_2\rangle = \langle x_1, y_1\rangle_K\langle x_2, y_2\rangle_K.
\end{equation*}

\begin{lemma}
  \label{lem:epsilon_sigma}
  Let $r_1,\ldots, r_g$ be a basis of $K$ over $\ratls$ and $s_1, \ldots, s_g$ the dual basis with respect
  to the trace pairing.  The element
  \begin{equation*}
    \epsilon_\sigma = \sum_{i=1}^g r_i \otimes \sigma(s_i)\in K\otimes K.
  \end{equation*}
  lies in $\Lambda^\sigma$ and does not depend on the choice of basis $(r_i)$.  Moreover, for any
  $x\in K_\sigma$ and $t\in F$, we have
  \begin{equation*}
    \langle x \epsilon_\sigma, t \otimes t\rangle = [K:K_\sigma] \langle x, Q_\sigma(t)\rangle_{K_\sigma}.
  \end{equation*}
\end{lemma}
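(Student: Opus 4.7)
The plan is to prove all three claims via the canonical $\ratls$-linear isomorphism
\begin{equation*}
\Theta\colon K\otimes_\ratls K\xrightarrow{\isom}\End_\ratls(K),\qquad \Theta(a\otimes b)(y)=a\,\Tr^K_\ratls(by),
\end{equation*}
which is an isomorphism because the trace pairing on $K$ is nondegenerate. Under $\Theta$, left multiplication by $x\in K$ on the first factor of $K\otimes K$ corresponds to post-composition with $M_x$, while right multiplication by $x$ on the second factor corresponds to pre-composition with $M_x$. The first step is to show that $\Theta(\epsilon_\sigma)=\sigma^{-1}$: using the dual-basis identity $y=\sum_i \Tr(ys_i)r_i$ together with $\Tr(\sigma(s_i)x)=\Tr(s_i\sigma^{-1}(x))$, one computes $\Theta(\epsilon_\sigma)(x)=\sum_i r_i\Tr(s_i\sigma^{-1}(x))=\sigma^{-1}(x)$. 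The resulting description $\epsilon_\sigma=\Theta^{-1}(\sigma^{-1})$ is manifestly independent of the choice of basis.

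The membership $\epsilon_\sigma\in\Lambda_K^\sigma$ then becomes transparent: the defining condition $x\cdot\epsilon_\sigma=\epsilon_\sigma\cdot\sigma(x)$ translates under $\Theta$ to $M_x\circ\sigma^{-1}=\sigma^{-1}\circ M_{\sigma(x)}$, which is the obvious identity $x\sigma^{-1}(y)=\sigma^{-1}(\sigma(x)y)$ for all $y\in K$. For the pairing formula, I plan to compute directly, using bilinearity:
\begin{equation*}
\langle x\epsilon_\sigma,t\otimes t\rangle=\sum_i\Tr(xr_it)\,\Tr(\sigma(s_i)t)=\sum_i\Tr(xr_it)\,\Tr(s_i\sigma^{-1}(t)).
\end{equation*}
Setting $u=\sigma^{-1}(t)$ and applying $u=\sum_i \Tr(us_i)r_i$, the sum collapses to $\Tr^K_\ratls(xtu)=\Tr^K_\ratls(x\,Q_\sigma(t))$. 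Since $xQ_\sigma(t)\in K_\sigma$, the tower formula $\Tr^K_\ratls|_{K_\sigma}=[K:K_\sigma]\,\Tr^{K_\sigma}_\ratls$ yields the stated identity.

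The only genuinely non-formal step, and the main obstacle, is verifying that $Q_\sigma(t)=t\sigma^{-1}(t)$ really lies in $K_\sigma=K^{G_\sigma}$ whenever $t\in F$. I plan to handle this by unwinding $G_\sigma=\phi_\sigma(\Stab(\sigma))$ in the two cases: for $\epsilon=1$ an element of $G_\sigma$ has the form $h_1$ with $h_1\in H$ and $\sigma h_1\sigma^{-1}\in H$, so it fixes both $t$ (since $h_1\in H$ and $t\in F=K^H$) and $\sigma^{-1}(t)$ (since $\sigma h_1\sigma^{-1}\in H$ fixes $t$); for $\epsilon=-1$ it has the form $h_1\sigma$ with $\sigma h_1\sigma\in H$, and the computation $(h_1\sigma)(t\sigma^{-1}(t))=h_1\sigma(t)\cdot h_1(t)=\sigma^{-1}(t)\cdot t$ gives the same conclusion, using $\sigma h_1\sigma(t)=t$. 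Once this is in place, the identification with $\End_\ratls(K)$ makes the remainder of the argument essentially mechanical.
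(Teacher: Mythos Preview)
Your proof is correct and essentially identical to the paper's: both identify $K\otimes_\ratls K$ with $\End_\ratls(K)$ via the trace pairing, recognize $\epsilon_\sigma$ as the preimage of a Galois automorphism, and then compute the pairing by a direct expansion collapsing via the dual-basis identity. The only cosmetic differences are that your convention gives $\Theta(\epsilon_\sigma)=\sigma^{-1}$ rather than the paper's $\sigma$, and you additionally supply the verification that $Q_\sigma(t)\in K_\sigma$, which the paper asserts earlier without proof.
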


\begin{proof}
  Identifying $K\otimes K$ with $\Hom_\ratls(K, K)$ via the trace pairing, $\Lambda^\sigma$
  corresponds to
  \begin{equation*}
    \{\phi\colon K\to K : \phi(x\lambda) = \sigma(x)\phi(\lambda) \text{ for all
      $x,\lambda\in K$}\}.
  \end{equation*}
  Under this correspondence, $\epsilon_\sigma$ is the canonical map $\phi_\sigma(x) = \sigma(x)$.  Thus,
  $\epsilon_\sigma\in \Lambda^\sigma$ and does not depend on the choice of the $r_i$.

  Now, write $t\in F$ as $t = \sum t_i \sigma(r_i)$ for $t_i\in \ratls$.   We calculate
  \begin{align*}
    \allowdisplaybreaks
    \left\langle x\epsilon_\sigma , t \otimes t \right\rangle &= \left\langle\sum_k x r_k \otimes
      \sigma(s_k), \sum_{\ell,m}t_\ell t_m \sigma(r_\ell)\otimes\sigma(r_m)\right\rangle\\
    &= \sum_{k,\ell,m}t_\ell t_m \langle x r_k, \sigma(r_\ell)\rangle_K\langle \sigma(s_k),
    \sigma(r_m)\rangle_K \\
    &=\sum_{k,\ell} t_k t_\ell \langle x r_k, \sigma(r_\ell)\rangle_K \\
    &= \Tr^K_\ratls(x t \sigma^{-1}(t)) \\
    &= [K:K_\sigma]\Tr^{K_\sigma}_\ratls(x Q_\sigma(t)).
    \qedhere
  \end{align*}
\end{proof}

\paragraph{Proof of Theorem~\ref{thm:admiss-nonhalf}.}

We first wish to identify $\Sym_\ratls(F)$ and the orthogonal complement $(\Lambda^1_F)^\perp$ as
subspaces of $K\otimes K$.  We have the orthogonal decomposition,
\begin{equation*}
  K\otimes K = \bigoplus_{\sigma\in G} \Lambda^\sigma_K.
\end{equation*}
$\Sym_\ratls(F)$ is the subspace of $K\otimes K$ fixed by the action of $I$, so
\begin{equation*}
  \Sym_\ratls(F) = \bigoplus_{\tau\in G/I}\Gamma^\tau,
\end{equation*}
where for each orbit $\tau\in G/I$, we define $\Gamma^\tau$ to be the subspace of $\bigoplus_{\sigma\in
  \tau} \Lambda^\sigma_K$ fixed pointwise by the action of $I$.  Given any $\sigma$ in an orbit
$\tau\in G/I$, we define the isomorphism $v_\sigma\colon K_\sigma\to \Gamma^\tau$ by
\begin{equation*}
  v_\sigma(x) = \sum_{\gamma\in I/\Stab(\sigma)} \gamma(x\epsilon_\sigma) = \sum_{\gamma\in
    I/\Stab(\sigma)} x\epsilon_{\gamma\cdot\sigma}.
\end{equation*}
Choose a set $\sigma_1 = 1, \sigma_2, \ldots, \sigma_n\in G$ of representatives of the orbits $G/I$.  We
obtain an isomorphism,
\begin{equation*}
  v\colon \bigoplus_{i=2}^n K_{\sigma_i} \to (\Lambda^1_F)^\perp\subset \Sym_\ratls(F),
\end{equation*}
defined by $v(x_i)_{i=2}^n = (v_{\sigma_i}(x_i))_{i=2}^n$.
By Lemma~\ref{lem:epsilon_sigma}, we have for any $x_i\in K_{\sigma_i}$ and $t\in F$,
\begin{equation}
  \label{eq:38}
  \langle v(x_i)_{i=2}^n, t\otimes t\rangle = \sum_{i=2}^n q_i \langle x_i,
  Q_\sigma(t)\rangle_{K_{\sigma_i}},
\end{equation}
for some positive rationals $q_i$.

Now, identifying $\Ann(\Lambda^1_F)\subset \bS_\ratls(F)$ with $(\Lambda^1_F)^\perp
\subset\Sym_\ratls(F)$ via the trace pairing, the admissibility condition is that for any $x\in
(\Lambda^1_F)^\perp$, if $\langle x, t_i\otimes t_i\rangle \geq 0$ for all $i$, then $\langle x,
t_i\otimes t_i\rangle = 0$ for all $i$.  By \eqref{eq:38}, this is equivalent to the $Q_\sigma(t_i)$
satisfying the no-half-space condition for each $i$.
\qed

\paragraph{Cubic fields.}

We now suppose $F$ is a cubic field.  Define a quadratic map $Q\colon F\to F$ by
\begin{equation*}
  Q(x) = N^F_\ratls(x)/x.
\end{equation*}
In this case, Theorem~\ref{thm:admiss-nonhalf} becomes
\begin{cor}
  \label{cor:admiss-nonhalf}
  Given a totally real cubic field $F$, a weighted boundary stratum with weights $\{t_1, \ldots,
  t_n\}\subset F$ is admissible if and only if $\{Q(t_1), \ldots, Q(t_n)\}\subset F$ satisfies the
  no-half-space condition.
\end{cor}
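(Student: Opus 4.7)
The plan is to deduce this corollary directly from Theorem~\ref{thm:admiss-nonhalf} by checking that, when $F$ is cubic, the quadratic maps $Q_\sigma$ that arise from the orbit-representative description collapse (up to a $\ratls$-linear field isomorphism) to the single map $Q$.

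First, I will analyze the $I$-action on $G \setminus H$ in each of the two possible cases. If $F/\ratls$ is Galois, then $K = F$, $H$ is trivial, $G \isom \zed/3\zed$, and $I \isom \zed/2\zed$ acts on $G$ by inversion; thus $G \setminus H = \{\sigma, \sigma^{-1}\}$ is a single $I$-orbit. If $F/\ratls$ is non-Galois, then $G \isom S_3$, $|H| = 2$, and $|G \setminus H| = 4$. A direct computation using the formula $(h_1, h_2, \epsilon) \cdot \gamma = h_2 \gamma^\epsilon h_1^{-1}$ (conjugation by $H$ on both sides, together with inversion) shows that the four elements of $G \setminus H$ form a single $I$-orbit. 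In either case, Theorem~\ref{thm:admiss-nonhalf} reduces admissibility to a single no-half-space condition on $\{Q_\sigma(t_i)\} \subset K_\sigma$ for one chosen representative $\sigma$.

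Next, I will pick $\sigma$ of order three and identify $Q_\sigma$ with $Q$. For cubic $F$ and any order-three $\sigma$, the norm formula gives $N^F_\ratls(t) = t \cdot \sigma(t) \cdot \sigma^{-1}(t)$ for $t \in F$, so
$$\sigma\bigl(Q(t)\bigr) = \sigma\bigl(N^F_\ratls(t)/t\bigr) = \sigma(t)\,\sigma^{-1}(t)\,t\,/\,\sigma(t) = t\,\sigma^{-1}(t) = Q_\sigma(t).$$
In the Galois case this identifies $Q_\sigma$ with $\sigma \circ Q : F \to F$. In the non-Galois case, a direct computation of $\Stab(\sigma)$ and $\phi_\sigma$ shows that $G_\sigma$ is an index-three subgroup conjugate to $H$, hence $K_\sigma = \sigma(F) \subset K$ is a cubic field $\ratls$-isomorphic to $F$ via $\sigma$, and the displayed identity asserts that $Q_\sigma$ is the composition of $Q$ with this isomorphism.

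Finally, since the no-half-space condition in a finite-dimensional $\ratls$-vector space is invariant under any $\ratls$-linear bijection, the no-half-space condition on $\{Q_\sigma(t_i)\}\subset K_\sigma$ is equivalent to the one on $\{Q(t_i)\} \subset F$, which finishes the proof. The main obstacle is the orbit/stabilizer bookkeeping in the non-Galois case: a miscount of $I$-orbits on $G \setminus H$ would produce phantom extra no-half-space conditions, and one must correctly identify $K_\sigma$ as a Galois conjugate of $F$ (not $F$ itself) so that the intertwining relation $Q_\sigma = \sigma \circ Q$ has its sides living in the claimed spaces.
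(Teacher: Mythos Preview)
Your argument is correct and follows essentially the same route as the paper's proof: reduce to a single $I$-orbit on $G\setminus H$, then identify $Q_\sigma$ with $Q$ up to a $\ratls$-linear field isomorphism. The only cosmetic difference is that you choose an order-three representative $\sigma$ (making the identity $Q_\sigma=\sigma\circ Q$ especially clean and letting you identify $K_\sigma=\sigma(F)$ directly), whereas the paper picks the transposition $(13)$; both choices work and the logic is identical.
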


\begin{proof}
  If $F$ is Galois, this follows directly from Theorem~\ref{thm:admiss-nonhalf}, so suppose $F$ is
  non-Galois with Galois closure $K$.  We may identify $G=\Gal(K/\ratls)$ with the symmetric group
  $S_3$ with $F = K^{(12)}$.  The action of $I$ on $G$ has two orbits, so we need only to check the
  condition of Theorem~\ref{thm:admiss-nonhalf} for a single $\sigma\in G\setminus H$.  Take $\sigma
  = (13)$.  We have $(132)\cdot Q_{(12)}(x) = Q(x)$ for all $x\in F$, thus the two conditions
  coincide.   
\end{proof}

\section{Rationality  and positivity} \label{sec:irrstrata}

In this section, we study in more detail the irreducible strata -- that is, those that
parameterize irreducible stable curves -- in the boundary of the real multiplication locus.
Given a basis $\br = (r_1,\ldots, r_g)$ of a lattice $\mathcal{I}\subset F$, we write
$\mathcal{S}_\br$ for the associated $\mathcal{I}$-weighted boundary stratum, parameterizing
irreducible stable curves having $2g$ nodes with weights $\pm r_1, \ldots, \pm r_g$.  We say that
$\br$ is an \emph{admissible basis} of $\mathcal{I}$ if $\mathcal{S}_\br$ is an admissible stratum
in the sense of \S\ref{sec:bound-eigenf-locus}.

We introduce in this section two additional properties of bases of number fields which we call
\emph{rationality} and \emph{positivity}.  We show that for totally real cubic fields, rationality
and positivity together are equivalent to admissibility.  For higher degree fields, the relation
between these conditions is not clear.  We then show that the rationality and positivity conditions
are necessary for an irreducible stratum to intersect the boundary of the real multiplication locus.
Finally, we give a geometric interpretation of the rationality and positivity conditions in terms of
the geometry of locally symmetric spaces, from which we conclude that there any lattice has only
finitely many rational and positive bases, up to similarity.

\paragraph{Rationality and positivity.}

\par
Consider a basis
$\br = (r_1, \ldots, r_g)$
of a lattice in a totally real number field $F$.  We denote by $(s_i)_{i=1}^g$ the dual basis.  We
say that $\br$ is \emph{rational} if
$$\frac{r_i}{s_i}/\frac{r_j}{s_j} \in \ratls \quad\text{for all $i\neq j$}.$$
We say that $\br$ is \emph{positive} if
$$\frac{r_i}{s_i} \gg 0 \quad\text{for all $i$},$$
where $x\gg 0$ means that $x$ is positive under each embedding $F\to\reals$.

\par
As an intermediate technical notion we say that $\br$ is \emph{weakly positive} 
if
\begin{equation*}
  \frac{r_i}{s_i}/\frac{r_j}{s_j} \gg 0 \quad\text{for all $i\neq j$}.
\end{equation*}
\par
\begin{lemma}
  \label{lem:weak_pos_implies_pos}
  Every weakly positive and rational basis of $F$ is positive.
\end{lemma}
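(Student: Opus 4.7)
The plan is to translate the hypotheses into a single positive-definiteness statement about a diagonal matrix coming from the embeddings of $F$, then read off total positivity directly. Set $a_i = r_i/s_i \in F$ for each $i$. Combining rationality with weak positivity, $a_i/a_j \in \ratls_{>0}$ for all $i \neq j$, so there exist positive rationals $c_1 = 1, c_2, \ldots, c_g$ with $a_i = c_i a_1$. Thus showing positivity of $\br$ reduces to showing $a_1 \gg 0$, because then $a_i = c_i a_1 \gg 0$ for every $i$.

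Next I would exploit duality to constrain $a_1$. Since $(s_j)$ is the trace-dual basis to $(r_j)$, we have $\delta_{ij} = \Tr(r_i s_j) = c_i \Tr(a_1\, s_i s_j)$, so
\begin{equation*}
  \Tr(a_1\, s_i s_j) \;=\; \frac{\delta_{ij}}{c_i}.
\end{equation*}
In other words, the symmetric matrix $M$ with entries $M_{ij} = \Tr(a_1\, s_i s_j)$ is the diagonal matrix $\diag(1/c_1, \ldots, 1/c_g)$, which is positive definite because each $c_i > 0$.

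Now I would rewrite this matrix identity through the real embeddings $\iota_1, \ldots, \iota_g$ of $F$. Let $S$ be the $g \times g$ matrix with entries $S_{ki} = \iota_k(s_i)$ and let $A = \diag(\iota_1(a_1), \ldots, \iota_g(a_1))$. Expanding the trace as a sum over embeddings,
\begin{equation*}
  M_{ij} \;=\; \sum_{k=1}^g \iota_k(a_1)\, \iota_k(s_i)\, \iota_k(s_j) \;=\; (S^T A S)_{ij},
\end{equation*}
so $S^T A S = M$ is positive definite. Since $(s_i)$ is a $\ratls$-basis of $F$, the matrix $S$ is invertible, hence $A$ itself is positive definite. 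Because $A$ is diagonal, this forces $\iota_k(a_1) > 0$ for every $k$, i.e.\ $a_1 \gg 0$, and the argument above finishes the proof.

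There is no real obstacle; the only slightly non-obvious point is passing from the rational-valued trace identity $\Tr(a_1\, s_i s_j) = \delta_{ij}/c_i$ to total positivity of $a_1$ via the change-of-basis formula $M = S^T A S$. Once that linear-algebraic reformulation is in hand, the positivity of $1/c_i$ automatically upgrades to total positivity of $a_1$.
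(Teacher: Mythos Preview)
Your proof is correct. Both your argument and the paper's rest on the same core idea: a congruence of real symmetric matrices forces the diagonal matrix of embeddings $\iota_k(r_1/s_1)$ to be positive definite, hence totally positive. The paper arrives there by contradiction, first renormalizing to auxiliary vectors $\tilde r_i,\tilde s_i$ and then exhibiting a matrix identity $R^tD_qR=D_\epsilon^{-1}$ that would make a definite form congruent to an indefinite one. Your route is more direct: you read the trace identity $\Tr(a_1 s_i s_j)=\delta_{ij}/c_i$ as $S^TAS=\diag(1/c_i)$ and invoke positive definiteness immediately. The two arguments are equivalent, but yours avoids the ad hoc renormalization and the contradiction setup.
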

\par
\begin{proof}
  Suppose $(r_i)$ is a basis of $F$ which is weakly positive and rational but not positive.
  We define for each $j$
  \begin{equation*}
    a^{(j)} = \left|\frac{s_1^{(j)}}{r_1^{(j)}}\right|^{1/2}
  \end{equation*}
  and for each $i$, the vectors
  \begin{equation*}
    \tilde{r}_i = (a^{(j)}r_i^{(j)})_{j=1}^g \qtq{and} \tilde{s}_i = (s_i^{(j)}/a^{(j)})_{j=1}^g.
  \end{equation*}
  Note that the bases $(\tilde{r}_i)$ and $(\tilde{s}_i)$ are dual with respect to the standard
  inner product on $\reals^n$.
  For each $i$, define 
  \begin{equation*}
     q_i=\frac{r_i}{s_i}/\frac{r_1}{s_1}.
  \end{equation*}
  By weak positivity and rationality, each $q_i$ is a positive rational.  We then have for each $i$ and
  $j$
  \begin{equation}
    \label{eq:9}
    \tilde{r}_i^{(j)} = \epsilon^{(j)}q_i \tilde{s}_i^{(j)}
  \end{equation}
  with each $\epsilon^{(j)} = \pm 1$.  Since the basis $(r_i)$ is not positive, we must have
  $\epsilon^{(j)}=-1$ for some $j$.  Consider the matrices $R = (R_{ij}) = (r_i^{(j)})$ and $S =
  (S_{ij}) = (s_i^{(j)})$.  Let $D_\epsilon$ be the diagonal matrix with $\epsilon^{(j)}$ the
  $j^{\rm th}$ diagonal entry, and define $D_q$ similarly.  We then have by \eqref{eq:9},
  \begin{equation*}
    S = D_q R D_\epsilon,
  \end{equation*}
  so since $R^t S = I$,
  \begin{equation}
    \label{eq:12}
    R^t D_q R = D_\epsilon^{-1}.
  \end{equation}
  Thus $R$ can be interpreted as an isomorphism between the indefinite quadratic form given by the
  matrix $D_\epsilon^{-1}$ and the definite quadratic form given by $D_q$, which is impossible.
\end{proof}
\par

\begin{prop} \label{prop:nhsequivratpos}
  A basis $(r_1,r_2,r_3)$ of a cubic field $F$ is admissible if and only if
  it is both rational and positive.
\end{prop}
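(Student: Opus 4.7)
The strategy is to translate both admissibility and rationality-plus-positivity into statements about a distinguished element $\mu\in F^*$, and then match the two via Sylvester's law of inertia applied to the symmetric bilinear form $B_\mu(x,y):=\Tr(\mu xy)$ on $F$.

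By Corollary~\ref{cor:admiss-nonhalf}, admissibility of $(r_1,r_2,r_3)$ is equivalent to $\{Q(r_1),Q(r_2),Q(r_3)\}$ satisfying the no-half-space condition in $F\otimes_\ratls\reals\cong\reals^3$. For three nonzero vectors in a three-dimensional rational vector space, NHS is equivalent (by elementary convex geometry) to the existence of $\alpha_i>0$ with $\sum_i\alpha_iQ(r_i)=0$; after clearing denominators this is in turn equivalent to a linear dependence $\gamma_1 r_2 r_3+\gamma_2 r_1 r_3+\gamma_3 r_1 r_2=0$ in $F$ with $\operatorname{sgn}(\gamma_i)=\operatorname{sgn}(N(r_i))$. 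In particular, admissibility forces $\{r_ir_j:i\neq j\}$ to be $\ratls$-linearly dependent in $F$.

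On the other side, rationality is equivalent to the existence of $\mu\in F^*$ such that $(r_i)$ diagonalizes $B_\mu$: writing $\mu=1/\lambda$, the equation $r_i/s_i=q_i\lambda$ rearranges to $s_i=\mu r_i/q_i$, and the dual-basis identity $\Tr(r_is_j)=\delta_{ij}$ becomes $B_\mu(r_i,r_j)=q_i\delta_{ij}$. By nondegeneracy of the trace form, such a $\mu$ exists if and only if $\Span_\ratls\{r_ir_j:i\neq j\}\subsetneq F$, matching the admissibility condition above. For cubic $F$ this span can never be one-dimensional (otherwise $r_i/r_j\in\ratls$ for some $i\neq j$, contradicting that $(r_i)$ is a basis), so under rationality its dimension is exactly $2$ and $\mu$ is determined up to a nonzero rational scalar. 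Positivity then translates, via the matrix congruence $R D_\mu R^T=D_q$ with $R=(r_i^{(\ell)})$ and Sylvester's law, into the statement that all $q_i$ share a common sign, which after possibly replacing $\mu$ by $-\mu$ can be arranged to be positive, i.e.\ $\mu\gg 0$.

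The crucial computation is the identity $\sum_{i=1}^{3}Q(r_i)/q_i=0$ in $F$. To verify it, fix an embedding $\iota_k$ and set $\{j,m\}=\{1,2,3\}\setminus\{k\}$; then using $r_i^{(m)}/q_i=s_i^{(m)}/\mu^{(m)}$ (the component-$m$ form of $s_i=\mu r_i/q_i$) together with the dual-basis matrix identity $R^TS=I_3$, which reads $\sum_i r_i^{(j)}s_i^{(m)}=\delta_{jm}$, one computes
\begin{equation*}
  \sum_{i}\frac{Q(r_i)^{(k)}}{q_i}=\sum_{i}\frac{r_i^{(j)}r_i^{(m)}}{q_i}=\frac{1}{\mu^{(m)}}\sum_{i}r_i^{(j)}s_i^{(m)}=\frac{\delta_{jm}}{\mu^{(m)}}=0,
\end{equation*}
since $j\neq m$. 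Combining this with uniqueness of the dependence relation up to rational scalar, any NHS witness $(\alpha_i)$ must satisfy $\alpha_i=c/q_i$ for some $c\in\ratls^*$; hence $\alpha_i>0$ for all $i$ is equivalent to the $q_i$ sharing a common sign, equivalent in turn by Sylvester's law to positivity. The converse is immediate, since rational-plus-positive gives $q_i>0$ and $\alpha_i:=1/q_i$ is then a positive NHS witness. The main obstacle will be the clean derivation of the key identity above; once it is in hand, all of the sign matching flows from Sylvester's law of inertia.
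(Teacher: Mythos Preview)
Your argument is correct and takes a genuinely different route from the paper's.

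The paper works geometrically in $\reals^3$: it computes the cross products $v_i=Q(r_i)\times Q(r_{i+1})=r_ir_{i+1}s_{i+2}\,\Delta(r_1,r_2,r_3)$, and reads rationality off from the proportionality of the $v_i$ and weak positivity from their common orientation; it then invokes Lemma~\ref{lem:weak_pos_implies_pos} (itself a disguised Sylvester argument) to upgrade to positivity. You instead recast rationality as the diagonalizability of $B_\mu(x,y)=\Tr(\mu xy)$ in the basis $(r_i)$, which is the same as $\Span_\ratls\{r_ir_j:i\neq j\}\subsetneq F$, and then let Sylvester's law do the sign bookkeeping directly via the congruence $RD_\mu R^T=D_q$. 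The key identity $\sum_i Q(r_i)/q_i=0$ that you derive from $R^TS=I$ is exactly the content of the paper's Lemma~\ref{lem:cipropNsr} (there written as $\sum N(r_i)s_i/r_i^2=0$), so the two proofs pivot on the same computation; your packaging simply makes the linear-algebraic structure explicit. The payoff of your approach is that it bypasses the separate ``weak positivity'' step and unifies everything through the signature of a single bilinear form; the paper's cross-product picture, on the other hand, gives a more direct geometric image of why the $Q(r_i)$ lie in a plane and how their cyclic orientation encodes positivity.
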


\begin{proof}
  Suppose that the no-half-space condition holds. If the three elements $Q(r_1)$, $Q(r_2)$, $Q(r_3)$
  are $\ratls$-linearly independent, their convex hull cannot contain zero. Since $r_1, r_2, r_3$
  are a basis of $F$, the $Q(r_i)$ cannot all be $\ratls$-multiples. Hence their $\ratls$-span is
  plane. Let $v_i = Q(r_i) \times Q(r_{i+1})$. One calculates that
  $$ v_i =  r_i r_{i+1} s_{i+2} \Delta(r_1,r_2,r_3),$$
  where $\Delta(w_1,w_2,w_3) = \det (w_i^{(j)})$.
  \par
  The no-half-space-condition implies that the $v_i$ are all proportional as elements of $\reals^3$,
  i.e., $v_i/v_j \in \ratls$ when considered as elements of $F$. This is the rationality condition.
  \par
  Moreover, the no-half-space condition implies that the angle between $Q(r_i)$ and $Q(r_{i+1})$ (in
  $\Span(Q(r_i), i=1,2,3)$) is strictly contained in $(0,\pi)$. Thus the $v_i$ are all pointing in
  the same direction. Consequently, the rational number $\frac{r_is_j}{r_js_i}$ is positive. This is
  weak positivity and the preceding lemma concludes one implication.
  \par
  Conversely, suppose that rationality and positivity hold for $\{r_i, r_2, r_3\}$.  The first part
  read backwards implies that the $Q(r_i)$ lie in a plane. If the no-half-space condition fails, we
  have that $v_i/v_j \in \ratls^+$ but $v_i/v_k \in \ratls^-$ for a suitable numbering with
  $\{i,j,k\} = \{1,2,3\}$.  This contradicts weak positivity, and hence positivity.
\end{proof}

\paragraph{Necessity of rationality and positivity.}

Given an irreducible $\mathcal{I}$-weighted boundary stratum $\mathcal{S}_\br$ and a real embedding
$\iota$ of $F$, recall that $\mathcal{S}_\br^\iota\subset\pobarmoduli$ is the stratum of
irreducible stable forms having $2g$ poles of residues $\pm \iota(r_1), \ldots, \pm \iota(r_g)$.

\begin{theorem}
  \label{thm:ratandposdirect}
  Any irreducible stable form in the boundary of $\E^\iota$ is contained in $\mathcal{S}_\br^\iota$
  for some rational and positive basis $\br$ of a lattice $\mathcal{I}\subset F$ whose coefficient
  ring contains $\mathcal{O}$.
\end{theorem}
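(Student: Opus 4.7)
The plan is a short composition of Theorem~\ref{thm:boundary_nec} and Proposition~\ref{prop:nhsequivratpos}, with a preliminary reduction to geometric genus $0$. First I would use Corollary~\ref{cor:geomgenus} to pin down the topology of $X$: an irreducible stable curve of geometric genus $g$ is already smooth and so lies in $\moduli$ rather than on its boundary, so the irreducible stable form $(X,\omega)$ in the boundary of $\E^\iota$ must have geometric genus $0$. Its normalization is $\proj^1$ carrying $2g$ marked points paired into $g$ self-nodes, and $\omega$ has simple poles at these $2g$ preimages with opposite residues at paired cusps.

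Next I would apply Theorem~\ref{thm:boundary_nec} to the cusp of $\E^\iota$ containing $(X,\omega)$. This produces a cusp packet $(\mathcal{I},T) \in \mathcal{C}(\mathcal{O})$ and an admissible $\mathcal{I}$-weighted boundary stratum $\mathcal{S}$ with $(X,\omega) \in \mathcal{S}^\iota(T) \subset \mathcal{S}^\iota$; by the very definition of a cusp packet, the coefficient ring of $\mathcal{I}$ contains $\mathcal{O}$. Irreducibility of $(X,\omega)$ forces $\mathcal{S}$ to parameterize irreducible weighted stable curves, so the weights at its $g$ self-nodes form a set $\{\pm r_1, \ldots, \pm r_g\} \subset \mathcal{I}$. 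The three defining conditions on an $\mathcal{I}$-weighting (opposite weights at paired cusps, zero sum on each irreducible component, and spanning $\mathcal{I}$) force these elements to span $\mathcal{I}$ as a $\zed$-module, and since $\mathcal{I}$ has rank $g$, the tuple $\br = (r_1,\ldots,r_g)$ is a basis and $\mathcal{S} = \mathcal{S}_\br$.

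Finally I would invoke Proposition~\ref{prop:nhsequivratpos}: admissibility of the stratum $\mathcal{S}_\br$ is by construction admissibility of the basis $\br$, and for a cubic field this is equivalent to rationality and positivity of $\br$. The conclusion $(X,\omega) \in \mathcal{S}_\br^\iota$ then follows from $\mathcal{S}^\iota = \mathcal{S}_\br^\iota$. The only step requiring any real unpacking is the identification of an irreducible $\mathcal{I}$-weighted stratum with some $\mathcal{S}_\br$, but this is essentially automatic from the three defining conditions on $\mathcal{I}$-weightings introduced in Section~\ref{sec:RS}; all of the substantive analytic content has already been packaged into Theorems~\ref{thm:meromorphic} and \ref{thm:boundary_nec}.
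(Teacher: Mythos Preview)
Your argument is correct in genus three, but Theorem~\ref{thm:ratandposdirect} is stated and proved in the paper for arbitrary genus~$g$, and your route does not extend. The bottleneck is Proposition~\ref{prop:nhsequivratpos}: the equivalence ``admissible $\Leftrightarrow$ rational and positive'' is established only for cubic fields, and the paper explicitly notes at the start of Section~\ref{sec:irrstrata} that ``for higher degree fields, the relation between these conditions is not clear.'' So composing Theorem~\ref{thm:boundary_nec} with Proposition~\ref{prop:nhsequivratpos} yields the statement only when $[F:\ratls]=3$.

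The paper instead gives a direct analytic argument valid for any~$g$. Given a one-parameter family degenerating to the irreducible stable form, Proposition~\ref{prop:invariant_vanishing_cycles} identifies the group of vanishing cycles with a lattice $\mathcal{I}$, the $\alpha_i$ with a basis $(r_i)$, and the dual classes $\beta_i$ with the dual basis $(s_i)$. The key computation, using the period asymptotics of Corollary~\ref{cor:period_matrix_boundary}, is that
\[
\Im\frac{\omega^{(j)}(\beta_i)}{\omega^{(j)}(\alpha_i)} \sim \frac{n_i}{2\pi}\log\frac{1}{|t_i|}
\]
with the right-hand side independent of the embedding index~$j$. Combined with the $\mathcal{O}$-module relations $\tfrac{r_i}{r_k}\cdot\alpha_k=\alpha_i$ and $\tfrac{s_i}{s_k}\cdot\beta_k\equiv\beta_i$, this forces $(s_i/r_i)/(s_k/r_k)$ to be a positive rational for all $i,k$, i.e.\ rationality and weak positivity; Lemma~\ref{lem:weak_pos_implies_pos} then upgrades to positivity. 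What the paper's approach buys is generality; what yours buys is a clean one-line derivation in the cubic case, recycling the machinery of Theorem~\ref{thm:boundary_nec} rather than rerunning a degeneration argument.
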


\begin{proof}
  Consider a family of stable curves $\mathcal{X}\to\Delta$, smooth over $\Delta^*$, the fiber $X_0$
  over $0$ irreducible, of geometric genus $0$, and with real multiplication by $\mathcal{O}$.  We
  label the vanishing cycles of the fiber $X_p$ over $p$ as $\alpha_1, \ldots, \alpha_g$, and we
  choose a family of cycles $\beta_1, \ldots, \beta_g$ (with $\beta_i$ defined only up to Dehn twist
  around $\alpha_i$) such that $(\alpha_i, \beta_i)_{i=1}^g$ is a symplectic basis of $H_1(X_p;
  \zed)$.  As in \S\ref{sec:bound-eigenf-locus}, we may identify as an $\mathcal{O}$-module the
  subspace $V_p\subset H_1(X_p; \zed)$ spanned by the vanishing cycles with some lattice
  $\mathcal{I}$ whose coefficient ring contains $\mathcal{O}$.  Under this identification, the
  $\alpha_i$ correspond to some $r_i\in \mathcal{I}$.  Choose an ordering $\iota_1=\iota, \ldots,
  \iota_g$ of the real embeddings of $F$.  We let $\omega^{(j)}\in\Omega(X_p)$ be the
  $\iota_j$-eigenform determined by
  \begin{equation*}
    \omega^{(j)}(\alpha_i) = r_i^{(j)}.
  \end{equation*}
  We must show that the $r_i$ are a rational and positive basis of $\mathcal{I}$.
  
  The plumbing coordinates from \S\ref{sec:RS} provide holomorphic functions
  $t_i\colon\Delta\to\cx$ which parameterize the opening-up of the $i^{\rm th}$ node of $X_0$.
  Since $X_p$ is nonsingular for $p\neq 0$, each function $t_i$ vanishing only at $0$.  We claim
  that for some positive integers $n_i$,
  \begin{equation}
    \label{eq:13}
    \Im \frac{\omega^{(j)}(\beta_i)}{\omega^{(j)}(\alpha_i)}\sim\frac{n_i}{2\pi}\log\frac{1}{|t_i|},
  \end{equation}
  meaning that the ratio of both sides tends to $1$ as $p\to 0$.

  Denote by $\eta_i\in\Omega(X_p)$ the form with $\eta_i(\alpha_j) = \delta_{ij}$.  We then have
  \begin{equation*}
    \omega^{(j)} = \sum_i r_i^{(j)}\eta_i,
  \end{equation*}
  so after exponentiation, we obtain
  \begin{equation}
    \label{eq:14}
    E\left(\frac{\omega^{(j)}(\beta_i)}{\omega^{(j)}(\alpha_i)}\right) = E(\eta_i(\beta_i))
    \prod_{k\neq i}E\left(\frac{r_k^{(j)}}{r_i^{(j)}}\eta_k(\beta_i)\right).
  \end{equation}
  By Corollary~\ref{cor:period_matrix_boundary}, we have
  \begin{equation}
    \label{eq:24}
    E(\eta_i(\beta_i)) = t_i^{n_i} \phi \qtq{and} E(\eta_i(\beta_j)) = \psi_j
  \end{equation}
  for $\phi$ and $\psi_j$ nonzero holomorphic functions on $\Delta$ and $n_i$ a positive integer
  (equal to the intersection number of $\Delta$ with the boundary stratum where $\alpha_i$ has been pinched).
  Substituting \eqref{eq:24} into \eqref{eq:14} and taking logarithms yields
  \begin{equation*}
    \Im\frac{\omega^{(j)}(\beta_i)}{\omega^{(j)}(\alpha_i)} = \frac{n_i}{2\pi}\log\frac{1}{|t_i|}+O(1),
  \end{equation*}
  from which \eqref{eq:13} follows.

  Since we have identified $V_p$ with $\mathcal{I}$ as $\mathcal{O}$-modules, we also have the
  $\mathcal{O}$-module isomorphism,
  \begin{equation*}
    H_1(X_p; \zed)/V_p\isom\Hom(V_p, \zed)\isom \Hom(\mathcal{I}, \zed)\isom\mathcal{I}^\vee,
  \end{equation*}
  where the first isomorphism arises from the intersection pairing and the last from the trace
  pairing.  Under this isomorphism, the basis $(\beta_i, \ldots, \beta_g)$ of $H_1(X; \zed)/V_p$
  corresponds to the basis $(s_1, \ldots, s_g)$ of $\mathcal{I}^\vee$ which is dual to $(r_1,
  \ldots, r_g)$.  Thus under the action of real multiplication, we have
  \begin{equation*}
    \frac{r_i}{r_k}\cdot\alpha_k = \alpha_i \qtq{and} \frac{s_i}{s_k}\cdot\beta_k = \beta_i \pmod {V_p}.
  \end{equation*}
  From this and \eqref{eq:13}, we then obtain
  \begin{equation}
    \label{eq:15}
    \frac{s_i^{(j)}}{r_i^{(j)}}/\frac{s_k^{(j)}}{r_k^{(j)}} =
    \Im\frac{\omega^{(j)}(\beta_i)}{\omega^{(j)}(\alpha_i)}/ \Im\frac{\omega^{(j)}(\beta_k)}{\omega^{(j)}(\alpha_k)} \sim \log\frac{n_i}{|t_i|}/\log\frac{n_k}{|t_k|}.
  \end{equation}
  Since the right side of \eqref{eq:15} is independent of $j$, so is the left side.  Thus
  $(s_i/r_i)/(s_k/r_k)$ is rational.  The right side of \eqref{eq:15} is also positive for $p\sim
  0$ because $t_\ell(0) = 0$ for all $\ell$, so $(s_i/r_i)/(s_k/r_k)$ is positive as well.
  Therefore this basis is both rational and weakly positive.  By
  Lemma~\ref{lem:weak_pos_implies_pos} the basis is then positive.
\end{proof}

\par

\paragraph{Finiteness of rational and positive bases.}

We now give a geometric interpretation of bases of lattices satisfying the rationality and positivity
conditions as points of intersection of flats in the locally symmetric space $\SL_g(\zed)\lmod
\SL_g(\reals)\rmod\SO_g(\reals)$.  This yields a quick proof that there are only finitely many such
bases up to the action of the unit group.

We recall the classical correspondence between similarity classes of lattices in degree $g$ totally
real number fields and compact flats in $\SL_g(\zed)\lmod \SL_g(\reals)\rmod\SO_g(\reals)$.  Consider a
degree $g$ totally real number field $F$ with an ordering $\iota_1, \ldots, \iota_g$ of the
embeddings of $F$ into $\reals$.  Let $\mathcal{I}$ be a lattice in $F$, which we regard as point in
the space of lattices $\SL_g(\zed)\lmod\SL_g(\reals)$.  Let
$U(\mathcal{I})\subset\mathcal{O}_\mathcal{I}$ be the group of totally positive units $\epsilon$ such that
$\epsilon\mathcal{I}=\mathcal{I}$.  We embed $U(\mathcal{I})$ in the group $D\subset\SL_g(\reals)$ of
positive diagonal matrices by the embeddings $\iota_i$.  By Dirichlet's units theorem, $U(\mathcal{I})$ is a
lattice in $D$.  Let $T(\mathcal{I}) = U(\mathcal{I})\lmod D$, a compact torus.  The stabilizer of
$\mathcal{I}$ under the right action of $D$ on $\SL_g(\zed) \lmod \SL_g(\reals)$ is $U(\mathcal{I})$, so
we obtain an immersion $p_\mathcal{I}\colon T(\mathcal{I})\to \SL_g(\zed)\lmod
\SL_g(\reals)\rmod\SO_g(\reals)$ of $T(\mathcal{I})$ as a compact flat in $\SL_g(\zed)\lmod
\SL_g(\reals)\rmod\SO_g(\reals)$.  Since similar lattices lie on the same $D$-orbit, this associates a
compact flat to each similarity class of lattices.

Let $\Rec\subset\SL_g(\zed)\lmod \SL_g(\reals)\rmod\SO_g(\reals)$ be the locus of lattices in $\reals^g$
which have a basis of orthogonal vectors, a closed (but not compact) flat isometric to
$\reals^g/C_g$, where $C_g\subset\SO_g(\reals)$ is the group of symmetries of the cube.

\begin{theorem}
  \label{thm:SLgFin}
  For each lattice $\mathcal{I}$ in a totally real number degree $g$ number field $F$, the flat
  $p_\mathcal{I}(T(\mathcal{I}))$ intersects $\Rec$ transversely.  There is a natural bijection
  between the set $p_\mathcal{I}^{-1}(\Rec)$ and the set of rational and positive bases of
  $\mathcal{I}$, up to the action of $U(\mathcal{I})$, changing signs, and reordering.
\end{theorem}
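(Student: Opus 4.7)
The plan is to set up a direct dictionary between points of $p_\mathcal{I}^{-1}(\Rec)$ and rational-and-positive bases of $\mathcal{I}$ (up to the stated equivalences), and then verify transversality via a short linear algebra computation at each intersection point.

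First, I would unwind what it means for the coset $dU(\mathcal{I}) \in T(\mathcal{I})$ to land in $\Rec$. Embed $\mathcal{I}$ into $\reals^g$ via $(\iota_1, \ldots, \iota_g)$, write $\Lambda_d = \iota(\mathcal{I}) \cdot d$ for the diagonally scaled lattice, and observe that $\Lambda_d \in \Rec$ iff there is a $\zed$-basis $\br = (r_1, \ldots, r_g)$ of $\mathcal{I}$ whose images $\iota(r_i) \cdot d$ form an orthogonal frame, i.e., $\sum_j d_j^2 r_i^{(j)} r_k^{(j)} = 0$ for $i \neq k$. Let $u \in F \otimes_\ratls \reals$ be the element with $u^{(j)} = d_j^2$; then this orthogonality condition reads $\Tr(u r_i r_k) = 0$ for $i \neq k$, where the trace is extended $\reals$-linearly. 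Setting $v_i := u\, r_i$, we obtain $\Tr(v_i r_k) = 0$ for every $k \neq i$, which forces $v_i$ to lie in $\reals\cdot s_i$, where $(s_i)$ is the trace-dual basis. Hence $u = \lambda_i\, (s_i/r_i)$ in $F \otimes \reals$ for each $i$. Since $u$ is a single element of $F\otimes\reals$, this forces $(s_i/r_i)/(s_k/r_k) \in F$ to be embedding-independent, hence to lie in $\ratls$ — this is the rationality of $\br$. The positivity condition $u \gg 0$ together with rationality then forces $s_i/r_i \gg 0$, i.e., positivity of $\br$, by the sign-definiteness argument of Lemma~\ref{lem:weak_pos_implies_pos}. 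Conversely, any rational and positive basis $\br$ yields the totally positive element $u := s_1/r_1 \in F$ and hence a valid $d := \diag(\sqrt{u^{(j)}}) \in D$ (after rescaling to unit determinant), producing an intersection point.

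Next I would verify that this correspondence respects the stated equivalences. Scaling $\br$ by a totally positive unit $\epsilon \in U(\mathcal{I})$ sends each $s_i/r_i$ to $\epsilon^{-2}(s_i/r_i)$, hence $u$ to $\epsilon^{-2}u$ and $d$ to $\epsilon^{-1}d$, which is exactly the $U(\mathcal{I})$-action on $D$. A sign change $r_i \to -r_i$ flips $s_i$ simultaneously, so $u$ and $d$ are unchanged while one vector of the orthogonal frame is negated — the action of the coordinate-flip element of the cube-symmetry group $C_g \subset \SO_g(\reals)$. Permutations of the $r_i$ likewise match permutations in $C_g$. This yields the natural bijection.

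For transversality at an intersection point $p$, I would identify $T_p\bigl(\SL_g(\zed)\lmod\SL_g(\reals)\rmod\SO_g(\reals)\bigr)$ with the space of traceless symmetric $g\times g$ matrices, using the orthogonal frame $(\iota(r_i) \cdot d)$ as an orthonormal basis of $\reals^g$. In this frame, $T_p \Rec$ is exactly the subspace of traceless diagonal matrices, while $T_p\, p_\mathcal{I}(T(\mathcal{I}))$ is the image of $\mathrm{Lie}(D)/\mathrm{Lie}(U(\mathcal{I}))$ under the differential of the right $D$-action, a different $(g-1)$-dimensional subspace whose off-diagonal entries encode the change of basis between the standard coordinates (on which $D$ acts diagonally) and the frame $(\iota(r_i)\cdot d)$. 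The hardest step will be checking that these two tangent subspaces intersect trivially; this reduces to a linear-algebra claim that a matrix built from the entries $\iota_j(r_i)$ (essentially a weighted Vandermonde) is nonsingular, which follows from $\br$ being a $\ratls$-basis of $F$ together with nondegeneracy of the trace pairing. Once transversality (hence discreteness of $p_\mathcal{I}^{-1}(\Rec)$) is established, compactness of $T(\mathcal{I})$ implies finiteness, and the bijection constructed above completes the proof.
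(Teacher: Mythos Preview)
Your bijection argument is essentially the paper's, recast in the language of the element $u\in F\otimes_\ratls\reals$ with $u^{(j)}=d_j^2$; the paper instead works with the diagonal matrix $A=\diag(a^{(j)})$ with $a^{(j)}=\sqrt{s_1^{(j)}/r_1^{(j)}}$, which is the same data.  One small imprecision: you invoke Lemma~\ref{lem:weak_pos_implies_pos} to get positivity, but you have not quite established weak positivity at that point.  The cleanest route is to observe directly that $\lambda_i=\Tr(u\,r_i^2)=\sum_j d_j^2 (r_i^{(j)})^2>0$, whence $s_i/r_i=u/\lambda_i\gg 0$ immediately.

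For transversality the two arguments genuinely diverge.  The paper avoids any tangent-space computation: having shown that for each lifted pair of flats in $\SL_g(\reals)/\SO_g(\reals)$ the intersection consists of \emph{at most one} point (the matrix $A$ above is uniquely determined), it concludes transversality from the general fact that two totally geodesic flats in a symmetric space of nonpositive curvature which meet in an isolated point must meet transversely (a shared tangent direction would force a shared geodesic line).  Your approach---computing the two tangent spaces as Cartan subalgebras $\mathfrak a$ and $k\mathfrak a k^{-1}$ and checking they meet trivially---is also valid, but your description of the endgame is off.  The condition is not a Vandermonde-type nonsingularity; rather, one needs that no nonzero traceless diagonal $Y$ has $k^{-1}Yk$ diagonal, i.e., that the columns of $k$ are not eigenvectors of any nonscalar diagonal matrix.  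Since every entry of $k$ is a nonzero multiple of some $r_i^{(j)}$, and $r_i\in F^*$ forces $r_i^{(j)}\neq 0$ for all $j$, each column of $k$ has all coordinates nonzero, hence lies in no proper coordinate subspace; this forces $Y$ to be scalar, hence zero.  So the key fact is ``all $r_i^{(j)}\neq 0$'', not nondegeneracy of the trace pairing.  The paper's route is shorter; yours is more explicit once the linear algebra is pinned down.
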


\begin{proof}
  Let $\tRec\subset\SL_g(\reals) \rmod \SO_g(\reals)$ be the image of the diagonal orbit of the standard
  basis of $\reals^g$, a lift of $\Rec$ to $\SL_g(\reals) \rmod \SO_g(\reals)$.

  Lifts of $T(\mathcal{I})$ to $\SL_g(\reals) \rmod \SO_g(\reals)$ correspond to oriented bases of
  $\mathcal{I}$ up to the action of the unit group by associating the flat $(r_i^{(j)})\cdot
  D\cdot\SO_g(\reals)$ to the basis $r_1, \ldots, r_g$.  Points of $p_\mathcal{I}^{-1}(\Rec)$
  correspond bijectively (up to the action of the group $C_g\subset\SL_g(\zed)$ of symmetries of the
  cube) to intersection points of $p_\mathcal{I}(T(\mathcal{I}))$ with $\Rec$.  Note that if a lift
  $F$ intersects $\tRec$, then so does the lift $\gamma\cdot F$ for any $\gamma\in C_g$.  These
  intersection points correspond to the same point in $p_\mathcal{I}^{-1}(\Rec)$, and on the level
  of bases, replacing $F$ with $g\cdot F$ corresponds to reordering and changing signs in the basis
  $(r_i)$.

  We must show that $(r_i^{(j)})\cdot D\cdot\SO_g(\reals)$ intersects $\tRec$ if and only if $(r_i)$
  is rational and positive.  Note that the rationality and positivity conditions make sense for
  bases of $\reals^n$, with the $j^{\rm th}$ embedding $r_i^{(j)}$ interpreted as the $j^{\rm th}$
  coordinate of the vector $r_i$.  A vector is regarded as rational if all of its coordinates are
  equal, totally positive if all of its coordinates are positive, and so on.  With this
  interpretation, a orthogonal basis $(r_1, \ldots, r_n)$ of $\reals^n$ is rational and positive,
  since the basis is orthogonal if and only if each dual vector $s_i$ is a positive multiple of the
  corresponding $r_i$.  The rationality and positivity conditions are invariant under the action of
  $D$, thus any basis $(r_i)$ whose $D$-orbit meets $\tRec$ is rational and positive.
  
  Now suppose the basis $(r_i)$ of $\mathcal{I}$ is rational and positive.  Let $(s_i)$ be the dual
  basis.  For each $j$, let $a^{(j)} = \sqrt{s_1^{(j)}/r_1^{(j)}}.$ Let $A$ be the diagonal matrix
  $(a^{(1)}, \ldots, a^{(g)})$, and let
  \begin{equation*}
    (\tilde{r}_i^{(j)}) = (a^{(j)}r_i^{(j)}) \qtq{and} (\tilde{s}_i^{(j)}) = (s_i^{(j)}/a^{(j)}).
  \end{equation*}
  Note that $(\tilde{s}_i^{(j)})$ is the dual basis to $(\tilde{r}_i^{(j)})$, and $A$ is the unique
  diagonal matrix for which the vectors $(\tilde{r}_1^{(j)})$ and $(\tilde{s}_1^{(j)})$ are
  positively proportional.  If the positivity and rationality conditions are satisfied, we have
  \begin{equation*}
    \frac{\tilde{s}_i^{(j)}}{\tilde{r}_i^{(j)}} = \frac{1}{(a^{(j)})^2}\cdot\frac{s_i^{(j)}}{r_i^{(j)}}
    = \frac{q_i}{(a^{(j)})^2}\cdot\frac{s_1^{(j)}}{r_1^{(j)}} = q_i
  \end{equation*}
  for some positive $q_i\in\ratls$.  Since each $\tilde{s}_i$ is proportional to $\tilde{r}_i$, the
  basis $(\tilde{r}_i)$ of $\reals^g$ is rectangular, so it is the unique intersection point of
  $(r_i^{(j)})\cdot D \cdot\SO_g(\reals)$ and $\tRec$.  Otherwise for some $i$ the vectors
  $(\tilde{r}_i^{(j)})$ and $(\tilde{s}_i^{(j)})$ are not proportional, so the flats are disjoint.
  Since we saw that there was at most one intersection point between each lift of the two flats,
  these intersection points are transverse.
\end{proof}

\begin{cor}
  \label{cor:rat_pos_finite}
  The set of bases of $\mathcal{I}$ satisfying the rationality and positivity conditions is finite,
  up to the action of $U(\mathcal{I})$
\end{cor}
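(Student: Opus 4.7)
The plan is to deduce this corollary directly from Theorem~\ref{thm:SLgFin}, which provides the necessary geometric translation. By that theorem, rational and positive bases of $\mathcal{I}$, up to the combined action of $U(\mathcal{I})$, sign changes, and permutations, are in bijection with the finite-to-one image of $p_\mathcal{I}^{-1}(\Rec)$ inside the compact flat torus $T(\mathcal{I})$. Since there are only $2^g \cdot g!$ ways to change signs and reorder a basis, it suffices to show that $p_\mathcal{I}^{-1}(\Rec)$ is a finite subset of $T(\mathcal{I})$.

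First I would observe that $\Rec \subset \SL_g(\zed)\lmod\SL_g(\reals)\rmod\SO_g(\reals)$ is a closed subset (being the locus of lattices admitting an orthogonal basis, cut out by closed conditions on the Gram matrix). Since $p_\mathcal{I}\colon T(\mathcal{I}) \to \SL_g(\zed)\lmod\SL_g(\reals)\rmod\SO_g(\reals)$ is continuous, the preimage $p_\mathcal{I}^{-1}(\Rec)$ is closed in $T(\mathcal{I})$. Because $T(\mathcal{I}) = U(\mathcal{I})\lmod D$ is compact (by Dirichlet's unit theorem, $U(\mathcal{I})$ is a lattice in $D$), this preimage is also compact.

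Next I would invoke the transversality assertion of Theorem~\ref{thm:SLgFin}: the flat $p_\mathcal{I}(T(\mathcal{I}))$ meets $\Rec$ transversely. Since both flats have complementary dimension ($g-1$ and $g-1$ inside the $g(g+1)/2 - 1$ dimensional total flat span, or more directly, since the intersection is transverse in the locally symmetric space), the intersection is $0$-dimensional, so each intersection point is isolated. Pulling back, $p_\mathcal{I}^{-1}(\Rec)$ is a discrete subset of $T(\mathcal{I})$.

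A discrete closed subset of a compact Hausdorff space is finite, so $p_\mathcal{I}^{-1}(\Rec)$ consists of finitely many points. Combined with the bijection of Theorem~\ref{thm:SLgFin} and the fact that sign changes and reorderings contribute only a bounded multiplicative factor, we conclude that $\mathcal{I}$ has finitely many rational and positive bases up to the action of $U(\mathcal{I})$. The main conceptual step is the transversality, but this is already packaged in Theorem~\ref{thm:SLgFin}, so no genuine obstacle remains at this stage.
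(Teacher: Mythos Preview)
Your proof is correct and follows essentially the same approach as the paper: compactness of $T(\mathcal{I})$ together with the transversality established in Theorem~\ref{thm:SLgFin} forces $p_\mathcal{I}^{-1}(\Rec)$ to be finite. The paper's proof is just the one-line version of what you wrote; your added remarks about closedness, discreteness, and the bounded sign/permutation factor are fine elaborations, though your parenthetical about complementary dimensions is muddled (the two flats are each $(g-1)$-dimensional in a space of dimension $(g-1)(g+2)/2$, so their dimensions are not complementary for $g\geq 3$) and is best omitted in favor of the direct appeal to isolated intersection points.
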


\begin{proof}
  Since $T(\mathcal{I})$ is compact, there are at most finitely many intersection points with $\Rec$
  by transversality.
\end{proof}

\section{Boundary of the eigenform locus: Sufficiency for genus three}
\label{sec:suffg3}

In this section we specialize to genus three. We prove that the boundaries of $\RM$ and
$\E^\iota$ are indeed the unions of the varieties described in Theorem~\ref{thm:boundary_nec}.
Moreover, we show how to derive these subvarieties explicitly from the weights of a boundary
stratum.
\par

\paragraph{Boundary strata in genus three.}

The topological type of a geometric genus zero stable curve (or a weighted boundary stratum) can be
encoded by a graph where each vertex represents an irreducible component and an edge joining two
vertices (or possibly joining a vertex to itself) represents a node at the intersection of those two
components.  There are fifteen topological types of arithmetic genus three, geometric genus zero
stable curves, shown in Figure~\ref{fig:graphs}.  We will refer to a stable curve represented by the
$j^{\rm th}$ graph in the $i^{\rm th}$ row of Figure~\ref{fig:graphs} as a \emph{type $(i,j)$ stable curve}.

An $\mathcal{I}$-weighted stable curve can be represented by a graph together with a direction and a weight
$r\in \mathcal{I}$ attached to each edge $e$.  The cusp on the component represented by the vertex at the front of $e$ has weight $r$,
and the other cusp has weight $-r$.

\begin{figure}[htbp]
  \centering
  \includegraphics{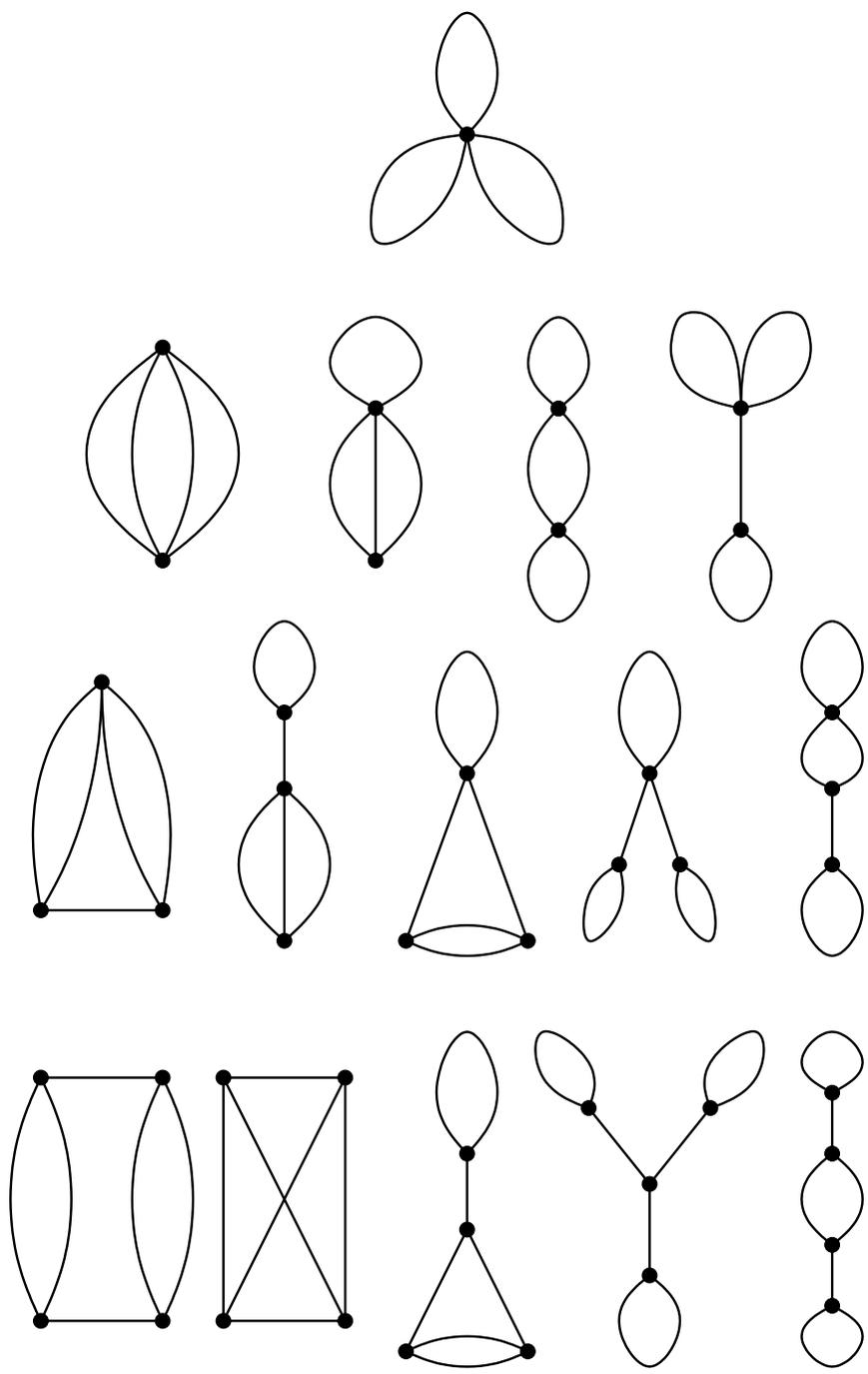}
  \caption{Genus three, geometric genus zero stable curves}
  \label{fig:graphs}
\end{figure}

It will be convenient have a compact notation for boundary strata without
separating curves, the only ones which will be important in the sequel.
For all but one of these strata the components of
the corresponding stable curves can be arranged in chain or one loop.  We code those boundary strata
in the following way: we write $[m_i]$ for a genus zero component of the stable curve with $m_i$ marked
points. We write $\times^{a_i}$ for the number of intersection points with the subsequent curve.
The possible patterns for curve systems without separating curves include $[6]$, $[m_1] \times^a
[m_2]$, $[m_1] \times^{a_1} [m_2] \times^{a_2} [m_3]$ or $[m_1] \times^{a_1} [m_2] \times^{a_2}
[m_3] \times^{a_3} $.  In the last pattern, $a_3$ is the number of nodes joining the last and the
first component.  For example, a $[5]\times^3[3]$ boundary stratum is represented by graph $(2, 2)$
in Figure~\ref{fig:graphs} and a $[4]\times^2[3]\times^1[3]\times^2$ boundary stratum is represented
by graph $(3,1)$.

Boundary strata of type $[6]$ parameterize irreducible stable curves with three
nonseparating nodes, often called ``trinodal curves.''

\par

\begin{theorem}
  \label{thm:boundary_suff}
  Consider an order $\mathcal{O}$ in a totally real cubic number field $F$, a real embedding
  $\iota$ of $F$, and a cusp packet $(\mathcal{I}, T)\in\mathcal{C}(\mathcal{O})$.  The closure in
  $\pobarmoduli$ of
  the cusp of $\E^\iota$ associated to $(\mathcal{I}, T)$ is equal to the union over all
  admissible $\mathcal{I}$-weighted boundary strata $\mathcal{S}$ of the varieties
  $\mathcal{S}^\iota(T)$.

  The closure of the corresponding cusp of $\RM$ in $\barmoduli$ is equal to the union over all
  $\mathcal{I}$-weighted boundary strata $\mathcal{S}$ of the images of the $\mathcal{S}(T)$ under
  the forgetful map to $\barmoduli$.
\end{theorem}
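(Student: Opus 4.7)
The plan is to complement Theorem~\ref{thm:boundary_nec}: given an admissible $\mathcal{I}$-weighted boundary stratum $\mathcal{S}$ and a point $y\in\mathcal{S}^\iota(T)$, I produce a sequence in the cusp of $\E^\iota(\mathcal{I},T)$ converging to $y$. Since $\barE^\iota(\mathcal{I},T)$ is the trivial line bundle of $\iota$-eigenforms over $\barRM(\mathcal{I},T)$, it suffices to prove the analogous statement for $\RM(\mathcal{I},T)\subset\moduli(\mathcal{I})$. The strategy, as sketched in the introduction, is to use the period functions $\Psi(a)$ from \S\ref{sec:period-matrices} together with plumbing parameters to obtain a local model near $\mathcal{S}$ in which $\RM(\mathcal{I},T)$ becomes a translated algebraic subtorus, and then to invoke a general torus-closure result.

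Fix $x\in\mathcal{S}$ and plumbing coordinates $(u_1,\ldots,u_d,t_1,\ldots,t_n)$ on a neighborhood $U$ of $x$ in $\barmoduli(\mathcal{I})$ in which the nodes $\gamma_1,\ldots,\gamma_n\in\mathcal{I}$ of $\mathcal{S}$ correspond to the divisors $\{t_i=0\}$. Pick a basis $a_1,a_2,a_3$ of the rank-three lattice $\Lambda:=\Ann(\Lambda^1)\cap \bS_\zed(\mathcal{I}^\vee)$. By Corollary~\ref{cor:period_matrix_boundary},
\begin{equation*}
\Psi(a_k)(u,t)=\phi_k(u,t)\prod_{i=1}^{n}t_i^{\langle\gamma_i\otimes\gamma_i,a_k\rangle},
\end{equation*}
with $\phi_k$ nowhere vanishing and holomorphic. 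Lifting Proposition~\ref{prop:TRM_formula} to the Teich\-m\"uller cover and using that the Torelli map is a bijection on $\tmoduli[3]$, the equations $\Psi(a_k)=q(T)(a_k)$ for $k=1,2,3$ locally cut out $\RM(\mathcal{I},T)$ as a smooth three-dimensional subvariety. Combining $\Psi(a_1),\Psi(a_2),\Psi(a_3)$ with enough of the $u_j$ gives a local chart $U\to(\cx^*)^m\times\cx^n$ in which $\mathcal{S}$ is $(\cx^*)^m\times\{\bzero\}$ and $\RM(\mathcal{I},T)$ is (the pullback of) a translate of an algebraic subtorus $\mathbb{T}\subset(\cx^*)^{m+n}$ whose character lattice is $\Lambda$ equipped with exponent data $(\langle\gamma_i\otimes\gamma_i,a_k\rangle)_i$.

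The last step applies the promised Theorem~\ref{thm:torus_closure}: the closure of a subtorus of $(\cx^*)^{m+n}$ inside $(\cx^*)^m\times\cx^n$ meets the boundary stratum $(\cx^*)^m\times\{\bzero\}$ precisely when no nonzero character of $\mathbb{T}$ has all exponents in the $t$-variables nonnegative, and in that case the intersection is the fiber, over the point determined by $q(T)$, of the natural projection onto $\Hom(\Lambda\cap N(\mathcal{S}),\Gm)$. By Theorem~\ref{thm:meromorphic} the first condition is exactly $C(\mathcal{S})\cap\Ann(\Lambda^1)\subset N(\mathcal{S})$, which is admissibility (the geometric reformulation of Theorem~\ref{thm:admiss-nonhalf} is used only later to make this explicit), and the fiber is exactly $G(T)$, so the set-theoretic intersection of the closure of $\RM(\mathcal{I},T)$ with $\mathcal{S}$ coincides with $\CR^{-1}(G(T))=\mathcal{S}(T)$. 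The main obstacle I expect is checking that the equations $\Psi(a_k)=q(T)(a_k)$ truly \emph{cut out} $\RM(\mathcal{I},T)$ locally (with the right multiplicity), rather than merely vanish on it, and that the local chart can be arranged so that $\RM(\mathcal{I},T)$ sweeps out a full neighborhood of $x$ in $\mathbb{T}$; once this is in hand, the conclusion is a direct consequence of Theorem~\ref{thm:torus_closure} combined with the vanishing formula $\vord_{D_\gamma}\Psi(a)=\langle\gamma\otimes\gamma,a\rangle$.
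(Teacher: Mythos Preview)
Your strategy is correct in outline, and the reduction to a torus-closure statement (Theorem~\ref{thm:torus_closure}) is exactly what the paper does. But the local chart you describe does not exist as stated, and fixing it requires two ingredients you have not supplied.

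First, combining $\Psi(a_1),\Psi(a_2),\Psi(a_3)$ with plumbing parameters $u_j$ does \emph{not} produce a chart in which $\RM(\mathcal{I},T)$ is a translated subtorus. In plumbing coordinates $\Psi(a_k)=\phi_k(u,t)\prod_i t_i^{\langle\gamma_i\otimes\gamma_i,a_k\rangle}$ with $\phi_k$ a nowhere-vanishing holomorphic function that genuinely depends on $(u,t)$; the equations $\Psi(a_k)=q(T)(a_k)$ are therefore not monomial in the coordinates $(u,t)$, and Theorem~\ref{thm:torus_closure} does not apply. The paper's fix is to use $\Psi$-functions as \emph{all} of the coordinates: one chooses elements $\sigma_1,\ldots,\sigma_m\in\bS_\zed(\mathcal{I}^\vee)$ with $\langle\sigma_i,\gamma_j\otimes\gamma_j\rangle=\delta_{ij}$ (Lemma~\ref{lem:nice_lin_indep}) and a basis $\tau_1,\ldots,\tau_n$ of $N(\mathcal{S})$, and shows that the map $\Xi=(\Psi(\sigma_1),\ldots,\Psi(\sigma_m),\Psi(\tau_1),\ldots,\Psi(\tau_n))$ is locally biholomorphic (Proposition~\ref{prop:period_coordinates}). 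Since each $a_k\in\Ann(\Lambda^1)$ is an integer combination of the $\sigma_i$ and $\tau_j$, and $\Psi$ is a homomorphism, the equations $\Psi(a_k)=q(T)(a_k)$ become honest monomial equations in the chart $\Xi$, and \emph{now} $\RM(\mathcal{I},T)$ is a translated subtorus. Verifying that $\Xi$ is a local chart is the content of Lemma~\ref{lem:boundary_coordinates} (where one must also pass to the quotient by the involution $\phi_\iota$ for strata of type $[6]$ and $[4]\times^4[4]$).

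Second, the dual elements $\sigma_i$ exist only for what the paper calls \emph{nice} strata---those of types $(1,1)$, $(2,1)$, $(2,2)$, $(3,1)$, $(4,2)$ in Figure~\ref{fig:graphs}. For the remaining nine topological types no such period-coordinate chart is available, and the paper handles them by a separate argument: strata with $\codim(\Span(\mathcal{S}))=0$ lie in the closure of a nice admissible stratum of type $[4]\times^4[4]$ or $[5]\times^3[3]$ (using Lemma~\ref{lemma:cones}), while the strata of types $(2,3)$, $(2,4)$, $(3,4)$, $(3,5)$, $(4,4)$, $(4,5)$ are treated by proving that the hypersurface $V=\{\Psi(\sigma)=u\}$ cut out by the cross-ratio equation has irreducible intersection with $\overline{\mathcal{S}}$ for the ambient irreducible stratum $\mathcal{S}$, so that $\overline{\mathcal{S}\cap V}=\overline{\mathcal{S}}\cap V$. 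Your proposal treats all strata uniformly and therefore misses both the restriction to nice strata and the case-by-case closure argument for the rest.
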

\par
After some preliminary discussions, we prove Theorem~\ref{thm:boundary_suff} at the end of this section.

Since the intersection of two algebraic subvarieties of $\barmoduli[3]$ has a finite
number of components, we obtain the following generalization for genus three 
of Theorem~\ref{thm:SLgFin}.
\par
\begin{cor} \label{cor:noweightedfinite}
  Given a lattice $\mathcal{I}$ in a cubic number field $F$, the number of
  $\mathcal{I}$-weighted admissible boundary strata up to similarity is finite.
\end{cor}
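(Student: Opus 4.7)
The plan is to combine Theorem~\ref{thm:boundary_suff} with the basic algebro-geometric fact that the intersection of two closed algebraic subvarieties of the projective variety $\pobarmoduli[3]$ has only finitely many irreducible components. Admissible weighted strata will be matched, up to similarity, with the finitely many boundary pieces of closures of cusps of the eigenform locus.

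I would first fix a real embedding $\iota\colon F\to\reals$ and note that the set of cusp packets $(\mathcal{I},T)$ attached to $\mathcal{I}$ is finite, since $E^{\mathcal{O}_\mathcal{I}}(\mathcal{I})$ is finite by the corollary to Theorem~\ref{thm:symp-exts}. For each such $T$, Theorem~\ref{thm:boundary_suff} identifies the intersection of $\partial\pobarmoduli[3]$ with the closure of the cusp of $\E^\iota$ at $(\mathcal{I},T)$ as $\bigcup_{\mathcal{S}\text{ admissible}}\mathcal{S}^\iota(T)$. Being a closed algebraic subvariety of a projective variety, this intersection has only finitely many irreducible components, so only finitely many pieces $\mathcal{S}^\iota(T)$ are non-empty for each $T$. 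Next, non-similar admissible $\mathcal{I}$-weightings embed as distinct subvarieties $\mathcal{S}^\iota\subset\pobarmoduli[3]$: if two weightings yield projectively proportional $\iota$-residues, the common scalar must lie in $F^\times$ and must preserve $\mathcal{I}$ (since both weight sets $\zed$-span $\mathcal{I}$), hence must lie in $\mathcal{O}_\mathcal{I}^\times$, which witnesses the similarity. Unioning over the finite set of $T$'s, only finitely many similarity classes of admissible $\mathcal{S}$ can produce a non-empty $\mathcal{S}^\iota(T)$ for any $T$.

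The main obstacle is then to show that every admissible similarity class in fact contributes some non-empty $\mathcal{S}^\iota(T)$, thereby closing the gap from ``realized in the boundary'' to ``admissible.'' This is the content of the sufficiency direction of Theorem~\ref{thm:boundary_suff}: the torus-closure construction sketched in the Introduction shows that admissibility of the weights of $\mathcal{S}$ is precisely the condition forcing $\mathcal{S}$ to meet $\barRM(\mathcal{I},T)$ inside $\barmoduli[3](\mathcal{I})$ for a suitable cusp packet $T$. Granting this, the finiteness bound of the previous paragraph accounts for every admissible similarity class and proves the corollary.
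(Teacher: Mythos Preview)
Your proposal is correct and follows the same strategy as the paper's (one-sentence) justification: the boundary of $\barE^\iota$ in $\pobarmoduli[3]$ is an intersection of closed subvarieties of a projective variety, hence has finitely many irreducible components, and Theorem~\ref{thm:boundary_suff} identifies these with the admissible strata. You have correctly supplied two details the paper leaves implicit: first, that non-similar $\mathcal{I}$-weighted strata yield \emph{disjoint} images $\mathcal{S}^\iota$ (your argument that the scaling factor must lie in $\iota(F^\times)$, hence in $\mathcal{O}_\mathcal{I}^\times$, is exactly right, since any ratio $\iota(r_i)/\iota(r_j')$ already lies in $\iota(F)$); and second, that the sufficiency direction of Theorem~\ref{thm:boundary_suff} via Theorem~\ref{thm:torus_closure} guarantees each admissible $\mathcal{S}$ actually contributes a non-empty $\mathcal{S}^\iota(T)$.

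One small expository point: the sentence ``so only finitely many pieces $\mathcal{S}^\iota(T)$ are non-empty'' does not follow from the component count alone---it needs the disjointness you establish in the following sentence. Reordering those two sentences would make the logic cleaner. Also, you do not really need to union over all $T$: since the condition $L\cap C\subset N$ in Theorem~\ref{thm:torus_closure} is independent of $\phi$, non-emptiness of $\mathcal{S}(T)$ for admissible $\mathcal{S}$ holds for every $T$, so fixing the trivial extension $T=0$ already suffices.
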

\par 
We will discuss in Appendix~\ref{sec:bdcounting} various aspects concerning enumerating and
counting this set of admissible weighted boundary strata.
\par
In order to make Theorem~\ref{thm:boundary_suff} completely explicit, we will now give coordinates
on some weighted boundary strata in terms of cross-ratios and give explicit equations cutting out
the subvarieties $\mathcal{S}(T)$.
\par
We say that a weighted boundary stratum
$\mathcal{S}_1$ is a {\em degeneration} of  $\mathcal{S}_2$, if $\mathcal{S}_1$
is obtained by pinching a collection of curves on a surface represented by $\mathcal{S}_2$.
We also say that $\mathcal{S}_2$ is an {\em undegeneration} of  $\mathcal{S}_1$
in this situation.

\paragraph{Irreducible strata.}

Consider an irreducible stratum (that is, type $[6]$ if we are in genus three) $\mathcal{S}_\br$.  A
weighted stable curve parameterized by $\mathcal{S}_{\br}$ is determined $2g$ distinct points $p_1,
\ldots, p_g$ and $p_{-1}, \ldots, p_{-g}$ on $\proj^1$ with weights $r_i$ at $p_i$ and $-r_i$ at
$p_{-i}$, so $\mathcal{S}_\br\isom\moduli[0,2g]$.  For $j\neq k$ we define the cross-ratio morphisms
$R_{[jk]}\colon\mathcal{S}_\br\to\cx\setminus\{0,1\}$ by
\begin{equation} \label{CR6}
R_{[jk]}=[p_j,p_{-j}, p_{-k}, p_k].
\end{equation}
where for $z_1, \ldots z_4\in\cx$,  $$[z_1,z_2, z_3, z_4]=\frac{(z_1-z_3)(z_2-z_4)}{(z_1-z_4)(z_2-z_3)}.$$
\par
Take $(s_1,\ldots,s_g)$ to be the dual basis of $\mathcal{I}^\vee$ (with respect to the
trace pairing) to $(r_1,\ldots,r_g)$.   We can now make the cross-ratio map $\CR$ 
defined in~\eqref{eq:defCR} more explicit.
\par
\begin{prop}
  \label{prop:CRpsisjsk}
  The elements $s_j \otimes s_k$ for $j\neq k$ form a basis of $N(\mathcal{S}_{\br})$.  Moreover we
  have $\Psi(s_j\otimes s_k) = R_{[jk]}$ as functions on $\mathcal{S}_\br$.
\end{prop}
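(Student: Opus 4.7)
The plan is to handle the two assertions separately: first establish that $\{s_j \otimes s_k : j \neq k\}$ is a basis of $N(\mathcal{S}_\br)$ by a direct pairing computation, then identify $\Psi(s_j \otimes s_k)$ with $R_{[jk]}$ using the plumbing coordinates from Section~\ref{sec:RS} together with the local formula for $\Psi$ derived inside the proof of Theorem~\ref{thm:meromorphic}.

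For the basis statement I would identify $\mathcal{I}^\vee$ with $\Hom(\mathcal{I},\zed)$ via the trace pairing, so $\langle s_j,r_i\rangle = \delta_{ij}$. Since $\Weight(\mathcal{S}_\br) = \{\pm r_1,\ldots,\pm r_g\}$ and only $\alpha\otimes\alpha$ enters the definition of $N$, signs are irrelevant and the induced pairing gives
$$\langle s_j\otimes s_k,\, r_i\otimes r_i\rangle = \delta_{ij}\,\delta_{ik},$$
which vanishes whenever $j\neq k$; so each $s_j\otimes s_k$ with $j\neq k$ lies in $N(\mathcal{S}_\br)$. In the basis $\{s_i\otimes s_j\}_{i\leq j}$ of $\bS_\ratls(F)$, the $g$ linear functionals $r_i\otimes r_i$ extract precisely the diagonal coordinates $s_i\otimes s_i$ and are therefore linearly independent, so $\dim N(\mathcal{S}_\br) = \binom{g+1}{2} - g = \binom{g}{2}$, which matches the number of $s_j\otimes s_k$ with $j<k$.

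For the cross-ratio identification I would fix a trinodal curve $X_0\in\mathcal{S}_\br$ represented by six points $p_{\pm 1},p_{\pm 2},p_{\pm 3}$ on $\proj^1$ and use plumbing coordinates $(\bs,\bt)$ in which $\mathcal{S}_\br$ is locally $\bt=0$. The stable form $\eta_\bt^\bs$ with $\int_{\alpha_i}\eta = s_j(r_i) = \delta_{ij}$ restricts at $\bt=0$ to the unique meromorphic one-form on $\proj^1$ with residues $\pm 1/(2\pi i)$ at $p_{\pm j}$ and no other singularities, namely
$$\eta|_{X_0} = \frac{1}{2\pi i}\left(\frac{1}{z-p_j} - \frac{1}{z-p_{-j}}\right) dz.$$
Following the proof of Theorem~\ref{thm:meromorphic}, $\Psi(s_j\otimes s_k)(\bs,\bt) = \exp\!\bigl(2\pi i\int_{\delta_\bt^\bs}\eta_\bt^\bs\bigr)$, where the cycle $\delta_\bt^\bs$ is an exterior chain $\delta_{\bt,0}^\bs$ plus plumbing arcs weighted by $s_k([\gamma_i]) = \delta_{ik}$, so only the $k$-th arc contributes. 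As $\bt\to 0$ this one-chain converges to a path on $\proj^1$ from $p_k$ to $p_{-k}$; integrating the displayed form and exponentiating yields the cross-ratio $[p_k,p_{-k},p_{-j},p_j]$, which equals $R_{[kj]}=R_{[jk]}$ by the direct symmetry check on the definition.

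The main obstacle is the plumbing bookkeeping: one must verify that the holomorphic nonvanishing factor $k(\bs,\bt)$ of Corollary~\ref{cor:period_matrix_boundary} equals the cross-ratio on the nose rather than up to an overall multiplicative constant, and that the orientation of $\beta_k$ together with the endpoint conventions of the arcs $\delta_{\bt,k}^\bs$ match the sign conventions built into the definition of $\Psi$. These reduce to careful tracking of branches of the logarithm and of the $\alpha$-normalization of $\eta$, since a spurious scalar would destroy the equality $\Psi(s_j\otimes s_k) = R_{[jk]}$.
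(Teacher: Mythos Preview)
Your argument for the basis claim is identical to the paper's: the pairing $\langle s_j\otimes s_k, r_i\otimes r_i\rangle=\delta_{ij}\delta_{ik}$ plus a dimension count.

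For the cross-ratio identity, your computation is also essentially the paper's, but you have wrapped it in unnecessary plumbing machinery and then flagged the resulting bookkeeping as an obstacle. Since $s_j\otimes s_k\in N(\mathcal{S}_\br)$, Theorem~\ref{thm:meromorphic} already tells you $\Psi(s_j\otimes s_k)$ is holomorphic and nonvanishing on $\mathcal{S}_\br$; its value on a boundary point $X_0$ is therefore simply $\exp\bigl(2\pi i\int_\gamma\eta\bigr)$ with $\eta$ the stable form on $X_0$ having $\alpha$-periods $s_j$ and $\gamma$ a path from $p_k$ to $p_{-k}$. There is no limiting procedure and no mysterious factor $k(\bs,\bt)$ to chase: the integral you wrote down,
\[
\exp\!\int_{p_k}^{p_{-k}}\left(\frac{1}{z-p_j}-\frac{1}{z-p_{-j}}\right)dz
= \frac{(p_{-k}-p_j)(p_k-p_{-j})}{(p_{-k}-p_{-j})(p_k-p_j)} = R_{[jk]},
\]
is already the answer. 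The paper streamlines this further by using the \Mobius normalization $p_j=0$, $p_{-j}=\infty$, $p_k=1$, which reduces $\eta$ to $dz/(2\pi i z)$ and the integral to $\log p_{-k}$, so $\Psi(s_j\otimes s_k)=p_{-k}=R_{[jk]}$ in one line. Your route and the paper's are the same in substance; the normalization just removes the cross-ratio algebra and, more importantly, makes it visible that your ``main obstacle'' is not an obstacle at all.
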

\par
\begin{proof}
  That $s_j \otimes s_k$ belongs to $N(\mathcal{S}_{\br})$ follows from the definition
  of the dual basis with respect to the trace pairing.  They are obviously linearly independent and
  thus a basis by a dimension count
  \par
  We normalize a point $P = (p_{-g}, \ldots, p_g)$ of $\mathcal{S}_\br$ by a \Mobius transformation so
  that  $p_j=0$, $p_{-j} = \infty$ and $p_k = 1$. By definition of $\Psi(s_j\otimes s_k)$
  we must choose the stable one-form $\omega$ on $\proj^1$ with residue $\pm \trace(s_j r_m)/2\pi i$ at
  the point $p_{\pm m}$, i.e. \ we have to choose $\omega = dz/2\pi iz$. We then integrate this function
  over the path whose intersection with the loop around the node at $p_{\pm m}$ is $\trace(s_k
  r_m)$.  On $\proj^1$, this is a path $\gamma$ joining $p_k=1$ to $p_{-k}$.
  We then have
  \begin{equation*}
    \Psi(s_j\otimes s_k)(P) = e^{2\pi i\int_\gamma\omega} = p_{-k} =R_{[jk]}(P).\qedhere
  \end{equation*}
\end{proof}
\par
\begin{cor}
  \label{cor:CRarecoord6}
  For $g=3$, after identifying $\Hom(N(\mathcal{S})\cap\bS_\zed(\mathcal{I}^\vee), \cx^*)$ with
  $(\cx^*)^3$ via the basis $(s_1\otimes s_2, s_2\otimes s_3, s_3\otimes s_1)$ of
  $N(\mathcal{S})\cap\bS_\zed(\mathcal{I}^\vee)$, the map $\CR$ becomes
  \begin{equation*}
    \CR = (R_{[12]}, R_{[23]}, R_{[31]}) \colon\mathcal{S}_\br\to (\cx\setminus\{0,1\})^3.
  \end{equation*}
  The map $\CR$ is a two-to-one branched cover which identifies orbits of the involution
  $i\colon\mathcal{S}_\br\to\mathcal{S}_\br$ which exchanges each pair $p_i$ and $p_{-i}$.
\end{cor}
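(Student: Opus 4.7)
The plan is to verify two things: that $\CR$ is invariant under the involution $i$, so that it factors through the quotient $\mathcal{S}_\br/\langle i\rangle$, and that the induced map to $(\cx\setminus\{0,1\})^3$ is generically injective. Since $\mathcal{S}_\br \cong \moduli[0,6]$ and $(\cx\setminus\{0,1\})^3$ are both three-dimensional, these two statements together force $\CR$ to be a $2$-to-$1$ branched cover whose fibers are exactly $i$-orbits. Invariance under $i$ is immediate from the cross-ratio symmetry
\begin{equation*}
[z_2,z_1,z_4,z_3] \;=\; [z_1,z_2,z_3,z_4],
\end{equation*}
which follows directly from the definition in \eqref{CR6}: the involution $i$ acts on the $4$-tuple $(p_j,p_{-j},p_{-k},p_k)$ defining $R_{[jk]}$ by swapping positions $1\!\leftrightarrow\!2$ and $3\!\leftrightarrow\!4$ simultaneously, so $R_{[jk]}\circ i = R_{[jk]}$.

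For generic injectivity modulo $i$, I would normalize by \Mobius transformations so that $p_1=0$, $p_{-1}=\infty$ and $p_2 = 1$. Direct computation from the definition of the cross-ratio (using $[0,\infty,a,b] = a/b$ in the appropriate limit) gives
\begin{equation*}
R_{[12]} = p_{-2}, \qquad R_{[31]} = p_{-3}/p_3,
\end{equation*}
so $p_{-2}$ and the ratio $p_{-3}/p_3$ are pinned down by two of the three cross-ratios. Substituting $p_{-3} = p_3\, R_{[31]}$ into the explicit expression for $R_{[23]} = [1,p_{-2},p_{-3},p_3]$ produces a quadratic equation in $p_3$ whose coefficients are rational functions of $R_{[12]}$, $R_{[23]}$, $R_{[31]}$. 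Vieta's formulas give that the product of its two roots equals $R_{[12]}/R_{[31]}$.

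To match those two roots to a single $i$-orbit, I would trace the action of $i$ through the renormalization: applying $i$ to $(0,\infty,1,p_{-2},p_3,p_{-3})$ yields $(\infty,0,p_{-2},1,p_{-3},p_3)$, and restoring the chosen normalization by the \Mobius map $z\mapsto p_{-2}/z$ replaces $p_3$ by $p_{-2}/p_{-3} = R_{[12]}/(p_3\,R_{[31]})$. The product of the old and new values of $p_3$ is therefore exactly $R_{[12]}/R_{[31]}$, matching the Vieta relation, so the two roots of the quadratic are indeed paired by $i$. The main bookkeeping obstacle is this renormalization step; once it is handled the $2$-to-$1$ claim is immediate, with branch locus along the $i$-fixed-point set cut out in these coordinates by $p_3\,p_{-3} = p_{-2}$.
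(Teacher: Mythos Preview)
Your proposal is correct and follows essentially the same route as the paper's own proof: the paper simply says the two-to-one property ``can be checked [by fixing] three of the $p_i$ and solving for the rest,'' and observes the $i$-invariance of each $R_{[jk]}$ from the cross-ratio symmetry. You have carried out exactly this computation explicitly, including the Vieta check that the two roots of the resulting quadratic are exchanged by $i$, which the paper leaves to the reader.

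One minor remark: the Vieta verification, while a nice concrete sanity check, is not strictly necessary once you have both $i$-invariance and the quadratic bound on fiber size. Since $i$ is a nontrivial involution on $\mathcal{S}_\br$, the quotient map $\mathcal{S}_\br\to\mathcal{S}_\br/\langle i\rangle$ has degree $2$; as $\CR$ factors through this quotient, $\CR$ has degree at least $2$, and the quadratic shows it is at most $2$. So the two solutions must already be an $i$-orbit. Your direct computation confirms this and pins down the branch locus, which is a bonus.
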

\par
\begin{proof}
  That $\CR$ is of this form follows immediately from the definition of $\CR$ and
  Proposition~\ref{prop:CRpsisjsk}.
  \par
  That the map $\CR = (R_{[12]}, R_{[13]}, R_{[23]})$ is two-to-one onto its image can be
  checked three of the $p_i$ and solving for the rest.  Interchanging each $p_i$ and $p_{-i}$
  leaves each cross-ratio $R_{[jk]}$ invariant, so $\CR$ is the quotient map by this involution. 
\end{proof}

\paragraph{Type $[4]\times^4[4]$ strata.}
\label{page:4times4}

Consider an $\mathcal{I}$-weighted stable curve $X$ of type $[4]\times^4[4]$ having weights $r_1,
\ldots, r_4\in \mathcal{I}$ with $\sum r_i = 0$, and let $\mathcal{S}$ be the corresponding
$\mathcal{I}$-weighted boundary stratum.  We name $u_1, \ldots, u_4$ the four points on one
irreducible component with weight $r_1, \ldots, r_4$ and name $v_1, \ldots, v_4$ the opposite points
on the other irreducible component.  We define the cross-ratios,
\begin{equation*}
R_u = [u_1,u_2,u_3,u_4] \quad \text{and} \quad R_v=[v_1,v_2,v_3,v_4].
\end{equation*}
$\mathcal{S}$ is isomorphic to $\moduli[0,4]\times\moduli[0,4]$ with $R_u$ and $R_v$ coordinates on
the first  and second factors.

\paragraph{Type $[4]\times^2[4]$ strata.}

Now consider the $\mathcal{I}$-weighted stable curve shown in Figure~\ref{fig:44graph} with distinct weights
$r_1, r_2, r_3\in \mathcal{I}$, and let $\mathcal{S}$ be the corresponding $\mathcal{I}$-weighted boundary stratum.  We
label by $p_1, p_{-1}, p_2, p_{-2}$ the points on one irreducible component with weights $r_1, -r_1,
r_2, -r_2$ and label by $q_1, q_{-1}, q_2, q_{-2}$ the points on the other irreducible component
with weights $r_3, -r_3, -r_2, r_2$.  The stratum $\mathcal{S}$ is isomorphic to
$\moduli[0,4]\times\moduli[0,4]$ with coordinates
\begin{equation}
  \label{CR424}
  R_1=[q_1,q_{-1}, q_{-2}, q_2] \quad \text{and} \quad R_3=[p_1,p_{-1}, p_{-2}, p_2].
\end{equation}

The stratum $\mathcal{S}$ arises as a degeneration of the irreducible weighted boundary stratum with
weights $r_1, r_2, r_3$ by pinching a curve around the points of weights $r_1, -r_1, r_2$.  As
this curve is pinched, the cross-ratio $R_{[1,3]}$ tends to $1$.

\begin{figure}[htbp]
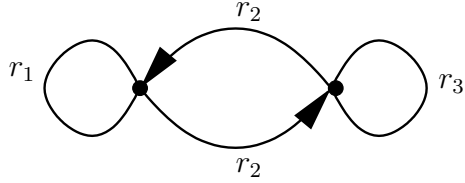

  \centering
  \ifpdf
  \input{44graph.pdftex_t}
  \else
  \input{44graph.pstex_t}
  \fi
  \caption{Type $[4]\times^2[4]$ $\mathcal{I}$-weighted stable curve}
  \label{fig:44graph}
\end{figure}

\paragraph{Calculation of $\mathcal{S}(T)$.}

We will write $R_i$ for $R_{[jk]}$ where $\{i,j,k\} = \{1,2,3\}$
and we let $(s_1,s_2,s_3)$ be the dual basis to $(r_1,r_2,r_3)$.

Whether $\mathcal{S}(T) = \mathcal{S}$ or not will depend on the following notion.  Given an
$\mathcal{I}$-weighted boundary stratum $\mathcal{S}$, we let $\Span(\mathcal{S})\subset\ratls^3$ denote
the $\ratls$-span of $\{Q(r): r\in\Weight(\mathcal{S})\}$, and let $\codim(\Span(\mathcal{S}))$ denote the
codimension of $\Span(\mathcal{S})$ in $\ratls^3$.

\begin{theorem}
  \label{thm:explCREQ}
  The locus $\mathcal{S}(T)$ is defined by the following equation.
  \begin{compactitem}
  \item{Case $[6]$:} For a boundary stratum of type $[6]$, we use the cross-ratio coordinates $R_1,
    R_2, R_3$ defined in Proposition~\ref{prop:CRpsisjsk}.  Then the subvariety $\mathcal{S}(T)$
    of the admissible boundary stratum $\mathcal{S}_{(r_1,r_2,r_3)}$ is given by the cross-ratio
    equation
    \begin{equation}
      \label{eq:CREQ} \prod_{i=1}^3 R_i^{a_i}= \zeta,
    \end{equation}
    where the $a_i$ are the unique (up to sign) relatively prime integers such that $a_i = t b_i$
    for some $t\in F$, and
    \begin{equation}
      \label{eq:1}
      b_i = N_\ratls^F(r_i)\left(\frac{s_i}{r_i}\right)^2,
    \end{equation}
    and where $\zeta$ is the root of unity $\zeta = e^{2\pi i u}$ with 
    \begin{equation}
      \label{eq:21}
      u = \langle T,\sigma \rangle,
    \end{equation}
    where
    \begin{equation}
      \label{eq:22}
      \sigma =  \sum_{i=1}^3 a_i s_{i+1} \otimes s_{i+2}.
    \end{equation}
    Here we interpret the extension class $T$ as an element of $\Sym_\ratls(F)$.
  \item{Case $[4]\times^2 [4]$:} The subvariety $\mathcal{S}(T)$ of the admissible boundary
    stratum with weights $\{r_1,r_2,r_3,r_4=-r_2\}$ is given, using the cross-ratio coordinates
    defined above, by
    \begin{equation} \label{eq:CR4ti24} R_1^{a_1} R_3^{a_3}= \zeta,
    \end{equation}
    where the $a_i$ and $\zeta$ are calculated from $\{r_1,r_2,r_3\}$ as in the preceding
    case $[6]$.
  \item{Case $[4]\times^4 [4]$:} There are two possibilities.  If $\codim(\Span(\mathcal{S}))=0$,
    then $\mathcal{S}(T)$ is the whole stratum.  If $\codim(\Span(\mathcal{S}))=1$, then
    $\mathcal{S}$ is a degeneration of an admissible irreducible weighted boundary stratum
    $\mathcal{S}_{(r_1,r_2,r_3)}$ with the property that exponents $a_i$ defined above satisfy
    $\sum_{i=1}^3 a_i =0$. Moreover, $\mathcal{S}(T)$ is cut out by the equation
    \begin{equation} \label{eq:CRstr444} (R_uR_v)^{a_1} \cdot
      \left(\frac{R_u}{1-R_u}\frac{R_v}{1-R_v}\right)^{a_3}= \zeta,
    \end{equation}
    where $\zeta$ is as in the case $[6]$.
  \end{compactitem}
  This is a complete list of the cases of boundary strata without separating curves, where for some
  admissible boundary stratum $\mathcal{S}$, we can have $\mathcal{S}(T) \subsetneq \mathcal{S}$.
\end{theorem}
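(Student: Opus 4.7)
The plan is to invoke Theorem~\ref{thm:boundary_suff}, which identifies $\mathcal{S}(T)=\CR^{-1}(G(T))$ with $G(T)=p^{-1}(q(T))$ the translate of the subtorus $p^{-1}(1)$ by the character $q(T)$.  Consequently the equations defining $\mathcal{S}(T)\subset \mathcal{S}$ are indexed by the lattice
\[
  \Gamma_\mathcal{S}:=N(\mathcal{S})\cap \Ann(\Lambda^1)\cap \bS_\zed(\mathcal{I}^\vee),
\]
and for each $\sigma\in \Gamma_\mathcal{S}$ the defining equation is $\Psi(\sigma)|_\mathcal{S}=q(T)(\sigma)$.  In each case I must identify a generator of $\Gamma_\mathcal{S}$, express $\Psi(\sigma)|_\mathcal{S}$ in the cross-ratio coordinates of $\mathcal{S}$, and recognise $q(T)(\sigma)$ as the root of unity $\zeta$ of the statement.

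For type $[6]$, Proposition~\ref{prop:CRpsisjsk} and Corollary~\ref{cor:CRarecoord6} give that $N(\mathcal{S}_\br)$ is spanned by the three elements $s_{i+1}\otimes s_{i+2}$ with $\Psi(s_{i+1}\otimes s_{i+2})=R_i$.  Identifying $\Lambda^1\subset F\otimes F$ with $F$ acting by left multiplication, the pairing computation reduces the condition $\sigma=\sum a_i\,s_{i+1}\otimes s_{i+2}\in\Ann(\Lambda^1)$ to the single $F$-linear equation $\sum a_i\,s_{i+1}s_{i+2}=0$, equivalently $\sum a_i/s_i=0$ in $F$.  Substituting $b_i:=N(r_i)(s_i/r_i)^2$ rewrites the left-hand side as $(s_1/r_1)\sum q_i Q(r_i)$, where $q_i:=(s_i/r_i)/(s_1/r_1)\in\ratls^+$ by rationality and positivity (Proposition~\ref{prop:nhsequivratpos}); the equation becomes $\sum q_i Q(r_i)=0$, which is the unique (up to scalar) $\ratls$-relation among $Q(r_1),Q(r_2),Q(r_3)$ provided by codimension-one admissibility.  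Thus $a_i=tb_i$, for $t\in F$ chosen to clear denominators and enforce coprimality, generates $\Gamma_\mathcal{S}$, and $\Psi(\sigma)=q(T)(\sigma)$ then reads $\prod R_i^{a_i}=\zeta$, which is \eqref{eq:CREQ}.

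For the strata of type $[4]\times^2[4]$ and $[4]\times^4[4]$ I plan to realize $\mathcal{S}$ as a degeneration of a $[6]$ stratum and pass the cross-ratio equation to the boundary.  In type $[4]\times^2[4]$ the weights are $\{\pm r_1,\pm r_2,\pm r_3\}$, so $\Gamma_\mathcal{S}$ coincides with that of $\mathcal{S}_{(r_1,r_2,r_3)}$; realising $\mathcal{S}$ as the degeneration in which the three points $p_1,p_{-1},p_2$ collide, a local plumbing computation shows that $R_2$ extends to the constant $1$ on $\mathcal{S}$ while $R_1,R_3$ specialise to the cross-ratios \eqref{CR424}, yielding \eqref{eq:CR4ti24}.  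In type $[4]\times^4[4]$, $\Gamma_\mathcal{S}$ is nontrivial (rank one) exactly when $\codim(\Span(\mathcal{S}))=1$; in this case $\mathcal{S}$ arises from an admissible $[6]$ stratum by pinching the separating curve splitting the six marked points into two triples, so $r_4=-(r_1+r_2+r_3)$.  By Theorem~\ref{thm:meromorphic}, each $R_i$ acquires the same order along the new divisor $D_{r_4}$, so the $[6]$-stratum equation extends to $\mathcal{S}$ precisely when $\sum a_i=0$; with this constraint, expanding $\prod R_i^{a_i}$ in plumbing coordinates and re-expressing the finite limit in the coordinates $R_u,R_v$ produces \eqref{eq:CRstr444}.

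The main obstacle is the explicit plumbing computation in the $[4]\times^4[4]$ case: one must track local coordinates on both components, match them to the six points of the undegeneration, and verify that the resulting ratio takes the asymmetric shape in \eqref{eq:CRstr444}.  Completeness of the list of cases is then a matter of inspecting the remaining graph types in Figure~\ref{fig:graphs} without separating nodes and checking that in each, either no admissible weighting exists or the weight system forces $\{Q(r_i)\}$ to span $F$, so that $\Gamma_\mathcal{S}=0$ and automatically $\mathcal{S}(T)=\mathcal{S}$.
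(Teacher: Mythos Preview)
Your outline follows the paper's approach closely, and the overall architecture is right: identify a generator of $\Gamma_\mathcal{S}=N(\mathcal{S})\cap\Ann(\Lambda^1)\cap\bS_\zed(\mathcal{I}^\vee)$, evaluate $\Psi$ on it via Proposition~\ref{prop:CRpsisjsk}, and for the reducible strata pass the equation through a degeneration of a $[6]$ stratum.  (Minor point: $\mathcal{S}(T)=\CR^{-1}(G(T))$ is the \emph{definition} of $\mathcal{S}(T)$, not a consequence of Theorem~\ref{thm:boundary_suff}.)  There are, however, two genuine gaps.

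In the $[6]$ case you reduce to $\sum a_i s_{i+1}s_{i+2}=0$ and propose $a_i$ proportional to $b_i=N(r_i)(s_i/r_i)^2$, but you never verify that the $b_i$ actually satisfy this relation.  Saying ``$\sum q_iQ(r_i)=0$ is the unique $\ratls$-relation provided by codimension-one admissibility'' is circular: admissibility guarantees \emph{some} relation exists, not that \emph{this} combination vanishes.  The paper proves the needed identity $\sum N(r_i)s_i/r_i^2=0$ directly from rationality and the matrix relation $S^tR=I$ for the dual bases (Lemma~\ref{lem:cipropNsr}); you need that computation or an equivalent.  In the $[4]\times^4[4]$ case you assert that $\mathcal{S}$ is a degeneration of an \emph{admissible} $[6]$ stratum, but this requires the convex-geometry Lemma~\ref{lemma:cones}: given four $Q(r_i)$ whose positive span is a plane, some three of them already positively span it.  On the other hand, your proposed argument for $\sum a_i=0$ via orders of vanishing is cleaner than the paper's explicit norm computation, once you make the logic explicit: $\Gamma_\mathcal{S}$ has rank one and sits inside $\Gamma_{\mathcal{S}_{(r_1,r_2,r_3)}}$, so they share the primitive generator $\sigma$; then $\sigma\in N(\mathcal{S})$ forces $\langle\sigma,r_4\otimes r_4\rangle=\sum a_i\cdot\trace(s_{i+1}r_4)\trace(s_{i+2}r_4)=\sum a_i=0$.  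Finally, ``inspecting the remaining graph types'' for completeness is the content of Lemma~\ref{lemma:conedims}, whose key input (Lemma~\ref{lemma:linindepQ}) is that three elements of $F$ spanning a $2$-plane have $\ratls$-independent $Q$-images; this is what forces $\codim(\Span(\mathcal{S}))=0$ whenever a trivalent vertex is present.
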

\par
We will refer to the equations stated in the above theorem as the \emph{cross-ratio
equations}.
\par
The following lemmas determine the possibilities for $\codim(\Span(\mathcal{S}))$.
\par
\begin{lemma}
  \label{lemma:linindepQ} Suppose that the $\ratls$-span of $r_1,r_2, r_3 \in F \setminus\ratls$ is
  two-dimensional. Then $Q(r_1),Q(r_2)$, and $Q(r_3)$ are $\ratls$-linearly independent.
\end{lemma}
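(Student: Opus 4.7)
The plan is to reduce the lemma to a purely multiplicative statement in $F$. Using the identity $Q(r) = \NFQ(r)/r$ and noting that $\NFQ(r_i) \neq 0$ (since $r_i \in F \setminus \ratls$ implies $r_i \neq 0$), the $\ratls$-linear independence of $Q(r_1), Q(r_2), Q(r_3)$ is equivalent to that of the inverses $r_1^{-1}, r_2^{-1}, r_3^{-1}$. Clearing denominators in a hypothetical nontrivial relation $\sum_i a_i r_i^{-1} = 0$ turns the question into asking whether the three products $r_2 r_3$, $r_1 r_3$, $r_1 r_2$ are $\ratls$-linearly independent in $F$.

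Pick $r_1, r_2$ as a basis of the $2$-plane $V = \Span_\ratls(r_1, r_2, r_3)$, and write $r_3 = \alpha r_1 + \beta r_2$ with $\alpha, \beta \in \ratls$. The case of genuine interest (in fact the only case where the conclusion can hold) is when the $r_i$ are pairwise $\ratls$-non-proportional, which forces $\alpha, \beta \neq 0$; I proceed under this assumption. In the basis $\{r_1^2, r_1 r_2, r_2^2\}$ of the product space $V \cdot V$, one computes directly
\begin{align*}
  r_1 r_2 &= r_1 r_2, \\
  r_1 r_3 &= \alpha r_1^2 + \beta r_1 r_2, \\
  r_2 r_3 &= \alpha r_1 r_2 + \beta r_2^2,
\end{align*}
so the determinant of the corresponding coefficient matrix is $\pm\alpha\beta \neq 0$. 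Hence the three products are $\ratls$-linearly independent inside $V \cdot V$, provided $\dim_\ratls(V \cdot V) = 3$.

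The main step is therefore to show that $V \cdot V = F$ for any $2$-dimensional $\ratls$-subspace $V \subset F$. The subspaces $r_1 V$ and $r_2 V$ are each $2$-dimensional $\ratls$-subspaces of $V \cdot V$. If they coincided, then $V$ would be invariant under multiplication by $r_2/r_1 \in F \setminus \ratls$, making $V$ a module over the subring $\ratls[r_2/r_1] \subseteq F$. Because $F$ is cubic over $\ratls$, it has no proper intermediate subfield, so $\ratls[r_2/r_1] = F$, which would force $V$ to be an $F$-submodule of $F$ of dimension $2$~-- impossible. Hence $r_1 V \neq r_2 V$, so $r_1 V + r_2 V$ has dimension at least $3$ and must equal $F$.

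The only real obstacle is this last step, which crucially uses that $F$ has prime degree over $\ratls$; the rest is just linear algebra. Putting the pieces together, the only solution to $\sum_i a_i \prod_{j\neq i} r_j = 0$ is $a_1 = a_2 = a_3 = 0$, and we conclude that $Q(r_1), Q(r_2), Q(r_3)$ are $\ratls$-linearly independent.
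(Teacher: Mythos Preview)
Your proof is correct and takes a genuinely different route from the paper's. The paper argues geometrically: via the real embeddings $F \hookrightarrow \reals^3$ the map $Q$ becomes $(x,y,z) \mapsto (yz, xz, xy)$ on $\proj^2$; a hypothetical rational line $L$ through the $Q(r_i)$ must have all three coefficients nonzero (since each $Q(r_i)\notin\ratls$), so $Q^{-1}(L)$ is a smooth conic, which the projective line carrying the $r_i$ can meet in at most two points, contradicting the fact that all three $r_i$ lie on it. Your argument is purely field-theoretic: you reduce to the independence of the pairwise products $r_i r_j$, and then show $V\cdot V = F$ by observing that a $2$-plane $V\subset F$ cannot be stable under multiplication by an irrational element because $[F:\ratls]$ is prime. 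Your approach isolates cleanly where the prime-degree hypothesis enters and makes no use of the real embeddings; the paper's approach is more geometric and visual. Both proofs tacitly need the $r_i$ to be pairwise $\ratls$-non-proportional (the paper needs three \emph{distinct} projective points on its conic); you are right to flag this, and in every application of the lemma in the paper the hypothesis is automatically satisfied.
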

\par

\begin{proof}
  Embedding $F$ in $\reals^3$ by its three real embeddings, the map $Q$ becomes
  \begin{equation*}
    Q(x, y, z) = (yz, xz, xy),
  \end{equation*}
  which we regard as a degree two map $Q\colon\proj^2(\reals)\to\proj^2(\reals)$.  Suppose the
  $Q(r_i)$ are $\ratls$-linearly dependent.  They then lie on a line $L\subset\proj^2(\reals)$
  cut out by an equation $a_1x+a_2y+a_3z=0$ with each $a_i\in\ratls$.  Each coefficient $a_i$ of this
  equation must be nonzero, for if (say) $a_3$ were zero, then no irrational $s\in F$ could lie on $L$, since
  the equation $a_1 s^{(1)} + a_2 s^{(2)}=0$ implies $s\in\ratls$.

  The inverse image $f^{-1}(L)$ is  a nonsingular conic, so it intersects any line in at most two
  points.  Thus if the $r_i$ were $\ratls$-linearly dependent, they could not map to $L$.
\end{proof}
\par

\begin{lemma} \label{lemma:conedims}
If the stratum $\mathcal{S}$ is irreducible or if it is of type $[4] \times^2 [4]$, 
then $\codim(\Span(\mathcal{S})) =1$.
If it is of type  $[4] \times^4 [4]$, then either $\codim(\Span(\mathcal{S})) =0$
or $\codim(\Span(\mathcal{S})) =1$.
In all of the remaining cases, $\codim(\Span(\mathcal{S})) =0$.
\end{lemma}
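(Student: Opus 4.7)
The plan is to do a case analysis over the topological type of $\mathcal{S}$, as classified in Figure~\ref{fig:graphs}. The main tool will be Lemma~\ref{lemma:linindepQ}, together with the elementary observation that $Q(x)\in\ratls\cdot Q(y)$ forces $x\in\ratls\cdot y$, which is immediate from $Q(x)=N^F_\ratls(x)/x$. First, for $\mathcal{S}$ irreducible (type $[6]$) or of type $[4]\times^2[4]$, the weight set is $\{\pm r_1,\pm r_2,\pm r_3\}$, and since these three elements generate the rank-three lattice $\mathcal{I}$ they must form a $\zed$-basis of it. Admissibility, combined with Proposition~\ref{prop:nhsequivratpos}, then forces $Q(r_1),Q(r_2),Q(r_3)$ to lie in a common two-dimensional $\ratls$-plane through $0$, giving $\dim_\ratls\Span(\mathcal{S})\le 2$; the proportionality observation gives the reverse inequality, yielding $\codim(\Span(\mathcal{S}))=1$.

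Next, for $\mathcal{S}$ of type $[4]\times^4[4]$, the weights are $\pm r_1,\ldots,\pm r_4$ with $\sum_i r_i=0$ and $(r_1,r_2,r_3)$ a $\zed$-basis of $\mathcal{I}$. The same proportionality observation yields $\dim_\ratls\Span(\mathcal{S})\ge 2$, and $\dim\le 3$ is automatic, so $\codim\in\{0,1\}$. For all remaining types in Figure~\ref{fig:graphs} (those without separating curves), I will first show by counting that the dual graph has some component with exactly three marked points: the total number of marked points equals $2E=2(h_1+V-1)=2V+4$, so if every component had at least four marked points then $V\le 2$, and a small enumeration of such $V\le 2$ graphs with no separating edge recovers precisely the three cases already treated. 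On a component with exactly three marked points the three weights $w_1,w_2,w_3$ satisfy $w_1+w_2+w_3=0$ and hence span at most a two-dimensional $\ratls$-subspace of $F$. The hypothesis that $\Weight(\mathcal{S})$ generates the rank-three lattice $\mathcal{I}$ will be shown to preclude $w_1,w_2,w_3$ from being pairwise $\ratls$-proportional, after which Lemma~\ref{lemma:linindepQ} yields that $Q(w_1),Q(w_2),Q(w_3)$ are $\ratls$-linearly independent, and hence $\codim(\Span(\mathcal{S}))=0$.

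The main obstacle I expect is the edge case in which some $w_i$ lies in $\ratls$, outside the hypothesis of Lemma~\ref{lemma:linindepQ}. The plan here is a direct coordinate computation via the three real embeddings of $F$: if $w_1\in\ratls$ then $Q(w_1)$ lies on the rational diagonal $\ratls\cdot(1,1,1)$ in $F\otimes_\ratls\reals$, while for $w_2\in F\setminus\ratls$ (which has three distinct real conjugates) the vector $Q(w_2)$ lies off this diagonal; a short determinant check using $w_3=-w_1-w_2$ then shows the three $Q$-vectors remain linearly independent. A secondary step is to establish the pairwise non-proportionality of $w_1,w_2,w_3$ from the spanning hypothesis: propagating the assumption ``all $w_i$ lie in a single rational line'' through the graph via the per-component sum-to-zero relations forces $\Weight(\mathcal{S})$ to span a $\ratls$-subspace of $F$ of dimension at most two, contradicting the rank-three spanning requirement.
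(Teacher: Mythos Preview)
Your approach is essentially the paper's: show every type other than $[6]$, $[4]\times^2[4]$, $[4]\times^4[4]$ has a thrice-punctured component, then invoke Lemma~\ref{lemma:linindepQ}. You are in fact more careful than the paper in two places it glosses over: (i) the possibility that one of the three weights at the thrice-punctured component lies in $\ratls$, and (ii) the possibility that those three weights are pairwise $\ratls$-proportional. Your determinant computation for (i) is correct.

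Your ``propagation'' argument for (ii), however, is not a proof as written: passing a rational-line constraint from one vertex to its neighbours via the sum-to-zero relation does not by itself force the neighbouring weights into a two-dimensional subspace. The correct argument is global rather than local. If the three edge-weights at the trivalent vertex $v$ all lie in $\ratls\cdot w$, pass to the quotient $F\to F/\ratls w$; the induced weighting $\bar w$ vanishes on the three edges at $v$ and still satisfies every vertex relation, so it is a $1$-cycle on $G' := G - v$. Since $G$ has no separating edges and $v$ is trivalent, $G'$ is connected (if $G'$ were disconnected, some component would receive exactly one of the three edges from $v$, making that edge a bridge). Hence $h_1(G') = (E-3)-(V-1)+1 = 1$, so the $\bar w_e$ all lie in a single rational line in $F/\ratls w$, and the original weights span at most a two-dimensional $\ratls$-subspace of $F$, contradicting the rank-three requirement.

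One small clarification: your parenthetical ``(those without separating curves)'' is not literally true of all remaining graphs in Figure~\ref{fig:graphs}---some do have separating edges---but restricting to the bridge-free case is exactly right, since that is the only case in which Lemma~\ref{lemma:conedims} is used (and, as your $G-v$ argument shows, the only case in which the proportionality step goes through).
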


\begin{proof}
  Since the set of weights contains a $\ratls$-basis of $F$, $\codim(\Span(\mathcal{S}))$ is at most
  $1$.  The preceding Lemma~\ref{lemma:linindepQ} implies that $\codim(\Span(\mathcal{S}))=0$
  whenever the curves $\mathcal{S}$ contains a component isomorphic to a thrice-punctured $\proj^1$.
  The only remaining cases are the irreducible stratum and strata of type $[4] \times^2 [4]$. In
  either case there are only three distinct weights.  We only need to remark that three vectors
  cannot span $\reals^3$ and contain $0$ in its convex hull at the same time.
\end{proof}
\par
We will show in Example~2 of Appendix~\ref{sec:bdcounting}
that this is a complete list of constraints.
\par

\begin{lemma}
  \label{lemma:cones}
  Suppose that $\{P_i\}_{i=1}^k$ are $k$ points in $\reals^n$, $k \geq n+2$, whose $\reals^+$-span
  is all of $\reals^n$ and such that no $n$ of the $P_i$ are
  contained in a subspace of dimension $n-1$.  Then there are $n+1$ points among the $P_i$, whose
  $\reals^+$-span also is all of $\reals^n$.
\end{lemma}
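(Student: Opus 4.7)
The plan is to apply Carath\'eodory's theorem to reduce from $k$ points to at most $n+1$ points forming a convex combination of $0$, and then use the general position hypothesis to conclude the combination involves exactly $n+1$ points with strictly positive weights, from which positive spanning follows.

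First, I observe that the positive-spanning hypothesis implies $0 \in \mathrm{conv}(P_1, \ldots, P_k)$: writing $-P_1 = \sum_{i=1}^k a_i P_i$ with $a_i \geq 0$ yields the nontrivial nonnegative relation $(1+a_1) P_1 + \sum_{i \geq 2} a_i P_i = 0$, which normalizes to a convex combination equal to zero. Carath\'eodory's theorem then furnishes a subset $I \subset \{1, \ldots, k\}$ of size at most $n+1$ and weights $\lambda_i \geq 0$, $i \in I$, summing to $1$ with $\sum_{i \in I} \lambda_i P_i = 0$; choosing $I$ of minimal cardinality forces every $\lambda_i$ to be strictly positive, since otherwise the subset with a zero weight removed would still witness $0$ in its convex hull.

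Next, the general position hypothesis forces $|I| = n+1$. The condition that no $n$ of the $P_i$ lie in an $(n-1)$-dimensional subspace is equivalent to linear independence of any $n$ of them, so any subset of size at most $n$ admits no nontrivial linear relation, and a fortiori no nontrivial convex relation summing to zero. Thus $|I|$ cannot be $\leq n$, so $|I| = n+1$.

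Finally, to verify $\{P_i : i \in I\}$ positively spans $\reals^n$: given $v \in \reals^n$, pick any $n$ points from this subset; by general position they form a basis, so $v = \sum_{j=1}^n b_j P_{i_j}$. Adding a sufficiently large positive multiple of the relation $\sum_{i \in I} \lambda_i P_i = 0$ makes every coefficient -- including that of the remaining $(n+1)$-st point -- positive. The main delicate point in this plan is coordinating the minimality argument for $I$ with the general position hypothesis to simultaneously ensure all $\lambda_i$ are strictly positive and $|I|$ is exactly $n+1$; once these are in hand, the verification of positive spanning for $n+1$ points in general position carrying a strict positive linear dependence is a direct calculation by basis expansion plus a shift by the zero relation.
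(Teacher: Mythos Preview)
Your proof is correct and follows essentially the same approach as the paper: both reduce to $n+1$ points via Carath\'eodory's theorem (the paper proves the reduction step by hand, walking along a line in the kernel until a coefficient vanishes, which is the standard proof of Carath\'eodory), and both use the general-position hypothesis to rule out shorter relations. Your write-up is slightly more explicit than the paper's in that you spell out the final verification that the $n+1$ points with a strictly positive dependence actually positively span $\reals^n$, which the paper leaves implicit.
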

\par
\begin{proof}
  Given $k \geq n+2$ points $P_i$ in $\reals^n$ whose convex hull contains
  zero, we must show that there are $k-1$ among them whose convex hull still contains zero.
  The hypothesis on the span of subsets of $n+1$ elements will then imply that these vectors span
  $\reals^n$, and the claim follows from induction on $k$.
  \par
  Consider the linear map $f$ that assigns to $x \in \reals^k$ the sum $f(x) = \sum_{i=1}^{k} x_i
  P_i$. The hypothesis implies that $K = {\rm Ker}(f)$ contains $w = (w_1,\ldots,w_k)$ with
  $\sum_{i=1}^k w_i =1$ and $w_i >0$.  Since $\dim(K) >2$ there is also $0 \neq y \in K$ with $\sum
  y_i =0$.  The affine space $w + \lambda y$ has to intersect the coordinate hyperplanes at some
  point different from zero. This point yields a convex combination of zero with at most $k-1$
  summands.
\end{proof}
\par
\paragraph{Proof of Theorem~\ref{thm:explCREQ}.}
  We start with case $[6]$.  Recall that $\mathcal{S}(T)\subset\mathcal{S}$ is the subvariety cut
  out by the equations
  \begin{equation}
    \label{eq:25}
    \Psi(a) = e^{-2\pi i \langle T, a\rangle},
  \end{equation}
  as $a$ ranges in $N(\mathcal{S})\cap\Ann(\Lambda^1)\cap \bS_\zed(\mathcal{I}^\vee)$.  By
  Lemma~\ref{lemma:conedims}, this is a rank-one $\zed$-module, so by
  Proposition~\ref{prop:CRpsisjsk}, it is generated by $\sum_{i=1}^3 a_i s_{i+1}\otimes s_{i+2}$ for
  some relatively prime integers $a_i$, and the equation \eqref{eq:25} is simply \eqref{eq:CREQ}
  with $\zeta$ as in \eqref{eq:21}.  To find the $a_i$, we will find some rationals $b_i$ with $\sum
  b_i s_{i+1}\otimes s_{i+2}\in\Ann(\Lambda_1)$, and the $a_i$ will be a primitive integral
  multiple.
  \par
  If $b_i \in \ratls$, then $\sum b_i s_{i+1} \otimes s_{i+2} \in \Ann(\Lambda^1) $ if and only if
  $$ \trace\left(\sum_{i=1}^3 b_i s_{i+1}s_{i+2} x \right) = \left\langle \sum_{i=1}^3 b_i s_{i+1} \otimes s_{i+2}, \sum_{j=1}^3 r_j \otimes s_jx\right\rangle =0$$
  for all $x \in F$, thus if and only if $\sum b_i s_{i+1} s_{i+2} = 0$.
  \par
  If we let $\tilde{b}_i = N(r_i) \frac{s_i}{r_i}$ and take $c_i$ satisfying $\sum c_i/r_i = 0$ then
  we have
 $$ \sum_{i=1}^3 \tilde{b}_i \frac{c_i}{N(r_i)} s_{i+1} s_{i+2} = 0.$$
 From Lemma~\ref{lem:cipropNsr} below, we deduce that $(\tilde{b}_i \frac{c_i}{N(r_i)})_{i=1}^3$ is
 proportional to the $b_i$ as in the statement.  Thus the exponents in the cross-ratio equation are
 proportional to the $b_i$ as claimed.
 \par
 We next treat the case of a stratum $\mathcal{S}$ of type $[4] \times^4 [4]$.  As explained above
 along with the cross-ratio coordinates, this case is a degeneration of a boundary stratum of type $[6]$.
 Since $\Span(\mathcal{S})$ here is the same as for $\mathcal{S}_{(r_1,r_2,r_3)}$ we obtain the same
 equation, only the cross-ratio $R_2$ is equal to one identically.
 \par
 It remains to treat the case of a boundary stratum $\mathcal{S}$ of type $[4] \times^4 [4]$ in the
 case $\dim(\Span(\mathcal{S}))=2$. Lemma~\ref{lemma:cones} implies that $\mathcal{S}$ is a
 degeneration of some admissible stratum of type $[6]$, say $\mathcal{S}_{(r_1,r_2,r_3)}$
 given a suitable numbering of the weights.
 \par
 Next we show that $\sum a_i = 0$. Admissibility implies  that \eqref{eq:sumci}
 below holds for some $c_i \in \ratls$.  The hypothesis on the dimension of the span implies the
 equation~\eqref{eq:sumci} and
 $$ \frac{1}{r_1+r_2+r_3} = \frac{e_1}{r_1} + \frac{e_2}{r_2}$$
 for some $e_1,e_2 \in \ratls$. We may moreover rescale such that $r_1=1$ and solve the system for
 cubic equations killing $r_2$ and $r_3$ respectively.  These equations must be the minimal
 polynomials of $r_2$ and $r_3$. We obtain
 $$N^F_\ratls(r_2) =  -\frac{c_2e_2}{c_1e_1}\quad \text{and} \quad 
 N^F_\ratls(r_3) = \frac{c_3^2e_2}{c_2c_1e_1 - c_1^2e_2}.$$ Using the Corollary~\ref{cor:2ndver} to
 the calculations in case $[6]$ below, we only need to check that $\sum c_i^2/N^F_\ratls(r_i) =0$,
 which is obvious.
 \par
 We may normalize the degeneration from the boundary stratum $\mathcal{S}_{(r_1,r_2,r_3)}$ to
 $\mathcal{S}$ as follows.  Let $p_1 = 0$, $p_2 = 1$, $p_3 = \infty$ and let the $p_{-i}$ all
 converge to the same point $\mu$, that is, $p_{-i}= \mu + \lambda_i t$ with $t \to 0$. Then
 $$ R_u= \frac{\mu-1}{\mu}, \quad R_v = 
 \frac{\lambda_1-\lambda_3}{\lambda_2 -\lambda_3}$$ and in the limit as $t \to 0$
 $$ R_1/R_2 = \frac{\mu-1}{\mu}\frac{\lambda_1-\lambda_3}{\lambda_2 -\lambda_3}, \quad R_3/R_2 =
 (1-\mu)\frac{\lambda_1-\lambda_3}{\lambda_1 -\lambda_2}.$$ Thus the cross-ratio equation
 $$ (R_1/R_2)^{a_1} \cdot (R_3/R_2)^{a_3} = \zeta$$
 for $\mathcal{S}_{(r_1,r_2,r_3)}$ becomes
 $$ (R_uR_v)^{a_1} \cdot \left(\frac{R_u}{1-R_u}\frac{R_v}{1-R_v}\right)^{a_3}= \zeta,$$
 as we claimed.
 \par
 The last statement in an immediate consequence of Lemma~\ref{lemma:conedims}.
 \qed
 \par
We give here the lemma needed above and as corollary a second version of calculating the
exponents of the cross-ratio equation. Using the no-half-space condition, there are rational
coefficients $c_i$ such that
\begin{equation}
  \label{eq:sumci}
  \frac{c_1}{r_1} + \frac{c_2}{r_2} + \frac{c_3}{r_3} = 0.
\end{equation}
\par
\begin{lemma}
  \label{lem:cipropNsr}
  If the $r_i$ and $c_i$ are as in \eqref{eq:sumci}, then the
  triple $(c_1,c_2,c_3)$ is proportional to $(N(r_i) {s_i}/{r_i})_{i=1}^3$.
\end{lemma}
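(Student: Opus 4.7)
The plan is to exploit the rationality of $(r_1,r_2,r_3)$ -- which follows from admissibility via Proposition~\ref{prop:nhsequivratpos}, and which (as the argument using the cross products $v_i$ in that proof shows) is actually equivalent to the $\ratls$-linear dependence of the $Q(r_i)$, i.e.\ to the existence of the non-trivial relation $\sum_i c_i/r_i = 0$ in the first place. This will let me rewrite $s_i$ in terms of $r_i$ explicitly and reduce the lemma to a single matrix identity.

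First I would deduce from rationality the existence of $\mu \in F$ and rationals $T_i \in \ratls^\times$ such that $s_i = \mu r_i/T_i$ for each $i$. Indeed, writing the pairwise-rational-multiple hypothesis as $s_i/r_i = \lambda_i \mu$ with $\lambda_i \in \ratls$ and applying the dual basis relation $\trace(s_i r_j) = \delta_{ij}$ gives $\lambda_i = 1/T_i$ with $T_i = \trace(\mu r_i^2)$, together with the orthogonality $\trace(\mu r_i r_j) = 0$ for $i\neq j$. In other words, $(r_i)$ is an orthogonal basis for the twisted trace pairing $(x,y)\mapsto \trace(\mu xy)$. Substituting back,
$$N(r_i)\, s_i/r_i \;=\; \mu \cdot N(r_i)/T_i,$$
so the vector $(N(r_i)s_i/r_i)_i$ is the single element $\mu\in F$ times the \emph{rational} vector $(N(r_i)/T_i)_i$, and the asserted proportionality reduces to the rational-valued statement $(c_i) \propto (N(r_i)/T_i)$.

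Since $(r_i)$ is a basis of $F$, no two of the $Q(r_i) = N(r_i)/r_i$ can be proportional; combined with their $\ratls$-linear dependence, the $Q(r_i)$ therefore span exactly a $2$-plane in $F$, and the kernel of $(c_i) \mapsto \sum_i c_i/r_i$ is one-dimensional over $\ratls$. Thus the lemma reduces to verifying that $(N(r_i)/T_i)$ lies in this kernel, i.e.\ that $\sum_i Q(r_i)/T_i = 0$ in $F$.

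For this last identity I would read the orthogonality of Step~1 as the matrix equation $R\,\diag(\mu^{(k)})\,R^T = \diag(T_i)$ with $R_{ij} = r_i^{(j)}$, and invert it to get $R^T\,\diag(T_i)^{-1}\,R = \diag(\mu^{(k)})^{-1}$. The off-diagonal $(j_1,j_2)$-entry of the latter reads
$$\sum_i r_i^{(j_1)} r_i^{(j_2)}/T_i \;=\; 0 \qquad (j_1 \neq j_2).$$
Since $Q(r_i)^{(k)} = \prod_{j\neq k} r_i^{(j)} = r_i^{(j_1)} r_i^{(j_2)}$ for $\{j_1,j_2\} = \{1,2,3\}\setminus\{k\}$, this is exactly $\bigl(\sum_i Q(r_i)/T_i\bigr)^{(k)} = 0$ for every $k$, so $\sum_i Q(r_i)/T_i$ vanishes in $F$, completing the proof. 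The whole argument is essentially linear algebra once Step~1 is set up; the only minor bookkeeping obstacle is the correct interpretation of ``proportional'' between the rational vector $(c_i)$ and the $F$-valued vector $(N(r_i)s_i/r_i)$, which is handled precisely by the extraction of the common factor $\mu$.
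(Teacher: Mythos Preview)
Your proof is correct and is essentially the same argument as the paper's. Both use rationality (which, as you observe, is equivalent to the existence of the relation $\sum c_i/r_i=0$), the one-dimensionality of the kernel, and a matrix inversion identity to produce the needed vanishing of off-diagonal entries. The only cosmetic difference is packaging: you introduce the auxiliary element $\mu$ and rationals $T_i$ with $s_i=\mu r_i/T_i$ and invert $R\,\diag(\mu^{(k)})R^T=\diag(T_i)$, whereas the paper works directly with $s_i$, noting that $(N(r_i)s_i/r_i)$ is rational up to the factor $r_1/s_1$ and using $RS^T=I\Rightarrow S^TR=I$; since $S=\diag(T_i)^{-1}R\,\diag(\mu^{(k)})$, the two identities are the same.
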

\par
\begin{proof} Note that the triple $(N(r_i) {s_i}/{r_i})_{i=1}^3$ is (up to a factor $r_1/s_1$)
integral by rationality. It thus suffices to check that
  $$ \sum_{i=1}^3 \left(N(r_i) \frac{s_i}{r_i}\right) \cdot \frac{1}{r_i} = 0.$$
  We have
  \begin{align}
    \notag \sum_{i=1}^3 \left(N(r_i) \frac{s_i}{r_i^2}\right) \cdot \frac{r_1}{s_1} &=
    \sum_{i=1}^3 r_i^{(2)} r_i ^{(3)}\frac{s_i^{(1)}}{r_i^{(1)}}\frac{r_1^{(1)}}{s_1^{(1)}}\\
    \notag
    &= \sum_{i=1}^3 r_i^{(2)} r_i ^{(3)}\frac{s_i^{(2)}}{r_i^{(2)}}\frac{r_1^{(2)}}{s_1^{(2)}}
    \quad\text{(by rationality)}\\
    \label{eq:16}
    &= \frac{r_1^{(2)}}{s_1^{(2)}}\sum_{i=1}^3s_i^{(2)}r_i^{(3)}
  \end{align} Consider the $3$ by $3$ matrices $R = (r_i^{(j)})$ and $S = (s_i^{(j)})$.  Since the
  bases $(r_i)$ and $(s_i)$ are dual, we have $RS^t=I$.  Thus $S^tR=I$ as well, and \eqref{eq:16} is $0$.
\end{proof}

\begin{cor}
  \label{cor:2ndver}
  The exponents $a_i$ appearing in the cross-ratio equation \eqref{eq:CREQ} are the unique (up to
  sign) relatively prime integers with $a_i = t b_i'$ for some $t\in F$ and
  $$ b_i' = c_i^2/N^F_\ratls(r_i).$$
\end{cor}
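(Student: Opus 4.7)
The plan is to deduce this reformulation directly from Theorem~\ref{thm:explCREQ} together with Lemma~\ref{lem:cipropNsr}. By the theorem, the $a_i$ are determined up to sign as the unique triple of relatively prime integers for which there exists $t\in F$ with $a_i = t b_i$, where $b_i = N^F_\ratls(r_i)(s_i/r_i)^2$. Thus it suffices to exhibit a single element $\lambda\in F^*$ such that $b_i = \lambda\, b_i'$ for every $i$, since then $a_i = (t\lambda)\, b_i'$ with $t\lambda\in F$, and conversely relative primality pins down the triple up to a common sign.

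The key step is to identify $\lambda$ using the proportionality provided by Lemma~\ref{lem:cipropNsr}. That lemma gives us some $\mu\in F^*$ with
\begin{equation*}
  c_i \;=\; \mu\cdot N^F_\ratls(r_i)\,\frac{s_i}{r_i}\qquad\text{for all }i.
\end{equation*}
(Note $\mu\in F^*$ because $c_i\in\ratls^*$ and $N^F_\ratls(r_i)s_i/r_i\in F^*$.) Substituting this into the definition of $b_i'$ gives
\begin{equation*}
  b_i' \;=\; \frac{c_i^2}{N^F_\ratls(r_i)} \;=\; \mu^2 \, N^F_\ratls(r_i)\left(\frac{s_i}{r_i}\right)^{\!2} \;=\; \mu^2\, b_i,
\end{equation*}
so $b_i = \mu^{-2}\,b_i'$ with $\mu^{-2}\in F^*$ independent of $i$. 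This is exactly the proportionality sought.

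Combining, if $t\in F$ is the scalar of Theorem~\ref{thm:explCREQ} realizing $a_i = t\,b_i$, then $a_i = (t\mu^{-2})\,b_i'$ with $t\mu^{-2}\in F$, so the $a_i$ are $F$-proportional to the triple $(b_1',b_2',b_3')$. The uniqueness clause is identical to that of Theorem~\ref{thm:explCREQ}: any two primitive integral triples which are $F$-proportional to a common triple in $F^3$ must agree up to a common sign, since the $F$-proportionality factor is uniquely determined in $F^*$ once a primitive integral representative is chosen.

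I do not anticipate a real obstacle: the corollary is essentially a bookkeeping reformulation of Theorem~\ref{thm:explCREQ} using Lemma~\ref{lem:cipropNsr}. The only point that requires care is verifying that the proportionality constant $\mu$ really lies in $F$ (so that $\mu^2$ can be absorbed into the $F$-scalar $t$), which follows immediately because each $c_i$ is a nonzero rational and $N^F_\ratls(r_i)s_i/r_i$ is a nonzero element of $F$.
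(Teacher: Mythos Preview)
Your argument is correct and is exactly the derivation the paper has in mind: the corollary is stated immediately after Lemma~\ref{lem:cipropNsr} with no separate proof because squaring the proportionality $c_i=\mu\,N^F_\ratls(r_i)s_i/r_i$ from that lemma gives $b_i'=\mu^2 b_i$, which is precisely what you carry out. Your care in noting that $\mu\in F^*$ (so that $t\mu^{-2}\in F$) is the only point requiring justification, and you handle it correctly.
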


\paragraph{Period coordinates.}

In preparation for the proof of Theorem~\ref{thm:boundary_suff}, we now define local coordinates
around certain Lagrangian boundary strata $\mathcal{S}\subset\barmoduli[3](L)$ in terms of exponentials
 of entries of period matrices.

Let $\mathcal{S}\subset\barmoduli[3](L)$ be a Lagrangian boundary stratum obtained by pinching
curves $\gamma_1, \ldots, \gamma_m$ on $\Sigma_3$.  We say that such a boundary stratum is
\emph{nice} if the complement of any two of the $\gamma_i$ is connected.  There are five topological
types of nice boundary strata in $\barmoduli[3](L)$, representing stable curves of type $(1,1)$,
$(2, 1)$, $(2, 2)$, $(3,1)$ and $(4,2)$.

Let $\alpha_i\in L\subset H_1(\Sigma_3; \zed)$ denote the homology class of $\gamma_i$ after
choosing an orientation.
\begin{lemma}
  \label{lem:nice_lin_indep}
  If $\mathcal{S}\subset\barmoduli[3](L)$ is nice boundary stratum, then there are elements
  $\sigma_1, \ldots, \sigma_n\in \Hom(L, \zed)$ such that
  \begin{equation}
    \label{eq:17}
    \langle\sigma_i,\alpha_j\otimes\alpha_j\rangle=\delta_{ij}.
  \end{equation}
\end{lemma}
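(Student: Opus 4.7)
First I would read $\sigma_i$ as elements of $\bS_\zed(\Hom(L,\zed))$, so that the pairing in~\eqref{eq:17} matches the one used throughout \S\ref{sec:period-matrices}, namely $\langle[\phi\otimes\psi],\gamma\otimes\gamma\rangle=\phi(\gamma)\psi(\gamma)$. A direct check with dual bases $\{\beta_i\}$ of $L$ and $\{\phi_i\}$ of $\Hom(L,\zed)$ shows that this pairing between $\bS_\zed(\Hom(L,\zed))$ and $\Sym_\zed(L)$ is unimodular: $\{[\phi_i\otimes\phi_j]\}_{i\leq j}$ and $\{\beta_i\otimes\beta_i\}\cup\{\beta_i\otimes\beta_j+\beta_j\otimes\beta_i\}_{i<j}$ are dual $\zed$-bases. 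Consequently the lemma reduces to showing that the $n$ vectors $\alpha_j\otimes\alpha_j\in\Sym_\zed(L)$ generate a direct summand of rank $n$.

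The plan is then a case analysis over the five nice topological types. For each, the relations among the $\alpha_j$ come from the boundaries of the components of $\Sigma_3\setminus\bigcup\gamma_j$: a component $v$ contributes the relation $\sum_j\epsilon_{j,v}\alpha_j=0$ with $\epsilon_{j,v}\in\{-1,0,+1\}$ the signed multiplicity of $\gamma_j$ in $\partial v$ (self-loops contributing zero). There are $V-1$ such independent relations, where $V$ is the number of vertices of the dual graph; since $n-V+1=E-V+1=g=3$ by the arithmetic-genus condition, the $\alpha_j$ span all of $L$, and $3$-edge-connectivity of the dual graph (the nice condition) lets one extract a $\zed$-basis of $L$ from among the $\alpha_j$.

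With such a basis fixed, each $\alpha_j\otimes\alpha_j$ is written in the corresponding standard basis of $\Sym_\zed(L)\isom\zed^6$, and the primitivity of the resulting list is checked by row reduction, or equivalently by Smith normal form. For instance, in type $[4]\times^4[4]$ the single relation $\alpha_1+\alpha_2+\alpha_3+\alpha_4=0$ yields $\alpha_4\otimes\alpha_4=\sum_i\alpha_i\otimes\alpha_i+\sum_{i<j}(\alpha_i\otimes\alpha_j+\alpha_j\otimes\alpha_i)$, and the $4\times 6$ coefficient matrix reduces to $(\mathrm{I}_4\mid 0)$, certifying the primitive-subset property.

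The main obstacle will be not conceptual but bookkeeping: verifying each of the five types, particularly the most intricate one (the four-vertex $K_4$-type, where $n=6=\rank\Sym_\zed(L)$ and the $\alpha_j\otimes\alpha_j$ must span the entire lattice rather than a proper direct summand). A uniform argument drawing directly on $3$-edge-connectivity of the dual graph would be more elegant, but the direct case check suffices.
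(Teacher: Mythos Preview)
Your approach is correct but genuinely different from the paper's. You reduce the claim to showing that the $\alpha_j\otimes\alpha_j$ generate a direct summand of $\Sym_\zed(L)$, then verify this by a Smith-normal-form computation in each of the five topological types. The paper instead constructs the dual elements $\sigma_i$ directly: for each edge $e$ of the dual graph it finds two closed circuits $c$ and $d$ that pass through $e$ once and share no other edge, and sets $\sigma_e=\beta_c\otimes\beta_d$, where $\beta_c\in\Hom(L,\zed)$ records signed edge-crossings of $c$. Then $\langle\beta_c\otimes\beta_d,\alpha_f\otimes\alpha_f\rangle=\beta_c(\alpha_f)\beta_d(\alpha_f)$ vanishes unless $f=e$ (since $c$ and $d$ have no other common edge) and equals $1$ at $f=e$.

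The paper's argument is more constructive and avoids any row reduction; the existence of the two circuits is checked against Figure~\ref{fig:graphs}, but in fact follows uniformly from $3$-edge-connectivity (remove $e$, apply Menger to get two edge-disjoint paths between its endpoints, then close each with $e$). This is exactly the uniform argument you allude to in your last sentence. Your route has the virtue of being purely linear-algebraic and would generalize mechanically to other genera or other classes of graphs, at the cost of more bookkeeping.
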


\begin{proof}
  We represent a curve in $\mathcal{S}$ by a directed graph $G$ with the edges weighted by elements
  of $L$.  A closed circuit $c$ in $G$ determines a functional $\beta_c\in\Hom(L, \zed)$ defined as
  follows.  If $e$ is an edge with weight $\gamma$, then $\beta_c(\gamma) = n$, where $n$ is the
  number of times $c$ crosses $e$ in the forward direction minus the number of times $c$ crosses $e$
  in the reverse direction.

  Each of the graphs in Figure~\ref{fig:graphs} representing nice boundary strata has the property
  that for each edge $e$ there are two circuits $c$ and $d$ which pass through $e$ once and have no
  other edge in common.  For each edge $f$, write $\rho(f) = w\otimes w$, where $w$ is the weight of
  $f$.  Then the functional $\beta_c\otimes\beta_d$ maps $\rho(e)$ to $1$ and $\rho(f)$ for any
  other edge  $f$ to $0$.
\end{proof}

Choose $\sigma_1, \ldots, \sigma_m\in\bS(\Hom(L, \zed))$ as in the lemma, and choose a basis
$\tau_1, \ldots, \tau_n$ of the annihilator $N(\mathcal{S})\subset\bS(\Hom(L, \zed))$ of
$\{\alpha_i\otimes\alpha_i\}_{i=1}^m$.

Let $U\subset\barmoduli[3](L)$ be the open subset consisting of $\moduli[3](L)$, $\mathcal{S}$, and
any intermediate boundary stratum obtained by pinching some subset of the curves $\{\gamma_i\}$.
We consider the map $\Xi\colon U\to \cx^m\times(\cx^*)^n$ defined by
\begin{equation*}
  \Xi = (\Psi(\sigma_1), \ldots, \Psi(\sigma_m), \Psi(\tau_1), \ldots, \Psi(\tau_n)),
\end{equation*}
sending $\mathcal{S}$ to $(0, \ldots, 0)\times(\cx^*)^n$.  

Any automorphism $T$ of $L$ induces an automorphism $\phi_T$ of $\barmoduli(L)$ defined by replacing
the marking $\rho$ of the marked surface $(X, \rho)$ with $\rho\circ T$.  Let $\iota\colon L\to L$
be the negation homomorphism $\phi(\alpha) = -\alpha$.  We define $\barmoduli'(L)$ to be the
quotient of $\barmoduli(L)$ by the involution $\phi_\iota$.

Each of the meromorphic functions $\Psi(\alpha)$ on $\barmoduli(L)$ is constant on orbits of
$\phi_\iota$ and so defines a meromorphic function $\Psi'(\alpha)$ on $\barmoduli'(L)$.  If
$\mathcal{S}$ is fixed by $\phi_\iota$, then so is $U$, and the map
$\Xi$ then factors through to a map $\Xi'\colon U'\to \cx^m\times(\cx^*)^n$, where
$U'=U/\phi_\iota$.

\begin{lemma}
  \label{lem:boundary_coordinates}
  Consider a nice boundary stratum $\mathcal{S}\subset\barmoduli[3](L)$.  If $\mathcal{S}$ is not
  fixed by $\pi_\iota$, then  for any basis $(\tau_1, \ldots, \tau_n)$ of
  $N(\mathcal{S})$, the functions $\Psi(\tau_1), \ldots, \Psi(\tau_n)$ form a system of local
  coordinates on $\mathcal{S}$.  If $\mathcal{S}$ is 
  fixed by $\phi_\iota$, then  for any basis $(\tau_1, \ldots, \tau_n)$ of
  $N(\mathcal{S})$, the functions $\Psi'(\tau_1), \ldots, \Psi'(\tau_n)$ form a system of local
  coordinates on $\mathcal{S}/\phi_\iota$.  
\end{lemma}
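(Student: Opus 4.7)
The plan is to show that $\Xi_\mathcal{S} := (\Psi(\tau_1),\dots,\Psi(\tau_n)) \colon \mathcal{S} \to (\cx^*)^n$ (respectively its descent $\Xi'_\mathcal{S}$ to $\mathcal{S}/\phi_\iota$) is a local biholomorphism. First I would collect two preliminary observations. By Theorem~\ref{thm:meromorphic}, each $\Psi(\tau_j)$ is holomorphic and nowhere vanishing on $\mathcal{S}$ since $\tau_j \in N(\mathcal{S})$ pairs trivially with every $\alpha_i\otimes\alpha_i$, so $\Xi_\mathcal{S}$ is well-defined and holomorphic. For the dimension count I would use Lemma~\ref{lem:nice_lin_indep}: the dual elements $\sigma_1,\dots,\sigma_m$ witness that the classes $\alpha_i\otimes\alpha_i$ are linearly independent in $\bS_\zed(L)$, so
\begin{equation*}
\dim_\cx N(\mathcal{S})\otimes\cx = \tfrac{g(g+1)}{2}-m = 6-m = 3g-3-m = \dim_\cx \mathcal{S}.
\end{equation*}
Hence $\Xi_\mathcal{S}$ is a map between complex manifolds of equal dimension, so it suffices to check that its Jacobian does not vanish (generically by irreducibility, then everywhere by properness of the fibers coming from the algebraicity of cross-ratios).

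Next I would reduce the question to an explicit case analysis, since each nice $\mathcal{S}$ splits as a product $\prod_{v} \moduli[0,n_v]$ indexed by the vertices of the dual graph, where $n_v$ is the valence at $v$. The irreducible case (type $(1,1)$, $V=1$) is already Corollary~\ref{cor:CRarecoord6}: the cross-ratios $R_{[jk]}=\Psi(s_j\otimes s_k)$ of Proposition~\ref{prop:CRpsisjsk} exhibit $\Xi_\mathcal{S}$ as a $2{:}1$ cover with quotient the involution $p_i\leftrightarrow p_{-i}$, and this involution coincides with the action of $\phi_\iota$ (which negates each $\alpha_i$ and so reverses the cusp orientations). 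For the other four nice types, I would generalize the explicit integration in the proof of Proposition~\ref{prop:CRpsisjsk}: picking a node $n_a$ on a component $C_v$, the residue-normalized stable form on $C_v\cong\proj^1$ is $\tfrac{1}{2\pi i}\,dz/z$ up to change of coordinate, so integrating along a path joining two other marked points of $C_v$ and exponentiating yields a cross-ratio on that factor. By choosing $\tau_1,\dots,\tau_n$ of the form $s_a\otimes s_b$ with $a,b$ adapted to this combinatorial data, one writes $\Psi(\tau_j)$ as an explicit cross-ratio on a single factor $\moduli[0,n_v]$, up to multiplication by cross-ratios on the other factors.

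With this choice, $\Xi_\mathcal{S}$ becomes (in the product coordinates on $\prod_v \moduli[0,n_v]$ given by distinguished cross-ratios) a monomial map whose exponent matrix is block-triangular with each diagonal block of the same form as the irreducible cross-ratio map in Corollary~\ref{cor:CRarecoord6}. The Jacobian of this monomial map is then a product of Jacobians of the individual factors, each of which is nonzero by the $V=1$ analysis. This establishes the local isomorphism property either on $\mathcal{S}$ or on $\mathcal{S}/\phi_\iota$, depending on whether $\phi_\iota$ acts nontrivially, and in the $\phi_\iota$-fixed case the quotient is exactly by the simultaneous swap of opposite cusps, under which every cross-ratio $[p_a,p_{-a},p_{-b},p_b]$ is invariant.

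The main obstacle will be the combinatorial bookkeeping for the four multi-component cases: one has to exhibit a basis of $N(\mathcal{S})$ whose elements localize cleanly to individual components, which requires careful exploitation of the nice condition (the guarantee of Lemma~\ref{lem:nice_lin_indep}) and of the relations among the $\alpha_i$ coming from vertices of valence $\geq 3$. A secondary, cleaner check is determining precisely which nice types are $\phi_\iota$-fixed, which amounts to asking whether the dual graph admits an automorphism reversing every edge orientation and acting compatibly with the $L$-labeling; the dichotomy in the statement ought to emerge directly from this symmetry analysis.
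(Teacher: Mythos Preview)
Your overall strategy matches the paper's: reduce to one well-chosen basis (since changing basis of $N(\mathcal{S})$ induces an automorphism of $(\cx^*)^n$), then handle the five nice types case by case, invoking Corollary~\ref{cor:CRarecoord6} for the irreducible stratum. The dimension count and the identification of $\phi_\iota$ with the cusp-swap are also correct.

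The genuine gap is in the $[4]\times^4[4]$ case. There $\phi_\iota$ \emph{exchanges the two $\moduli[0,4]$ factors}, so every $\Psi'(\tau)$ is symmetric in $(R_u,R_v)$ and cannot be a cross-ratio on a single factor, nor a monomial that is block-triangular with respect to the product decomposition. In particular your claim that ``$\Xi_\mathcal{S}$ becomes a monomial map with block-triangular exponent matrix, each diagonal block nonsingular'' would force $\Xi_\mathcal{S}$ to be a local biholomorphism on $\mathcal{S}$ itself, which is impossible: since $\Xi_\mathcal{S}$ is $\phi_\iota$-invariant and $\phi_\iota$ acts nontrivially, $\Xi_\mathcal{S}$ is generically $2$-to-$1$. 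The paper instead works directly on the quotient, choosing $\tau_1=(s_2-s_1)\otimes s_3$ and $\tau_2=(s_3-s_1)\otimes s_2$ (note: \emph{not} of the form $s_a\otimes s_b$) and computing $\Psi'(\tau_1)=R_uR_v$, $\Psi'(\tau_2)=(1-R_u)(1-R_v)$, which one then checks by hand are local coordinates on $\bigl(\moduli[0,4]\times\moduli[0,4]\bigr)/\phi_\iota$. A smaller point: your ``generically by irreducibility, then everywhere by properness'' step is unnecessary and unclear; the change-of-basis-is-torus-automorphism observation already reduces everything to exhibiting one basis that works at every point.
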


\begin{proof}
  It is enough to produce a single basis of $N(\mathcal{S})$ which yields a system of local
  coordinates, since the coordinate systems defined by any two bases are related by an automorphism of the
  algebraic torus $(\cx^*)^n$.

  Any type $[6]$ stratum $\mathcal{S}$ is fixed by $\phi_\iota$.  
  Corollary~\ref{cor:CRarecoord6} implies that the functions $\Psi'(s_i\otimes s_j)$ for $i\neq j$
  identify $\mathcal{S}/\phi_\iota$ with an open subset of $(\cx^*)^3$ (the involution $\phi_\omega$
  was called $i$ in that Corollary), and so they give a system of local coordinates on
  $\mathcal{S}/\phi_\iota$. 

  Any type $[4]\times^4[4]$ stratum is also fixed by $\phi_\iota$.  We use the notation for theses
  strata from p.~\pageref{page:4times4}.  Under the identification of $\mathcal{S}$ with
  $\moduli[0,4]\times\moduli[0,4]$ the map $\phi_\iota$ is just the involution exchanging the two
  factors.

  Let $\{s_1, \ldots, s_3\}$ be a basis of $F$ dual to $\{r_1, \ldots, r_3\}$.  Let
  $\tau_1 = (s_2 - s_1)\otimes s_3$ and $\tau_2 = (s_3 - s_1) \otimes s_2$.  From
  the definition of $\Psi$,
  \begin{equation*}
    \Psi'(\tau_1) = R_u R_v \quad\text{and} \quad
    \Psi'(\tau_2) = (1-R_u)(1-R_v),
  \end{equation*}
  a system of local coordinates on $\moduli[0,4]\times\moduli[0,4]/\phi_\iota$.

  The remaining cases are strata not fixed by $\phi_\iota$.  We leave these simpler cases to the reader.
\end{proof}

\begin{prop}
  \label{prop:period_coordinates}
  Consider a nice $L$-weighted boundary stratum $\mathcal{S}$ in $\barmoduli[3](L)$.  If
  $\mathcal{S}$ is not fixed by $\phi_\iota$, then the map $\Xi$
  is locally biholomorphic on a neighborhood of $\mathcal{S}$. Otherwise $\Xi'$ is
  locally biholomorphic on a neighborhood of $\mathcal{S}/\phi_\iota$.  In either case, the map
  $\Xi$ is open. 
\end{prop}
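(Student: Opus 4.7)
The plan is to work in the plumbing coordinates $(s,t) \in \Delta_\epsilon^{n} \times \Delta_1^m$ on a neighborhood $U$ of $\mathcal{S}$ provided by Section~\ref{sec:RS}, chosen so that the boundary divisor $D_{\alpha_i}$ is cut out by $t_i = 0$. By Corollary~\ref{cor:period_matrix_boundary}, each function $\Psi(\alpha)$ has the form
\[ \Psi(\alpha)(s,t) = h_\alpha(s,t) \prod_{i=1}^m t_i^{\langle \alpha, \alpha_i \otimes \alpha_i \rangle} \]
with $h_\alpha$ holomorphic and nowhere zero on $U$. Applied to $\sigma_j$, whose defining property is $\langle \sigma_j, \alpha_i \otimes \alpha_i \rangle = \delta_{ij}$, this yields $\Psi(\sigma_j) = h_{\sigma_j} t_j$; applied to $\tau_k \in N(\mathcal{S})$ it yields $\Psi(\tau_k) = h_{\tau_k}$, a nonvanishing holomorphic function on all of $U$.

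At a point of $\mathcal{S}$ (where $t = 0$), the differential of $\Xi$ in the coordinates $(s,t)$ therefore takes the block form
\[ d\Xi = \begin{pmatrix} 0 & \diag\bigl(h_{\sigma_1}(s,0), \ldots, h_{\sigma_m}(s,0)\bigr) \\ \partial_s\bigl(\Psi(\tau_k)|_{\mathcal{S}}\bigr) & * \end{pmatrix}, \]
with rows indexed by $(\sigma_1, \ldots, \sigma_m, \tau_1, \ldots, \tau_n)$ and columns by $(s_1, \ldots, s_n, t_1, \ldots, t_m)$. The upper-right $m \times m$ block is diagonal with nonzero entries, and the lower-left $n \times n$ block is invertible because by Lemma~\ref{lem:boundary_coordinates} the $\Psi(\tau_k)$ form local coordinates on $\mathcal{S}$. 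Hence $d\Xi$ is invertible, and by the holomorphic inverse function theorem $\Xi$ is a local biholomorphism on a neighborhood of $\mathcal{S}$. This handles the case where $\mathcal{S}$ is not fixed by $\phi_\iota$.

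When $\mathcal{S}$ is fixed by $\phi_\iota$, each $\Psi(\alpha)$ is $\phi_\iota$-invariant because $\phi_\iota$ acts trivially on $\bS_\zed(\Hom(L, \zed))$, so $\Xi$ factors through the quotient as $\Xi = \Xi' \circ q$ with $q \colon U \to U'$ the quotient map. Running the same Jacobian computation in a $\phi_\iota$-invariant system of holomorphic coordinates on $U'$ near $\mathcal{S}/\phi_\iota$, and invoking the second half of Lemma~\ref{lem:boundary_coordinates} to identify $\Psi'(\tau_k)$ with local coordinates on $\mathcal{S}/\phi_\iota$, shows that $\Xi'$ is a local biholomorphism. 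Openness of $\Xi$ then follows either directly (when $\mathcal{S}$ is not fixed, as $\Xi$ is itself a local biholomorphism) or as the composition of the open quotient map $q$ with the local biholomorphism $\Xi'$.

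The main subtlety I anticipate is the $\phi_\iota$-fixed case, since the plumbing coordinates $(s,t)$ are not canonically $\phi_\iota$-equivariant. One clean way around this is to carry out the Jacobian computation upstairs on $U$, observe that $d\Xi$ has maximal rank with one-dimensional kernel tangent to the $\phi_\iota$-orbit, and conclude from the factorization $\Xi = \Xi' \circ q$ that $d\Xi'$ is an isomorphism; this makes both the two-to-one character of $\Xi$ near $\mathcal{S}$ and the local biholomorphicity of $\Xi'$ transparent.
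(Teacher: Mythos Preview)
Your proof is correct and follows essentially the same route as the paper: compute the Jacobian of $\Xi$ (resp.\ $\Xi'$) in plumbing coordinates, use Corollary~\ref{cor:period_matrix_boundary} to see that the $\Psi(\sigma_j)$ contribute a diagonal block with nonzero entries, invoke Lemma~\ref{lem:boundary_coordinates} for the $\Psi(\tau_k)$-block, and get openness from the quotient map.  The paper adds one remark you leave implicit in the fixed case: the quotient $\barmoduli[3](L)\to\barmoduli[3]'(L)$ is unbranched along the boundary divisors, so the vanishing-order formula of Theorem~\ref{thm:meromorphic} still computes $\vord_{D_\gamma}\Psi'(a)$ and the block argument goes through verbatim downstairs.

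Your final speculative paragraph, however, does not work.  The involution $\phi_\iota$ has finite (two-point) orbits, so there is no ``one-dimensional direction tangent to the $\phi_\iota$-orbit''; and since $\Xi$ is a map between spaces of the same dimension $m+n$, ``maximal rank with one-dimensional kernel'' is a contradiction.  What actually happens upstairs is that $d\Xi$ is invertible at generic points of $U$ (where $q$ is a local isomorphism) and degenerates precisely along the fixed locus of $\phi_\iota$, so one cannot read off local biholomorphicity of $\Xi'$ from a single rank computation upstairs.  The clean argument is exactly the one you already gave in your third paragraph: work directly on $U'$ and repeat the Jacobian computation there.
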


\begin{proof}
  Suppose $\mathcal{S}$ is not fixed by the involution.
  Centered at an arbitrary point of $\mathcal{S}$, we choose plumbing coordinates $t_1, \ldots,
  t_m$, $s_1, \ldots, s_n$, as in \S\ref{sec:RS}, so that each divisor $D_i$ where $\gamma_i$ has
  been pinched is cut out by $t_i=0$.  We must show that the Jacobian of $\Xi$ at $(\boldsymbol{0},
  \boldsymbol{0})$ is nonzero.   The functions $\Psi(\sigma_i)$
  vanish to order one on $D_i$ and zero on $D_j$ for $j\neq i$.  We then have
  $\pderiv{\Psi(\sigma_i)}{t_j}(\boldsymbol{0}, \boldsymbol{0}) = 0$ if $i\neq j$,
  $\pderiv{\Psi(\sigma_i)}{s_j}(\boldsymbol{0}, \boldsymbol{0}) = 0$ for all $i$ and $j$, and
  $\pderiv{\Psi(\sigma_i)}{t_i}(\boldsymbol{0}, \boldsymbol{0}) \neq 0$ for all $i$.  Thus, to show
  that the Jacobian of $\Xi$ at $(\boldsymbol{0}, \boldsymbol{0})$ is nonzero, it suffices to show
  that the matrix $(\pderiv{\Psi(\tau_i)}{s_j}(\boldsymbol{0}, \boldsymbol{0}))$ is invertible.  In
  other words, we must show that the functions $\Psi(s_j)$ locally define a system of local
  coordinates on $\mathcal{S}$.  This is the content of Lemma~\ref{lem:boundary_coordinates}.

  The case where $\mathcal{S}$ is fixed is nearly identical.  Note that since the quotient mapping
  $\barmoduli[3](L)\to\barmoduli[3]'(L)$ is unbranched along the boundary divisors, the order of
  vanishing of any $\Psi'(a)$ along $D_i$ is also given by the formula of
  Theorem~\ref{thm:meromorphic}.

  The last statement follows, since any quotient map~-- in particular, the canonical map
  $\moduli[3](L)\to\moduli[3]'(L)$~--  is open.
\end{proof}

\paragraph{Closures of algebraic tori.}

The period coordinates above reduce the problem of computing the boundary of the eigenform locus to
computing the closures of algebraic tori $T\subset(\cx^*)^n\subset\cx^n$, which we now consider.

Consider the algebraic torus $T = (\cx^*)^k\times(\cx^*)^\ell\subset\cx^k\times(\cx^*)^\ell$.  We
identify the character group $\chi(T)$ with $\zed^k\oplus\zed^\ell$ by assigning to $(\ba, \bb) =
(a_1, \ldots, a_k, b_1, \ldots, b_\ell)$ the character $\lambda_{(\ba, \bb)}\colon T\to\cx^*$
defined by
\begin{equation*}
  \lambda_{(\ba, \bb)}(\bz, \bw) =  z_1^{a_1}\cdots z_k^{a_k}w_1^{b_1}\cdots w_\ell^{b_\ell}
\end{equation*}
Given a subgroup $L$ of $\chi(T)$ with $\chi(T)/L$ torsion-free and a homomorphism $\phi\colon
L\to\cx^*$, we define $T_{A, \phi}$ to be the subvariety of $T$ cut out by the monomial equations
\begin{equation}
  \label{eq:18}
  \lambda_{(\ba, \bb)}(\bz, \bw) = \phi(\ba, \bb)
\end{equation}
for each $(\ba, \bb)\in L$, a translate of a subtorus of $T$.

Let $\Delta = \{\boldsymbol{0}\}\times(\cx^*)^\ell$.  We define
\begin{gather*}
  C = \{(\ba, \bb)\in \chi(T) : a_i\geq 0 \text{ for }1 \leq i \leq k\},\quad\text{and} \\
  N = \{\boldsymbol{0}\}\oplus\zed^\ell\subset\chi(T).
\end{gather*}
Let $\Delta_{L, \phi}$ be the subvariety of $\Delta$ cut out by the monomial equations \eqref{eq:18}
for $(\ba, \bb)\in L\cap N$.

\begin{theorem}
  \label{thm:torus_closure}
  The closure $\overline{T}_{L, \phi}\cap \Delta$ is nonempty if and only if $L\cap C\subset N$, in which
  case we have $\overline{T}_{L, \phi}\cap \Delta = \Delta_{L, \phi}$.
\end{theorem}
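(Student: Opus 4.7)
The plan is to handle the easy direction first, then the two inclusions needed for the equality under the hypothesis $L\cap C\subset N$. For the easy direction, suppose $(\ba,\bb)\in L\cap C\setminus N$, so some $a_i>0$. Then $\lambda_{(\ba,\bb)}$ has only non-negative $z$-exponents, hence is holomorphic on $\cx^k\times(\cx^*)^\ell$, and the defining equation $\lambda_{(\ba,\bb)}=\phi(\ba,\bb)$ extends by continuity to $\overline{T}_{L,\phi}$. On $\Delta$ the left side vanishes while $\phi(\ba,\bb)\neq 0$, forcing $\overline{T}_{L,\phi}\cap\Delta=\emptyset$.

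Now assume $L\cap C\subset N$. The inclusion $\overline{T}_{L,\phi}\cap\Delta\subset\Delta_{L,\phi}$ is again by continuity: for $(\bzero,\bb)\in L\cap N$ the monomial $\lambda_{(\bzero,\bb)}$ involves only the $w$-variables and is holomorphic and nonvanishing on $\cx^k\times(\cx^*)^\ell$, so its equation extends to the closure and there cuts out exactly $\Delta_{L,\phi}$.

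The substantive step is the reverse inclusion $\Delta_{L,\phi}\subset\overline{T}_{L,\phi}$. Given $p=(\bzero,\bw_0)\in\Delta_{L,\phi}$, the plan is to produce a holomorphic arc $\gamma\colon\Delta^*\to T_{L,\phi}$ with $\lim_{t\to 0}\gamma(t)=p$. Two ingredients go into it. First, write $\pi\colon\zed^{k+\ell}\to\zed^k$ for the projection onto the first $k$ coordinates; by a Gordan-type theorem of the alternative (Stiemke's lemma), the hypothesis $\pi(L)\cap\reals_{\geq 0}^k=\{\bzero\}$ is equivalent to the existence of a real vector orthogonal to $\pi(L)$ with strictly positive entries, and clearing denominators yields $\boldsymbol{m}\in\zed_{>0}^k$ annihilating $\pi(L)$. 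Second, the assignment $(\ba,\bb)\mapsto \phi(\ba,\bb)\prod_j w_{0,j}^{-b_j}$ is a homomorphism $L\to\cx^*$ that vanishes on $L\cap N$---precisely the condition $\bw_0\in\Delta_{L,\phi}$---so it factors through a character $\psi\colon\pi(L)\to\cx^*$. Since $\cx^*$ is a divisible abelian group, $\psi$ extends to a character of $\zed^k$, represented by some $\bc\in(\cx^*)^k$. Setting $\gamma(t)=(c_1 t^{m_1},\ldots, c_k t^{m_k},\bw_0)$, the identity $\sum a_im_i=0$ kills the $t$-factor in $\lambda_{(\ba,\bb)}(\gamma(t))$, which collapses to $\psi(\ba)\prod_j w_{0,j}^{b_j}=\phi(\ba,\bb)$; hence $\gamma$ lands in $T_{L,\phi}$, and because each $m_i>0$ it converges to $p$ as $t\to 0$.

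The hard part will be the theorem-of-the-alternative step: this is what converts the purely combinatorial hypothesis $L\cap C\subset N$ into the analytic fact that $T$ contains a one-parameter subgroup along which every point of $\Delta_{L,\phi}$ is approached. Once $\boldsymbol{m}$ is in hand the character-extension step is routine via divisibility of $\cx^*$, and verifying that the resulting $\gamma$ lies on $T_{L,\phi}$ and limits to $p$ is direct computation.
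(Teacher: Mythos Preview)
Your proof is correct and follows essentially the same route as the paper's. Both arguments use a Stiemke/Gordan-type theorem of the alternative to produce a strictly positive integer vector orthogonal to $\pi(L)$, and then build a one-parameter family $t\mapsto(c_i t^{m_i},\bw_0)$ limiting to the desired point. The only difference is cosmetic: where the paper chooses a basis of $L$ adapted to $L\cap N$ and solves the resulting linear system in logarithms directly, you package the same computation as extending a character from $\pi(L)$ to $\zed^k$ via divisibility of $\cx^*$; this is a slightly cleaner phrasing of the identical step.
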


\begin{proof}
  Suppose $(\ba, \bb)$ is a nonzero element of $(L\cap C)\setminus N$.  The equation \eqref{eq:18}
  is then satisfied on $T_{L, \phi}$, but $\lambda_{(\ba, \bb)}(\bz, \bw)\equiv 0$ on $\Delta$, so
  $\Delta$ and $\overline{T}_{L, \phi}$ must be disjoint.

  Conversely, suppose $L\cap C\subset N$.  Then the orthogonal projection $p(L)$ of $L$
  onto the $\zed^k$ factor of $\chi(T)$ satisfies $p(L)\cap C = 0$.  Theorem~15.7 of \cite{roman}
  states that given a subspace $V$ of $\reals^n$ with $V\cap \{\bx\in\reals^n : x_i \geq 0 \text{
    for all $i$}\} = 0$, there is a vector $\by\in V^\perp$ with each coordinate positive.  Thus we
  may find an integral $\bc\in p(L)^\perp\subset\zed^k$ with positive coordinates.

  Note that the curve parameterized by
  \begin{equation*}
    f(w) = (d_1 w^{c_1}, \ldots, d_kw^{c_k}, e_1, \ldots, e_\ell)
  \end{equation*}
  lies in $T_{L, \phi}$ if and only if for each $(\ba, \bb)\in L$, the equation
  \begin{equation}
    \label{eq:19}
    d_1^{a_1} \dots d_k^{a_k} e_1^{b_1} \dots e_\ell^{b_\ell}= \phi(\ba,\bb)
  \end{equation}
  is satisfied, in which case $(0, \ldots, 0, e_1, \ldots, e_\ell)\in \overline{T}_{L,
    \phi}\cap\Delta$.

  Choose some $(\boldsymbol{0}, \be)\in\Delta_{L, \phi}$, and let $(\ba_i,\bb_i) = (a_{i1},
  \ldots. a_{ik}, b_{i1}, \ldots, b_{i\ell})$ for $1\leq i \leq \dim(L)$ be a basis of $L$ with
  $a_{ij}=0$ for $i \leq \dim(L\cap N)$.  We must find $g_1, \ldots, g_k$ satisfying the equations
  \begin{equation}
    \label{eq:20}
    a_{i1} g_1 + \dots + a_{ik} g_k + b_{i1} \log e_1 + \dots + b_{i\ell}\log e_\ell = \log\phi(\ba_i, \bb_i).
  \end{equation}
  The first $\dim(L\cap N)$ equations don't involve the $g_i$ and are satisfied automatically
  because $(\boldsymbol{0}, \be)\in\Delta_{A, \phi}$ as long as the values of $\log$ were chosen correctly.
  The vectors $\ba_{\dim(L\cap N)+1}, \ldots, \ba_{\dim(L)}$ are linearly independent, so the matrix
  $(a_{ij})$ (with $\dim(L\cap N)< i \leq \dim(L)$ and $1\leq j \leq k$) has maximal rank.  Thus we
  can solve \eqref{eq:20} for the $g_i$.  Setting $d_i = e^{g_i}$, \eqref{eq:19} is satisfied.
\end{proof}

\paragraph{Proof of Theorem~\ref{thm:boundary_suff}.}

It suffices to show that for any cusp packet $(\mathcal{I}, T)$ and admissible
$\mathcal{I}$-weighted boundary stratum $\mathcal{S}\subset\barmoduli[3](\mathcal{I})$ the variety
$\mathcal{S}(T)$ lies in the closure of $\RM(\mathcal{I}, T)$.

For nice boundary strata, the map $\Xi$ of Proposition~\ref{prop:period_coordinates} reduces the
computation of the closure of $\RM$ to the computation of the closure of an algebraic torus in
$\cx^n$ (since under an open mapping, the inverse image of the closure of a set is equal to the
closure of the inverse image), which is done in Theorem~\ref{thm:torus_closure}.  It is easily
checked that the condition of this theorem is equivalent to the admissibility condition.  This handles
admissible boundary strata of type $(1,1), (2,1), (2,2), (3, 1)$, and $(4,2)$ in
Figure~\ref{fig:graphs}.

Admissible boundary strata which are in the boundary of a nice admissible boundary stratum
$\mathcal{S}$ with $\codim(\mathcal{S})=0$ are then automatically in the closure of
$\RM(\mathcal{I}, T)$.  It follows from
Lemma~\ref{lemma:cones} that any admissible boundary stratum $\mathcal{S}$ with
$\codim(\mathcal{S})=0$ is in the boundary of such a nice admissible stratum, since some collection
of nodes can be unpinched to obtain a stratum of type $[4]\times^4[4]$ or $[5]\times^3[3]$ where the
cone condition still holds.  This handles admissible boundary strata of type $(3,2), (3, 3), (4,1)$, and
$(4,3)$.

It remains to consider admissible boundary strata of type $(2,3)$, $(2,4)$, $(3,4)$, $(3,5)$, $(4,4)$, and
$(4,5)$.  Any such boundary stratum in the closure of a unique irreducible
Lagrangian boundary stratum $\mathcal{S}$.  The weights of  $\mathcal{S}$ define the equation
\begin{equation}
  \label{eq:23}
  \Psi(\sigma) = u,
\end{equation}
with $u$ and $\sigma$ as in \eqref{eq:21} and \eqref{eq:22}.  Let $V\subset\barmoduli[3](\mathcal{I})$
be the subvariety cut out by this equation.  For any stratum
$\mathcal{S}'\subset\overline{\mathcal{S}}$, we have $\mathcal{S}'(T)= \mathcal{S}'\cap V$ by the
definition of $\mathcal{S}'(T)$, so we must show for any such $\mathcal{S}'$ that $\mathcal{S}'\cap
V \subset{\barRM(\mathcal{I}, T)}$.  Since we have already handled irreducible boundary strata,
we know that $V\cap\mathcal{S} = {\barRM(\mathcal{I}, T)} \cap \mathcal{S}$.  It follows that
$\overline{V\cap \mathcal{S}}\subset{\barRM(\mathcal{I}, T)}$.  If $\overline{S}\cap V$ were
irreducible, it would follow that $\overline{\mathcal{S}}\cap V = \overline{\mathcal{S}\cap V}$, and
we would be done.

We see the irreducibility of $\overline{\mathcal{S}}\cap V$ as follows.  Since $V$ is
codimension-one and $\mathcal{S}\cap V$ is irreducible, as is easily seen from the form of the
cross-ratio equation \eqref{eq:CREQ}, $\overline{\mathcal{S}}\cap V$ could only fail to be
irreducible if a two-dimensional stratum in the boundary of $\mathcal{S}$ were contained in $V$.
Such a stratum must be of type $(2,3)$ (that is, $[4]\times^2[4]$) or $(2,4)$ in
Figure~\ref{fig:graphs}.  The restriction of the equation~\eqref{eq:23} to a type $[4]\times^2[4]$
stratum is the cross-ratio equation \eqref{eq:CR4ti24} which is not satisfied on an entire stratum.
Similarly, a type $(2,4)$ stratum is isomorphic to $\moduli[0,5]$, and the equation \eqref{eq:23}
reduces to the equation $R = u$, where $R$ is a cross-ratio of four marked points and $u$ is a root
of unity.  This equation is not satisfied on the entire stratum.
\qed

\section{Existence of an admissible basis} \label{sec:exist_adm}

In this section we construct, for any totally real cubic number field $F$ with ring of
integers $\mathcal{O}_F$, an $\mathcal{O}_F$-ideal with an admissible basis.  
This will be used in the next section to show $\GLtwoRplus$-noninvariance
of eigenform loci.

\begin{lemma}
  \label{lem:monogenetic_basis}
  For any cubic number field $F$, there is some fractional $\mathcal{O}_F$-ideal $\mathcal{I}$ with
  basis $\{1, \alpha, \alpha^2\}$.
\end{lemma}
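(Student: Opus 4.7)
The plan is to produce an explicit $\alpha \in F$ so that the rank-$3$ lattice $\mathcal{I} := \mathbb{Z} + \mathbb{Z}\alpha + \mathbb{Z}\alpha^2$ is stable under multiplication by $\mathcal{O}_F$. The monogenic case is immediate: if $\mathcal{O}_F = \mathbb{Z}[\beta]$ for some primitive integer $\beta$, take $\alpha = \beta$ and $\mathcal{I} = \mathcal{O}_F$.

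In the non-monogenic case, first observe that $\alpha$ must lie outside $\mathcal{O}_F$: if $\alpha \in \mathcal{O}_F$ is primitive then $\mathcal{I} = \mathbb{Z}[\alpha]$ is an order containing $1$, so its coefficient ring equals $\mathbb{Z}[\alpha]$ itself, which is strictly smaller than $\mathcal{O}_F$. Pick any primitive integer $\beta \in \mathcal{O}_F$ and set $f := [\mathcal{O}_F : \mathbb{Z}[\beta]] > 1$. I would propose $\alpha = (\beta + a)/f$ for an integer $a$ to be chosen. Expressing each $\omega \in \mathcal{O}_F$ as $(u + v\beta + w\beta^2)/f$ with $(u,v,w)$ ranging over a sublattice $\Lambda \subset \mathbb{Z}^3$ representing $\mathcal{O}_F/\mathbb{Z}[\beta]$, the three required containments $\omega, \omega\alpha, \omega\alpha^2 \in \mathcal{I}$ translate into a finite system of polynomial congruences in $a$ modulo $f$; the first and simplest is $u - av + a^2 w \equiv 0 \pmod{f}$ for every $(u,v,w) \in \Lambda$, and the other conditions arise by expanding $\omega\alpha^j$ in the basis $\{1,\beta+a,(\beta+a)^2\}$ and using the minimal polynomial of $\beta$.

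The main obstacle is to show that this congruence system is always simultaneously solvable. Rather than pursuing the direct combinatorial analysis, the cleaner route is to work prime-by-prime and patch by weak approximation. At each prime $p$, the semilocal ring $\mathcal{O}_F \otimes \mathbb{Z}_p$ decomposes as a finite product of complete discrete valuation rings $\prod_{v \mid p}\mathcal{O}_{F,v}$, and one constructs $\alpha_p = (\alpha_v)_v \in F \otimes \mathbb{Q}_p$ whose components are tuned so that the Vandermonde matrix of $\{1,\alpha_p,\alpha_p^2\}$ has determinant with the correct $p$-adic valuation; a direct computation then verifies that $\mathbb{Z}_p + \mathbb{Z}_p\alpha_p + \mathbb{Z}_p\alpha_p^2$ both contains $\mathcal{O}_F \otimes \mathbb{Z}_p$ and is stable under its multiplication. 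For instance, in the totally split case at $p=2$, the choice $\alpha_2 = (0,\tfrac{1}{2},1) \in \mathbb{Q}_2^3$ works: the resulting lattice contains $\mathbb{Z}_2^3 = \mathcal{O}_F \otimes \mathbb{Z}_2$ and is closed under coordinatewise multiplication by $\mathbb{Z}_2^3$. Since the local condition is open in the $p$-adic topology, weak approximation across the finitely many primes $p \mid f$ delivers a single global $\alpha \in F$; the remaining primes, where $\mathcal{O}_F \otimes \mathbb{Z}_p$ is already monogenic over $\mathbb{Z}_p[\beta]$, present no difficulty.
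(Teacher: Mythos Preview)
Your local-global approach has a genuine gap at the weak approximation step. Weak approximation produces $\alpha \in F$ that is $p$-adically close to your local choices $\alpha_p$ for the finitely many $p \mid f$, but it gives you no control whatsoever over $\alpha$ at the remaining primes $q \nmid f$. Your claim that these primes ``present no difficulty'' because $\mathcal{O}_F \otimes \zed_q = \zed_q[\beta]$ is not justified: the $\alpha$ you obtain bears no relation to $\beta$ at those primes. Concretely, $\alpha$ could pick up a $q$-denominator, or it could land in $\mathcal{O}_F \otimes \zed_q$ but generate only a proper suborder; in either case $\zed_q + \zed_q\alpha + \zed_q\alpha^2$ fails to be an $\mathcal{O}_F \otimes \zed_q$-module, and then $\mathcal{I}$ is not a fractional $\mathcal{O}_F$-ideal. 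One might try to repair this by restricting to $\alpha$ of the shape $(\beta + a)/M$ with $a \in \zed$ and $M$ supported on primes dividing $f$ (which does handle all $q \nmid f$ automatically, since then $\zed_q[\alpha]=\zed_q[\beta]$), but then you must show the local conditions at each $p \mid f$ can be met within this two-parameter family---and your totally split example $\alpha_2 = (0,\tfrac12,1)$ is not of this form for a generic $\beta$, so this is real work still to be done. Your local existence argument is also only sketched in one splitting type.

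The paper's proof sidesteps all of this by working globally with the index form. For any primitive $\alpha$ with minimal polynomial $aX^3 + bX^2 + cX + d \in \zed[X]$, one checks directly that $R = \langle 1,\, a\alpha,\, a\alpha^2 + b\alpha\rangle$ is an order acting on $\mathcal{I}_\alpha = \langle 1,\alpha,\alpha^2\rangle$; taking $\alpha$ to be a root of the index form of $\mathcal{O}_F$ (a binary cubic, so such a root exists) forces $R = \mathcal{O}_F$ by a standard fact about cubic orders.
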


\begin{proof}
  Given $\alpha\in F\setminus\ratls$, let $\mathcal{I}_\alpha\subset F$ be the lattice $\langle 1,
  \alpha, \alpha^2\rangle$.  If $a X^3 + b X^2 + c X + d\in \zed[X]$ is the minimal polynomial of
  $\alpha$, one checks that
  \begin{equation*}
    R = \langle 1, a\alpha, a\alpha^2+b\alpha\rangle \quad 
    \text{satisfies} \quad R \cdot \mathcal{I_\alpha} \subset 
    \mathcal{I_\alpha}.
  \end{equation*}

  We must arrange that $R = \mathcal{O}_F$.  Let $\{1, \mu, \nu\}$ be a basis of $\mathcal{O}_F$.
  Associated to this basis is the index form, an integral binary cubic form which is defined by
  \begin{equation*}
    F(x,y)^2 = \disc(x\nu-y\mu)/\disc(F)
  \end{equation*}
  for $x,y\in \ratls$ (see \cite[Proposition~8.2.1]{Co00}), where $\disc(\alpha)$ is the
  discriminant of the lattice $\mathcal{I}_\alpha$.  If we choose $\alpha$ to be a root of $F(x,1)$,
  then $R = \ord_F$ by \cite[Proposition~8.2.3]{Co00}.
\end{proof}

\begin{prop}
  \label{prop:exist_admtriple}
  Given a totally real cubic field $F$, there is an
  $\ord_F$-ideal $\mathcal{I}$ with an admissible basis.
\end{prop}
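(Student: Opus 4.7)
The plan is to use Lemma~\ref{lem:monogenetic_basis} to produce a fractional $\ord_F$-ideal $\mathcal{I} = \zed[\alpha]$ with $\zed$-basis $\{1, \alpha, \alpha^2\}$, and then exhibit inside $\mathcal{I}$ an explicit admissible $\zed$-basis manufactured from $\alpha$ by a single partial-fraction identity. The natural candidate is the triple
$$(r_1, r_2, r_3) \;:=\; \bigl(\alpha - n,\ \alpha - n - 1,\ (\alpha - n)(\alpha - n - 1)\bigr)$$
for a suitable integer $n$; expanding each entry in the basis $\{1, \alpha, \alpha^2\}$ shows that the change-of-basis matrix has determinant $\pm 1$, so $(r_1, r_2, r_3)$ is again a $\zed$-basis of $\mathcal{I}$.

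The identity $-\frac{1}{\alpha - n} + \frac{1}{\alpha - n - 1} - \frac{1}{(\alpha - n)(\alpha - n - 1)} = 0$ supplies the rational dependence $\sum_i d_i/r_i = 0$ with $d = (-1, 1, -1)$. By Lemma~\ref{lem:cipropNsr} this is, up to scaling, the only such dependence available, so by Corollary~\ref{cor:admiss-nonhalf} admissibility reduces to the three quantities $d_i/N(r_i)$ sharing one common sign. Using $N(r_3) = N(r_1)\,N(r_2)$, the condition collapses to the pair of inequalities $N(\alpha - n) < 0$ and $N(\alpha - n - 1) > 0$. Letting $g \in \zed[X]$ denote the minimal polynomial of $\alpha$ and $\alpha^{(1)} < \alpha^{(2)} < \alpha^{(3)}$ its real embeddings, this pair translates to $g(n) > 0$ and $g(n + 1) < 0$, i.e.\ to
$$n \;\in\; I \;:=\; \bigl(\max(\alpha^{(1)}, \alpha^{(2)} - 1),\ \min(\alpha^{(2)}, \alpha^{(3)} - 1)\bigr) \cap \zed.$$

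The hard part is producing such an integer $n$. The discriminant bound $\prod_{i<j}|\alpha^{(i)} - \alpha^{(j)}| \geq 1$ forces $|I| > 0$ unconditionally, but $|I|$ may well be less than $1$, so the interval could miss every integer for a particular $\alpha$. I would resolve this by leveraging the flexibility in Lemma~\ref{lem:monogenetic_basis}: its proof produces $\alpha$ as a root of the index form attached to a chosen $\zed$-basis of $\ord_F$, and varying that basis yields a whole family of admissible generators whose embeddings realize many different configurations on the real line. The main technical point is to verify that for every totally real cubic $F$ at least one such $\alpha$ yields an $I$ containing an integer---either because some adjacent gap $\alpha^{(i+1)} - \alpha^{(i)}$ exceeds $1$, forcing $|I| = 1$, or via a pigeonhole / equidistribution argument on the fractional parts of $\alpha^{(2)}$ as $\alpha$ ranges over the available generators.
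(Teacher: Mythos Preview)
Your construction and reduction are identical to the paper's: both take $\mathcal{I}=\zed[\alpha]$ from Lemma~\ref{lem:monogenetic_basis} and use the basis $(\alpha',\,1-\alpha',\,\alpha'(\alpha'-1))$ with $\alpha'=\alpha-n$ (your triple $(\alpha-n,\,\alpha-n-1,\,(\alpha-n)(\alpha-n-1))$ is this up to a sign), and both reduce admissibility via the partial-fraction identity and Corollary~\ref{cor:admiss-nonhalf} to the pair of norm inequalities $N(\alpha-n)<0$, $N(\alpha-n-1)>0$.

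The gap in your proposal is at the last step. You correctly rewrite the inequalities as $g(n)>0$ and $g(n+1)<0$ for the monic minimal polynomial $g$, i.e.\ a sign change in the specific direction $+\to-$, and you correctly note that the interval $I$ of valid $n$ has length at most $1$ and might contain no integer. But you do not resolve this: the fixes you sketch (vary the basis of $\ord_F$ to change $\alpha$, or appeal to equidistribution of fractional parts) are not carried out, and your claimed discriminant bound $\prod_{i<j}|\alpha^{(i)}-\alpha^{(j)}|\geq 1$ is neither justified for the index-form root nor strong enough even if true. The paper's argument at exactly this point is shorter: it asserts that it suffices to find consecutive integers where $N(\alpha+k)$ takes \emph{opposite} signs, and then observes that any odd-degree polynomial with no integer roots has such a sign change. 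Your sharper analysis shows that in fact only one of the two directions of sign change (the $+\to-$ direction for $g$) yields the required inequalities under the substitutions $\alpha\mapsto\pm\alpha+k$, and that is not the direction automatically supplied by the parity argument. So you have put your finger on the genuinely delicate point here; what your write-up still owes is either a proof that some index-form generator $\alpha$ produces an integer in $I$, or a second explicit basis of $\mathcal{I}$ whose admissibility condition is the complementary sign pattern.
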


\begin{proof}
  Let $\mathcal{I}$ be a fractional ideal with basis $\{1, \alpha, \alpha^2\}$
  which is provided by Lemma~\ref{lem:monogenetic_basis}.  The basis given by $r_1 = \alpha$, $r_2 = (1-\alpha)$, and 
  $r_3 = \alpha(\alpha-1)$ satisfies the equation
  \begin{equation}
    \label{eq:rec_dep}
    \frac{1}{\NFQ(r_1)}\frac{\NFQ(r_1)}{r_1} + \frac{1}{\NFQ(r_2)}\frac{\NFQ(r_2)}{r_2} +
    \frac{1}{\NFQ(r_3)}\frac{\NFQ(r_3)}{r_3} = 0
  \end{equation}
  so
  $$\dim \Span \{\NFQ(r_1)/r_1, \NFQ(r_2)/r_2, \NFQ(r_3)/r_3\}=2.$$

  The no-half-space condition is then equivalent to the coefficients of \eqref{eq:rec_dep} having
  the same sign, that is $\NFQ(\alpha)<0$, and $\NFQ(1-\alpha)<0$.  We are free to replace $\alpha$
  with $\alpha'=\alpha-k$ for any $k\in\zed$, since the basis $\{1, \alpha', \alpha'^2\}$
  spans the same lattice.  Thus the problem is reduced to finding $k\in \zed$ such that
  $\NFQ(\alpha+k)$ and $\NFQ(\alpha+k+1)$ have opposite signs.

  Define $P(k) = \NFQ(\alpha+k)$.  Then $P(k) = -F(k)$, where $F$ is the monic minimal polynomial of
  $\alpha$.  We claim that there are consecutive integers at which $P$ has opposite signs.  In fact,
  this holds for any polynomial $P$ of odd degree with no integral roots, for if $P$ had the same
  sign at any two consecutive integers, then it must have the same sign at all integers.  This is
  impossible, as the sign of $P(x)$ as $x\to\infty$ is the opposite of $P(x)$ as $x\to -\infty$.
\end{proof}

\par
\begin{example}
  {\rm Consider the field $F = \ratls[x]/\langle x^3-x^2-10x+8 \rangle$ of discriminant $D=961$. Its
    ring of integers $\mathcal{O}_F = \langle 1,x,(x^2+x)/2 \rangle$ is not monogenetic, i.e.\ does
    not have a basis of the form $\{1, \theta, \theta^2\}$ for any $\theta$ in $F$. The class number
    of $\mathcal{O}_F$ is one, so the above algorithm provides a basis of this form spanning some
    fractional ideal similar to $\mathcal{O}_F$.
    \par
    One calculates the index form to be $F(X,1) = 2X^3 - X^2 - 5X + 2$, thus if $\theta$ is a root
    of this polynomial,
    then $\mathcal{O}_F = \langle 1,2\theta,2\theta^2-\theta \rangle$ and $\mathcal{I} = \langle 1,
    \theta, \theta^2\rangle$. Here $N(\alpha) = -1$ and $\NFQ(1-\alpha)=-1$, so the last step of the
    proof is unnecessary. 
  }
\end{example}
\par
\begin{cor} \label{cor:6inbd}
  For any field $F$ the closure of the eigenform locus ${\cal
    E}_{\ord_F}$ intersects a boundary stratum of type $[6]$, that is, a stratum  of trinodal curves.
\end{cor}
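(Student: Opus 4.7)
The plan is to combine the existence of an admissible basis from Proposition~\ref{prop:exist_admtriple} with the sufficiency part of the boundary classification (Theorem~\ref{thm:boundary_suff}) and the explicit description of trinodal strata from Theorem~\ref{thm:explCREQ}. First I would invoke Proposition~\ref{prop:exist_admtriple} to produce an $\mathcal{O}_F$-ideal $\mathcal{I}$ together with an admissible basis $\br = (r_1, r_2, r_3)$. By the definition in \S\ref{sec:irrstrata}, the associated $\mathcal{I}$-weighted boundary stratum $\mathcal{S}_\br \subset \barmoduli[3](\mathcal{I})$ is then an admissible weighted boundary stratum of type $[6]$, i.e.\ it parametrizes trinodal curves.

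Next I would select any cusp packet $(\mathcal{I}, T) \in \mathcal{C}(\mathcal{O}_F)$ with underlying lattice $\mathcal{I}$; the trivial extension $T = 0$ always works. Pick any real embedding $\iota\colon F\to\reals$. By Theorem~\ref{thm:boundary_suff}, the closure in $\pobarmoduli[3]$ of the cusp of $\mathcal{E}^\iota_{\mathcal{O}_F}$ corresponding to $(\mathcal{I}, T)$ contains $\mathcal{S}_\br^\iota(T)$. It therefore suffices to check that $\mathcal{S}_\br^\iota(T)$ is nonempty, since this stratum is embedded in $\pobarmoduli[3]$ over the trinodal stratum of $\barmoduli[3]$.

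To verify non-emptiness, I would appeal to Theorem~\ref{thm:explCREQ}: under the cross-ratio coordinates $(R_1, R_2, R_3)$ on $\mathcal{S}_\br \cong \moduli[0,6]/\phi_\iota$, the locus $\mathcal{S}_\br(T)$ is cut out by the single equation
\begin{equation*}
  R_1^{a_1} R_2^{a_2} R_3^{a_3} = \zeta
\end{equation*}
for nonzero integers $a_i$ and a root of unity $\zeta$. Since the $R_i$ take values in $\cx\setminus\{0,1\}$ and the map $\CR = (R_1, R_2, R_3)$ is a dominant (in fact, two-to-one) morphism to $(\cx\setminus\{0,1\})^3$ by Corollary~\ref{cor:CRarecoord6}, this hypersurface has positive complex dimension and is certainly nonempty. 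Pushing down by $p_\iota$, we obtain a nontrivial subvariety of the type $[6]$ boundary stratum of $\pobarmoduli[3]$ lying in the closure of $\mathcal{E}^\iota_{\mathcal{O}_F}$, and hence in the closure of $\mathcal{E}_{\mathcal{O}_F}$.

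The substantive input is entirely Proposition~\ref{prop:exist_admtriple}; once an admissible basis is in hand, the rest is bookkeeping, with the only point requiring care being that the cross-ratio equation genuinely has solutions in $(\cx\setminus\{0,1\})^3$, which follows because not all three exponents $a_i$ can be zero (their $\gcd$ is $1$) and the monomial map $(R_1,R_2,R_3)\mapsto R_1^{a_1}R_2^{a_2}R_3^{a_3}$ is then a nontrivial character whose preimage of any fiber meets $(\cx\setminus\{0,1\})^3$.
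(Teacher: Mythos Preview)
Your proposal is correct and follows essentially the same route as the paper, which states the corollary without explicit proof as an immediate consequence of Proposition~\ref{prop:exist_admtriple} together with Theorem~\ref{thm:boundary_suff}. You have filled in the one detail the paper leaves implicit, namely that the cross-ratio locus $\mathcal{S}_\br(T)$ is actually nonempty; your verification via Theorem~\ref{thm:explCREQ} and Corollary~\ref{cor:CRarecoord6} is sound.
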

\par
We do not know if the class of the ideal class of $\mathcal{I}$ given by
Lemma~\ref{lem:monogenetic_basis} always is the class of $\Ord_F$.  Nor do we know if there is
always an admissible basis of $\Ord_F$.  Computer experiments
using the algorithm described in Appendix~\ref{sec:bdcounting} suggest an affirmative answer.  This
algorithm also produces examples of ideal classes with no such bases.

\section{\Teichmuller curves and the $\GLtwoRplus$ action} \label{sec:apTc}

In preparation for the next sections, we recall the well-known action of $\GLtwoRplus$ on
$\Omega\moduli$ and the basic properties of \Teichmuller curves in $\moduli$.

\paragraph{Translation surfaces.}

A Riemann surface $X$ equipped with a nonzero holomorphic one-form $\omega$ is otherwise known as a
\emph{translation surface}.  The form $\omega$ defines a metric $|\omega|$ on $X\setminus
Z(\omega)$, where $Z(\omega)$ is the set of zeros of $\omega$, assigning to a vector $v$ the length
$|\omega(v)|$.  The metric $|\omega|$ has cone singularities at the zeros of $\omega$.

The form $\omega$ defines an atlas of charts $\{\phi_\alpha\colon U_\alpha\to \cx\}$
covering $X\setminus Z(\omega)$, where $\phi_\alpha(z) = \int_p^z\omega$ for some choice of
basepoint $p\in U_\alpha$.  The transition functions of this atlas are translations of $\cx$, and
the form $\omega$ is recovered by $\omega|_{U_\alpha} = \phi_\alpha^{-1}(dz)$.

Any translation-invariant geometric structure on $\cx$ can then be pulled back to $X$ via this
atlas.  In particular, for any slope $\theta\in\reals\cup\{\infty\}$ there is a foliation
$\mathcal{F}_\theta$ of $X$ by geodesics of slope $\theta$.

\paragraph{$\GLtwoRplus$ action.}

We can now regard $\Omega\moduli$ as the moduli space of genus $g$ translation surfaces.
$\GLtwoRplus$ acts on $\Omega\moduli$ as follows.  We identify $\cx$ with $\reals^2$ in the usual
way so that a matrix $A\in\GLtwoRplus$ determines a $\reals$-linear automorphism of $\cx$.
Replacing the atlas of charts $\{\phi_\alpha\colon U_\alpha\to \cx\}$ defined above with
$\{A\circ\phi_\alpha\colon U_\alpha\to \cx\}$ yields a new atlas with transition functions also
translations of $\cx$.  Pulling back the complex structure of $\cx$ and the one-form $dz$ via this
atlas defines a new translation surface $A\cdot(X, \omega)$.

\paragraph{Strata.}

Given a partition $n_1, \ldots, n_r$ of $2g-2$, there is the stratum
$$\Omega\moduli(n_1, \ldots,n_r)\subset\Omega\moduli$$
of forms with exactly $r$ zeros of orders given by the $n_i$.  This
stratification is preserved by the $\GLtwoR$-action.

\paragraph{Veech surfaces and  \Teichmuller curves.}

We define the affine automorphism group of a translation surface $(X, \omega)$  to be the group $\Aff^+(X,
\omega)$ of orientation preserving, locally affine homeomorphisms of $(X, \omega)$.
There is a homeomorphism
\begin{equation*}
  D\colon\Aff^+(X, \omega)\to \SLtwoR,
\end{equation*}
sending a map $A$ to its derivative $DA$ in a local translation chart.  We define $\SL(X, \omega) =
D(\Aff^+(X, \omega)) \subset\SLtwoR$.  The group $\SL(X, \omega)$ is known as the \emph{Veech group}
of $(X, \omega)$.

The surface $(X, \omega)$ is said to be \emph{Veech} if
$\SL(X, \omega)$ is a lattice in $\SLtwoR$.  The group $\SL(X, \omega)$ coincides with the
stabilizer of $(X, \omega)$ under the $\GLtwoRplus$-action.  Thus $(X, \omega)$ is Veech if and only if
$\GLtwoRplus\cdot(X, \omega)\subset\Omega\moduli$ descends to an immersed finite volume Riemann
surface (orbifold) in $\moduli$.  An immersed finite volume Riemann surface arising in this way is
called a \emph{\Teichmuller curve} and is necessarily isometrically immersed with respect to the
\Teichmuller metric.

A \Teichmuller curve can also be regarded as an embedded smooth curve in $\proj\Omega\moduli$.

\paragraph{Periodicity.}

A \emph{saddle connection} on a translation surface $(X, \omega)$ is an embedded geodesic segment
connecting two zeros of $\omega$.

The foliation $\mathcal{F}_\theta$ of slope $\theta$ is said to be \emph{periodic} if every leaf of
$\mathcal{F}_\theta$ is either closed (i.e.\ a circle) or a saddle connection. In this case, we call
$\theta$ a periodic direction.  A periodic direction $\theta$ yields a decomposition of $(X,
\omega)$ into finitely many maximal cylinders foliated by closed geodesics of
slope $\theta$.  The complement of these cylinders is a finite collection of saddle connections.

Veech proved the
following strong periodicity property of Veech surfaces.

\begin{theorem}[\cite{veech89}]
  \label{thm:Veech_periodicity}
  Suppose $(X, \omega)$ is a Veech surface with either a closed geodesic or a saddle connection of
  slope $\theta$.  Then the foliation $\mathcal{F}_\theta$ is periodic.
\end{theorem}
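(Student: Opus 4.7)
The plan is to first act by an element of $\SOtwoR \subset \GLtwoRplus$ to assume $\theta=0$ (the horizontal direction), and then to produce a nontrivial parabolic element of the Veech group $\Gamma := \SL(X,\omega)$ fixing the horizontal direction, necessarily of the form $h_{s_0} := \begin{pmatrix} 1 & s_0 \\ 0 & 1 \end{pmatrix}$ for some $s_0 \neq 0$. Once we have such an $h_{s_0} \in \Gamma$, lift it to an affine automorphism $\phi \in \Aff^+(X,\omega)$. Since the derivative of $\phi$ fixes every horizontal tangent vector, $\phi$ preserves each horizontal leaf of $\mathcal{F}_0$ setwise. A standard flat-geometry argument then shows $\phi$ must act as a product of Dehn twists along a collection of horizontal simple closed curves, and this forces $X$ to decompose into horizontal cylinders swept out by these curves (with boundaries built from horizontal saddle connections), which is exactly the definition of $\mathcal{F}_0$ being periodic.

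To produce $h_{s_0}$, I would use the hypothesized horizontal saddle connection $\gamma$ together with the Teichm\"uller geodesic flow $g_t := \diag(e^t, e^{-t})$. The holonomy of $\gamma$ on $g_t \cdot (X,\omega)$ is $e^{-t}\omega(\gamma)$, which shrinks to $0$ as $t \to \infty$; by Mumford's compactness criterion, the presence of arbitrarily short saddle connections means $g_t \cdot (X,\omega)$ leaves every compact subset of $\Omega\moduli$. Projecting to the Teichm\"uller curve $C = \HH/\Gamma$, the hyperbolic geodesic ray corresponding to $\{g_t\}_{t\geq 0}$ therefore escapes every compact subset of $C$. Since $C$ is a finite-volume hyperbolic orbifold, such a divergent geodesic must eventually enter a horocyclic neighborhood of some cusp $c$, and after lifting to $\HH$ its endpoint on $\partial\HH$ must coincide with the parabolic fixed point of the stabilizer of (some lift of) $c$ in $\Gamma$. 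But the geodesic $\{g_t\}$ in $\HH = \SLtwoR/\SOtwoR$ terminates at the boundary point $\infty$, whose stabilizer in $\SLtwoR$ is precisely $\{\pm h_s : s \in \reals\}$. Hence the cusp stabilizer in $\Gamma$ is a nontrivial cyclic subgroup of $\{\pm h_s\}$, yielding the required $h_{s_0} \in \Gamma$.

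The case where the hypothesis gives only a horizontal closed geodesic (rather than a saddle connection) reduces to the previous one: such a geodesic lies inside a maximal horizontal cylinder whose boundary is either a union of horizontal saddle connections~-- in which case we apply the argument above to one of them~-- or is empty, in which case $X$ is itself a flat torus and every direction is trivially periodic.

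The main obstacle I expect is the passage \emph{divergent $g_t$-ray $\Longrightarrow$ parabolic element of $\Gamma$ fixing $\infty \in \partial \HH$}. The delicate point is that a priori the ray could oscillate, repeatedly entering and leaving various cusp neighborhoods of $C$ rather than settling into one; this is ruled out by monotonicity~-- the length of $\gamma$ on $g_t\cdot(X,\omega)$ is strictly decreasing in $t$, so once the ray enters a sufficiently deep horoball it cannot come back out. The only other step requiring care is the final deduction that an affine multi-twist preserving each horizontal leaf forces cylinder decomposition; this follows by analyzing the fixed set of powers of $\phi$ together with the fact that a nontrivial linear-parabolic automorphism of a flat surface must have its twist curves carry the unique invariant direction of the derivative.
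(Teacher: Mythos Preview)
The paper does not prove this theorem; it is quoted from \cite{veech89} and used as a black box. Your outline is the standard argument and is essentially correct, but two steps need repair.

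First, a sign slip: with $g_t=\diag(e^t,e^{-t})$ a \emph{horizontal} holonomy vector is stretched by $e^t$, not contracted, so your saddle connection $\gamma$ gets longer along this ray. Use $g_t=\diag(e^{-t},e^t)$ (the paper's $A_t$) or run $t\to-\infty$; the rest of the divergence--into--a--cusp argument then goes through.

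Second, the assertion ``since $D\phi$ fixes every horizontal tangent vector, $\phi$ preserves each horizontal leaf setwise'' is false as stated. From $D\phi=h_{s_0}$ one only gets $\phi^*(\Im\omega)=\Im\omega$, so $\phi$ preserves the horizontal \emph{foliation} but may translate the leaf space by a nonzero constant (e.g.\ $(x,y)\mapsto(x+y,\,y+\tfrac12)$ on the square torus has derivative $h_1$ but swaps the leaves $y=0$ and $y=\tfrac12$). The fix is to first replace $\phi$ by a power fixing every zero of $\omega$, hence every horizontal separatrix; that power has zero vertical shift and does fix each leaf. The clean way to finish is then not ``$\phi$ is a multi-twist, hence cylinders''---this is circular, since the multi-twist description already presupposes the cylinders---but rather: this power of $\phi$ fixes each zero and acts as an orientation-preserving isometry on each horizontal separatrix, hence is the identity on every separatrix; if some separatrix were infinite it would be dense in a minimal component, forcing $\phi$ to be the identity on an open set and hence $D\phi=\mathrm{Id}$, a contradiction. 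Therefore every horizontal separatrix is a saddle connection and $\mathcal{F}_0$ is periodic.
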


Given a Veech surface $(X, \omega)$ generating a \Teichmuller curve $C\subset\proj\Omega\moduli$,
there is a natural bijection between the cusps of $C$ and the periodic directions on $(X, \omega)$,
up to the action of $\SL(X, \omega)$.  The cusp associated to a periodic direction $\theta$ is the
limit of the geodesic $A_t R\cdot(X, \omega)$, where $R\subset\SOtwoR$ is a rotation which makes
$\theta$ horizontal, and
\begin{equation*}
  A_t =
  \begin{pmatrix}
    e^{-t} & 0\\
    0 & e^t
  \end{pmatrix}.
\end{equation*}
\label{page:surgery}
The stable form in $\proj\Omega\barmoduli$ which is the limit of this cusp is obtained by cutting
each cylinder of slope $\theta$ along a closed geodesic and gluing a half-infinite cylinder to each
resulting boundary component (see \cite{masur75}).  These infinite cylinders are the poles of the resulting
stable form, and the two poles resulting from a single infinite cylinder are glued to form a node.

A periodic direction $\theta$ of a Veech surface $(X, \omega)$ generating a \Teichmuller curve $C$
is \emph{irreducible} if the complement of the cylinders of $\mathcal{F}_\theta$ is a connected
union of saddle connections.  Equivalently, a periodic direction is irreducible if the stable curve
at the limit of the corresponding cusp of $C$ is irreducible.  An irreducible periodic direction
always has $g$ cylinders, where $g$ is the genus of $X$.

\begin{lemma} \label{le:31topo1}
  Every Veech surface $(X, \omega)$ having at most two zeros has an irreducible periodic direction.
\end{lemma}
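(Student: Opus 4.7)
The plan is to split into cases by the number of zeros of $\omega$ and exhibit in each case an explicit irreducible periodic direction.

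First I would dispose of the one-zero case. If $\omega$ has a unique zero $p$, then for every periodic direction $\theta$ the complement of the cylinders of $\mathcal{F}_\theta$ is a graph of saddle connections whose only vertex is $p$, and so is automatically connected. Because $(X,\omega)$ is Veech it admits a closed geodesic, and Theorem~\ref{thm:Veech_periodicity} then guarantees a periodic direction; any such direction is irreducible.

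Assume now that $\omega$ has exactly two zeros $p_1, p_2$. The main step is to produce a single saddle connection $\gamma$ joining $p_1$ directly to $p_2$, and then to apply Veech's periodicity theorem to its direction. To construct $\gamma$ I would take a length-minimizing path from $p_1$ to $p_2$ in the flat metric $|\omega|$; such a minimum is attained because the surface is compact and length is lower semicontinuous. A minimizing path is a local geodesic of the flat metric, hence a concatenation of Euclidean segments meeting only at cone points, i.e.\ at $p_1$ or $p_2$. Minimality precludes $\gamma$ from touching either zero as an interior vertex, since one could then excise a nontrivial loop and shorten the path. Hence $\gamma$ is a single saddle connection with endpoints $\{p_1, p_2\}$.

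Finally I would check that the direction $\theta$ of $\gamma$ is irreducible. By Theorem~\ref{thm:Veech_periodicity}, $\theta$ is periodic. In the resulting cylinder decomposition, the interior of every cylinder is foliated by closed regular leaves, so no saddle connection in direction $\theta$ can meet the interior of any cylinder; thus $\gamma$ lies in the complement of the open cylinders. This complement is therefore a graph on $\{p_1, p_2\}$ containing the edge $\gamma$, hence connected, so $\theta$ is irreducible. The main and really the only subtlety is the existence of a \emph{direct} saddle connection between the two zeros: having at most two zeros is crucial here, since with three or more zeros a length-minimizing path between two specified zeros could have interior vertices at other zeros, and the loop-excision argument would no longer apply.
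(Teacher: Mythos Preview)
Your proof is correct and follows essentially the same approach as the paper's: split into the one-zero and two-zero cases, and in the latter produce a saddle connection joining the two zeros (the paper says ``straightening any path joining the two zeros to a geodesic path,'' which is your length-minimizing argument), then invoke Veech periodicity and observe that this saddle connection connects the two vertices of the spine. Your write-up supplies more detail than the paper's terse version, particularly the loop-excision reason why the minimizer cannot have interior vertices and the explicit check that $\gamma$ lies in the cylinder complement, but the strategy is the same.
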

\par
\begin{proof}
  If $(X, \omega)$ has only a single zero, then every periodic direction is irreducible.

  If $(X, \omega)$ has two zeros, take a saddle connection $I$ joining them.  Such a saddle connection can be
  obtained by straightening any path joining the two zeros to a geodesic path.
  The direction
  determined by $I$ is periodic by Theorem~\ref{thm:Veech_periodicity}, and this direction is
  irreducible as the graph of saddle connections is connected.
\end{proof}
 
\paragraph{Algebraic primitivity.}
 
The trace field of a Veech surface $(X, \omega)$ is the field $\ratls(\trace A: A\in \SL(X, \omega))$.
The trace field of $(X, \omega)$ is a number field which is totally real (see \cite{moeller06} or
\cite{hubertlanneau}) whose degree is at most the genus of $X$ (see \cite{mcmullenbild}).  A Veech
surface $(X, \omega)$ is said to be \emph{algebraically primitive} if the degree of its trace field
is equal to the genus of $X$.

Our finiteness theorem for algebraically primitive \Teichmuller curves will require the following
facts.
\begin{theorem}[\cite{moeller06, moeller}]
  \label{thm:alg_prim_rm_torsion}
  Suppose $(X, \omega)$ is an algebraically primitive Veech surface.  We then have
  \begin{itemize}
  \item $\GLtwoRplus\cdot(X, \omega)$ lies in the locus of eigenforms for real multiplication by the
    trace field of $(X, \omega)$.
  \item For any two distinct zeros $p$ and $q$ of $\omega$ the divisor $p-q$, regarded as a point
    in $\Jac(X)$, is torsion.
  \end{itemize}
\end{theorem}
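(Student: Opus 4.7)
\emph{Proof proposal.} For the first assertion, the plan is to build the real multiplication on $\Jac(X)$ directly from the representation of the Veech group on $H^1(X; \ratls)$. Write $\Gamma = \SL(X, \omega) \subset \SLtwoR$ and $F$ for its trace field. Pullback along affine automorphisms gives a representation $\rho\colon \Aff^+(X,\omega) \to \Aut(H^1(X; \ratls))$ that factors through the derivative map to $\Gamma$. I would first show that the Zariski closure of $\rho(\Aff^+(X,\omega))$ inside $\Aut(H^1(X; \reals))$ decomposes as $\prod_{\iota \colon F \to \reals} \SLtwoR$, with the $\iota$-th factor acting on a two-dimensional subspace $V^\iota \otimes \reals$ via $\iota$. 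Over $\ratls$ this algebraic group is $\ratls$-simple, and its underlying representation is a direct sum $\bigoplus_\iota V^\iota$ of Galois-conjugate two-dimensional subspaces; algebraic primitivity, namely $[F : \ratls] = g$, ensures the sum has dimension $2g$ and so exhausts $H^1(X; \ratls)$. The commutant of $\rho$ in $\End_\ratls(H^1(X; \ratls))$ therefore contains a copy of $F$ acting on each $V^\iota$ by multiplication through $\iota$. Since $\omega$ is an eigenvector for $\rho$ with eigenvalues given by $\iota_1$, the form $\omega$ lies in $V^{\iota_1} \otimes \cx$, and each Galois conjugate lies in the corresponding $V^\iota \otimes \cx$. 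This forces the $F$-action to preserve the Hodge decomposition, yielding real multiplication with $\omega$ an $\iota_1$-eigenform. The orbit $\GLtwoRplus \cdot (X, \omega)$ has the same topological data and a conjugate Veech group, so the argument transports along the entire orbit.

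For the second assertion, the plan is to analyze the section $\sigma \colon \widetilde{C} \to \Jac_{\mathcal{X}/\widetilde{C}}$ of the relative Jacobian over a finite \'etale cover $\widetilde{C}$ of the \Teichmuller curve on which the two zeros $p, q$ can be labeled, defined by $\sigma(t) = p_t - q_t$. Using the first part, $\Jac(X_t)$ is isogenous to a product $\prod_\iota E^\iota_t$ of rank-one eigenspace families, and I would verify that each component of $\sigma$ is torsion. Integrating the $\iota_1$-eigenform from $p_t$ to $q_t$ produces the component in $E^{\iota_1}$; this integral vanishes identically because $p_t, q_t$ are both zeros of $\omega_t$. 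For $\iota \neq \iota_1$, the family $E^\iota \to \widetilde{C}$ is a family of elliptic curves whose monodromy factors through the conjugate group $\iota(\Gamma) \subset \SLtwoR$, which is still a non-elementary Fuchsian group. The component of $\sigma$ in $E^\iota$ lifts to a holomorphic map from the universal cover $\half \to \widetilde{C}$ to $\cx$, equivariant with respect to the action of $\pi_1(\widetilde{C})$ given by $\iota$ on $\half$ and by translations in the period lattice of $E^\iota$ on $\cx$. The image lies in a bounded region of a fiber of $E^\iota$, so the lift is bounded; an equivariant bounded holomorphic function on $\half$ with this type of equivariance is forced to be constant (rigidity for variations of Hodge structure on hyperbolic curves with non-elementary monodromy), and a constant equivariant section is torsion.

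The two main obstacles are, first, the Zariski-closure step in part one: ruling out unexpected $\ratls$-invariant subspaces of $H^1(X; \ratls)$ requires that the Veech group contains hyperbolic elements whose eigenvalues generate $F$, together with a Galois descent argument to produce the decomposition $\bigoplus_\iota V^\iota$ over $\ratls$. Second, the rigidity/boundedness step in part two requires care near the cusps of $\widetilde{C}$: one must check that the normal function associated to $p - q$ has no essential singularity at the cusps, which in the \Teichmuller setting is verified using the cylinder-surgery description of limiting stable forms recalled on page \pageref{page:surgery} and the fact that $p_t, q_t$ remain at bounded distance in each Galois-conjugate flat structure.
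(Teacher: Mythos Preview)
The paper does not prove this theorem; it is quoted from \cite{moeller06, moeller} and used as a black box. So there is no in-paper proof to compare against, only the arguments in the cited references.

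Your outline for the first bullet is in the spirit of \cite{moeller06}: one shows that the weight-one $\ratls$-VHS over the \Teichmuller curve splits, via Deligne's semisimplicity, into Galois-conjugate rank-two sub-VHS indexed by the embeddings $\iota\colon F\to\reals$, and that the resulting $F$-action preserves the Hodge filtration. Your description of the Zariski closure and the Galois-descent step is a reasonable sketch of that argument.

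Your outline for the second bullet contains two genuine errors. First, real multiplication by a degree-$g$ field $F$ on a $g$-dimensional abelian variety does \emph{not} make $\Jac(X_t)$ isogenous to a product of elliptic curves $\prod_\iota E^\iota_t$; a generic principally polarized abelian variety with real multiplication by $F$ is simple. What decomposes is the $\reals$-VHS (equivalently the local system $R^1\pi_*\reals$), not the abelian variety itself, so there are no literal elliptic-curve factors $E^\iota$ to take sections of. Second, even granting some eigenspace decomposition, your claim that the $\iota_1$-component of $p-q$ vanishes ``because $p_t,q_t$ are both zeros of $\omega_t$'' is false: the Abel--Jacobi image of $p-q$ evaluated on $\omega$ is the relative period $\int_p^q\omega$, which has no reason to be zero (and in the examples discussed in the paper it is not). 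The torsion statement says only that some integer multiple of $\int_p^q\omega$ lies in the period lattice of $\omega$, not that it vanishes.

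What \cite{moeller} actually does is closer to your idea for the $\iota\neq\iota_1$ factors, but carried out at the level of variations of Hodge structure rather than elliptic curves: one shows that each rank-two sub-VHS in the decomposition has \emph{maximal Higgs field} (the Kodaira--Spencer map is an isomorphism), and then invokes a theorem to the effect that a family of abelian varieties whose VHS decomposes into pieces all with maximal Higgs field has finite Mordell--Weil group over the base curve. The section $t\mapsto p_t-q_t$ is then torsion because it lies in a finite group. Your boundedness/rigidity heuristic for the $\iota\neq\iota_1$ pieces is morally related to this, but the $\iota_1$ piece requires the same Higgs-field argument, not the incorrect vanishing claim.
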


The following lemma shows that the heights of cylinders in an irreducible periodic direction of an
algebraically primitive Veech surface can be
recovered from knowledge of their widths.

\begin{lemma} \label{lem:heightknown} Suppose $(X, \omega)\in\Omega\moduli[g]$ is an eigenform for
  real multiplication by a totally real field $F$ of degree $g$, and suppose the horizontal
  direction of $(X, \omega)$ is periodic and irreducible.  Then the vector $(r_i)_{i=1}^g$ of widths
  of the $g$ horizontal cylinders is a real multiple of a basis of $F$ over $\ratls$, and the
  corresponding vector $(s_i)_{i=1}^g$ of heights of these cylinders is a real multiple of the
  dual basis of $F$ over $\ratls$ with respect to the trace pairing.
 \end{lemma}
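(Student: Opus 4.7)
The core idea is to use the $\iota$-eigenform property to recognize the subspace $V\subset H_1(X;\ratls)$ spanned by the horizontal cylinder cores as a rank-one $F$-submodule, and then to exploit the fact that any $F$-compatible nondegenerate pairing between two rank-one $F$-modules is a scalar multiple of the trace pairing.

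Let $\alpha_1,\ldots,\alpha_g \in H_1(X;\zed)$ be the core curves of the $g$ horizontal cylinders, and let $V$ be their $\ratls$-span in $H_1(X;\ratls)$. Irreducibility of the horizontal direction means the limiting stable curve is irreducible of geometric genus zero with $g$ nodes, so the $\alpha_i$ are the vanishing cycles and form a $\ratls$-basis of the rank-$g$ Lagrangian subspace $V$. Because $\omega$ is an $\iota$-eigenform, the $\ratls$-linear functional $\Phi_v(\gamma) := \int_\gamma \Im\omega$ satisfies $\Phi_v(\lambda\gamma) = \iota(\lambda)\Phi_v(\gamma)$ for all $\lambda \in F$, so $\ker\Phi_v$ is $F$-invariant. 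Since the $\alpha_i$ are horizontal, $V \subseteq \ker\Phi_v$; $F$-invariance forces $\dim_\ratls \ker\Phi_v$ to be a multiple of $g$, hence either $g$ or $2g$, and the value $2g$ is ruled out because $\omega \neq 0$ would be forced to have vanishing imaginary periods. Therefore $V = \ker\Phi_v$, and $V$ inherits the structure of a rank-one $F$-module.

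Running the same argument with the horizontal period functional $\Phi_h(\gamma) := \int_\gamma \Re\omega$ shows that $\Phi_h|_V$ is a nonzero $F$-linear map from the rank-one $F$-module $V$ to $\reals$ (with $F$ acting on $\reals$ via $\iota$), hence an isomorphism of $V$ onto a line $c \cdot \iota(F) \subset \reals$. Setting $b_i := \iota^{-1}(r_i/c) \in F$ then produces a $\ratls$-basis of $F$ with $r_i = c\,\iota(b_i)$, giving the width statement. For the heights I would pick $\beta_1,\ldots,\beta_g \in H_1(X;\zed)$ so that $\{\alpha_i,\beta_i\}$ is a symplectic basis; the height of the $i$th cylinder is then $s_i = \Phi_v(\beta_i)$, well-defined modulo $V \subset \ker\Phi_v$. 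The real-multiplication identity $\langle \lambda x, y\rangle = \langle x, \lambda y\rangle$ makes the intersection pairing induce an $F$-compatible perfect pairing $V \times (H_1(X;\ratls)/V) \to \ratls$, so the identification $V \cong F$ via $\alpha_i \leftrightarrow b_i$ extends, up to a single overall scalar, to an identification of $H_1(X;\ratls)/V$ with $F$ sending $\beta_i$ to the trace-dual basis element $b_i^*$. The $F$-invariance of $V$ lets $\Phi_v$ descend to an $F$-linear map $H_1(X;\ratls)/V \to \reals$, which by the rank-one computation of the second paragraph must equal $c'\iota$ for some $c' \in \reals$; hence $s_i = c'\,\iota(b_i^*)$.

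The main technical point is the rigidity statement that any nondegenerate $\ratls$-bilinear pairing $F \times F \to \ratls$ satisfying $\langle \lambda x, y\rangle = \langle x, \lambda y\rangle$ is of the form $(x,y) \mapsto \Tr(cxy)$ for some $c \in F^\times$, which pins down every $F$-compatible perfect pairing between two rank-one $F$-modules as a scalar multiple of the trace pairing. Granted this, together with the fact that the $F$-action on both $V$ and $H_1(X;\ratls)/V$ is the one coming from real multiplication, the translation between cylinder widths and heights on one side, and a basis of $F$ together with its trace-dual on the other, becomes essentially $F$-linearity bookkeeping.
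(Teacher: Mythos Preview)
Your proof is correct and follows essentially the same route as the paper: identify the span $V$ of the cylinder core curves and the quotient $H_1(X;\ratls)/V$ as rank-one $F$-modules, invoke the rigidity of $F$-compatible pairings to match the symplectic duals with the trace-dual basis, and read off widths and heights via the $\iota$-eigenform property. Your characterization $V=\ker\Phi_v$ is a clean self-contained justification of the $F$-invariance of $V$, a step the paper simply asserts.
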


 \begin{proof}
   Let $M\subset H_1(X; \ratls)$ be the $g$-dimensional subspace generated by the core curves of
   cylinders, and let $N = H_1(X; \ratls)/M$.  Real multiplication gives both $M$ and $N$ the
   structure of one-dimensional $F$-vector spaces, so we may choose isomorphisms of $F$-vector spaces
   $\phi\colon M \to F$ and $\psi\colon N\to F$.  Since $\omega$ is an eigenform, there are
   constants $c,d\in\reals$ and an embedding $\iota\colon F\to\reals$ such that
   \begin{equation}
     \label{eq:28}
     \int_\alpha\omega = c \iota(\phi(\alpha)) \qtq{and} \Im\int_\beta\omega=d\iota(\psi(\beta))
   \end{equation}
   for all $\alpha\in M$ and $\beta\in N$.

   The intersection pairing between $M$ and $N$ yields a perfect pairing $\langle \, ,
   \,\rangle\colon F\times F\to\ratls$ which is compatible with the action of $F$ in the sense that
   $\langle \lambda x, y\rangle = \langle x, \lambda y\rangle$ for all $\lambda\in F$.  A second
   such pairing is given by $(x, y) = \trace(xy)$.  Since the space of all such perfect pairings is
   a one-dimensional $F$-vector space, there is a $\lambda\in F$ such that
   \begin{equation}
     \label{eq:29}
     \langle x, \lambda y\rangle = \trace(xy)
   \end{equation}
   for all $x,y\in F$.

   Let $\alpha_i\in M$ be the class of a core curve of the $i$th horizontal cylinder $C_i$, let $r_i =
   \phi(\alpha_i)$, and let $s_i$ be the dual basis of $F$ to the $r_i$.  Choose $\beta_i\in H_1(X;
   \ratls)$ such that $\beta_i \equiv\psi^{-1}(\lambda s_i)\pmod M$.  By \eqref{eq:29}, the $\beta_i$
   are dual to the $\alpha_i$ with respect to the intersection pairing.  It follows that $\beta_i$
   crosses $C_i$ once and no other cylinder, so the height of $C_i$ is $\Im\int_{\beta_i}\omega$.
   By \eqref{eq:28}, we have
   \begin{equation*}
     \int_{\alpha_i} \omega = c \iota(r_i) \qtq{and} \Im\int_{\beta_i}\omega =
     d\iota(\lambda)\iota(s_i). \qedhere
   \end{equation*}
 \end{proof}

\section{$\GLtwoRplus$ non-invariance} \label{sec:non-inv}

In this section we show that the $\GLtwoRplus$ action on $\Omega\moduli[g]$ admits
a continuous extension to the Deligne-Mumford compactification. We deduce from this and the previous
sections that the eigenform locus for real multiplication by the ring of integers in any totally
real cubic field is {\em not} invariant under the action of $\GLtwoRplus$.  McMullen
proved non-invariance in \cite{mcmullenbild}  for the maximal order in $\ratls(\cos(2\pi/7))$
using the existence of a curve with a special automorphism group.
\par
\paragraph{$\GLtwoRplus$-action  on $\Omega\barmoduli$.}

The definition of the $\GLtwoRplus$ action on Abelian differentials works
just as well for stable Abelian differentials $(X, \omega)$, regarding $\omega$ as a holomorphic
one-form on the punctured Riemann surface $X$.  The opposite-residue condition is preserved by
linearity of the $\GLtwoRplus$-action on $\reals^2$: $A\cdot(-v) = -A\cdot v$.  Thus we obtain an
action of $\GLtwoRplus$ on $\Omega\barmoduli$ and $\Omega\augteich(\Sigma_g)$.

\begin{prop} \label{prop:contextension}
  The action of $\GLtwoRplus$ on $\Omega \barmoduli$ is continuous.
\end{prop}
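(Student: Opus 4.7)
The strategy is to verify joint continuity at an arbitrary point $(A_0, (X_0, \omega_0)) \in \GLtwoRplus \times \Omega\barmoduli$. Continuity on the open stratum $\GLtwoRplus\times\Omega\moduli$ is classical (the action is a real-analytic family of quasiconformal deformations), so the substantive case is when $X_0$ has nodes. The plan is to work in the plumbing coordinates of \S\ref{sec:RS} adapted to the residue data of $\omega_0$, and to express the action in those coordinates explicitly enough to see continuity.

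First I would choose, near each node $n_i$ of $X_0$ with residue $\pm r_i$, local coordinates $(z_i, w_i)$ on the two branches in which $\omega_0 = r_i\, dz_i/z_i$ and $\omega_0 = -r_i\, dw_i/w_i$; such coordinates exist because $\omega_0$ has simple poles with opposite residues. The plumbing construction then produces a universal family $\mathcal{X}\to B = \Delta_\epsilon^{3g-3-k}\times \Delta_1^k$ together with an explicit holomorphic trivialization of $\Omega\barmoduli$ near $(X_0,\omega_0)$. Applying $A_0$ pointwise to the translation charts of $\omega_0$ gives a stable form $A_0\cdot(X_0,\omega_0)$ with the same nodal combinatorics and residues $A_0 r_i$ (viewing $r_i\in\cx\cong\reals^2$); I would set up analogous plumbing coordinates at this target point.

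For $A$ near $A_0$ and $(\bs,\bt)$ small, I would describe the action $A\cdot(X_\bt^\bs,\omega_\bt^\bs)$ in these coordinates. On each plumbing collar $\mathbb{V}_i=\{x_iy_i=t_i\}$ the form is $\omega = r_i\,dx_i/x_i + (\text{holomorphic})$, which in flat coordinates is a long flat annulus; applying $A$ to it produces another flat annulus with explicit new circumference $|A r_i|$, twist, and modulus, hence a new plumbing parameter $t_i'(A,\bs,\bt)$ depending continuously on all arguments and vanishing at $t_i=0$. On the thick region $X^* = X_0\setminus\bigcup(U_i\cup V_i)$, the $\reals$-linear map $A$ defines a quasiconformal map whose Beltrami coefficient depends continuously on $A$ and on $\bs$ (the latter through the Beltrami differentials $\mu_\bs$ used in the plumbing construction); by the measurable Riemann mapping theorem with parameters (see \cite{ahlfors_quasiconformal}), the resulting new moduli parameters $\bs'(A,\bs,\bt)$ vary continuously. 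Gluing these two local descriptions and tracking the residues gives a continuous map $(A,\bs,\bt)\mapsto A\cdot(X_\bt^\bs,\omega_\bt^\bs)$ into $\Omega\barmoduli$ near $A_0\cdot(X_0,\omega_0)$, which is what we need.

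The main obstacle will be verifying that this gluing is consistent in the limit $\bt\to\bzero$: the collar transformation yields a flat annulus of diverging modulus, and one must check that its complex structure matches the quasiconformal deformation of the thick region to produce a stable form whose plumbing parameters at $A_0\cdot(X_0,\omega_0)$ are precisely $(\bs'(A,\bs,\bt), \bt'(A,\bs,\bt))$ with the correct residues. This reduces to a local normal-form calculation exploiting the observation that, near a node, $(X_\bt^\bs,\omega_\bt^\bs)$ is asymptotically modelled by a pair of long flat cylinders on which the $\GLtwoRplus$-action is transparent, while on the thick region the action is a standard qc deformation controlled by the measurable Riemann mapping theorem.
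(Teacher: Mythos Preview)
Your approach differs from the paper's and leaves the gluing step you flag genuinely unresolved. The difficulty is that the plumbing parameters $(\bs',\bt')$ of $A\cdot(X_\bt^\bs,\omega_\bt^\bs)$ are global invariants of the new complex structure; they cannot be read off separately from the collar and from the thick part. Your local computation on a collar produces an (approximately) flat annulus, but the value of $t_i'$ depends on how the uniformized thick region attaches to it, which in turn requires solving the Beltrami equation with the correct boundary behavior. The asymptotic-cylinder heuristic does not by itself supply the uniform estimates needed to conclude continuity at $\bt=\bzero$. This route could perhaps be made rigorous, but it is considerably harder than necessary.

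The paper avoids all of this by lifting to the augmented \Teichmuller space $\Omega\augteich(\Sigma_g)$ and using its intrinsic topology rather than plumbing coordinates. The topology on $\augteich(\Sigma_g)$ is the smallest one making the hyperbolic length functions $\ell_\gamma$ continuous; since $\ell_\gamma$ is determined by the modulus of the annular cover corresponding to $\langle\gamma\rangle\subset\pi_1(X)$, and moduli of annuli vary continuously under quasiconformal deformation, continuity on the base follows in one line. For the fiber, one chooses curves $\alpha_1,\dots,\alpha_g$ spanning a Lagrangian in $H_1(\Sigma_g;\zed)$, each either among the pinched curves or disjoint from them, trivializes $\Omega\augteich$ near $X_0$ by $\eta\mapsto(\int_{\alpha_i}\eta)_i$, and observes that these periods transform $\reals$-linearly under $A$. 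Since the action commutes with the mapping class group, continuity descends to $\Omega\barmoduli$. No explicit coordinate computations, no gluing, and no appeal to the measurable Riemann mapping theorem are needed.
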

\par
\begin{proof}
  We show that the action of $\GLtwoRplus$ on $\Omega
  \augteich(\Sigma_g)$ is continuous.  As the $\GLtwoRplus$-action on $\Omega\augteich$ commutes with the
  action by the mapping class group, this action then descends to a continuous action on $\Omega\barmoduli$.
  
  We claim that under the action of $\GLtwoRplus$ on $\Omega\augteich(\Sigma_g)$ the hyperbolic
  lengths of simple closed curves vary continuously.  Since the topology of $\augteich(\Sigma_g)$ is
  the smallest topology such that hyperbolic lengths of simple closed curves are continuous
  functions $\ell_\gamma\colon\augteich(\Sigma_g)\to \reals_+ \cup \{\infty\}$, it follows that
  under this action, the underlying Riemann surfaces are varying continuously.

  That the length of a simple closed curve $\gamma$ varies continuously follows easily from
  considering the annular covering of $X$ corresponding to $\langle \gamma\rangle\subset\pi_1(X)$.
  The modulus of this annulus varies continuously under quasiconformal deformation, and the length
  of $\gamma$ is determined by this modulus (see for example \cite[Proposition~7.2]{DoHu93}).
  
  Consider a form $([f\colon \Sigma_g\to X], \omega)\in\Omega\augteich(\Sigma_g)$.  Say the collapse
  $f$ pinches a set of curves $S$ on $\Sigma_g$.  We may choose a set of curves $\alpha_1,\ldots,
  \alpha_g$ on $\Sigma_g$ that generate a Lagrangian subspace of $H_1(\Sigma_g, \zed)$ and such that
  each of the $\alpha_i$ is either one of the curves in $S$ or intersects each curve in $S$
  trivially.  We obtain a trivialization of the bundle $\Omega\augteich(\Sigma_g)$ over a
  neighborhood of $X$ sending a form $\eta$ to $(\eta(\alpha_1), \ldots, \eta(\alpha_g))\in\cx^g$.

  Say$A\cdot(Y, \eta) = (Z, \zeta)$.  From the definition of the $\GLtwoRplus$ action, we have
  \begin{equation*}
    \zeta(\alpha_i) = A\cdot\eta(\alpha_i),
  \end{equation*}
  with $A\in\GLtwoRplus$ acting on $\cx\isom\reals^2$ in the usual way.  Thus
  $\eta(\alpha_i)$ varies continuously under the $\GLtwoRplus$-action, and so the action on
  $\Omega\augteich(\Sigma_g)$ is continuous.
\end{proof}

\paragraph{Four-punctured spheres.}

Given $r_1, r_2\in\cx$, we let $\mathcal{R}_{(r_1, r_2)}\isom\moduli[0,4]$ be the moduli space of pairs $(X,
\omega)$, where $X$ is the four-punctured sphere $\proj^1\setminus\{p_1, p_{-1}, p_2, p_{-2}\}$. and
$\omega$ is the unique meromorphic one-form with simple poles at the $p_i$ with residue $r_{\pm i}$
at $p_{\pm i}$.  We identify $\mathcal{R}_{(r_1, r_2)}$ with $\cx\setminus\{0,1\}$ via the
cross-ratio $R=[p_1,p_{-1},p_{-2}, p_2]$ and write $(X_R, \omega_R)$ for the form associated to the
cross-ratio $R$.

If $r_1, r_2\in \reals$, then the subgroup $P\subset\GLtwoRplus$ of matrices fixing the vector $(1,0)$ acts on
$\mathcal{R}_{(r_1,r_2)}$, as this is the subgroup of $\GLtwoRplus$ preserving the residues $r_i$.

\begin{prop}
  \label{prop:stuff}
  Suppose $r_1, r_2\in\reals$, and $r_1\neq \pm r_2$.  We then have
  \begin{itemize}
  \item
    The horizontal foliation of each $(X_R, \omega_R)\in\mathcal{R}_{(r_1, r_2)}$ is periodic.  Each
    $(X_R, \omega_R)$ has either two or three cylinders (counting the two half-infinite cylinders of
    width $r_i$ as a single cylinder).
  \item The form $\omega_R$ has a double zero for the single value of $R$,
    \begin{equation}
      \label{eq:26}
      R = \left(\frac{r_1-r_2}{r_1+r_2}\right)^2.
    \end{equation}
  \item We define $\Spine_{(r_1,r_2)}\subset\mathcal{R}_{(r_1,r_2)}$ to be the locus of two-cylinder
    forms.  $\Spine_{(r_1, r_2)}$ is the locus of singular leaves of a quadratic differential on
    $\mathcal{R}_{(r_1, r_2)}$.  $\Spine_{(r_1, r_2)}$ is homeomorphic to a figure-9, with the three
    pronged singularity at the unique form $(X_R, \omega_R)$ with a double zero.  The one-pronged
    singularity is at $R=1$, the point in the boundary of $\mathcal{R}_{(r_1, r_2)}$ obtained by
    pinching the curve separating $p_{\pm 1}$ from $p_{\pm 2}$.
  \item $\Spine_{r_1, r_2}$ is the locus of points fixed by the action of $P$ on $\mathcal{R}_{(r_1, r_2)}$.
  \end{itemize}
\end{prop}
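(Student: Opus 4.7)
The plan hinges on a single-valued real ``height function'' $G$ on the punctured sphere, available because the reality of the residues $\pm r_1, \pm r_2$ makes every period of $\Re\omega_R$ vanish: $\Re\omega_R = dG$ for a single-valued $G\colon \proj^1\setminus\{p_{\pm 1}, p_{\pm 2}\}\to\reals$ with logarithmic singularities $G\sim\pm r_i\log|z-p_i|$ at the poles, and the horizontal foliation is the level-set foliation of $G$. Periodicity in (1) is then immediate: regular level sets of $G$ are compact smooth $1$-manifolds, so non-critical leaves are closed.

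For the cylinder count in (1), first observe that a cylinder whose core separates pole set $S$ from its complement has circumference $2\pi|\sum_{p_i\in S}r_i|$; the hypothesis $r_1\neq \pm r_2$ eliminates cylinders enclosing $\{p_i,p_{-i}\}$ (width zero), while a topological observation (a bi-infinite cylinder with both ends at poles and no finite saddle boundary has closure equal to all of $S^2$, excluding other poles) rules out bi-infinite cylinders entirely, so each of the four poles lies at the infinite end of a half-infinite cylinder. Morse-theoretic analysis then pins down the possibilities: regular level sets of $G$ have exactly two components for $|G|$ large (one near $\{p_1,p_2\}$ and one near $\{p_{-1},p_{-2}\}$), and at each simple zero the component count shifts by $\pm 1$; the only scenarios compatible with ``two components at $G=\pm\infty$'' are (a) both zeros at the same critical level, giving four peripheral cylinders paired as two, or (b) zeros at distinct levels with a $2\to 1\to 2$ transition, giving four peripherals plus one essential cylinder, totaling three. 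The double-zero case is a further degeneration of (a). Claim (2) is a direct computation: normalizing $p_1 = 0, p_{-1} = \infty, p_2 = 1, p_{-2} = R$, the numerator of $\omega_R$ is a quadratic in $z$ whose discriminant vanishes iff $R \in \{1, ((r_1-r_2)/(r_1+r_2))^2\}$, the former being the excluded boundary pinching.

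For (3) and (4), the Spine is cut out by the single real-analytic equation $G(q_1) = G(q_2)$, equivalently $\Re F(q_1) = \Re F(q_2)$ where $F(z) = r_1\log z + r_2\log(z-1) - r_2\log(z-R)$ has single-valued real part and $q_1, q_2$ are the zeros of $\omega_R$; this is one real equation on the $2$-real-dimensional $\mathcal{R}_{(r_1,r_2)}$, cutting out a $1$-dimensional real-analytic subset. The figure-$9$ topology emerges from analyzing the two local degenerations: at $R = R^*$ the zeros coalesce, and the double-zero local model $G\sim\Re(w^3)$ has three level-$0$ rays, producing the three-pronged meeting of branches; as $R\to 1$ a separating saddle connection collapses, producing the one-pronged endpoint. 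This realizes the Spine as the critical graph of an explicit meromorphic quadratic differential on $\mathcal{R}_{(r_1,r_2)}$ with simple zero at $R^*$ and simple pole at $R = 1$. For the $P$-fixed characterization in (4), a two-cylinder form has all saddle connections at the common critical level $\{G = G(q_1) = G(q_2)\}$, and the $P$-action fixes this level pointwise while the vertical shear-and-stretch reparameterizes each half-infinite cylinder (conformally a punctured disk, unchanged up to isomorphism) without altering the underlying Riemann-surface structure; conversely, a three-cylinder form contains a finite essential cylinder whose modulus scales by the vertical stretching factor of $P$, preventing $P$-invariance.

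The main obstacle is claim (3): rigorously identifying the figure-$9$ topology of the Spine and exhibiting the associated quadratic differential requires both local analysis at the two special points $R^*$ and $R=1$, a global connectedness argument ruling out extraneous components, and a monodromy analysis of how the solution set of $G(q_1) = G(q_2)$ wraps around the three-pronged singularity at $R^*$.
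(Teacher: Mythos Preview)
Your approach is sound and, for part (4), essentially identical to the paper's: the paper argues that a three-cylinder form has a saddle connection crossing the finite cylinder whose period has nonzero imaginary part, hence is moved by any nontrivial element of $P$, while a two-cylinder form is four half-infinite cylinders glued to a graph, on which one builds the required affine automorphism by hand. Your modulus argument and spine-fixing argument are the same in content.

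Where you differ from the paper is in parts (1)--(3): the paper does not prove these at all, but simply cites \cite[Propositions~7.3, 7.4]{Ba07L} and \cite[Proposition~6.10]{bainbridge07}. Your height-function/Morse-theory argument for (1), discriminant computation for (2), and level-set description of the Spine for (3) are correct reconstructions of what one finds in those references, and your computation in (2) checks out exactly (the quadratic $B^2R^2-(A^2+B^2)R+A^2=0$ with $A=r_1-r_2$, $B=r_1+r_2$ has roots $R=1$ and $R=(A/B)^2$).

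One small caution on conventions: with the paper's normalization (residues $r_i$ real, cylinder widths equal to $r_i$), the periods of $\omega_R$ around the punctures are real, so it is $\Im\omega_R$ (not $\Re\omega_R$) that is exact, and the height function is $G=\Im\int\omega_R$ with $G\sim -\tfrac{r_i}{2\pi}\log|z-p_i|$ near $p_i$. This is only a sign/labelling issue and does not affect your argument. You are also right to flag (3) as the part requiring the most care: ruling out extra components of the real-analytic set $\{G(q_1)=G(q_2)\}$ and pinning down the quadratic-differential description globally is where the work in \cite{Ba07L} lies, and your outline does not yet supply that.
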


\begin{proof}
  See \cite[Proposition~7.3]{Ba07L} for the first statement, \cite[Proposition~6.10]{bainbridge07}
  for the second statement, and \cite[Proposition~7.4]{Ba07L} for the third statement.

  For the final statement, suppose $(X, \omega)\in \mathcal{R}_{(r_1,r_2)}$ is a three-cylinder
  surface.  Then there is a single finite horizontal cylinder $C\subset X$ with a simple zero of
  $\omega$ on the top and bottom boundaries of $C$.  The period $\int_\gamma\omega$ along a curve
  joining these two zeros has nonzero imaginary part, so it is not fixed by any matrix in $P$.  Thus
  $P$ does not fix $\omega$.

  If $(X, \omega)\in\Spine_{(r_1,r_2)}$, then $(X, \omega)$ is obtained by gluing four half-infinite
  cylinders to  graph (the spine of $(X, \omega)$. There is an affine automorphism of $(X, \omega)$
  with derivative $P$ which is the identity on the spine.  Thus $(X, \omega)$ is stabilized by the
  action of $P$. 
\end{proof}

\paragraph{$\GLtwoRplus$ non-invariance.}

We write $\Omega\E^\iota\subset\Omega\moduli$ for the locus of $\iota$-eigenforms (as opposed to its
projectivization $\E^\iota$).

\begin{theorem} \label{thm:sl_noninvariance} Let $\mathcal{O}$ be a totally real cubic order and
  $X\subset\Omega\moduli[3]$ an irreducible component of $\Omega\mathcal{E}^\iota_\mathcal{O}$.  If
  $\overline{X}\subset\Omega\barmoduli[3]$ has nontrivial intersection with a boundary stratum of
  type $[6]$, then $X$ is not invariant under the action of $\GLtwoRplus$.
\end{theorem}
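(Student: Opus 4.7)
The plan is to argue by contradiction, exploiting the explicit cross-ratio equation that cuts out the closure of $\Omega\E^\iota_\mathcal{O}$ inside a type $[6]$ boundary stratum together with the fact (Proposition~\ref{prop:contextension}) that the $\GLtwoRplus$-action extends continuously to $\Omega\barmoduli[3]$.

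Suppose $X$ were $\GLtwoRplus$-invariant; then so is its closure $\overline X \subset \Omega\barmoduli[3]$. By hypothesis, I would pick $(X_0, \omega_0) \in \overline X$ in some type $[6]$ stratum, and after rescaling $\omega_0$ by a complex constant (using that $\overline X$ is a $\cx^*$-cone) I may take the residues to be $\pm\iota(r_1), \pm\iota(r_2), \pm\iota(r_3)$ for some admissible basis $\br$ of an ideal in $F$ whose coefficient ring contains $\mathcal{O}$, per Theorem~\ref{thm:ratandposdirect} and Proposition~\ref{prop:nhsequivratpos}. Theorem~\ref{thm:explCREQ} then gives that the cross-ratios of $(X_0, \omega_0)$ satisfy
\begin{equation*}
R_1(X_0,\omega_0)^{a_1}\,R_2(X_0,\omega_0)^{a_2}\,R_3(X_0,\omega_0)^{a_3} \,=\, \zeta,
\end{equation*}
where the exponents $a_i$ and the root of unity $\zeta$ depend only on the cusp packet $(\mathcal{I},T)$ and the basis $\br$.

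The next step is to show that invariance forces the identical cross-ratio equation to be satisfied along the entire $\GLtwoRplus$-orbit of $(X_0,\omega_0)$. For any $A \in \GLtwoRplus$ with $v := Ae_1 \in \cx^*$, the form $A \cdot (X_0, \omega_0)$ is a trinodal stable form in $\overline X$ whose set of residues is $\pm v\,\iota(r_i)$, still a common complex multiple of $\iota(\br)$, associated to the same cusp packet $(\mathcal{I},T)$. Since the recipe for the exponents $a_i$ (via Corollary~\ref{cor:2ndver}) and for $\zeta$ (via \eqref{eq:21}--\eqref{eq:22}) involves only $\br$ and $T$ and is manifestly invariant under complex rescaling of residues, Theorem~\ref{thm:explCREQ} applied to the new trinodal stratum yields that $A \cdot (X_0, \omega_0)$ must satisfy the identical cross-ratio equation.

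I would then specialize to the horocycle subgroup $P = \bigl\{P_s = \bigl(\begin{smallmatrix}1 & s \\ 0 & 1\end{smallmatrix}\bigr) : s \in \reals\bigr\}$, which fixes $e_1$ pointwise and hence preserves the stratum of trinodal forms with residues exactly $\pm\iota(r_i)$ setwise in $\Omega\barmoduli[3]$. By the previous paragraph, the entire $P$-orbit of $(X_0, \omega_0)$ lies in $\overline X \cap \mathcal{S}^\iota_\br$ and satisfies the cross-ratio equation, so differentiating at $s=0$ gives the identity
\begin{equation*}
\sum_{i=1}^{3} a_i \cdot \frac{1}{R_i(X_0,\omega_0)} \cdot \frac{d}{ds}\bigg|_{s=0} R_i(P_s\cdot(X_0,\omega_0)) \,=\, 0.
\end{equation*}

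The main obstacle is to show that this identity must fail somewhere on $\overline X \cap \mathcal{S}^\iota_\br$, which by Theorem~\ref{thm:explCREQ} is a two-complex-dimensional subvariety of the three-complex-dimensional stratum. I would unfold the cross-ratios $R_i$ using \eqref{CR6} as M\"obius invariants of four of the six punctures, describe the horocycle derivative of each pole position via the flat-geometric picture of $(X_0,\omega_0)$ (a six-punctured sphere with six half-infinite horizontal cylinders joined along a spine of horizontal saddle connections, as in the surgery description of p.~\pageref{page:surgery}), and translate the displayed identity into a rational linear relation among imaginary parts of period integrals of $\omega_0$ along paths connecting pairs of poles. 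By Corollary~\ref{cor:2ndver} the resulting coefficients are proportional to $(c_i^2/N^F_\ratls(r_i))_i$, and such a relation among periods would amount to a $\ratls$-linear dependence incompatible with the $r_i$ being a basis of $F$. The delicate part is to rule out the possibility that the particular $(X_0,\omega_0)$ is a $P$-fixed point where the derivative vanishes for trivial reasons, but the positive dimension of $\overline X \cap \mathcal{S}^\iota_\br$ supplies enough freedom to replace $(X_0,\omega_0)$ by a generic nearby point in this subvariety where the horocycle genuinely moves.
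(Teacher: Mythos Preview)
Your overall strategy---assume $\GLtwoRplus$-invariance, pass to the closure in a boundary stratum, and exploit that the horocycle subgroup $P$ must preserve the locus cut out by the cross-ratio equation---matches the paper's. But the route you take from there is genuinely different, and the final step has a gap.

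The paper does \emph{not} stay in the type $[6]$ stratum. Instead it degenerates further to a type $[4]\times^2[4]$ stratum, which is a product $\mathcal{R}_{(r_1,r_2)}\times\mathcal{R}_{(r_3,r_2)}$ of two moduli spaces of four-punctured spheres. On each factor, the $P$-fixed locus $\Spine_{(r_i,r_2)}$ is computed explicitly in Proposition~\ref{prop:stuff}: it is a figure-$9$ graph with a unique three-pronged singularity at the double-zero point \eqref{eq:26} and a unique one-pronged end at $R=1$. $P$-invariance of the locus $\{R_1^{a_1}R_3^{a_3}=\zeta\}$ forces the monomial maps $z\mapsto z^{a_i}$ and $z\mapsto \zeta/z$ to carry spine to spine and to match the valences of the singularities; this rigidity pins down $a_1,a_3\in\{\pm1\}$ and $\zeta=1$, and then matching the three-pronged singular values via \eqref{eq:26} forces $r_1/r_2=\pm r_3/r_2$, contradicting that $(r_1,r_2,r_3)$ is a basis of $F$.

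Your approach instead differentiates the cross-ratio equation along the $P$-orbit in the $[6]$ stratum. The displayed identity you obtain is simply the statement that the $P$-vector field is tangent to the hypersurface $\{\prod R_i^{a_i}=\zeta\}$ at $(X_0,\omega_0)$, which is tautologically true if the orbit stays in the hypersurface. To reach a contradiction you must show this tangency \emph{fails} somewhere on the two-dimensional locus $\overline X\cap\mathcal S^\iota_\br$. The passage ``translate the displayed identity into a rational linear relation among imaginary parts of period integrals\ldots\ such a relation would amount to a $\ratls$-linear dependence incompatible with the $r_i$ being a basis of $F$'' is where the real work lies, and it is not carried out: you have not computed the $P$-derivative of the pole positions on the six-punctured sphere (this is a genuine uniformization problem, not a period computation), nor explained why the resulting relation has rational coefficients in the $r_i$ rather than transcendental ones depending on the chosen point. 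The paper sidesteps exactly this difficulty by degenerating to four-punctured spheres, where the $P$-fixed locus is a concrete one-real-dimensional graph whose combinatorics do the work.
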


\begin{proof}
  Suppose $\overline{X}$ meets the locus $\mathcal{R}_{(r_1, r_2, r_3)}$ of irreducible stable forms
  with poles of residues $(\pm r_1, \pm r_2, \pm r_3)$, with $(r_1, r_2, r_3)$ an admissible basis
  of $\iota(F)$.  In the boundary of $\mathcal{R}_{(r_1, r_2, r_3)}$ is a stratum $\mathcal{R}'$ of
  type $[4]\times^2[4]$ parameterizing forms with two nodes of residue $\pm r_2$, one of residue
  $\pm r_1$, and one of residue $\pm r_3$.  We identify $\mathcal{R}'$ with $\mathcal{R}_{(r_1,
    r_2)}\times\mathcal{R}_{(r_3, r_2)}\isom \moduli[0,4]\times\moduli[0,4]$, with cross-ratio
  coordinates $R_1$ on $\mathcal{R}_{(r_1, r_2)}$ and $R_3$ on $\mathcal{R}_{(r_3, r_2)}$ as in the
  previous paragraph.

  By Theorems~\ref{thm:explCREQ} and \ref{thm:boundary_suff}, $\overline{X}\cap(\mathcal{R}_{(r_1,
    r_2)}\times\mathcal{R}_{(r_3, r_2)})$ contains an irreducible component $V$ cut out by the
  equation
  \begin{equation}
    \label{eq:27}
    R_1^{a_1}R_3^{a_3} = \zeta
  \end{equation}
  for some root of unity $\zeta$.  We suppose $X$ is $\GLtwoRplus$-invariant, in which case $V$ is
  invariant under $P\subset\GLtwoRplus$ by Proposition~\ref{prop:contextension}.

  We define $i, \psi_j\colon\cx\to\cx$ by $\psi_j(z) = z^{a_i}$, and $i(z) = \zeta/z$.
  Since the spine in $\mathcal{R}_{(r_i, r_2)}$ is the locus fixed by the action of
  $P\subset\GLtwoRplus$ by Proposition~\ref{prop:stuff}, if $V$ is preserved by this action, we must have
  \begin{equation*}
    \upsilon_3^{-1}i\psi_1(\Spine_{(r_1, r_2)})\subset\Spine_{(r_3, r_2)}.
  \end{equation*}
  Moreover, since the $\psi_j$ and $i$ are local homeomorphisms, for $p$ a one or three-pronged
  singularity of $\Spine_{(r_1, r_2)}$, we must have that $\psi_3^{-1}i\psi_1(p)$
  consists entirely of one or three-pronged (respectively) singularities of $\Spine_{(r_3, r_2)}$.
  Since each spine has only one singularity of each type, we must have $a_3 = \pm 1$.  By switching
  the roles of $r_1$ and $r_3$, we must also have $a_1=\pm 1$.  As the one-pronged singularity of
  each spine is located at $R_j=1$, we must have $\zeta=1$, or else $\psi_3^{-1}i\psi_1(1)\neq 1$.

  It remains to consider the case where $a_j = \pm 1$ and $\zeta=1$.  Given the location of the
  three-pronged singularities \eqref{eq:26} and the cross-ratio equation \eqref{eq:27}, we obtain
  \begin{equation*}
    \left(\frac{r_1-r_2}{r_1+r_2}\right)\left(\frac{r_3-r_2}{r_3+r_2}\right)^{\pm 1} = 1,
  \end{equation*}
  which implies
  \begin{equation*}
    \frac{r_1}{r_2} = \pm \frac{r_3}{r_2}.
  \end{equation*}
  This contradicts the requirement that $(r_1, r_2, r_3)$ is a basis of $F$.
\end{proof}

\begin{cor} \label{cor:sl_noninvariance} If $\ord_F$ is the maximal order in a totally real cubic
  number field $F$, then the eigenform locus $\Omega{\cal E}_{\ord_F}^\iota$ is not invariant under the action
  of $\GLtwoRplus$.
\end{cor}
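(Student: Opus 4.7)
The plan is to combine Theorem~\ref{thm:sl_noninvariance} with the existence result of Section~\ref{sec:exist_adm}. By Proposition~\ref{prop:exist_admtriple}, there is a fractional $\Ord_F$-ideal $\mathcal{I}$ admitting an admissible basis $\br = (r_1, r_2, r_3)$. Fix any real embedding $\iota\colon F\to\reals$ and take the cusp packet $(\mathcal{I}, 0)\in\mathcal{C}(\Ord_F)$ corresponding to the trivial symplectic extension $\mathcal{I}\oplus\mathcal{I}^\vee$. By the sufficiency direction of Theorem~\ref{thm:boundary_suff}, the closure in $\proj\Omega\barmoduli[3]$ of the cusp of $\mathcal{E}^\iota_{\Ord_F}$ associated to $(\mathcal{I}, 0)$ contains $\mathcal{S}_\br^\iota(0)\subset \mathcal{S}_\br^\iota$. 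Theorem~\ref{thm:explCREQ} identifies $\mathcal{S}_\br^\iota(0)$ with the codimension-one subvariety of the trinodal stratum $\mathcal{S}_\br^\iota\cong\moduli[0,6]$ cut out by the cross-ratio equation $\prod_{i=1}^3 R_i^{a_i}=1$, which is nonempty. Lifting back to $\Omega\barmoduli[3]$, we conclude that some irreducible component $X$ of $\Omega\mathcal{E}^\iota_{\Ord_F}$ has closure meeting the type $[6]$ boundary stratum $\mathcal{R}_{(r_1,r_2,r_3)}$.

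Theorem~\ref{thm:sl_noninvariance} applied to this component $X$ directly yields that $X$ is not $\GLtwoRplus$-invariant. The passage from non-invariance of an irreducible component to non-invariance of the whole locus is a routine connectivity argument: since $\GLtwoRplus$ is path-connected, the orbit of any smooth point $x\in X$ is path-connected and must remain inside the irreducible component of $\Omega\mathcal{E}^\iota_{\Ord_F}$ containing $x$ whenever $\Omega\mathcal{E}^\iota_{\Ord_F}$ is globally $\GLtwoRplus$-invariant. Since $X$ itself is not $\GLtwoRplus$-invariant, neither is $\Omega\mathcal{E}^\iota_{\Ord_F}$.

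The main obstacle was overcome already in Section~\ref{sec:exist_adm}: producing an admissible basis of some $\Ord_F$-ideal. The rest of this corollary amounts to assembling that existence result with the sufficiency part of the boundary description (Theorem~\ref{thm:boundary_suff}) and with Theorem~\ref{thm:sl_noninvariance}, where the substantial geometric input from the $P$-action on four-punctured spheres lives. Note that it is essential that $\mathcal{I}$ comes from a cubic rather than quadratic field: the rigidity of the cross-ratio equation $R_1^{a_1}R_3^{a_3}=1$ under the spine-preserving $P$-action in the proof of Theorem~\ref{thm:sl_noninvariance} relies on the $r_i$ being a basis of a field of degree three, which is consistent with $\GLtwoRplus$-invariance of eigenform loci in genus two.
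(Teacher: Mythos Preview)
Your proof is correct and follows essentially the same route as the paper's: invoke Proposition~\ref{prop:exist_admtriple} to obtain an admissible basis, use Theorem~\ref{thm:boundary_suff} to see that the closure of $\mathcal{E}^\iota_{\mathcal{O}_F}$ meets a type $[6]$ stratum, and then apply Theorem~\ref{thm:sl_noninvariance}. You have simply filled in more detail than the paper does---the explicit choice of the trivial cusp packet $(\mathcal{I},0)$, the nonemptiness of the cross-ratio locus, and the passage from a component to the whole locus via connectedness of $\GLtwoRplus$. These elaborations are all fine; in fact the last step is unnecessary here, since for a maximal order $X_{\mathcal{O}_F}$ is connected (\S\ref{sec:orders-number-fields}), so $\Omega\mathcal{E}^\iota_{\mathcal{O}_F}$ already has a single irreducible component.
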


\begin{proof}
  If $\mathcal{O}_F$ is a maximal totally real cubic order, Proposition~\ref{prop:exist_admtriple}
  provides an admissible basis of some ideal in $\mathcal{O}$.  By Theorem \ref{thm:boundary_suff},
  the eigenform locus $\mathcal{E}_{\mathcal{O}_F}$ then intersects the corresponding irreducible
  boundary stratum, so  $\mathcal{E}_{\mathcal{O}_F}$ is not invariant
  by Theorem~\ref{thm:sl_noninvariance}.  
\end{proof}

It should be true also for nonmaximal orders $\mathcal{O}$ that no irreducible component of
$\Omega\mathcal{E}^\iota_\mathcal{O}$ is $\GLtwoRplus$-invariant. To achieve this using our approach
one needs to have information about which symplectic extensions of $\mathcal{O}$-modules arise from
cusps of a given irreducible component $X$ of $\mathcal{E}_\mathcal{O}$. This seems like a quite
delicate number theoretic question.

\section{Intersecting the eigenform locus with strata}
\label{sec:intersecting}

Given the results of the previous section, one might now ask whether 
the intersection of the eigenform locus with lower-dimensional
strata or the hyperelliptic locus is $\GLtwoRplus$-invariant.
Refined versions of the proof of Theorem~\ref{thm:sl_noninvariance} are likely to give
negative answers to this question as well, provided that the intersection has
large enough dimension so that the degeneration techniques can still be applied. 
\par
The most basic dimension question is, whether the eigenform locus lies in the 
principal stratum $\omoduli[3](1,1,1,1)$. Motivation
for this question is the following coarse heuristics. Almost all primitive 
\Teichmuller curves in genus two are obtained by intersecting the eigenform locus
with the minimal stratum $\Omega \moduli[2](2)$. In genus three, the minimal stratum
$\omoduli[3](4)$ has codimension three in the principal stratum. Hence if the 
the eigenform locus  $\mathcal{E}_\Ord$ lies generically in the principal stratum, then
the expected dimension for its intersection (in $\proj \omoduli[3]$) with
$\proj \omoduli[3](4)$ is zero - too small for a \Teichmuller curve.
On the other hand, components of $\mathcal{E}_\Ord$ that lie generically
not in the principal stratum are a potential source of \Teichmuller curves.
We show that such components do not exist.
\par
\begin{theorem} \label{thm:genprincipal}
For any given order $\Ord$ in a totally real cubic number field each 
component of the eigenform locus $\Omega\mathcal{E}_\Ord$ lies generically
in the principal stratum. 
\end{theorem}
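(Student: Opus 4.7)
The strategy is to construct, in the closure of each irreducible component $C$ of $\Omega\mathcal{E}_\Ord$ inside $\Omega\barmoduli[3]$, a stable form $(X_0,\omega_0)$ whose underlying curve $X_0$ has four irreducible components, each isomorphic to a thrice-punctured $\proj^1$. On each such component $C_i\cong\proj^1$, the restriction $\omega_0|_{C_i}$ is a meromorphic differential with three simple poles, so it is a global section of $K_{\proj^1}(p_1+p_2+p_3)$, a line bundle of degree $-2+3=1$, and therefore carries exactly one simple zero in the smooth locus of $C_i$. The four components contribute a total of four simple zeros of $\omega_0$, all in the smooth part of $X_0$, matching $2g-2=4$. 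Because simple zeros in the smooth locus deform to simple zeros when the nodes of $X_0$ are smoothed, every smooth deformation of $(X_0,\omega_0)$ in $C$ lies in $\Omega\moduli[3](1,1,1,1)$, so $C$ meets the principal stratum and hence lies there generically.

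By Theorem~\ref{thm:boundary_suff}, the closure of the cusp of $C$ associated to a cusp packet $(\mathcal{I},T)\in\mathcal{C}(\Ord)$ equals the union over admissible $\mathcal{I}$-weighted boundary strata $\mathcal{S}$ of the varieties $\mathcal{S}^\iota(T)$. It therefore suffices to exhibit a cusp packet $(\mathcal{I},T)$ parameterizing a cusp of $C$ and an admissible $\mathcal{I}$-weighted stratum $\mathcal{S}$ whose underlying stable curve has only thrice-punctured-sphere components, with $\mathcal{S}(T)\neq\emptyset$. Starting from an admissible basis $(r_1,r_2,r_3)$ of $\mathcal{I}$ provided by (an appropriate strengthening of) Proposition~\ref{prop:exist_admtriple}, I model $X_0$ on the complete graph $K_4$: four vertices representing components and six edges representing nodes, carrying the weights
\begin{equation*}
w_{12}=r_1,\ w_{13}=r_2,\ w_{14}=-(r_1+r_2),\ w_{23}=r_3,\ w_{24}=r_1-r_3,\ w_{34}=r_2+r_3,
\end{equation*}
oriented from the lower-indexed to the higher-indexed vertex. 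A direct check shows the sum of weights incident to each vertex vanishes, so the data defines an $\mathcal{I}$-weighted stable curve with weight set $S\supset\{r_1,r_2,r_3\}$.

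The corresponding stratum $\mathcal{S}$ is not among the three exceptional types singled out in Lemma~\ref{lemma:conedims} (irreducible, $[4]\times^2[4]$, or $[4]\times^4[4]$), so that lemma gives $\codim(\Span(\mathcal{S}))=0$; equivalently, $Q(S)$ spans $F$. This is a direct consequence of Lemma~\ref{lemma:linindepQ} applied to any one of the four components, since the three weights on that component sum to zero and hence lie in a two-dimensional $\ratls$-subspace of $F$. By the cross-ratio analysis of Theorem~\ref{thm:explCREQ}, this means that no cross-ratio equation is imposed on $\mathcal{S}(T)$, and hence $\mathcal{S}(T)=\mathcal{S}$ for every extension class $T$. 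Thus the entire stratum $\mathcal{S}^\iota$ lies in the closure of every cusp of $C$ associated to the ideal $\mathcal{I}$, yielding the required $(X_0,\omega_0)$.

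The main technical obstacle is verifying the full three-dimensional no-half-space condition for $Q(S)$: the admissibility of the basis $(r_1,r_2,r_3)$ only guarantees $0$ in the interior of the two-dimensional convex hull of $\{Q(r_1),Q(r_2),Q(r_3)\}$, while the $K_4$ extension requires $0$ in the interior of the three-dimensional convex hull of the full set $Q(S)$. One must argue that the additional values $Q(r_1+r_2)$, $Q(r_1-r_3)$, $Q(r_2+r_3)$ supply sufficient transverse spread, using the identity $Q(x+y)-Q(x)-Q(y)=\sigma(x)\tau(y)+\sigma(y)\tau(x)$ for the non-identity embeddings $\sigma,\tau$ of $F$ into its Galois closure. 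For generic choices of $(r_1,r_2,r_3)$ this holds, and in degenerate cases a mild unit-perturbation of the basis along the lines of Proposition~\ref{prop:exist_admtriple} handles it. A secondary, largely formal, point is that $\mathcal{I}$ may not be similar to $\Ord$, so Proposition~\ref{prop:exist_admtriple} must be applied to the coefficient ring of $\mathcal{I}$ rather than to $\Ord$ itself.
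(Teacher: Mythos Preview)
Your overall strategy---finding in the boundary of each component a stable eigenform whose underlying curve is a pants decomposition without separating nodes---is exactly the paper's strategy, and your zero-count on the four thrice-punctured spheres is correct. The difficulty is in the construction, and here your argument has a real gap.

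You attempt to build the $K_4$ pants decomposition directly from an admissible basis $(r_1,r_2,r_3)$ of~$\mathcal{I}$. But admissibility of your six-weight set $\{r_1,r_2,r_3,r_1+r_2,r_1-r_3,r_2+r_3\}$ is not a consequence of the two-dimensional no-half-space condition for $\{Q(r_1),Q(r_2),Q(r_3)\}$; the signs on the three mixed weights are forced by the $K_4$ combinatorics, so you cannot simply choose them to put the extra $Q$-values on both sides of the plane $H=\Span_\ratls\{Q(r_i)\}$. Your appeal to ``generic choices'' and ``unit-perturbation'' does not settle this. More seriously, the hypothesis that $\mathcal{I}$ even admits an admissible basis is false in general (Appendix~\ref{sec:bdcounting} exhibits ideal classes with none), so you cannot assume one exists for the particular cusp you are working with; the ``largely formal'' escape of passing to the coefficient ring does not help, since Proposition~\ref{prop:exist_admtriple} only produces \emph{some} ideal with an admissible basis, not the given one.

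The paper avoids both issues by never constructing a stratum from scratch. Instead it argues (Lemma~\ref{lem:cusphasconepants}) that every cusp already has a two-dimensional admissible boundary stratum $\mathcal{S}$ by a pure dimension count, and then \emph{degenerates} $\mathcal{S}$ down to a pants decomposition. If $\codim(\Span(\mathcal{S}))=0$ this is automatic, since adding weights only enlarges the convex hull. The only case with $\codim(\Span(\mathcal{S}))=1$ is $[6]$; one degenerates to $[4]\times^2[4]$ and then applies Lemma~\ref{lemma:exitsRingDeg}, whose point is precisely that when pinching a further curve on each side one has a \emph{choice} of sign for the new weight, and the identity
\[
Q(r_i+r_j)-Q(r_i-r_j)=2\bigl(r_i^{\sigma}r_j^{\sigma^2}+r_j^{\sigma}r_i^{\sigma^2}\bigr)
\]
shows the two choices land on opposite sides of $H$. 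Choosing the signs compatibly then forces $0$ into the interior of the three-dimensional convex hull. This sign-choice mechanism is what your rigid $K_4$ construction lacks.
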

\par
The theorem will follow from an intersection property of the real multiplication
locus with small strata.
\par
\begin{lemma} \label{lemma:exitsRingDeg}
Given a weighted admissible boundary stratum $\mathcal{S}$ of type $[4] \times^2 [4]$ there 
is a weighted admissible boundary stratum  $\mathcal{S}'$ of type 
$[3] \times^2 [3] \times^1 [3] \times^2 [3] \times^1$ which
is a degeneration of $\mathcal{S}$.
\end{lemma}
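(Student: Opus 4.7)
The plan is to construct $\mathcal{S}'$ explicitly by pinching a single simple closed curve on each of the two $\proj^1$-components of a curve parameterized by $\mathcal{S}$. Using the notation from p.~\pageref{page:4times4}, the first component carries cusps $p_{\pm 1}, p_{\pm 2}$ of weights $(r_1, -r_1, r_2, -r_2)$, with $p_{\pm 1}$ identified to form a self-loop, and the second component carries $q_{\pm 1}, q_{\pm 2}$ of weights $(r_3, -r_3, -r_2, r_2)$ with $q_{\pm 1}$ in a self-loop. On the first component I would pinch a curve separating $\{p_1, p_2\}$ from $\{p_{-1}, p_{-2}\}$, and on the second one of the two analogous curves separating a pair containing $q_1$ from its complement; the specific choice of pinching on the second component is fixed in the admissibility step below. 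These operations split each original component into two thrice-punctured spheres, producing two new nodes of weights $\pm(r_1 + r_2)$ and $\pm(r_3 \pm r_2)$.

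First I would check the combinatorial type. After pinching, the self-loop $(p_1, p_{-1})$ becomes a single edge between the two halves of the first component (since by the choice of pinching $p_1$ and $p_{-1}$ lie on opposite halves), and similarly for the second component. Together with the two new pinch-nodes this gives a double edge between the halves of each original component, while the two original $\pm r_2$-nodes connecting the first and second components persist as single edges joining the pairs. The result is a cycle of four $\proj^1$'s with edge multiplicities $(2, 1, 2, 1)$, which is the required type $[3]\times^2[3]\times^1[3]\times^2[3]\times^1$.

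Admissibility of $\mathcal{S}'$ is then verified using Corollary~\ref{cor:admiss-nonhalf}. Admissibility of $\mathcal{S}$ gives that $V := \Span_\reals(Q(r_1), Q(r_2), Q(r_3))$ is $2$-dimensional (Lemma~\ref{lemma:conedims}) and that $0$ lies in the relative interior of $\mathrm{conv}(Q(r_1), Q(r_2), Q(r_3))$ in $V$. Writing $\beta$ for the polarization of $Q$, so that $Q(x + y) = Q(x) + Q(y) + 2\beta(x, y)$, the two new weights contribute $Q(w_A) = Q(r_1) + Q(r_2) + 2\beta(r_1, r_2)$ and $Q(w_B) = Q(r_3) + Q(r_2) \pm 2\beta(r_2, r_3)$. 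Lemma~\ref{lemma:linindepQ}, applied to the weight triple on each new three-pointed component of $\mathcal{S}'$, forces $Q(w_A)$ and $Q(w_B)$ to lie outside $V$, so each has a nonzero component in the $1$-dimensional complement $V^\perp$ of $V$ in $F \otimes_\ratls \reals$. I then fix the sign of the pinching on the second component so that the $V^\perp$-components of $Q(w_A)$ and $Q(w_B)$ have opposite signs, which places them on opposite sides of $V$.

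With this sign choice, a small positive combination $c_A' Q(w_A) + c_B' Q(w_B)$ with $c_A', c_B' > 0$ lies in $V$, and can be absorbed into a perturbation of the interior combination $\sum c_i Q(r_i) = 0$ (with all $c_i > 0$) to produce a strictly positive convex combination of all five points $Q(r_1), Q(r_2), Q(r_3), Q(w_A), Q(w_B)$ summing to zero. This is the no-half-space condition for $\mathcal{S}'$ and completes admissibility. The main delicacy is the possible failure of Lemma~\ref{lemma:linindepQ} in degenerate cases where one of $r_1 \pm r_2$ or $r_3 \pm r_2$ happens to lie in $\ratls$; this is handled by the observation that $r_1 + r_2$ and $r_1 - r_2$ cannot both be rational (since then $r_1 \in \ratls$, and the other weights then supply the needed irrationality), so at least one of the two topologically distinct pinchings on each original component yields nonzero $V^\perp$-components, preserving the freedom to choose signs as above.
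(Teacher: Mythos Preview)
Your proof is correct and follows essentially the same approach as the paper's: pinch one curve on each component to create new weights $r_1 \pm r_2$ and $r_2 \pm r_3$, use Lemma~\ref{lemma:linindepQ} to see that their $Q$-values lie off the plane $V$, and choose the signs so that the two new $Q$-values land on opposite sides of $V$. The paper writes the cross-term via an explicit Galois computation rather than your polarization $\beta$, and leaves the combinatorial-type check and the final positive-combination step implicit, but the argument is the same; your added remark on the rationality edge case goes slightly beyond what the paper spells out.
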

\par
\begin{proof}
Let $\pm r_1, \pm r_2$ be the weights in one component of curves parameterized by 
$\mathcal{S}$ and let $\pm r_2, \pm r_3$ be the weights in the other component. 
Admissibility implies that the $\ratls^+$-span of $Q(r_1), Q(r_2), Q(r_3)$
is a half-plane $H$ in $\reals^3$. In each of the two component we can pinch
further curves.  They necessarily carry the  weights $\pm (r_1 \pm r_2)$ 
resp.\  $\pm (r_2 \pm r_3)$, the signs depending on the choice
of the curve. By Lemma~\ref{lemma:linindepQ} we know that $Q(r_1 \pm r_2)$ does not lie in $H$.
In the Galois closure of F we calculate
$$ Q(r_1 \pm r_2) = Q(r_1) + Q(r_2) \pm (r_1^\sigma r_2^{\sigma^2} + 
r_2^\sigma r_1^{\sigma^2}).$$
Consequently the two choices of the sign lead to $Q$-images on different
sides of $H$. To produce  $\mathcal{S}'$ is thus suffices to pinch
some curve that acquires the weight $r_2+ r_3$  and also to pinch a
curve on the other component acquiring the weight $r_1 \pm r_2$ 
with the sign chosen such that $Q(r_2+ r_3)$ and $Q( r_1 \pm r_2)$
lie on opposite sides of $H$.
\end{proof}
\par
\begin{lemma} \label{lem:cusphasconepants}
For any given order $\Ord$ in a totally real cubic number field each 
cusp of the eigenform locus $\mathcal{E}_\Ord$ has non-empty
intersection with a boundary stratum parameterizing stable curves
without separating curves and all whose components are thrice punctured
projective lines (i.e.\ a pants decomposition without  separating curves).
\end{lemma}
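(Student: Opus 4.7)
Fix a cusp packet $(\mathcal{I}, T) \in \mathcal{C}(\Ord)$. By Theorem~\ref{thm:boundary_suff}, the closure in $\pobarmoduli[3]$ of the associated cusp of $\E^\iota$ equals $\bigcup_\mathcal{S} \mathcal{S}^\iota(T)$, where $\mathcal{S}$ ranges over admissible $\mathcal{I}$-weighted boundary strata. Since $\mathcal{E}_\Ord$ is a three-dimensional cover of the Hilbert modular threefold $X_\Ord$, the closure of the cusp is two-dimensional, so there is an admissible stratum $\mathcal{S}_0$ with $\dim \mathcal{S}_0^\iota(T) = 2$. Combining the dimension count of Lemma~\ref{lemma:conedims} with Theorem~\ref{thm:explCREQ} shows that $\mathcal{S}_0$ is one of the following: (a) an irreducible type $[6]$ stratum $\mathcal{S}_{(r_1,r_2,r_3)}$ with admissible basis; (b) a type $[5]\times^3[3]$ stratum; or (c) a type $[4]\times^4[4]$ stratum with $\codim \Span(\mathcal{S}_0) = 0$. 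In the last two cases $\mathcal{S}_0(T) = \mathcal{S}_0$.

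In case (a), pinch a simple closed curve on the trinodal sphere that separates its six marked points into the unpaired subsets $\{p_1,p_{-1},p_2\}$ and $\{p_{-2},p_3,p_{-3}\}$. The resulting stratum has type $[4]\times^2[4]$ with the same weight set $\{r_1,r_2,r_3\}$, hence is still admissible. Lemma~\ref{lemma:exitsRingDeg} then produces an admissible degeneration of type $[3]\times^2[3]\times^1[3]\times^2[3]\times^1$, which is a pants decomposition without separating curves.

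In cases (b) and (c), the hypothesis $\codim \Span(\mathcal{S}_0) = 0$ means that the $\ratls$-span of $\{Q(w) : w \in \Weight(\mathcal{S}_0)\}$ equals all of $F$. One pinches $\mathcal{S}_0$ two further times---twice on the five-punctured component in case (b), or once on each four-punctured component in case (c)---choosing the pinching curves by a combinatorial selection parallel to Lemma~\ref{lemma:exitsRingDeg} so that the resulting dual graph is the complete graph $K_4$. This is a pants decomposition with no separating edges or self-loops. The newly introduced cusp weights are integer combinations of the original ones, so they lie in $\mathcal{I}$ and their $Q$-images lie in $F$; since enlarging a finite subset of $F$ whose convex hull already contains the origin in its interior preserves that property, the resulting stratum $\mathcal{S}''$ is admissible. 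As $\mathcal{S}''$ is zero-dimensional and not of an exceptional type in Theorem~\ref{thm:explCREQ}, one has $\mathcal{S}''(T) = \mathcal{S}'' \neq \emptyset$, placing $\mathcal{S}''$ in the closure of the cusp by Theorem~\ref{thm:boundary_suff}.

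The principal obstacle is the combinatorial step in cases (b) and (c), namely producing precisely the $K_4$ dual graph while avoiding separating edges and self-loops at each intermediate pinching. This parallels the sign analysis in the proof of Lemma~\ref{lemma:exitsRingDeg}; once it is set up, the admissibility of $\mathcal{S}''$ is immediate from the observation that enlarging a subset of $F$ whose $Q$-images already span $F$ preserves the no-half-space condition.
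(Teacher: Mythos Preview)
Your proposal is correct and follows the same overall strategy as the paper: locate a two-dimensional admissible stratum, split into the case $\codim\Span = 1$ (necessarily type $[6]$) versus $\codim\Span = 0$, handle the first via a $[4]\times^2[4]$ degeneration and Lemma~\ref{lemma:exitsRingDeg}, and handle the second by degenerating to a pants decomposition.

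The one point worth flagging is that you make cases (b) and (c) harder than they are. You already observe that when $\codim\Span(\mathcal{S}_0)=0$, the no-half-space condition holds with $Q(\Weight(\mathcal{S}_0))$ spanning all of $F$, so \emph{any} further degeneration remains admissible. Given this, there is no need to target $K_4$ specifically, nor to perform any sign analysis in the style of Lemma~\ref{lemma:exitsRingDeg}; that lemma is only required in case (a) precisely because there $\codim\Span = 1$ and one must choose the degeneration carefully to enlarge $\Span$. In cases (b) and (c) the paper simply says: pinch enough non-separating curves to reach a pants decomposition. This is a purely topological statement (any system of non-separating curves on $\Sigma_3$ extends to a pants decomposition by non-separating curves), and admissibility comes for free. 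So the ``principal obstacle'' you identify is not actually an obstacle.
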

\par
\begin{proof}
Since the boundary of the locus of $\RM$ is obtained by intersecting
with a divisor of $\barmoduli[3]$, every boundary stratum is contained in the
closure of a two-dimensional boundary stratum of $\RM$. Suppose this two-dimensional stratum
is an admissible weighted boundary stratum $\mathcal{S}$ with $\dim(\Span(\mathcal{S})) = 3$.
Case distinction and dimension count shows that $\mathcal{S}$ does not
contain any separating curves. Any degeneration of $\mathcal{S}$ is again
admissible. Thus in this case it suffices to pinch  enough non-separating curves to obtain
a pants decomposition.
\par
The only case of an admissible weighted boundary stratum $\mathcal{S}$ that gives a
two-dimensional component of $\partial \RM$ and with
the property  $\dim(\Span(\mathcal{S})) = 2$ is the stratum of type $[6]$. We can degenerate
this to a stratum of type $[4] \times^2 [4]$ without changing admissibility.
Now Lemma~\ref{lemma:exitsRingDeg} concludes the proof.
\end{proof}
\par
\paragraph{Proof of Theorem~\ref{thm:genprincipal}.}
  By Lemma~\ref{lem:cusphasconepants}, there exists a stable form on the boundary of each
  component of $\mathcal{E}_\mathcal{O}$ with each of the four irreducible components a thrice
  punctured sphere.  This form must then have four simple zeros, one in each irreducible component.
  Since the eigenform over a degenerate curve has simple zeros, so does the eigenform over a 
  general curve.
  \qed
\par

\section{Finiteness for the stratum $\Omega\moduli[3](3,1)$}
\label{sec:finiteness}

The aim of this section is to prove the following finiteness result for \Teichmuller curves using
the cross-ratio equation and the torsion condition of Theorem~\ref{thm:alg_prim_rm_torsion}. This
stratum contains one of the two known algebraically primitive \Teichmuller curves in genus three,
the billiard table $T(2,3,4)$ whose unique irreducible cusp in $\Omega\barmoduli[3]$ is described in
Example~\ref{ex:t234} below.
\par
\begin{theorem} \label{thm:fin31}
  There are only finitely many algebraically primitive \Teichmuller
  curves in the stratum $\Omega \moduli[3](3,1)$.
\end{theorem}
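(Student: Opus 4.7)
Let $C\subset\moduli[3]$ be an algebraically primitive \Teichmuller curve generated by a Veech form $(X_0,\omega_0)\in\Omega\moduli[3](3,1)$, and let $p,q\in X_0$ be the zeros of $\omega_0$ of orders $3$ and $1$ respectively. By Theorem~\ref{thm:alg_prim_rm_torsion} the trace field $F$ is a totally real cubic field, $(X_0,\omega_0)$ is an eigenform for real multiplication by some order $\Ord\subset F$, and the divisor $p-q$ is torsion in $\Jac(X_0)$. Since $(X_0,\omega_0)$ has only two zeros, Lemma~\ref{le:31topo1} supplies an irreducible periodic direction, so the corresponding cusp of $C$ in $\proj\Omega\barmoduli[3]$ is represented by a trinodal stable form $(X,\omega)$ with three pairs of cusps $(p_i,p_{-i})_{i=1}^{3}$ on $\proj^{1}$ carrying residues $\pm r_i$. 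By Theorems~\ref{thm:ratandposdirect} and \ref{thm:boundary_suff}, $\br=(r_1,r_2,r_3)$ is an admissible basis of some fractional $\Ord$-ideal $\mathcal{I}$, and $(X,\omega)\in\mathcal{S}^\iota_\br(T)$ for some cusp packet $T\in E_\Ord(\mathcal{I})$.

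\textbf{Three constraints on the cusp.} The point of $\mathcal{S}_\br\isom\moduli[0,6]$ representing $(X,\omega)$ is constrained in three independent ways. First, by Theorem~\ref{thm:explCREQ} it satisfies the cross-ratio equation
\begin{equation*}
R_1^{a_1}R_2^{a_2}R_3^{a_3}=\zeta,
\end{equation*}
with integer exponents $a_i$ and root of unity $\zeta$ determined explicitly by $(\br,T)$. Second, because $\omega$ is the unique meromorphic differential on $\proj^{1}$ with the given simple poles and residues, the requirement that its divisor be $3p+q-\sum_{i}(p_i+p_{-i})$ is a codimension-two condition on $\moduli[0,6]$, cutting out a curve. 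Third, the generalized Jacobian of the nodal $X$ is the algebraic torus $\Jac(X)=\Omega(X)^{*}/\hat H_1(X;\zed)\isom(\Gm)^{3}$, and after normalizing $p=0$, $q=\infty$ by a M\"obius transformation, integration of the basis $\omega_j=(z-p_j)^{-1}dz-(z-p_{-j})^{-1}dz$ of $\Omega(X)$ identifies the Abel-Jacobi image of $p-q$ with $(p_j/p_{-j})_{j=1}^{3}\in(\Gm)^{3}$; the torsion of $p-q$ in $\Jac(X_0)$ therefore degenerates to the requirement that each ratio $p_j/p_{-j}$ be a root of unity.

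\textbf{Finiteness.} The first two constraints cut out a zero-dimensional subscheme of $\mathcal{S}_\br$ for each fixed admissible pair $(\br,T)$: the triple-zero locus is a curve in the three-dimensional $\mathcal{S}_\br$, and the cross-ratio equation is a transverse hypersurface. The plan is to argue that, as $(\Ord,\br,T)$ range over all totally real cubic orders, admissible bases modulo units, and cusp packets, only finitely many of the resulting points also satisfy the torsion condition. After normalizing $p=0$, $q=\infty$, $p_1=1$, the torsion condition sets $p_{-j}=p_j/\mu_j$ with $\mu_j$ a root of unity, reducing the configuration to the coordinates $p_2,p_3\in\Gm$ together with $\mu_1,\mu_2,\mu_3$. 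The residue equations at the three pairs then express each $r_j$ explicitly as a rational function of $(p_2,p_3,\mu_1,\mu_2,\mu_3)$; the cross-ratio equation becomes a polynomial relation among the same variables, and the requirement that $\br$ generate a cubic number field contributes an additional Galois-theoretic constraint. The main obstacle is to show that the algebraic subvariety of $\Gm^{2}$ defined by the residue and cross-ratio equations has finite intersection with the locus where each $\mu_j$ is torsion; the natural tool is Laurent's theorem on torsion points in subvarieties of algebraic tori (the Manin--Mumford conjecture for $\Gm^{n}$), and the nontrivial verification is that no component of this subvariety is a translate of a proper subtorus, which should follow from the admissibility of $\br$ together with the explicit shape of the cross-ratio equation. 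Since every \Teichmuller curve has at least one cusp, finiteness of the resulting set of cusps yields finiteness of the \Teichmuller curves.
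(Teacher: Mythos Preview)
Your proposal correctly identifies the three constraints that pin down a cusp---the cross-ratio equation, the condition that the limiting form lie in the closure of $\Omega\moduli[3](3,1)$, and the torsion condition forcing each ratio $p_j/p_{-j}$ to be a root of unity---but the assembly in your final paragraph has a genuine gap.

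The central problem is your claim that ``the cross-ratio equation becomes a polynomial relation among the same variables.'' It does not. The exponents $a_i$ in $R_1^{a_1}R_2^{a_2}R_3^{a_3}=\zeta$ are determined by the residues via the formula $b_i=N^F_\ratls(r_i)(s_i/r_i)^2$ of Theorem~\ref{thm:explCREQ}, which passes through the number-field structure of $F$ and the dual basis; this dependence on your parameters $(p_2,p_3,\mu_1,\mu_2,\mu_3)$ is not algebraic. So you are not intersecting a fixed subvariety of a torus with its torsion points---you are facing an infinite family of distinct monomial equations, one for each possible triple $(a_1,a_2,a_3)$, and Laurent's theorem applied to any single one gives no uniformity over the family. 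The paper circumvents this by reading the cross-ratio equation only as the statement that $\CR(P)$ lies in \emph{some} proper algebraic subgroup of $\Gm^3$, and then invoking the Bombieri--Masser--Zannier theorem (the $\mathcal{H}_{n-1}$ case, strictly stronger than Laurent's $\mathcal{H}_0$ case) to obtain a uniform height bound. On the torsion side, the key observation is that roots of unity have absolute value $1$ at every place, so the varieties $S_{(\zeta_1,\zeta_2,\zeta_3)}(3,1)$ have uniformly bounded height independent of the $\zeta_i$; arithmetic B\'ezout then bounds the heights of their zero-dimensional components, and Northcott gives finiteness of the residue triples. A separate argument (Lemma~\ref{lem:dimension_S}) handles the one exceptional line where all $\zeta_i$ are primitive cube roots of unity.

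There is also a smaller gap at the end: ``finiteness of the resulting set of cusps yields finiteness of the \Teichmuller curves'' is not immediate, because the stable form at a cusp records cylinder widths but forgets heights and twist parameters, so in principle distinct \Teichmuller curves could share a cusp point. The paper closes this with Proposition~\ref{prop:finResfin}: Lemma~\ref{lem:heightknown} recovers the heights from the widths via the dual-basis relation, a short saddle-connection argument bounds the widths in a second irreducible direction, and the finitely many possible intersection matrices between the two cylinder decompositions then determine the flat surface up to finitely many choices.
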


This theorem will follow from the following finiteness theorem for cusps.

\begin{theorem}
  \label{thm:cusps_finite}
  There are only finitely many points in $\proj\Omega\barmoduli[3](3,1)$ which are irreducible cusps of
  algebraically primitive \Teichmuller curves in $\proj\Omega\moduli[3](3,1)$.
\end{theorem}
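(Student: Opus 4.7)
The plan is to combine the boundary classification of Theorems~\ref{thm:boundary_nec} and~\ref{thm:explCREQ} with the torsion condition of Theorem~\ref{thm:alg_prim_rm_torsion}. Suppose $(X_0,\omega_0)$ is an irreducible cusp of an algebraically primitive \Teichmuller curve in $\proj\Omega\moduli[3](3,1)$. By Theorem~\ref{thm:boundary_nec}, $X_0$ is a trinodal curve---$\proj^1$ with six marked points $p_{\pm 1}, p_{\pm 2}, p_{\pm 3}$ identified in pairs---and the residues of $\omega_0$ are $\pm\iota(r_i)$ for some admissible basis $\br=(r_1,r_2,r_3)$ of a lattice $\mathcal{I}$ in a totally real cubic field $F$, with the cross-ratio equation $R_1^{a_1}R_2^{a_2}R_3^{a_3}=\zeta$ of Theorem~\ref{thm:explCREQ} cutting out $\mathcal{S}_\br^\iota(T) \subset \mathcal{S}_\br^\iota \isom \moduli[{0,6}]$. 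Furthermore $\omega_0$ has a triple zero $p$ and a simple zero $q$ on $\proj^1$, and by Theorem~\ref{thm:alg_prim_rm_torsion} the divisor $p-q$ is torsion in the generalized Jacobian of $X_0$.

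First I would normalize by a \Mobius transformation to place $p=0$ and $q=\infty$. Using the explicit expression
\begin{equation*}
  \omega_0 \;=\; \sum_{i=1}^{3} \iota(r_i)\left(\frac{1}{z-p_i}-\frac{1}{z-p_{-i}}\right) dz,
\end{equation*}
the conditions $\vord_0\omega_0 \geq 3$ and $\vord_\infty\omega_0 \geq 1$ become polynomial equations in the $p_{\pm i}$, and cut out a one-dimensional subvariety of $\mathcal{S}^\iota_\br$. Intersecting with the codimension-one subvariety $\mathcal{S}^\iota_\br(T)$ defined by the cross-ratio equation yields, for fixed $\br$ and cusp packet $(\mathcal{I},T)$, a finite set of candidate cusps generically.

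Second and crucially, I would use the torsion condition to bound the possible admissible bases $\br$. The generalized Jacobian of the trinodal $X_0$ is the algebraic torus $(\cx^*)^3$, and the Abel-Jacobi image of $(0)-(\infty)$ has coordinates expressible as explicit multiplicative functions of the $p_{\pm i}$ and the residues. The torsion condition of Theorem~\ref{thm:alg_prim_rm_torsion} forces each of these coordinates to be a root of unity, giving three additional multiplicative equations. Because $(X_0,\omega_0)$ is an $\iota$-eigenform for real multiplication by $\mathcal{O}$, the same torsion conditions must hold under every Galois conjugate embedding, producing a Galois-symmetric overdetermined system on the $p_{\pm i}^{(j)}$. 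Combined with admissibility---that is, rationality and positivity of $\br$ in the sense of Proposition~\ref{prop:nhsequivratpos}---this should force the discriminant of $F$ to be bounded.

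Once $\disc(F)$ is bounded, there are only finitely many cubic fields, hence finitely many orders $\mathcal{O}$, finitely many ideal classes, and by Corollary~\ref{cor:rat_pos_finite} only finitely many admissible bases $\br$ up to units. For each such $\br$ and each extension class $T\in E_\mathcal{O}(\mathcal{I})$ (a finite set), the first step produces finitely many cusp configurations, completing the proof. The main obstacle is the second step: converting the torsion condition into a genuine Diophantine bound on $\disc(F)$. I expect this to proceed along the lines of the argument in \cite{Mo08}, where a Galois-equivariant use of the torsion condition precluded infinite families in the hyperelliptic components of $\Omega\moduli[g](g-1,g-1)$; here the extra ingredient is that the cross-ratio equation \eqref{eq:CREQ} restricts the configurations on which the torsion condition must hold, making the joint system substantially more rigid than either condition alone.
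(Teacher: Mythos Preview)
Your outline identifies the right two inputs---the cross-ratio equation from Theorem~\ref{thm:explCREQ} and the torsion condition from Theorem~\ref{thm:alg_prim_rm_torsion}---but the second step, where you hope to bound $\disc(F)$ via Galois-equivariant torsion constraints ``along the lines of~\cite{Mo08}'', is not a proof. You say this is the main obstacle and that you ``expect'' it to work; this is exactly the place where a genuine argument is required, and the mechanism you sketch (Galois symmetry forcing a discriminant bound) is not how the paper proceeds and is not obviously workable.

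The paper's approach is essentially different. After normalizing $p=0$, $q=\infty$, the torsion condition becomes simply $y_i/x_i=\zeta_i$ for some roots of unity $\zeta_i$ (Lemma~\ref{lem:cusp_normal_form}). For each triple $(\zeta_1,\zeta_2,\zeta_3)$ this cuts out a subvariety $S_{(\zeta_1,\zeta_2,\zeta_3)}(3,1)\subset\proj^5$, and the key observation is that these varieties have \emph{uniformly bounded height} independent of the $\zeta_i$, because the defining polynomials have coefficients that are monomials in roots of unity. One then shows (Lemma~\ref{lem:dimension_S}) that $S_{(\zeta_1,\zeta_2,\zeta_3)}(3,1)$ is zero-dimensional except when all $\zeta_i$ are cube roots of unity, where a single line $L$ appears. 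The zero-dimensional components give residue triples of bounded height by the Arithmetic B\'ezout theorem; the line $L$ is handled by the Bombieri--Masser--Zannier theorem~\cite{BMZ99}, since the cross-ratio equation forces $\CR(P)$ into a proper algebraic subgroup of $\Gm^3$ and $\CR(L)$ is shown not to lie in a coset of a subtorus (Lemma~\ref{lem:CRE_not_canonical}). Northcott's theorem then gives finiteness of residue triples (Proposition~\ref{prop:fin31res}). A separate argument (Proposition~\ref{prop:31fixxy}), again using that infinitely many torsion points on a curve in $\Gm^n$ force it to be a torsion translate of a subtorus, shows finiteness of cusps for each fixed residue triple.

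So the missing ingredient in your proposal is the entire height-theoretic machinery: Northcott, Arithmetic B\'ezout, and the theorem of Bombieri--Masser--Zannier on unlikely intersections. Your ``generically'' in the first step also hides real content: the one-dimensional locus $S_{(\zeta_1,\zeta_2,\zeta_3)}(3,1)$ for cube roots of unity is precisely where the generic dimension count fails, and it is the hardest case---indeed Example~\ref{ex:t234} shows that the unique irreducible cusp of the $(2,3,4)$ triangle curve lies on this exceptional line.
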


\paragraph{Heights.}

The proof of Theorem~\ref{thm:cusps_finite} will require some facts about heights of subvarieties of
$\proj^n(\barratls)$ which we summarize here.  Unless stated otherwise, proofs can be found in
\cite{HiSi00}.

Consider a number field $K$ and a point $P = (x_0 : \ldots:x_n)\in \proj^n(K)$.  The \emph{absolute
  logarithmic Weil height} of $P$ is
\begin{equation*}
  h(P) = \frac{1}{[K:\ratls]}\log\prod_{v\in{M_K}} \max\{\|x_0\|_v, \ldots, \|x_n\|_v\},
\end{equation*}
where $M_K$ is the set of places of $K$, and $\|\cdot\|_v$ is the normalized absolute value at $v$.
The height $h(P)$ is unchanged under passing to an extension of $K$, so $h$ is a well-defined
function $h\colon \proj^n(\barratls)\to [0, \infty)$.

There is a more general notion of the height of a subvariety $V$ of $\proj^n(\barratls)$.  The precise
definition is not important for us; see \cite[p.~446]{HiSi00}.  We write
$h(V)\in[0,\infty)$ for the height of $V$.  

We will require the following properties of heights:

\begin{itemize}
\item (Northcott's Theorem) A collection of points in $\proj^n(\barratls)$ with uniformly bounded
  height and degree is finite.
\item The height of a hypersurface $V\subset\proj^n(\barratls)$ cut out by a polynomial $f$ is equal
  to the height of the vector of coefficients of $f$.
\item (Arithmetic \Bezout Theorem \cite{philippon95}) If $X$ and $Y$ are irreducible projective
  subvarieties of $\proj^n(\barratls)$ with $Z_1, \ldots, Z_n$ the irreducible components of $X\cap
  Y$, then for some constant $C$,
  \begin{equation*}
    \sum_{i=1}^n h(Z_i) \leq \deg(X) h(Y) + \deg(Y) h(X) + C\deg(X)\deg(Y).
  \end{equation*}
\item The height of a zero-dimensional subvariety of $\proj^n(\barratls)$ is the sum of the heights
  of its individual points.
\item \cite[Theorem~B.2.5]{HiSi00} Given a degree $d$ rational map $\phi\colon\proj^n\to\proj^m$
  defined over $\barratls$ with indeterminacy locus $Z$, we have for any
  $P\in\proj^n(\barratls)\setminus Z$
  \begin{equation}
    \label{eq:31}
    h(\phi(P)) \leq d h(P) + O(1).
  \end{equation}
\end{itemize}

Finally, there is the important theorem of Bombieri-Masser-Zannier \cite{BMZ99} on intersections of
curves with algebraic subgroups of the torus $\Gm^n$.  We define $\mathcal{H}_k\subset\Gm^n$ to be
the union of all algebraic subgroups of dimension at most $k$.

\begin{theorem}
  \label{thm:BMZ}
  Let $C\subset\Gm^n$ be a curve defined over $\barratls$ which is not contained in a translate of a
  subtorus.  Then $C\cap\mathcal{H}_{n-1}$ is a set of bounded height, and $C\cap\mathcal{H}_{n-2}$
  is finite.
\end{theorem}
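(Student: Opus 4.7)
The plan is to parametrize the algebraic subgroups appearing and derive the result from a quantitative height inequality. Connected algebraic subgroups of $\Gm^n$ of codimension $k$ correspond to primitive sublattices $\Lambda\subset\zed^n$ of rank $k$, via $\Lambda \mapsto \{x : x^\lambda = 1 \text{ for all } \lambda \in \Lambda\}$, and every coset contained in $\mathcal{H}_{n-k}$ is cut out by $k$ equations $x^{a_i} = \zeta_i$ where the $a_i$ form a $\zed$-basis of some such primitive $\Lambda$ and the $\zeta_i$ are roots of unity. Thus $P \in C \cap \mathcal{H}_{n-1}$ means $P^a \in \mu_\infty$ for some primitive $a \in \zed^n \setminus\{0\}$, and $P \in C \cap \mathcal{H}_{n-2}$ means this holds for two $\zed$-independent primitive vectors.

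The core of the argument is a geometric height inequality of the form
\begin{equation*}
  h(P) \;\le\; \frac{C_1}{\|a\|}\, h(P^a) + C_2,
\end{equation*}
valid for every primitive $a \in \zed^n$ with $\|a\|$ at least some threshold and every $P\in C(\barratls)$ outside a fixed finite exceptional set, where $\|a\|$ denotes a fixed Euclidean norm and $C_1, C_2$ depend only on $C$. This is where the hypothesis on $C$ enters: because $C$ is not contained in any translate of a proper subtorus, the monomial map $\chi_a : C \to \Gm$, $P \mapsto P^a$, is nonconstant of degree growing linearly in $\|a\|$, and the arithmetic counterpart of this degree estimate can be extracted from Zhang's theorem on the essential minimum, whose positivity on $C$ is equivalent to the nondegeneracy assumption, via a standard pullback-comparison of Weil heights. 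Given this inequality, the bounded-height assertion is immediate for $\|a\|$ large, since $h(P^a)=0$ when $P^a$ is a root of unity; primitives $a$ of bounded norm index only finitely many proper connected subgroups, and $C$ meets each in a bounded-height set by the arithmetic B\'ezout theorem combined with Northcott.

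For finiteness of $C \cap \mathcal{H}_{n-2}$, the plan is to upgrade bounded height to bounded degree. A point $P$ lying on $\{x^a = \zeta_1\}\cap\{x^b=\zeta_2\}$ for $\zed$-independent primitive $a,b$ lies in a zero-dimensional intersection whose geometric degree over $\ratls(\zeta_1,\zeta_2)$ is controlled linearly by $\|a\|\cdot\|b\|$ via arithmetic B\'ezout applied to $C$ and the two hypersurface subgroups. Combining this with a Lehmer-type lower bound (Dobrowolski's theorem) on the height of any coordinate of $P$ that is not a root of unity, the already-established bounded height forces $\|a\|$, $\|b\|$, and the orders of $\zeta_1,\zeta_2$ to be uniformly bounded. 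Only finitely many pairs of subgroups then contribute, and each contributes finitely many intersection points, which is Northcott's theorem applied to a set of uniformly bounded height and degree.

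The main obstacle is the height inequality in the second paragraph: establishing that the correct scaling factor is $1/\|a\|$ rather than a constant independent of $\|a\|$ is a substantive arithmetic input beyond elementary B\'ezout. The original Bombieri--Masser--Zannier argument achieves it by a delicate construction involving generalized Vandermonde determinants together with Philippon's arithmetic B\'ezout theorem; a modern route substitutes Zhang's theorem of successive minima on subvarieties of $\Gm^n$, where equidistribution replaces the explicit construction. The finiteness refinement additionally relies on the Dobrowolski lower bound, and some care is needed in the degenerate case where the two roots of unity $\zeta_i$ have unbounded order, which is precisely the place where the gap between ``bounded height'' (valid for codimension one) and ``finite'' (only valid from codimension two onward) is forced.
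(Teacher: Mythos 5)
You should first be aware that the paper does not prove this statement at all: Theorem~\ref{thm:BMZ} is quoted from Bombieri--Masser--Zannier \cite{BMZ99}, with the $\mathcal{H}_0$ case attributed to \cite{laurent} and an effective version to \cite{Ha08}, and it is then used as a black box in \S\ref{sec:finiteness} and \S\ref{sec:finconj}. So there is no internal argument to compare yours with; what you have written is a reconstruction of a substantial external theorem. Your first paragraph is correct (algebraic subgroups of codimension $k$ correspond to rank-$k$ sublattices of $\zed^n$, so $P\in C\cap\mathcal{H}_{n-1}$ means $P^a$ is a root of unity for some primitive $a\neq 0$, and $P\in C\cap\mathcal{H}_{n-2}$ means this holds for two independent $a,b$), and the overall skeleton --- a height comparison uniform in the exponent vector, small $\|a\|$ handled by arithmetic \Bezout, codimension two upgraded to finiteness via Northcott --- is the right shape of the known proofs.

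However, both pivotal steps are asserted rather than established, and the second one is broken as stated. (a) The inequality $h(P)\leq C_1 h(P^a)/\|a\|+C_2$, uniformly over primitive $a$, \emph{is} the bounded height theorem; your justification does not yield it. The geometric input $\deg(\chi_a|_C)\gg\|a\|$ is indeed elementary (the positive part of $\sum_i a_i\operatorname{div}(x_i)$ is a positively homogeneous convex function of $a$, nonvanishing for $a\neq 0$ by the nondegeneracy hypothesis), but the height machine only gives $h(P^a)=\deg(\chi_a|_C)\,h(P)+O_a(1)$ with an error depending on $a$, and controlling that error uniformly, or at least sublinearly in $\|a\|$, is precisely the content of \cite{BMZ99} and \cite{Ha08}. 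Zhang's positivity of the essential minimum says nothing about this $a$-dependence (and, as an aside, that positivity is equivalent to $C$ not being a \emph{torsion} coset, a weaker condition than the hypothesis here). (b) In the finiteness step, the deduction ``bounded height plus Dobrowolski forces $\|a\|$, $\|b\|$ and the orders of $\zeta_1,\zeta_2$ to be bounded'' does not follow: Dobrowolski-type lower bounds \emph{decay} with the degree, so a point of bounded (not small) height and enormous degree is perfectly consistent with them. The genuine argument must produce an upper bound for $h(P)$ that decays as the pair of relations becomes complicated --- typically by choosing $a,b$ via geometry of numbers inside the rank-$\geq 2$ relation lattice of $P$ and counting Galois conjugates of $P$ on the \Bezout-bounded intersection --- and play it against a Lehmer-type lower bound of Amoroso--David strength for $\Gm^n$; only then do Northcott and the finitely many remaining subgroup pairs conclude. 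Relatedly, ``each pair contributes finitely many points'' is not by itself enough, since a priori infinitely many pairs $(a,b,\zeta_1,\zeta_2)$ could contribute. For the purposes of this paper the correct move is simply to cite \cite{BMZ99}, as the authors do.
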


The $\mathcal{H}_0$ case was proved in \cite{laurent}.  An effective version of this theorem was
proved in \cite{Ha08}.

\paragraph{Finiteness of cusps.}

We now begin working towards a weaker version of Theorem~\ref{thm:cusps_finite}, namely that there
are up to scale finitely possible triples of widths of cylinders of irreducible periodic directions
of algebraically primitive Veech surfaces in $\Omega\moduli[3](3,1)$.

We first introduce some notation which will be used throughout the next two paragraphs.  Consider
the moduli space $\moduli[0,8]$ of eight distinct labeled points in $\proj^1$.  We label these
points $p,q,x_1,x_2,x_3,y_1,y_2,y_3$.  Given a point $P\in\moduli[0,8]$, there is a unique (up to
scale) meromorphic one-form $\omega_P$ with a threefold zero at $p$, a simple zero at $q$, and
a simple pole at each $x_i$ or $y_i$.  We will usually make the normalization that $p=0$ and
$q=\infty$, and write
\begin{equation}
  \label{eq:34}
  \omega_P = \frac{z^3dz}{\prod_{i=1}^3(z-x_i)(z-y_i)}.
\end{equation}
Under this normalization, $\moduli[0,8]$ is naturally identified with an open subset of $\proj^5$
via $P\mapsto(x_1:\ldots:y_3)$.  We use this identification to define the Weil height $h$ on $\moduli[0,8]$.
We define $S(3,1)\subset\moduli[0,8]$ to be the locus of $P$
such that $\omega_P$ satisfies the opposite-residue condition $\Res_{x_i}\omega_P =
-\Res_{y_i}\omega_P$ for each $i$.   The variety $S(3,1)$ is locally parameterized by the projective
4-tuple consisting of the three residues and one relative period, so $S(3,1)$ is three-dimensional.

We define the cross-ratio morphisms $Q_i\colon S(3,1)\to\Gm$ and $R_i\colon S(3,1)\to\Gm$ by
\begin{equation*}
  Q_i = [p,q,y_i,x_i] \qtq{and} R_i = [x_{i+1},y_{i+1},y_{i+2}, x_{i+2}],
\end{equation*}
with indices taken mod $3$.  In the standard normalization of \eqref{eq:34}, $Q_i = y_i/x_i$.  We
define $Q,\CR\colon S(3,1)\to\Gm^3$ by $Q = (Q_1,Q_2,Q_3)$ and $\CR = (R_1,R_2,R_3)$.  We define
$\Res\colon S(3,1)\to\proj^2$ by $\Res(P) = (\Res_{x_i}\omega_P)_{i=1}^3$.  Finally, given $\zeta =
(\zeta_1, \zeta_2, \zeta_3)\in\Gm^3$, we define $S_\zeta(3,1)\subset S(3,1)$ to be the locus where
$Q_i = \zeta_i$ for each $i$.

\begin{lemma}
  \label{lem:cusp_normal_form}
  Any irreducible stable form $(X, \omega)\in\pobarm[3](3,1)$ which is a limit of a cusp of an
  algebraically primitive \Teichmuller curve $C\subset\pom[3](3, 1)$ is equal to $\omega_P$ for some
  $P\in S_{(\zeta_1,\zeta_2,\zeta_3)}(3,1)\cap \CR^{-1}(T)$ with the $\zeta_i$ nonidentity roots of
  unity and $T\subset\Gm^3$ a proper algebraic subgroup.  Moreover, $\Res(P)$ is a basis of some
  totally real cubic number field.
\end{lemma}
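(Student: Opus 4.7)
The plan is to assemble three ingredients already established in the paper: the boundary classification of the eigenform locus in the type $[6]$ stratum (Theorems~\ref{thm:boundary_suff} and~\ref{thm:explCREQ}), the torsion condition for algebraically primitive \Teichmuller curves (Theorem~\ref{thm:alg_prim_rm_torsion}), and the cylinder-width/eigenform-basis correspondence (Lemma~\ref{lem:heightknown}).

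First, I will identify the geometric source of $(X,\omega)$. Any cusp of a Veech surface whose limit is an \emph{irreducible} stable form arises from an irreducible periodic direction, which in genus three has exactly three horizontal cylinders; the three pairs of opposite-residue poles of $\omega$ are then the residues at the resulting semi-infinite cylinders (see the cylinder-cutting description on p.~\pageref{page:surgery}). By algebraic primitivity the trace field $F$ of $C$ is totally real of degree three, and by Lemma~\ref{lem:heightknown} the triple of cylinder widths is a real multiple of a $\ratls$-basis of $F$; this is exactly the residue triple $\Res(P)$. Normalizing $p=0$ and $q=\infty$ then writes $(X,\omega)=\omega_P$ for a point $P\in S(3,1)$, establishing the final clause of the lemma.

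Second, I will extract the root-of-unity conditions $Q_i=\zeta_i$ from the torsion condition. Since $X$ is trinodal of geometric genus zero, its generalized Jacobian is canonically $\Gm^3$, and the three characters evaluate the class of $p-q$ to the cross-ratios $Q_i=[p,q,y_i,x_i]=y_i/x_i$ in the standard normalization (up to inversion, which is immaterial). Theorem~\ref{thm:alg_prim_rm_torsion} asserts that $p-q$ is torsion in $\Jac(X')$ for each smooth $X'\in C$; passing to the limit, its class in $\Gm^3$ is torsion, forcing each $Q_i$ to be a root of unity. That $\zeta_i\neq 1$ is automatic because $x_i\neq y_i$: otherwise the $i$-th pair of poles would coincide and there would be no $i$-th node.

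Third, I will invoke the cross-ratio structure of the boundary of $\E^\iota$. By Theorem~\ref{thm:alg_prim_rm_torsion}, the surfaces on $C$ are $\iota$-eigenforms, so $(X,\omega)\in\overline{\E^\iota}$ lies in the type $[6]$ stratum $\mathcal{S}_{\Res(P)}$. Theorem~\ref{thm:boundary_suff} then places it in a subvariety $\mathcal{S}_{\Res(P)}(T_0)$ for some cusp packet $T_0$, which by Theorem~\ref{thm:explCREQ} is cut out by a single monomial equation $R_1^{a_1}R_2^{a_2}R_3^{a_3}=\zeta$ with $\zeta$ a root of unity. The only mildly delicate step, and the main thing to write out carefully, is that this translate of a subtorus must be upgraded to lie inside a \emph{proper algebraic subgroup}; taking $T=\{x\in\Gm^3:x_1^{na_1}x_2^{na_2}x_3^{na_3}=1\}$ with $n$ the order of $\zeta$ gives a proper algebraic subgroup of $\Gm^3$ containing $\CR(P)$, completing the proof.
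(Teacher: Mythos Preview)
Your proof is correct and follows essentially the same three-step structure as the paper's: residues are a basis of the trace field via Lemma~\ref{lem:heightknown}; the cross-ratios $Q_i$ are roots of unity via the torsion condition of Theorem~\ref{thm:alg_prim_rm_torsion}; and $\CR(P)$ lies in a proper algebraic subgroup via Theorems~\ref{thm:boundary_nec}/\ref{thm:boundary_suff} and~\ref{thm:explCREQ}.

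The one notable difference is in how the torsion step is carried out. The paper argues concretely: torsion of $p-q$ gives, for some fixed $n$ and every smooth fiber, a meromorphic function with an order-$n$ zero at $p$ and an order-$n$ pole at $q$; it then takes a limit of these functions (citing \cite{Mo08} for the justification) to obtain $f(z)=z^n$ on the normalization, whence $x_i^n=y_i^n$. You instead invoke the generalized Jacobian $\Gm^3$ of the trinodal curve and assert that torsion ``passes to the limit.'' This is correct, but the passage to the limit is exactly the point requiring care (continuity of the Abel--Jacobi map into the family of semi-abelian varieties, or equivalently the convergence of the associated functions), and the paper's explicit construction plus the citation to \cite{Mo08} is what supplies that justification. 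Your explicit remark that the coset $\{R_1^{a_1}R_2^{a_2}R_3^{a_3}=\zeta\}$ with $\zeta$ a root of unity lies inside the genuine subgroup $\{R_1^{na_1}R_2^{na_2}R_3^{na_3}=1\}$ is a nice clarification that the paper leaves implicit.
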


\begin{proof}
  By \cite{masur75}, a limit of an irreducible cusp of $C$ is an irreducible stable form
  with two zeros of order $3$ and $1$, and $6$ poles whose residues (up to sign and constant
  multiple) are the widths of the $3$ horizontal cylinders of $(X, \omega)$.    Since
  a form generating $C$ is an eigenform for real multiplication by Theorem~\ref{thm:alg_prim_rm_torsion} and
  the residues $r_i$ are widths of cylinders, they are a basis of the trace field by
  Lemma~\ref{lem:heightknown}.

  That the $\zeta_i$ are roots of unity follows from the torsion condition of
  Theorem~\ref{thm:alg_prim_rm_torsion}.  By Abel's theorem, there is an $n$ such that for each $(Y,
  \eta)\in C$ we may find a degree $n$ meromorphic function $Y\to\proj^1$ with a single pole of
  order $n$ at one zero of $\eta$ and a zero of order $n$ at the other zero of $\eta$.  Taking a
  limit of such functions (this is justified in \cite[p.{}~9]{Mo08}), we obtain a meromorphic
  function $f\colon X\to \proj^1$ with a single zero at $p$ and a single pole at $q$.  In the
  normalization of \eqref{eq:34}, such a function must be of the form $f(z) = z^p$.  Since $x_i$ and
  $y_i$ are identified, we must have $x_i^p = y_i^p$, as desired.

  That $\CR(P)$ lies on an algebraic subgroup follows directly from Theorems~\ref{thm:boundary_nec}
  and \ref{thm:explCREQ}.
\end{proof}

\begin{lemma}
  \label{lem:dimension_S}
  If the $\zeta_i$ are not all cube roots of unity, then $S_{(\zeta_1, \zeta_2, \zeta_3)}(3,1)$ is
  zero-dimensional.  Otherwise $S_{(\zeta_1, \zeta_2, \zeta_3)}(3,1)$ has a single one dimensional
  component, a line in $\moduli[0,8]$.  Specifically, if $\zeta_i = e^{2 \pi i/3}$ for all $i$, this
  component is the line $L$ cut out by the equation,
  \begin{equation*}
    x_1 + x_2 + x_3 = 0,
  \end{equation*}
  under the normalization $p=0$ and $q=\infty$.
\end{lemma}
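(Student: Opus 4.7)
The key is to reduce to an explicit polynomial computation in $\proj^2$. With the normalization $p=0$, $q=\infty$, the condition $Q_i = \zeta_i$ becomes $y_i = \zeta_i x_i$, and the residual $\Gm$-action on coordinates lets me view $S_{\boldsymbol\zeta}(3,1)$ as a subvariety of $\proj^2 = \Proj\,\cx[x_1,x_2,x_3]$ cut out by the three opposite-residue conditions, which expand to the homogeneous degree-$4$ equations
\[
f_i := \zeta_i^3 \prod_{j \neq i}(x_i - x_j)(x_i - \zeta_j x_j) \;-\; \prod_{j \neq i}(\zeta_i x_i - x_j)(\zeta_i x_i - \zeta_j x_j)\;=\;0
\]
for $i=1,2,3$. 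The residue theorem forces $\sum_i (\Res_{x_i}+\Res_{y_i}) = 0$, so only two of the $f_i$ are independent; $S_{\boldsymbol\zeta}(3,1)$ is the common zero locus of $f_1$ and $f_2$ in $\proj^2$ minus the excluded collision loci $\{x_i=0\}\cup\{x_i=x_j\}\cup\{x_i=\zeta_j x_j\}$. By B\'ezout it is therefore at most one-dimensional, with a $1$-dimensional component only if $f_1$ and $f_2$ share a non-constant common factor.

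The central computation is the following. Setting $P_i(t) = \prod_{j\neq i}(t-x_j)(t-\zeta_j x_j)$, we have $f_i = \zeta_i^3 P_i(x_i) - P_i(\zeta_i x_i)$, and collecting powers of $x_i$ (using $\zeta_i\neq 1$) gives
\[
f_i = (\zeta_i - 1)\bigl[-\zeta_i^3 x_i^4 + \zeta_i^2 a_2^{(i)} x_i^2 + \zeta_i(\zeta_i+1) a_1^{(i)} x_i + (\zeta_i^2+\zeta_i+1)\,\zeta_j\zeta_k x_j^2 x_k^2\bigr].
\]
Hence $x_i \mid f_i$ iff $\zeta_i^3 = 1$. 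In the symmetric case $\zeta_1=\zeta_2=\zeta_3 = \omega := e^{2\pi i/3}$, long division in $x_i$ (using $1+\omega = -\omega^2$) yields the complete factorization
\[
f_i \;=\; (1-\omega)\,x_i\,(x_1 + x_2 + x_3)\,(x_i - x_j)(x_i - x_k);
\]
the factors $x_i$ and $x_i-x_j$ correspond to excluded collision loci, while $x_1+x_2+x_3$ is the unique common factor producing the claimed line $L$. An entirely analogous factorization, with $x_1+x_2+x_3$ replaced by a linear form $\ell_{\boldsymbol\zeta} = \sum_i \epsilon_i x_i$ with $\epsilon_i$ determined by $\boldsymbol\zeta$, handles the general case where $\zeta_i^3 = 1$ for all $i$.

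The hard part will be the converse: showing that when \emph{some} $\zeta_i^3 \neq 1$ the polynomials $f_1, f_2$ share no common non-constant factor. My plan is as follows. Assume without loss of generality $\zeta_1^3 \neq 1$, so that $x_1 \nmid f_1$ (as the displayed constant term in $x_1$ is a nonzero multiple of $x_2^2x_3^2$). Any hypothetical common linear factor of $f_1$ and $f_2$ must then be of the form $\ell = x_1 - \alpha x_2 - \beta x_3$; substituting $x_1 = \alpha x_2 + \beta x_3$ into both $f_1$ and $f_2$ and collecting coefficients of the monomials in $x_2, x_3$ produces a system of polynomial conditions on $(\alpha, \beta, \boldsymbol\zeta)$ which by direct manipulation force $\zeta_1^3 = \zeta_2^3 = \zeta_3^3 = 1$, contradicting the hypothesis. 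Higher-degree common factors can be ruled out by the same specialization argument applied to each irreducible factor, or alternatively by observing that the locus $Z \subset \Gm^3$ on which a common factor exists is algebraic and contains finitely many known points, and then performing a tangent-space computation at those points to show it is zero-dimensional. Finally, uniqueness of the $1$-dimensional component when $\zeta_i^3 = 1$ follows immediately from the complete factorization of $f_1$ into four linear forms, of which only $\ell_{\boldsymbol\zeta}$ has zero locus not contained in the excluded collision loci.
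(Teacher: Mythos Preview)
Your setup and the ``positive'' direction are fine and in fact more explicit than the paper's: the complete factorization
\[
f_i = (1-\omega)\,x_i\,(x_1+x_2+x_3)(x_i-x_j)(x_i-x_k)
\]
immediately shows both that $L$ is a component and that it is the \emph{only} one-dimensional component outside the collision loci, whereas the paper verifies divisibility by $x_1+x_2+x_3$ and then asserts that the quotients $D_k/P$ have no common factor ``by an argument as above.''

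The gap is in your ``hard part.'' You restrict to common \emph{linear} factors $\ell = x_1 - \alpha x_2 - \beta x_3$, promise that substituting and collecting coefficients forces $\zeta_i^3=1$, and then wave at higher-degree common factors (``the same specialization argument applied to each irreducible factor''). But an irreducible common factor in $\proj^2$ could be a conic or cubic, for which there is no linear substitution to make; your argument does not apply to such factors as written, and the promised manipulation for the linear case is not carried out either. The alternative tangent-space sketch is likewise not executed.

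The paper sidesteps all of this with a single clean specialization, and the key is to use \emph{all three} equations $D_1,D_2,D_3$ rather than just two. Each $D_k$ has $x_k^4$ with nonzero coefficient $\zeta_k^3-\zeta_k^4$, so any common factor $P$ of degree $d<4$ must contain each pure power $x_k^d$ with nonzero coefficient. Now set $x_1=0$: then $P(0,x_2,x_3)$ still contains $x_2^d$ and $x_3^d$ with nonzero coefficients, but
\[
D_1(0,x_2,x_3) = \zeta_2\zeta_3\,x_2^2x_3^2\,(\zeta_1^3-1),
\]
which (when $\zeta_1^3\neq 1$) is a nonzero monomial in $x_2,x_3$ and hence has no divisor containing both $x_2^d$ and $x_3^d$. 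This handles all degrees at once with no case analysis. Your decision to drop $f_3$ as redundant is what costs you this symmetry; keeping it gives a much shorter argument than the one you outline.
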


\begin{proof}
  $S_{(\zeta_1, \zeta_2, \zeta_3)}(3,1)$ is cut out by the equations $y_i = \zeta_i x_i$ and
  \begin{equation}
    \label{eq:33}
    D_{i} = \zeta_i^3 \prod_{j\neq i}(x_i - x_j)(x_i - \zeta_j x_j) - \prod_{j\neq i} (\zeta_i x_i -
    x_j)(\zeta_i x_i - \zeta_j x_j).
  \end{equation}
  
  Suppose that $S_{(\zeta_1, \zeta_2, \zeta_3)}(3,1)$ has a positive dimensional component, and suppose
  first that (say) $\zeta_1$ is not a cube root of unity.  Then there is a homogeneous polynomial
  $P$ of some degree $d < 4$ which divides $D_k$ for all $k$.  Expanding $D_k$, we obtain
  \begin{equation*}
    D_k = x_k^4(\zeta_k^3-\zeta_k^4) + \cdots +\zeta_{k+1}x_{k+1}^2 \zeta_{k+2} 
    x^2_{k+2}(\zeta_k^3-1),
  \end{equation*}
  with indices taken mod $3$.  Because each $D_k$ contains $x_k^4$ with non-zero coefficient, each monomial
  $x_k^d$ appears in $P$ with non-zero coefficient. We have
  $$P(0,x_2,x_3) = \alpha_2x_2^d + \alpha_3x_3^d + \ldots \quad
  \mid \quad D_1(0,x_2,x_3) = \zeta_2 x_2^2 \zeta_3 x_3^2 (\zeta_1^3-1).$$
  This is not possible since the
  $\alpha_i$ are nonzero and $\zeta_1^3\neq 1$.

  Now suppose that $\zeta_i = e^{2\pi i/3}$ for all $i$.  A simple computation shows that $P =
  x_1+x_2+x_3$ divides each $D_k$, so $L$ is a component of $S_{(\zeta_1, \zeta_2,
  \zeta_3)}(3,1)$.  An argument as above shows that the quotients $D_k/P$ have no common factor, so
  $L$ is the only one-dimensional component.

  Finally, suppose the $\zeta_i$ are arbitrary cube roots of unity.  Replacing some of the cube
  roots of unity $e^{2\pi i/3}$ with their complex conjugates amounts to
  swapping the corresponding $x_i$ and $y_i$.  Thus the new  $S_{(\zeta_1, \zeta_2,
  \zeta_3)}(3,1)$ is simply a rotation of the old one.
\end{proof}

\begin{lemma}
  \label{lem:CRE_not_canonical}
  No one-dimensional component of any $S_{(\zeta_1,\zeta_2,\zeta_3)}(3,1)$ lies in $CR^{-1}(T)$ for $T$ any
  algebraic subgroup of $\Gm^3$.  
\end{lemma}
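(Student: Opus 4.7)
The plan is as follows. Lemma~\ref{lem:dimension_S} tells us that a one-dimensional component of some $S_{(\zeta_1,\zeta_2,\zeta_3)}(3,1)$ exists exactly when every $\zeta_i$ is a primitive cube root of unity, and every such component is related to the line $L$ of the case $\zeta_1=\zeta_2=\zeta_3=\zeta:=e^{2\pi i/3}$ by involutions swapping some subset of the pairs $\{x_i,y_i\}$. These involutions act on $\CR$ by inverting the corresponding coordinates, which is an automorphism of $\Gm^3$ and therefore preserves the property ``the image lies in a proper algebraic subgroup.'' So it suffices to treat the single line $L$, cut out (in the normalization $p=0$, $q=\infty$, $y_i=\zeta x_i$) by $x_1+x_2+x_3=0$.

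I would next parameterize $L$ by $(x_1,x_2,x_3)=(1,t,-1-t)$ and compute each $R_i|_L$ directly from the defining formula $R_i=[x_{i+1},y_{i+1},y_{i+2},x_{i+2}]$. The identity $1+\zeta+\zeta^2=0$ (equivalently $(1+\zeta)^2=\zeta$) produces a striking collapse of the three numerators to a common cyclotomic polynomial:
\begin{equation*}
R_1(t)=\frac{t^2+t+1}{(2t+1)^2},\qquad R_2(t)=\frac{t^2+t+1}{(t+2)^2},\qquad R_3(t)=\frac{t^2+t+1}{(1-t)^2}.
\end{equation*}

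To finish, I would argue multiplicative independence. Any proper algebraic subgroup $T\subset\Gm^3$ is contained in the kernel of a nontrivial character, so $\CR(L)\subset T$ would force a relation $R_1^{a_1}R_2^{a_2}R_3^{a_3}\equiv 1$ on $L$ for some $(a_1,a_2,a_3)\in\zed^3\setminus\{0\}$. Writing $a_i=a_i^+-a_i^-$ with $a_i^\pm\ge 0$ and clearing denominators converts this into a polynomial identity in $\ratls[t]$ involving only $t^2+t+1$, $2t+1$, $t+2$, and $1-t$. The first of these is irreducible over $\ratls$ (its discriminant is negative), and the last three are linear with distinct roots $-\tfrac{1}{2}$, $-2$, $1$; so they are pairwise non-associate irreducibles in $\ratls[t]$, and unique factorization forces every $a_i$ to vanish, a contradiction. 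The only nontrivial step is the cross-ratio computation above; once those formulas are in hand the conclusion is immediate, so there is no real obstacle to this approach.
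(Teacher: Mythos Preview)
Your proof is correct and follows essentially the same route as the paper: reduce to the single line $L$ via Lemma~\ref{lem:dimension_S}, parameterize $L$, and use unique factorization in the polynomial ring to rule out any monomial relation among the $R_i$. Your explicit formulas for $R_1,R_2,R_3$ on $L$ are correct and give a cleaner picture than the paper, which only identifies the factor $2t+1$ as unique to $R_1$; one small imprecision is that swapping $(x_i,y_i)$ inverts $R_j$ for $j\neq i$ rather than $R_i$ itself, but since this is still an automorphism of $\Gm^3$ your reduction is unaffected.
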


\begin{proof}
  By Lemma~\ref{lem:dimension_S}, we need only to show that the equation
  \begin{equation}
    \label{eq:32}
    R_1^{a_1}R_2^{a_2}R_3^{a_3}= \zeta
  \end{equation}
  is not satisfied identically on the line $L$ cut out by $x_1+x_2+x_3=0$.  We may assume without
  loss of generality that $a_1\neq 0$.  Normalizing so that $x_1=1$. and setting $x_3 = -1-x_2$, the
  left hand side of \eqref{eq:32} becomes a rational function $R$ in the single variable $x_2$ which
  must be identically equal to $\zeta$.  The factor $(2x_2+1)$ lies in the denominator of $R_1$ and
  appears nowhere else in $R$.  Since $\cx[x]$ is a unique factorization domain, it follows that $R$
  is not constant.
\end{proof}

\begin{prop} \label{prop:fin31res}
  There is a finite number of projectivized triples of real cubic numbers $(r_1:r_2:r_3)$
  such that for any irreducible periodic direction on any $(X,\omega) \in
  \Omega\moduli[3](3,1)$ generating an algebraically primitive \Teichmuller curve,  the projectivized
  widths of the horizontal cylinders is one of the triples $(r_1:r_2:r_3)$.

  In particular, there are only a finite number of trace fields $F$ of algebraically
  primitive \Teichmuller curves in $\Omega\moduli[3](3,1)$.
\end{prop}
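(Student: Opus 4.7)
The plan is to prove Proposition~\ref{prop:fin31res} by applying Northcott's theorem to the set of projectivized width triples $(r_1:r_2:r_3)\in\proj^2(\barratls)$ arising from algebraically primitive irreducible cusps in $\Omega\moduli[3](3,1)$. By Theorem~\ref{thm:alg_prim_rm_torsion} and Lemma~\ref{lem:heightknown}, each $r_i$ lies in the cubic trace field, so every such triple has degree at most three over $\ratls$. Finiteness therefore reduces to establishing a uniform upper bound on the Weil height $h(r_1:r_2:r_3)$. By Lemma~\ref{lem:cusp_normal_form}, every irreducible cusp arises as $\omega_P$ for some $P\in S_\zeta(3,1)\cap\CR^{-1}(T)$, where $\zeta$ is a tuple of nonidentity roots of unity and $T\subset\Gm^3$ is a proper algebraic subgroup; using~(\ref{eq:31}) it then suffices to bound $h(P)$.

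I will split into the two cases of Lemma~\ref{lem:dimension_S}. First, suppose all $\zeta_i$ are cube roots of unity. Only finitely many such tuples $\zeta$ arise, and for each the variety $S_\zeta(3,1)$ has the distinguished one-dimensional component $L$. By Lemma~\ref{lem:CRE_not_canonical}, $\CR(L)$ is a curve in $\Gm^3$ not contained in any translate of a proper subtorus. Since $\CR(P)\in T\subset\mathcal{H}_2$, Theorem~\ref{thm:BMZ} yields a uniform height bound on $\CR(L)\cap\mathcal{H}_2$, and pulling back along the birational inverse of $\CR|_L$ (another application of~(\ref{eq:31})) produces a uniform bound on $h(P)$, and hence on $h(\Res(P))$, for this case.

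In the remaining case at least one $\zeta_i$ is not a cube root of unity, so each $S_\zeta(3,1)$ is zero-dimensional but $\zeta$ now ranges over infinitely many torsion tuples. I consider the generically finite morphism
\[
\Phi=(Q,\CR)\colon S(3,1)\to \Gm^3\times\Gm^3=\Gm^6,
\]
and let $V=\overline{\Phi(S(3,1))}$, a three-dimensional irreducible subvariety. Valid cusps satisfy $\Phi(P)\in \mathcal{H}_0(\Gm^3)\times\mathcal{H}_2(\Gm^3)\subset\mathcal{H}_2(\Gm^6)$. To bring Theorem~\ref{thm:BMZ} to bear I would fiber over $\Res$: for $r=\Res(P)$, the image $C_r=\Phi(\Res^{-1}(r))\subset\Gm^6$ is a curve, which I would show is not contained in any translate of a proper subtorus by producing, along a transverse direction in $\Res^{-1}(r)$, two cross-ratio monomials whose ratio is non-constant, using the explicit form~(\ref{eq:CREQ}) of Theorem~\ref{thm:explCREQ}. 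Theorem~\ref{thm:BMZ} applied to $C_r$ then bounds the height of the finite set $C_r\cap\mathcal{H}_4$ in terms of $h(C_r)$ and $\deg(C_r)$.

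The main obstacle is that this bound depends on $C_r$, and to finish I need uniformity as $r$ varies. I would attempt the required uniformity either by invoking an effective/uniform version of Theorem~\ref{thm:BMZ} such as Habegger's, applied to the family $\{C_r\}_{r\in\proj^2}$, together with the Arithmetic B\'ezout Theorem to control the height of the generic member of the family. Should a uniform BMZ bound prove elusive, a fallback is to exploit the admissibility structure of Section~\ref{sec:irrstrata}: by Theorem~\ref{thm:ratandposdirect} the triples $(r_1:r_2:r_3)$ come from rational and positive bases of lattices in cubic fields, and by Corollary~\ref{cor:rat_pos_finite} each such lattice admits only finitely many such bases up to units, so the problem reduces to bounding the finitely many relevant lattice classes in terms of the root-of-unity data attached to the cross-ratio equation~(\ref{eq:CREQ}).
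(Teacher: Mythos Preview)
Your treatment of the cube-roots case (the one-dimensional component $L$) is essentially the paper's argument: apply Theorem~\ref{thm:BMZ} to $\CR(L)$, use the degree bound, and conclude finiteness on $L$.

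The genuine gap is in your handling of the non-cube-roots (zero-dimensional) case. You propose to pass to $\Gm^6$ via $\Phi=(Q,\CR)$, fiber over $\Res$, and apply Theorem~\ref{thm:BMZ} to each fiber curve $C_r$, then make the resulting bound uniform in $r$. That uniformity is not provided by Theorem~\ref{thm:BMZ} as stated, and you acknowledge this. Your fallback via Corollary~\ref{cor:rat_pos_finite} does not help either: it bounds the number of rational-and-positive bases \emph{per lattice up to units}, but gives no control across the infinitely many cubic fields and ideal classes that might occur.

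The point you are missing is that the zero-dimensional case requires no Bombieri--Masser--Zannier input at all. Look at the defining equations of $S_\zeta(3,1)$: the hypersurfaces $\{y_i=\zeta_i x_i\}$ and $\{D_i=0\}$ from~\eqref{eq:33} have coefficients that are integer polynomials in the $\zeta_i$. Since a root of unity satisfies $\|\zeta\|_v=1$ at every place $v$, the Weil height of each coefficient vector is bounded by a constant independent of $\zeta$. Hence each hypersurface has uniformly bounded height, and the Arithmetic \Bezout Theorem yields a uniform bound on the height of the intersection $S_\zeta(3,1)$. Its zero-dimensional components therefore consist of points of uniformly bounded height, and pushing forward by the rational map $\Res$ via~\eqref{eq:31} bounds the heights of the residue triples uniformly. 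This disposes of all zero-dimensional $S_\zeta(3,1)$ at once, leaving only the line $L$, which you handled correctly.
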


\begin{proof}
  By Northcott's Theorem, we need only to give a uniform bound for the heights of the triples
  $(r_1:r_2:r_3)$ of widths of cylinders, or equivalently of residues of limiting irreducible stable
  forms satisfying the conditions of Lemma~\ref{lem:cusp_normal_form}.

  Let $T_i(\zeta_1,\zeta_2,\zeta_3)\subset\proj^5$ be the subvariety cut out by the polynomial $D_i$
  of \eqref{eq:33}.  Since $\|\zeta\|_v=1$ for any root of unity $\zeta$ and place $v$, it follows
  directly from the definition of the Weil height that there is a uniform bound on the heights of
  the $T_i(\zeta_1, \zeta_2, \zeta_3)$, independent of the root of unity.  Since
  $S_{(\zeta_1,\zeta_2,\zeta_3)}(3,1)$ is the intersection of the $T_i(\zeta_1,\zeta_2,\zeta_3)$ and the
  hypersurfaces defined by $x_i - \zeta_iy_i$ (which have height $0$), it follows from the
  Arithmetic \Bezout theorem that the varieties $S_{(\zeta_1, \zeta_2, \zeta_3)}(3,1)$ have uniformly
  bounded height.  Thus the zero dimensional components of the $S_{(\zeta_1, \zeta_2, \zeta_3)}(3,1)$ have
  uniformly bounded height as well.  By \eqref{eq:31}, the heights of these points increases by a
  bounded factor under the rational map $\Res$.  Thus the residue triples arising from the zero
  dimensional components of the $S_{(\zeta_1, \zeta_2, \zeta_3)}(3,1)$ have uniformly bounded heights.

  By Lemma \ref{lem:dimension_S}, it only remains to bound the heights of the residue triples
  arising from the line $L\subset \moduli[0,8]$ cut out by the equations $x_1+x_2+x_3=0$ and $x_i-\theta
  y_i$ for each $i$, where $\theta = e^{2\pi i/3}$.  Suppose a point $P\in L$ is a cusp of an
  algebraically primitive \Teichmuller curve.  By Lemma~\ref{lem:cusp_normal_form}, $\Res(P)$
  must be defined over a cubic number field, and $\CR(P)$ must lie in $\mathcal{H}_2$.  Let
  $L'\subset L$ be the set of points satisfying these two conditions.  If $\Res(P)$ lies in $\proj^2(F)$
  for some cubic number field $F$, then $P$ is defined over $F(\theta)$.  Thus $L'$ and $\CR(L')$ consist of
  points of degree at most $9$.  By
  Lemma~\ref{lem:CRE_not_canonical}, $\CR(L)$ is not contained in a translate of a subtorus of
  $\Gm^3$.  Thus Theorem~\ref{thm:BMZ} applies, and we conclude that $\CR(L)\cap\mathcal{H}_2$ is a
  set of points of bounded height.  Therefore $\CR(L')$ is finite by Northcott's theorem.
  The map $\CR$ is finite on $L$ by Lemma~\ref{lem:CRE_not_canonical}, so $L'$ and thus $\Res(L')$
  are finite as well.  Thus there are at most finitely many residue triples arising from $L$ as desired.
\end{proof}

\begin{remark} {\rm All of the estimates in the preceding propositions, 
in particular Theorem~\ref{thm:BMZ} and the height estimates are
effective. It is thus possible in principle to give a complete list of triples
$(r_1,r_2,r_3)$ that may appear in Proposition~\ref{prop:fin31res}.
Unfortunately the available bounds are so bad that this is currently not feasible. 
}\end{remark}
\par
\begin{example}{\rm \label{ex:t234} There is one known example of an algebraically primitive
    \Teichmuller curve in $\Omega\moduli[3](3,1)$, discovered in \cite{ks}.  It is the surface $(X,
    \omega)$ obtained by unfolding the $(2,3,4)$ triangle, shown in Figure~7 of \cite{ks}.  The
    trace field of $(X, \omega)$ is the field $K = \ratls[v]/P(v)$ of discriminant $81$, where $P(v)
    = v^3 - 3v +1$ has a solution $v=2\cos(2\pi/9)$.  The vertical direction is of type $[5]
    \times^3 [3]$, and the circumferences of the vertical cylinders are
    $$ \begin{array}{lcl} w_1 = 2\cos(3\pi/9) &=& 1 \\
      w_2 = -2(\cos(3\pi/9)+\cos(8\pi/9))&=& v^2 + v - 1\\
      w_3= 2(\cos(2\pi/9)+ \cos(3\pi/9)+\cos(8\pi/9) &=& -v^2-3\\
      w_4 = 2\cos(4\pi/9) &=& v^2-2. \end{array} $$ One can check that the $w_i$ form an admissible
    subset of $K$.  \par
    The horizontal direction is irreducible periodic, with cylinder widths, $$ \begin{array}{lcl}
      r_1 =  -(2w_1+w_2+w_3+w_4) &=& -v^2 -v \\
      r_2 =  w_1+w_2+w_3&=& v+1\\
      r_3 = -(3w_1+3w_2+2w_3+w_4)&=&-2v^2-3v+2. \\
    \end{array} $$ In fact, this is the unique irreducible cusp of the \Teichmuller curve spanned by
    $(X, \omega)$.  This cusp lies on the line $L$ of Lemma~\ref{lem:dimension_S}, as we will now
    show.  The irreducible cusp $(X_0, \omega_0)$ is of the form
    \begin{equation} \label{eq:6}
      \omega_0 = C\frac{z^3dz}{\prod(z-x_i)(z-\zeta_i x_i)} =
      \sum\left(\frac{r_i}{z-x_i}-\frac{r_i}{z-\zeta_i x_i}\right)
    \end{equation}
    for some constant
    $C$ and roots of unity $\zeta_i = e^{2\pi i p_i/q_i}$.  To calculate the $\zeta_i$, we consider
    a relative period.  There is a path joining the two zeros of $(X, \omega)$ of period $\sum
    r_i/3$, so the integral of $\omega_0$ along a path $\gamma$ joining $0$ to $\infty$ must be
    $\sum (a_i + 1/3)r_i$, for some integers $a_i$.  From \eqref{eq:6}, we calculate
    \begin{equation*}
      \frac{1}{3}\sum r_i = \int_\gamma \omega_0 = \sum r_i\log\zeta_i = \sum
      r_i\frac{p_i}{q_i},
    \end{equation*}
    so we must have $\zeta_i = e^{2\pi i/3}$ for each $i$ by
    the linear independence of the $r_i$.  One then calculates that up to scale there is a unique
    triple $(x_1,x_2,x_3)$ so that $\omega_0$ has the residues $r_i$,
    \begin{equation*} x_1 = 1,
      \quad x_2 = 2 - v^2, \qtq{and} x_3 = v^2-3.
    \end{equation*} Since the sum of the $x_i$ is
    $0$, this cusp lies on the line $L$.  }
\end{example}
\par
Theorem~\ref{thm:cusps_finite} now
follows directly from Proposition~\ref{prop:fin31res} and the following proposition.
\begin{prop}
  \label{prop:31fixxy}
  Given a basis $(r_1,r_2,r_3)$ over $\ratls$ of a totally real cubic number
  field, there are only finitely cusps of algebraically primitive \Teichmuller curves in
  $\Omega\moduli[3](3,1)$ having residues $(r_1,r_2,r_3)$.
\end{prop}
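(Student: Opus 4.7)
The plan is to express the set of cusps as a constrained intersection on a one-dimensional curve and apply the Bombieri--Masser--Zannier theorem (Theorem~\ref{thm:BMZ}). Set
\[
V \;=\; \Res^{-1}\bigl((r_1:r_2:r_3)\bigr)\;\subset\; S(3,1),
\]
a (possibly reducible) curve, since the generic fibre of $\Res\colon S(3,1)\to\proj^2$ is one-dimensional. By Lemma~\ref{lem:cusp_normal_form}, every cusp appearing in the statement is represented by a point $P\in V$ satisfying both the torsion condition $Q(P)\in\mathcal{H}_0\subset\Gm^3$ (which comes from Theorem~\ref{thm:alg_prim_rm_torsion}) and a cross-ratio equation placing $\CR(P)$ inside a translate of a proper subtorus of $\Gm^3$; in particular, $\CR(P)\in\mathcal{H}_2$.

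First, I would dispose of the exceptional line $L=\{x_1+x_2+x_3=0\}$ from Lemma~\ref{lem:dimension_S}. A direct computation with the explicit form~\eqref{eq:34} shows that $\Res|_L$ is non-constant, so $V\cap L$ is zero-dimensional and hence finite. Let $V'$ denote the union of the irreducible components of $V$ not contained in $L$. By Lemma~\ref{lem:dimension_S}, no component of $V'$ is contained in any $S_{\boldsymbol\zeta}(3,1)$, so the morphism $Q\colon V'\to\Gm^3$ is non-constant on each component.

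Now consider the combined map $\Phi=(Q,\CR)\colon V'\to\Gm^6$. Since $Q|_{V'}$ is non-constant, $\Phi(V')$ is a curve, and every cusp $P\in V'$ satisfies
\[
\Phi(P)\;\in\;\mathcal{H}_0\times\mathcal{H}_2\;\subset\;\mathcal{H}_4\;\subset\;\Gm^6.
\]
Provided no irreducible component of $\Phi(V')$ is contained in a translate of a proper subtorus of $\Gm^6$, Theorem~\ref{thm:BMZ} gives that $\Phi(V')\cap\mathcal{H}_4$ is finite; since $\Phi$ is generically finite on each one-dimensional component of $V'$, this yields finitely many cusps on $V'$, completing the proof in view of the finiteness of $V\cap L$.

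The main obstacle is verifying the non-degeneracy hypothesis: ruling out any non-trivial monomial identity $\prod_i Q_i^{a_i}\prod_j R_j^{b_j}\equiv c$ on any component of $V'$. I plan to argue by contradiction. After substitution of $Q_i=y_i/x_i$ and the cross-ratio formulas~\eqref{CR6} for the $R_j$ (all expressible via~\eqref{eq:34}), such a relation would yield a polynomial identity on $(x_1,x_2,x_3,y_1,y_2,y_3)$ lying in the ideal generated by the residue-equations cutting out $V$. An adaptation of the unique-factorization argument from Lemma~\ref{lem:CRE_not_canonical}, combined with a degree comparison against the polynomials $D_i$ of~\eqref{eq:33} that define the $S_{\boldsymbol\zeta}(3,1)$, should rule this out on $V'$. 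Establishing this non-degeneracy is the technical core of the argument.
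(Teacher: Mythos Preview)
Your overall strategy is different from the paper's, and the gap you yourself identify --- the non-degeneracy of $\Phi(V')$ in $\Gm^6$ --- is not addressed by the sketch you give. Adapting the unique-factorization argument of Lemma~\ref{lem:CRE_not_canonical} to rule out \emph{every} monomial relation $\prod Q_i^{a_i}\prod R_j^{b_j}\equiv c$ on an arbitrary component of $V'$ is a much harder problem than that lemma: there the base curve is the explicit line $L$, whereas here $V'$ is cut out by the full residue conditions and you would need control over all six coordinate functions simultaneously. You have not shown this can be done, and it is not clear that a degree-comparison against the polynomials $D_i$ suffices.

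The paper avoids this difficulty by a rather different route that dispenses with $\CR$ entirely and uses only the torsion map $Q$. Assuming a component $C_0$ of $V$ carries infinitely many cusps, the paper first observes (as you do) that $Q|_{C_0}$ is non-constant, so $Q(C_0)$ contains infinitely many torsion points; by Laurent's theorem this forces $Q(C_0)$ to be a torsion translate of a one-dimensional subtorus. The crucial extra step, which has no analogue in your proposal, is a \emph{geometric} argument: on any form in $C_0$ one can collapse a saddle connection joining the two zeros (following \cite{emz}), and along this path each $Q_i\to 1$, so $(1,1,1)$ lies in $\overline{Q(C_0)}$ and the translate is in fact the subtorus itself. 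The paper then parametrizes the subtorus as $a\mapsto(\zeta_1^a,\zeta_2^a,\zeta_3^a)$, solves the linear residue equations for $x_2(a),x_3(a)$, takes the limit $a\to 0$ via l'H\^opital, and reaches a contradiction with the $\ratls$-linear independence of $r_1,r_2,r_3$. This explicit computation plays the role of your non-degeneracy check, but for a far more constrained situation (a single one-parameter family in $\Gm^3$ rather than an arbitrary monomial relation in $\Gm^6$), which is what makes it tractable.

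In short: your introduction of the $\CR$-coordinates is unnecessary for this proposition, and it is precisely what makes the required multiplicative-independence statement hard. The paper's approach trades that algebraic check for a short geometric observation about degenerating saddle connections plus a concrete limit computation.
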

\begin{proof}
  Consider the
  variety $C = \Res^{-1}(r_1:r_2:r_3)\subset S(3,1)$ of forms having residues $\pm r_i$ and two
  zeros of order $3$ and $1$.  A dimension count shows that $C$ is at least one-dimensional.  In
  fact, $C$ is exactly one-dimensional, as $C$ is locally parameterized by the single relative
  period of the forms $\omega_P$.  Let $C_0$ be a component of $C$.  We suppose that $C_0$ contains
  infinitely many cusps of algebraically primitive \Teichmuller curves and derive a contradiction.
  Consider the image $Q(C_0)\subset(\cx^*)^3$.  We claim that $Q(C_0)$ is a curve.  If not, and
  $Q(C_0)= (\zeta_1,\zeta_2,\zeta_3)$, then $C_0$ is a component of $S_{(\zeta_1,\zeta_2,\zeta_3)}$.
  Then $C_0$ must be the line $L$ of Lemma~\ref{lem:dimension_S}.  It is easily checked that $\Res$
  is not constant along $L$, so this is impossible.  Now since $C_0$ contains infinitely many cusps
  of \Teichmuller curves, $Q(C_0)$ must contain infinitely many torsion points of $(\cx^*)^3$ by
  Lemma~\ref{lem:cusp_normal_form}.  From this it follows that $Q(C_0)$ is a translate of a subtorus
  of $(\cx^*)^3$ by a torsion point.  This is a consequence of the main result of \cite{laurent}.
  It can also be seen by first applying Theorem~\ref{thm:BMZ} to show that $Q(C_0)$ lies on a
  subtorus $T\subset(\cx^*)^3$, then applying Theorem~\ref{thm:BMZ} again to $T$.  We now claim that
  $Q(C_0)$ is in fact a subtorus of $(\cx^*)^3$, rather than a translate.  To see this, it suffices
  to show that the identity $(1,1,1)$ is contained in the closure of $Q(C_0)$.  Given a form $(X,
  \omega)$ representing a point $P\in C_0$, we may choose a saddle connection joining the two zeros
  $p$ and $q$.  Following \cite{emz}, we may collapse this saddle connection (and possibly
  simultaneously a homologous saddle connection) to obtain a path in $C_0$ such that the zeros $p$
  and $q$ collide.  Under this deformation, each cross-ratio $Q_i$ tends to $1$, so $(1,1,1)$ is in
  the closure, as desired.  It remains to show that $Q(C)$ is not a subtorus of $(\cx^*)^3$. If this
  were true, we could find roots of unity $\zeta_i$ and a projective triple $(x_1(a):x_2(a):x_3(a))$
  depending on a parameter $a$, such that for all $a \in \cx$ the differential $$ \omega_\infty =
  \left( \sum_{i=1}^3 \frac{r_i}{z-x_i(a)} - \frac{r_i}{z-\zeta_i^a x_i(a)} \right) dz =
  \frac{p(z)dz}{\prod_i(z-x_i(a))(z-\zeta_i^ax_i(a))}$$ has a triple zero at $z=0$ and a simple zero
  at $z=\infty$. The vanishing of the $z^4$-term of $p(z)$ implies.  $$ \sum r_i x_i (1-\zeta_i^a) =
  ,0$$ and the linear term (divided by $x_1x_2x_3$) also yields a linear equation.  Using the
  normalization $x_1=1$ we may solve the two linear equations for $x_2$ and $x_3$.  We then take the
  limit of $x_2$ and $x_3$ as $a\to 0$, applying l'H\^opital's rule twice. If we let $\zeta_i =
  e^{2\pi i q_i}$ for some $q_i \in \ratls$, we obtain
  \begin{equation}
    \label{eq:3}
    x_2(0) =
    \frac{q_3r_3-q_1r_1}{q_2r_2-q_3r_3} \qtq{and} x_3(0) = \frac{q_2r_2 - q_1r_1}{q_3r_3 - q_2r_2}.
  \end{equation}
  Taking the derivative of the $z^2$-term of $p(z)$ with respect to $a$ at $a=0$ and
  making the substitution \eqref{eq:3}, we obtain $$
  (q_3r_3-r_1q_1)(q_2r_2-q_1r_1)(q_1r_1+q_2r_2+q_3r_3)=0.$$ The $\ratls$-linear independence of the
  $r_i$ yields the desired contradiction.
\end{proof}

\paragraph{Finiteness of \Teichmuller  curves.}

Theorem~\ref{thm:fin31} follows from Theorem~\ref{thm:cusps_finite} and the following
proposition.

\begin{prop}
  \label{prop:finResfin}
  Suppose that there are at most finitely many
  irreducible cusps in $\proj\Omega\barmoduli[g]$ of algebraically primitive \Teichmuller curves in
  the stratum $\proj\Omega\moduli(m,n)$ (resp.\ in a component of the stratum
  $\proj\Omega\moduli(2g-2)$).  Then there are at most finitely many algebraically primitive
  \Teichmuller curves in $\proj\Omega\moduli(m,n)$ (resp.\ in this component of the stratum
  $\proj\Omega\moduli(2g-2)$).
\end{prop}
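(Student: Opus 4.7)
The plan is a two-step reduction. Assuming infinitely many algebraically primitive \Teichmuller curves $C_1, C_2, \ldots$ in the given stratum, I would derive a contradiction with the hypothesis of finitely many irreducible cusps. First, I would show that each $C_i$ has at least one irreducible cusp in $\proj\Omega\barmoduli[g]$. Second, I would show that only finitely many such curves can share a fixed irreducible cusp point. Together with pigeonhole, these give the conclusion.

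For the first step, I would invoke Lemma~\ref{le:31topo1}: in a stratum $\proj\Omega\moduli(m,n)$ with at most two zeros, any Veech surface generating an algebraically primitive \Teichmuller curve admits an irreducible periodic direction, whose limit in $\proj\Omega\barmoduli[g]$ is an irreducible stable form. In the minimal stratum $\proj\Omega\moduli(2g-2)$ every periodic direction is automatically irreducible, since the saddle-connection graph complementing the horizontal cylinders emanates from the unique zero and is therefore connected.

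For the second step, fix an irreducible cusp $p = (X_0, \omega_0) \in \proj\Omega\barmoduli[g]$ and consider all algebraically primitive \Teichmuller curves $C$ with $p \in \overline{C}$. By Theorem~\ref{thm:alg_prim_rm_torsion} each such $C$ lies in the eigenform locus $\mathcal{E}^\iota$, and by the identity principle for irreducible algebraic subvarieties, $C$ is determined by its analytic germ at $p$. In the plumbing coordinates $(t_1, \ldots, t_g)$ near $p$ from \S\ref{sec:RS}, this germ is parametrized as $t_i(q) = a_i q^{n_i} + O(q^{n_i+1})$ for positive integers $n_i$ and constants $a_i \in \cx^*$. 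Lemma~\ref{lem:heightknown} forces the ratios $n_i/n_j$ to match the ratios $s_i/s_j$ of the dual basis of the widths, giving finitely many primitive vectors $(n_1, \ldots, n_g)$, while the cross-ratio equation \eqref{eq:CREQ} of Theorem~\ref{thm:explCREQ} constrains the leading coefficients $a_i$ to satisfy a root-of-unity relation.

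The hard part will be pinning down the full germ of $C$ at $p$ from this leading-order data, since the cross-ratio equation alone cuts out a positive-dimensional locus in the space of admissible $a_i$. My approach would be to follow the template of Proposition~\ref{prop:31fixxy}, applying the torsion condition of Theorem~\ref{thm:alg_prim_rm_torsion} to the one-parameter family of Veech surfaces along $C$ near $p$ in order to extract a sequence of arithmetic equations on the Taylor coefficients of the $t_i(q)$. Together with the cross-ratio constraint, these equations should admit only finitely many solutions, yielding finiteness of germs of algebraically primitive \Teichmuller curves through $p$ and hence, by the identity principle, finiteness of such curves overall.
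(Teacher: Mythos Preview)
Your first step is fine and matches the paper. The second step, however, has a genuine gap, and the paper's proof takes a completely different route that avoids it.

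Your plan is to fix an irreducible cusp $p$ and bound the number of algebraically primitive \Teichmuller curves through $p$ by analyzing germs in plumbing coordinates, using the torsion condition to pin down higher-order Taylor data. There are two problems. First, in the minimal stratum $\proj\Omega\moduli(2g-2)$ there is only one zero, so the torsion condition of Theorem~\ref{thm:alg_prim_rm_torsion} is vacuous; your proposed mechanism for constraining the germ simply does not exist in that case. Second, even in $\proj\Omega\moduli(m,n)$, you acknowledge that the cross-ratio equation leaves a positive-dimensional locus of leading coefficients, and you only say the torsion constraints ``should'' cut this down to finitely many germs. This is not an argument; Proposition~\ref{prop:31fixxy} is a statement about a one-parameter family of stable forms in the boundary, not about germs of curves transverse to the boundary, and it does not transfer in any obvious way.

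The paper's proof bypasses local analysis at the cusp entirely and works with flat geometry on the Veech surface itself. The hypothesis of finitely many irreducible cusps gives a uniform constant $C$ bounding the ratio of any two parallel saddle connections or cylinder circumferences (equation~\eqref{eq:30}), and gives finitely many possible width-tuples $(r_i)$ for the horizontal cylinders. By Lemma~\ref{lem:heightknown} the heights $(s_i)$ are then the dual basis, so also fixed. One then produces a \emph{second} irreducible periodic direction (vertical, via a saddle connection crossing a single horizontal cylinder) and observes that its cylinder widths are bounded by~\eqref{eq:30}. The intersection matrix $B_{ij} = C_i \cdot D_j$ between the two cylinder systems therefore has bounded entries, hence finitely many possibilities, and $B$ together with the widths and heights determines the rectangle decomposition of $(X,\omega)$ up to finitely many gluing patterns. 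This is a global combinatorial argument, not a local one, and it uses no torsion.
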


\begin{proof}
  Suppose $(X, \omega)\in\Omega\moduli(m,n)$
  generates an algebraically primitive \Teichmuller curve.  Let $\theta$ be an irreducible periodic
  direction on $(X, \omega)$, and let $I$ and $J$ each be either a saddle connection or periodic
  direction of slope $\theta$.  Since lengths of saddle connections or circumferences of cylinders
  of a given slope are unchanged under passing to the corresponding limiting stable form, from
  finiteness of irreducible cusps we obtain a constant $C$, depending only on the stratum, so that
  \begin{equation}
    \label{eq:30}
    \frac{1}{C} < \frac{\length(I)}{\length(J)} < C
  \end{equation}
  for
  $I$ and $J$ any saddle connections or closed geodesics of the same slope.  There is an irreducible
  periodic direction on $(X, \omega)$ by Lemma~\ref{le:31topo1}.  Choose one, and apply a rotation
  of $\omega$ so that it is horizontal.  Let $C_1, \ldots, C_g$ be the horizontal cylinders of $(X,
  \omega)$.  There must be some cylinder $C_i$ having one of the two zeros in its bottom boundary
  component and the other zero in the top.  Take a saddle connection $\gamma$ contained in $C_i$ and
  connecting these zeros.  Applying the action of a matrix $\left(
    \begin{smallmatrix}
      1 & t \\
      0 & 1
    \end{smallmatrix}
  \right)\in\SLtwoR$, we may take $\gamma$ to be vertical, whence the
  vertical direction is irreducible periodic with $g$ cylinders $D_i, \ldots, D_g$.  By
  Lemma~\ref{lem:heightknown}, after normalizing by the action of a diagonal element of
  $\GLtwoRplus$, we have $w(C_i) = r_i$ and $h(C_i)= s_i$ (where we write $w(C)$ and $h(C)$ for the
  height and width of the cylinder $C$) for some basis $(r_i)$ of $F$ (with a chosen real embedding)
  and dual basis $(s_i)$.  By finiteness of cusps, there are only finitely many possibilities for
  the $r_i$, and thus the $s_i$, so we may take them to be fixed.  Since the saddle connection
  $\gamma$ crosses only one cylinder, its length is bounded by a constant depending only on the
  stratum.  This implies that the $w(D_i)$ are bounded as well by \eqref{eq:30}.  Therefore the
  intersection matrix $(B_{ij}) = (C_i\cdot D_j)$ has bounded entries, and we may take it to be
  fixed.  The widths and heights of the $D_j$ are determined by $B$, as well as the widths and
  heights of the $C_i$, so we may take them to be fixed as well.  Now each intersection of $C_i$ and
  $D_j$ is isometric to a rectangle $R_{ij}$ of width $h(D_j)$ and height $h(C_i)$.  Thus the
  surface $(X, \omega)$ may be built by gluing the finite collection of rectangles consisting of
  $B_{ij}$ copies of $R_{ij}$ for each index $(i,j)$.  As there are only finitely many gluing
  patterns for a finite collection of rectangles, there are only finitely many possibilities for
  $(X, \omega)$.  \par In the case $\proj\Omega\moduli(2g-2)$ the same argument works. It is even
  simplified by the fact that every direction is irreducible.
\end{proof}

\section{Finiteness conjecture for  $\Omega\moduli[3](4)^{\rm hyp}$}

\label{sec:finconj} In this section, we
give numerical and theoretical evidence for the following conjecture, which together with
Proposition~\ref{prop:finResfin} implies Conjecture~\ref{conj:intro} for the case of the stratum
$\omoduli[3](4)^{\rm hyp}$.
\par
\begin{conj}
  \label{conj:4resfinite}
  There are only a finite number of possibilities for the projectivized
  triples $(r_1:r_2:r_3)$ of widths of cylinders of algebraically primitive \Teichmuller curves in
  $\omoduli[3](4)^{{\rm hyp}}$.
\end{conj}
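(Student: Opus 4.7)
The plan is to adapt the cusp-counting argument of Proposition~\ref{prop:fin31res} to the hyperelliptic component $\omoduli[3](4)^{\rm hyp}$; once finiteness of residue triples is established, Proposition~\ref{prop:finResfin} converts it into finiteness of the \Teichmuller curves themselves, yielding Conjecture~\ref{conj:intro} in this component. First I would parameterize the irreducible cusps. Such a cusp is an irreducible stable form in $\Omega\barmoduli[3](4)^{\rm hyp}$ whose hyperelliptic involution fixes the unique fourth-order zero; normalizing this zero to $z=0$ and the involution to $z\mapsto -z$, the form reads
\begin{equation*}
  \omega_{(x_1,x_2,x_3)} = C\,\frac{z^4\,dz}{\prod_{i=1}^{3}(z-x_i)(z+x_i)},
\end{equation*}
with the three nodes obtained by identifying $x_i$ with $-x_i$. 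The space of such cusps is therefore $\proj^2$ modulo the natural action by permutations and sign changes of the $x_i$. Direct computation yields residues $r_i = Cx_i^3/\bigl(2(x_i^2-x_j^2)(x_i^2-x_k^2)\bigr)$ and, in the cross-ratio coordinates of Proposition~\ref{prop:CRpsisjsk}, $R_i = \bigl((x_j+x_k)/(x_j-x_k)\bigr)^2$ for $\{i,j,k\}=\{1,2,3\}$.

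Next I would collect the constraints coming from algebraic primitivity. Lemma~\ref{lem:heightknown} together with Theorem~\ref{thm:alg_prim_rm_torsion} guarantees that the residues form a basis of a totally real cubic field $F$; Theorem~\ref{thm:ratandposdirect} moreover forces this basis to be admissible. Theorems~\ref{thm:boundary_nec} and \ref{thm:explCREQ} then furnish the cross-ratio equation
\begin{equation*}
  R_1^{a_1}R_2^{a_2}R_3^{a_3} = \zeta,
\end{equation*}
with exponents $a_i$ determined from $(r_1,r_2,r_3)$ via \eqref{eq:1} (equivalently Corollary~\ref{cor:2ndver}) and $\zeta$ a root of unity depending on the extension class $T\in E(\mathcal{I})$. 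Substituting the hyperelliptic formulas for the $R_i$ converts this into a polynomial relation in the $x_i$ whose exponents and target root of unity depend in a controlled way on the residues.

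The heart of the argument is to show, in the spirit of Proposition~\ref{prop:fin31res}, that only finitely many projective admissible triples $(r_1:r_2:r_3)$ admit a solution of the cross-ratio equation under the above hyperelliptic normalization. Since the residue map $(x_1:x_2:x_3)\mapsto(r_1:r_2:r_3)$ is generically finite on $\proj^2$, the problem reduces to a statement purely about residue triples. I would attempt to bound the Weil height of such triples using a combination of the arithmetic \Bezout theorem (applied to the intersection of the residue-defining hypersurfaces with the cross-ratio hypersurface in $\proj^2$) and Theorem~\ref{thm:BMZ} applied to the cross-ratio map $\CR\colon\proj^2\to\Gm^3$; Northcott's theorem then converts bounded height and bounded degree into finiteness. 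Exceptional components, analogous to the line $L$ of Lemma~\ref{lem:dimension_S}, would have to be excluded by a direct analysis that exploits the extra hyperelliptic symmetry to cut down the relevant one-parameter families.

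The hard part will be precisely this last step. Unlike in the $(3,1)$ case treated in Section~\ref{sec:finiteness}, the torsion condition of Theorem~\ref{thm:alg_prim_rm_torsion} is vacuous here: the stratum has a single zero, so the divisor $p-q$ is identically zero and places no constraint on the cusp. In Section~\ref{sec:finiteness} this condition was precisely what forced the auxiliary cross-ratios $Q_i=y_i/x_i$ to be roots of unity, placing the image of the cusp locus inside a union of proper algebraic subgroups of $\Gm^3$ and making Theorem~\ref{thm:BMZ} directly applicable. In the hyperelliptic setting the symmetry forces $y_i=-x_i$ (hence $Q_i=-1$) automatically, but this yields a single point rather than a family of torsion points and does not by itself supply the needed subgroup structure. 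Finding a suitable arithmetic substitute, either an additional algebraic restriction imposed by hyperellipticity on the cross-ratios $R_i$ or a genuinely new height estimate for the cross-ratio equation on the two-dimensional hyperelliptic cusp space, appears to be the central difficulty. The strong numerical evidence provided by Theorem~\ref{thm:numevidence} suggests that the conjecture holds, but converting this evidence into a proof requires new input to replace the missing torsion condition.
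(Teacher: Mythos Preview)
The statement is a \emph{conjecture}, and the paper does not prove it. What the paper does instead is give partial results: Proposition~\ref{prop:fin4fixedci} establishes finiteness once the admissibility coefficients $(c_1,c_2,c_3)$ are fixed, and Proposition~\ref{prop:unlikelycanc} shows that any counterexample sequence would exhibit arithmetically improbable cancellations between the $c_i$ and the cross-ratio exponents $a_i$. Together with the numerical search of Theorem~\ref{thm:numevidence}, these constitute the evidence for the conjecture, not a proof.

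Your proposal is not a proof either, and you are honest about this in the final paragraph. You correctly diagnose the obstruction: in the stratum $\omoduli[3](4)$ the torsion condition of Theorem~\ref{thm:alg_prim_rm_torsion} is vacuous, so the mechanism that drove Proposition~\ref{prop:fin31res} --- forcing the auxiliary cross-ratios $Q_i$ to be roots of unity and thereby confining the cusp to $S_{(\zeta_1,\zeta_2,\zeta_3)}(3,1)$ --- simply disappears. Your observation that hyperellipticity forces $Q_i=-1$ automatically is correct but, as you note, it yields no additional constraint. The paper's approach to salvaging something is different from what you sketch: rather than trying to bound heights directly on the two-dimensional cusp space, it fixes the admissibility coefficients $(c_1,c_2,c_3)$, which cuts the problem down to a curve $Y_{(c_1,c_2,c_3)}$ to which Theorem~\ref{thm:BMZ} (in Habegger's effective form) applies. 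This gives finiteness for each fixed $(c_1,c_2,c_3)$, but the height bound depends on $h(c_1:c_2:c_3)$, and the paper cannot rule out that the $c_i$ grow without bound along a hypothetical infinite sequence of \Teichmuller curves.

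In short: your identification of the gap is exactly right, and the paper does not close it. If you intend to submit this as a proof of the conjecture, you should be aware that neither your outline nor the paper's Section~\ref{sec:finconj} achieves that; both are programs pointing at the same missing ingredient.
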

\par
Everything in this section should  hold as well  for the other component $\omoduli[3](4)^{\rm odd}$ of
$\omoduli[3](4)$, but we stick to the hyperelliptic component for simplicity.  The hyperelliptic  component contains the other of the two known examples of
algebraically primitive \Teichmuller curves in genus three, Veech's $7$-gon.  We describe the stable
form which is the limit of the unique cusp of this curve in Example~\ref{ex:veech7} below. Finally we will give the
algorithm for searching any eigenform locus for \Teichmuller curves in $\Omega\moduli[3](4)$ which
is used to prove Theorem~\ref{thm:numevidence}.

\par
\paragraph{Finiteness for fixed admissibility
  coefficients.}
Recall from \eqref{eq:sumci} that if $\mathcal{S}$ is a weighted admissible
boundary stratum of type $[6]$, then the weights $r_i$ satisfy $\sum_{i=1}^3 c_i/ r_i =0$ for some
$c_i \in \zed$.  We call the triple $(c_1,c_2,c_3)$ of coprime integers the {\em admissibility
  coefficients} of the $r_i$.
\par
\begin{prop} \label{prop:fin4fixedci} For any fixed triple
  $(c_1,c_2,c_3)$ there is only a finite number of algebraically primitive \Teichmuller curves in
  $\omoduli[3](4)^\hyp$ that possess a direction whose cylinders have lengths with admissibility
  coefficients $(c_1,c_2,c_3)$.
\end{prop}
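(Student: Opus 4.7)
The strategy is a refinement of the argument for Proposition~\ref{prop:fin31res} together with Theorem~\ref{thm:fin31}.  I would first reduce to showing that, for any fixed $(c_1,c_2,c_3)$, there are only finitely many irreducible cusps in $\proj\Omega\barmoduli[3](4)^\hyp$ of algebraically primitive \Teichmuller curves with residue triple of admissibility coefficients $(c_1,c_2,c_3)$.  The passage from finiteness of such cusps to finiteness of \Teichmuller curves is then an adaptation of Proposition~\ref{prop:finResfin} to $\Omega\moduli[3](4)^\hyp$, essentially immediate from that proposition together with Lemma~\ref{le:31topo1}, using that every periodic direction on a form with a single zero is already irreducible.

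An irreducible trinodal cusp of such a curve is a hyperelliptic stable form on $\proj^1$ whose single quadruple zero must be fixed by the hyperelliptic involution.  Normalizing the involution to $z\mapsto -z$ and placing the zero at $\infty$, the form becomes
\begin{equation*}
\omega_0 \;=\; \frac{C\,dz}{\prod_{i=1}^{3}(z^2-a_i^2)},
\end{equation*}
with nodes at $\pm a_i$. Modding out the residual dilation action yields a complex two-dimensional parameter space $\mathcal{N}$, with residue morphism $r\colon\mathcal{N}\to\proj^2$ given by $r_i = C/(2a_i\prod_{j\neq i}(a_i^2-a_j^2))$ and cross-ratio morphism $\rho\colon\mathcal{N}\to\Gm^3$ given by $R_i = ((a_j+a_k)/(a_j-a_k))^2$ for $\{i,j,k\}=\{1,2,3\}$. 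Fixing $(c_1,c_2,c_3)$, the admissibility relation $\sum c_i/r_i=0$ cuts out a one-dimensional subvariety $V_c\subset\mathcal{N}$.

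By Theorems~\ref{thm:boundary_nec} and~\ref{thm:explCREQ}, every cusp of an algebraically primitive \Teichmuller curve satisfies $R_1^{n_1}R_2^{n_2}R_3^{n_3}=\zeta$ for some coprime integers $n_i$ and some root of unity $\zeta$, so the $\rho$-image of the cusp lies in $\mathcal{H}_2\subset\Gm^3$.  Granting the nondegeneracy statement that $\rho(V_c)$ is not contained in any translate of a proper subtorus, the Bombieri-Masser-Zannier theorem (Theorem~\ref{thm:BMZ}) implies that $\rho(V_c)\cap\mathcal{H}_2$ has bounded Weil height.  Since the residues $r_i$ of any such cusp lie in the cubic trace field by Lemma~\ref{lem:heightknown}, the node positions $a_i$ and therefore the cross-ratios $R_i$ are algebraic numbers of uniformly bounded degree over $\ratls$.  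Northcott's theorem then yields a finite set of cross-ratio triples, which via the generic finiteness of $\rho$ on $V_c$ corresponds to finitely many cusps.

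The main obstacle is the nondegeneracy of $\rho(V_c)$.  To establish it, I would analyze the degenerations $a_2\to 0$ and $a_3\to 0$ of $V_c$.  In the normalization $a_1=1$, the admissibility equation reads
\begin{equation*}
c_1(1-a_2^2)(1-a_3^2) + c_2 a_2 (a_2^2-1)(a_2^2-a_3^2) + c_3 a_3 (a_3^2-1)(a_3^2-a_2^2) = 0,
\end{equation*}
which at $a_2=0$ reduces to $(1-a_3^2)(c_1 - c_3 a_3^3)=0$, producing (besides the degenerate solutions $a_3=\pm 1$) three smooth branches at the three cube roots of $c_1/c_3$.  On these branches $\rho$ has the shape $(1,R_2,1)$ with three distinct generic values of $R_2$, and the symmetric degeneration $a_3\to 0$ produces three points of the form $(1,1,R_3)$.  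A hypothetical identity $R_1^{m_1}R_2^{m_2}R_3^{m_3}=k$ along $V_c$ would then force $m_2=m_3=0$ and, after checking one further boundary piece, also $m_1=0$.  The delicate technical point is to uniformly handle the sporadic triples $(c_1,c_2,c_3)$ for which $V_c$ is reducible or for which the boundary branch values satisfy an exceptional multiplicative coincidence, by a case-by-case analysis in the spirit of Lemma~\ref{lem:CRE_not_canonical}.
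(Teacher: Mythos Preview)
Your overall strategy matches the paper's: parametrize hyperelliptic trinodal stable forms, impose the admissibility relation $\sum c_i/r_i=0$ to cut out a curve, apply Bombieri--Masser--Zannier to bound heights of the points satisfying the cross-ratio equation, and finish with Northcott.  The substantive divergence is in how you establish the nondegeneracy hypothesis for Theorem~\ref{thm:BMZ} and how you treat irreducibility.

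The paper handles nondegeneracy much more cleanly than your boundary analysis.  Rather than working with the squared cross-ratios $R_i$, it factors the map through $\CR_0=(R_1',R_2',R_3')$ with $R_i'=(x_{i+1}+x_{i+2})/(x_{i+1}-x_{i+2})$, so that $R_i=(R_i')^2$.  It then proves a one-line lemma (Lemma~\ref{lemma:notinsubtorus}): an irreducible curve in $(\cx^*)^n$ whose closure in $\cx^n$ contains, for each $i$, a point with $i$-th coordinate zero and all others nonzero cannot lie in any translate of a proper subtorus.  Sending one $x_j$ to $0$ along $Y_{(c_1,c_2,c_3)}$ produces exactly such a limit point of $\CR_0(Y)$, and permuting indices gives the others.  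This sidesteps entirely the multiplicative-coincidence case analysis you outline.

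The ``delicate technical point'' you defer---irreducibility of $V_c$---is resolved uniformly in the paper by an explicit computation: a birational map carries $Y_{(c_1,c_2,c_3)}$ to a hyperelliptic curve $y^2=c_1^2x^6-3c_1^2x^5+\cdots+c_3^2$, and one checks directly that this sextic is never a perfect square.  So there are no sporadic triples requiring separate treatment.

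One caution on your reduction step.  Proposition~\ref{prop:finResfin} as stated assumes finiteness of \emph{all} irreducible cusps in the stratum: its proof needs the uniform length-ratio bound~\eqref{eq:30} for a \emph{second} periodic direction, whose admissibility coefficients are in general not $(c_1,c_2,c_3)$.  Calling this step ``essentially immediate'' from that proposition overstates the matter; the paper's own proof in fact stops at finiteness of the cross-ratio triples and treats that as the content of the proposition.
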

\par
This has as obvious consequence:
\par
\begin{cor}
  In $\omoduli[3](4)^\hyp$ there is only a finite number of algebraically primitive \Teichmuller
  curves meeting the infinite collection of weighted boundary strata provided by the
  algorithm in Proposition~\ref{prop:exist_admtriple}.
\end{cor}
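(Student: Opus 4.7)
The plan is to reduce the corollary to Proposition~\ref{prop:fin4fixedci} by showing that every weighted boundary stratum produced by the algorithm of Proposition~\ref{prop:exist_admtriple} has the same admissibility coefficients, namely $(c_1,c_2,c_3)=(1,1,1)$.

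First I would examine the construction in Proposition~\ref{prop:exist_admtriple}. For any totally real cubic field $F$, the algorithm produces (after a shift $\alpha \mapsto \alpha+k$ to achieve the correct signs) a basis of the form
\begin{equation*}
  r_1 = \alpha, \qquad r_2 = 1-\alpha, \qquad r_3 = \alpha(\alpha-1) = -r_1 r_2,
\end{equation*}
spanning a fractional ideal $\mathcal{I} = \langle 1,\alpha,\alpha^2\rangle$ whose coefficient ring contains $\mathcal{O}_F$. A direct computation (equation~\eqref{eq:rec_dep} in the excerpt) shows that the relation $\sum c_i / r_i = 0$ reduces to
\begin{equation*}
  \frac{1}{\alpha} + \frac{1}{1-\alpha} + \frac{1}{-\alpha(1-\alpha)} = 0,
\end{equation*}
so the admissibility coefficients of the weighted boundary stratum $\mathcal{S}_{(r_1,r_2,r_3)}$ are $(1,1,1)$, independent of the underlying field $F$ and of the choice of shift.

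Next, suppose $(X,\omega)\in\omoduli[3](4)^\hyp$ generates an algebraically primitive \Teichmuller curve $C$ that meets one of these strata. The intersection point corresponds to a cusp of $C$, hence by the surgery description of cusps recalled on p.~\pageref{page:surgery} to a periodic direction on $(X,\omega)$ whose cylinders have widths $(r_1,r_2,r_3)$ up to sign and positive scale. Since the admissibility coefficients are invariants of the projective triple of widths, this direction has admissibility coefficients $(1,1,1)$.

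Finally, I would invoke Proposition~\ref{prop:fin4fixedci} with the fixed triple $(c_1,c_2,c_3)=(1,1,1)$, which asserts the finiteness of algebraically primitive \Teichmuller curves in $\omoduli[3](4)^\hyp$ possessing any such direction. Combining the two preceding paragraphs, every \Teichmuller curve meeting the infinite collection of strata from Proposition~\ref{prop:exist_admtriple} falls into this finite set, proving the corollary. There is essentially no obstacle here, since the work has been carried out in Proposition~\ref{prop:fin4fixedci}; the only content of the corollary is the observation that the universal construction of Proposition~\ref{prop:exist_admtriple} yields a uniform admissibility coefficient triple, which is visible from the symmetry of $r_3=-r_1 r_2$.
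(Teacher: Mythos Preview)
Your argument is correct and is exactly the intended one: the paper states the corollary as an ``obvious consequence'' of Proposition~\ref{prop:fin4fixedci} without further proof, and the content you have supplied---that the bases $r_1=\alpha$, $r_2=1-\alpha$, $r_3=\alpha(\alpha-1)$ produced by Proposition~\ref{prop:exist_admtriple} always satisfy $\sum 1/r_i=0$ and hence have admissibility coefficients $(1,1,1)$---is precisely the observation the reader is meant to make.
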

\par
\paragraph{The limiting
  differential in the hyperelliptic case.}
We want to make the cross-ratio coordinates more
explicit and therefore normalize the hyperelliptic involution on the stable curve $X_\infty$
corresponding to a \Teichmuller curve in $\omoduli[3](4)^\hyp$. Necessarily, $X_\infty$ is
irreducible, and consequently the desingularization of $X_\infty$ is a $\proj^1$ with coordinate
$z$, where we may normalize the hyperelliptic involution to be $z \mapsto -z$ and $z=0$ is the
$4$-fold zero.  The preimages of the nodes are $\pm x_i$ for $i=1,2,3$, and we will at some points
in the sequel use the full threefold transitivity of M\"obius transformations to normalize moreover
$x_1=1$. The differential $\omega_\infty$ pulls back on the normalization to
\begin{equation}
  \label{eq:35}
  \omega_\infty = \sum_{i=1}^3 \left(\frac{r_i}{z-x_i} - \frac{r_i}{z+x_i} \right)dz =
  \frac{Cz^4}{\prod_{i=1}^3(z^2-x_i^2)} dz
\end{equation} for some constant $C$ that can obviously
be expressed in the $r_i$ and $x_i$.  Coefficient comparison yields the two equations
\begin{gather}
  \label{eq:1stra4hyp}
  \sum_{i=1}^3 r_i x_{i+1}x_{i+2} =0 \\
  \label{eq:2stra4hyp} \sum_{i=1}^3 r_i x_i (x_{i+1}^2 + x_{i+2}^2) =0,
\end{gather} where indices
are to be read mod $3$.   The cross-ratio map $\CR$ as defined by Equation~\ref{CR6}
is given by $$ \CR = (R_1,R_2,R_3), \quad \text{where} \quad R_i = \left(\frac{x_{i+1}+x_{i+2}}
  {x_{i+1}-x_{i+2}}\right)^2.$$ It will be convenient to use that $\CR$ factors as a composition of
the squaring map and the rational map $\CR_0 \colon \proj^2 \to (\cx^*)^3$ defined by $\CR_0 =
(R_1',R_2',R_3')$, where
$$ R_i'(x_1:x_2:x_3) = \frac{x_{i+1}+x_{i+2}}{x_{i+1}-x_{i+2}}.$$
\par
\begin{example} \label{ex:veech7}
{\rm Veech's $7$-gon curve lies in this stratum, and we conjecture it is the
only one. 
Let $F =\ratls[v]/\langle v^3 + v^2 - 2v - 1 \rangle$ be the
cubic field of discriminant $D=49$. 
There is a unique cusp whose cylinder widths are projectively
equivalent to
$$ r_1 = 1, \quad r_2 = v^2 + v - 2, \quad r_3 = v^2 - 2,$$
with $v=2\cos(2\pi/7)$.
Since 
$$\sum_i \frac{1}{r_i} = 0 \qtq{and} \NFQ(r_i) = 1$$
for all $i$, the cross-ratio exponents are all $1$.
Only one of the three solutions to equations~\eqref{eq:1stra4hyp} and \eqref{eq:2stra4hyp}
satisfies the cross-ratio equation $\prod R_i=1$, namely
$$ x_1 = 1, \quad x_2 = -v^2 - v + 1, \quad x_3 = v^2 + v - 2.$$
\par
Note in comparison with Proposition~\ref{prop:unlikelycanc} below 
that here the $c_i$, the $\NFQ(r_i)$ and also the moduli
of the cylinders are all one.  That is, all the auxiliary parameters are arithmetically
as simple as possible.
}
\end{example}
\par
Inside the domain of $\CR_0$ the rationality condition $\sum_{i=1}^3 c_i/ r_i =0$ 
together with the opposite-residue condition defines a curve $Y = Y_{(c_1,c_2,c_3)}$.
We want to apply Theorem~\ref{thm:BMZ} to this curve and now check
the necessary hypothesis.

\begin{lemma} \label{lemma:notinsubtorus} Let $X \subset (\cx^*)^n$ be an irreducible curve whose closure in
  $\cx^n$ contains points $P_1,\ldots,P_n$ where $P_i=(p_{i1},\ldots,p_{in})$ and where for all $i$
  we have $p_{ii}=0$ while $p_{ij} \neq 0$ for $i \neq j$.  Then $X$ is not contained in the
  translate of an $n-1$-dimensional algebraic subtorus in $\cx^*$.
\end{lemma}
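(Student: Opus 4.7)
The plan is to argue by contradiction. Suppose that $X$ were contained in a translate $T$ of some $(n-1)$-dimensional algebraic subtorus of $(\cx^*)^n$. Any such $T$ is cut out by a single monomial equation of the form
\[
  z_1^{a_1} \cdots z_n^{a_n} = c
\]
for some nonzero vector $(a_1, \ldots, a_n) \in \zed^n$ (which I may take to have gcd one) and some $c \in \cx^*$. Since $X \subset T$, this equation holds identically on $X$.

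Next, for each fixed index $i$, I would examine the behavior of the monomial $z_1^{a_1} \cdots z_n^{a_n}$ along any sequence in $X$ converging to $P_i$. By the hypothesis on $P_i$, the $i$th coordinate tends to $0$ while all other coordinates approach the nonzero limits $p_{ij}$. Therefore, if $a_i > 0$, the monomial tends to $0$, while if $a_i < 0$, the monomial tends to $\infty$. Neither value can equal the nonzero constant $c$, so I must conclude that $a_i = 0$.

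The main observation is that this argument applies independently for every $i \in \{1, \ldots, n\}$, so $a_i = 0$ for all $i$. This contradicts the choice of $(a_1, \ldots, a_n)$ as a nonzero primitive vector, and hence $X$ cannot lie in any translate of an $(n-1)$-dimensional subtorus. There is no real obstacle here beyond carefully unwinding the definitions; the content of the lemma is entirely the observation that a monomial relation can extend continuously across the boundary of $(\cx^*)^n$ only when the exponent of each vanishing coordinate is zero.
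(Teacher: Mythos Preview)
Your proof is correct and follows essentially the same approach as the paper: assume a monomial relation $\prod z_i^{b_i}=t$ holds on $X$, pass to the closure, and observe that evaluating at $P_i$ forces $b_i=0$ for each $i$. The paper phrases the extension step as ``the equation holds on $X$, thus on its closure,'' while you spell out the limit argument for both signs of the exponent, but the content is identical.
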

\par
\begin{proof}
  Let $z_i$ be coordinates of $\cx^n$ and suppose on the contrary that $X$ is contained in such a
  torus given by the equation $\prod z_i^{b_i} = t$ for some $b_i \in \zed$ not all zero and $t \in
  \cx^*$. This equation holds on $X$, thus on its closure. Plugging in $P_i$ implies $b_i = 0$.
  Using all the $P_i$ we obtain the contradiction that all of the $b_i$ are zero.
\end{proof}
\par
\begin{cor}
  The curve $\CR_0(Y)$ does not lie in a translate of an algebraic subtorus in $(\cx^*)^3$.
\end{cor}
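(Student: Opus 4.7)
The plan is to apply Lemma~\ref{lemma:notinsubtorus} with $n=3$; since any translate of a lower-dimensional subtorus sits inside a translate of a codimension-one subtorus, this already rules out containment in any proper subtorus translate in $(\cx^*)^3$.  Accordingly the task reduces to exhibiting three points $P_1, P_2, P_3 \in \overline{\CR_0(Y)} \cap \cx^3$ such that $P_i$ has a zero in the $i$-th coordinate and nonzero entries in the remaining two.

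First I would eliminate $(r_1:r_2:r_3)$ from the defining equations of $Y$.  The opposite-residue conditions \eqref{eq:1stra4hyp} and \eqref{eq:2stra4hyp} are linear in the $r_i$ and have unique (generic) solution up to scale
\begin{equation*}
  r_i \;=\; x_i^3\bigl(x_{i+1}^2 - x_{i+2}^2\bigr);
\end{equation*}
substituting into $\sum c_i/r_i = 0$ and clearing denominators presents $Y$ as the degree-$10$ plane curve
\begin{equation*}
  f(x_1,x_2,x_3) \;:=\; \sum_{i=1}^3 c_i\, x_{i+1}^3 x_{i+2}^3 (x_{i+2}^2 - x_i^2)(x_i^2 - x_{i+1}^2) \;=\; 0
\end{equation*}
in $\proj^2$.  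Note that $f$ is invariant under simultaneous cyclic permutation of $(x_1,x_2,x_3)$ and $(c_1,c_2,c_3)$, and that each $c_i$ is nonzero, since otherwise two of the $r_i$ would be $\ratls$-proportional, contradicting that $r_1,r_2,r_3$ form a basis of the cubic field.

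Next I would locate the required boundary points at the three coordinate vertices of $\proj^2$.  At $(1:0:0)$, setting $x_1 = 1$ and using local coordinates $u = x_2$, $v = x_3$, a short expansion shows that the lowest-degree part of $f(1,u,v)$ equals
\begin{equation*}
  (u-v)(u+v)\bigl(c_2 v^3 - c_3 u^3\bigr),
\end{equation*}
a product of five distinct linear factors.  Hence $Y$ admits a smooth analytic branch $\beta_1$ through $(1:0:0)$ with tangent direction $u+v = 0$, along which $x_2 = t + O(t^2)$ and $x_3 = -t + O(t^2)$.  Along $\beta_1$ one computes
\begin{equation*}
  R_1' \;=\; \frac{x_2+x_3}{x_2-x_3} \;=\; \frac{O(t^2)}{2t + O(t^2)} \to 0, \qquad R_2' \to -1, \qquad R_3' \to 1,
\end{equation*}
so $P_1 = (0, -1, 1) \in \overline{\CR_0(Y)}$.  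The cyclic symmetry of $f$ produces the analogous smooth branches $\beta_2$ and $\beta_3$ through $(0:1:0)$ and $(0:0:1)$, yielding the points $P_2 = (1, 0, -1)$ and $P_3 = (-1, 1, 0)$.  These satisfy the hypotheses of Lemma~\ref{lemma:notinsubtorus}, proving the corollary.

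The only nontrivial step is the tangent-cone factorization at a single vertex; everything else is forced by cyclic symmetry and a direct limit computation.  The one point worth emphasizing is why the naive approach fails: the curve $Y$ never meets the ``simple'' boundary locus $x_2 + x_3 = 0$ at a non-vertex point, as is seen by restricting $f$ to that line and finding that it vanishes identically only at $(1:0:0)$ and at the doubly degenerate points $(1:\pm1:\mp1)$.  This is precisely why the required limit points must be extracted from branches of $Y$ at the vertices of $\proj^2$ rather than from ordinary intersection points.
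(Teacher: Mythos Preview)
Your argument follows the same strategy as the paper: locate limit points of $\CR_0(Y)$ in $\cx^3$ with exactly one vanishing coordinate and invoke Lemma~\ref{lemma:notinsubtorus}. Your tangent-cone computation at the coordinate vertices is more explicit than the paper's one-line ``degeneration $x_2\to 0$'' and pins down the limit $(0,-1,1)$ cleanly.

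Two small points are worth flagging. First, the tangent cone $(u-v)(u+v)(c_2v^3-c_3u^3)$ does not always split into five \emph{distinct} linear factors; this fails precisely when $c_2=\pm c_3$, so the branch tangent to $u+v=0$ need not be smooth. This does not matter for your purpose: $(u+v)$ is always a factor of the tangent cone (since $c_1\neq 0$ forces $f(1,u,-u)\not\equiv 0$, so the branch is not the entire line $u+v=0$), hence some branch with that tangent direction exists, and your limit computation only uses that along it $x_2+x_3$ vanishes to higher order than $x_2-x_3$.

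Second, Lemma~\ref{lemma:notinsubtorus} is stated for an \emph{irreducible} curve, and you do not check this for $\CR_0(Y)$. The paper does: it exhibits a birational map from $Y_{(c_1,c_2,c_3)}$ to a curve $y^2=P_6(x)$ and verifies that $P_6$ is never a perfect square. For the corollary itself this omission is harmless, since the proof of Lemma~\ref{lemma:notinsubtorus} never actually uses irreducibility. But irreducibility is genuinely required for the subsequent application of Theorem~\ref{thm:BMZ} in Proposition~\ref{prop:fin4fixedci}, so the paper's extra step is doing necessary work downstream that your argument does not supply.
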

\par
\begin{proof}
  Normalizing $x_1=1$ and applying the degeneration $x_2 \to 0$ to $\CR_0(Y)$ we obtain the limit
  point $(1,0,1) \in \cx^3$.  Permuting coordinates, we obtain a limit point where any single
  coordinate vanishes, so we may apply Lemma~\ref{lemma:notinsubtorus} after verifying irreducibility.
  \par
  A computer algebra system with an algorithm for computing Weierstrass normal form (e.g.\ MAPLE)
  exhibits a birational map from $ Y_{(c_1,c_2,c_3)}$ to the curve
$$\widetilde{Y}: \quad y^2=
c_1^2x^6-3c_1^2x^5+3c_1^2x^4+(c_2^2-c_3^2-c_1^2)x^3+3c_3^2x^2-3c_3^2x+c_3^2. $$
A straightforward calculation shows that the right hand side is not a perfect square
for any $(c_1,c_2,c_3)$. Consequently, $\widetilde{Y}$ is irreducible and thus also
$ Y_{(c_1,c_2,c_3)}$.
\end{proof}
\par
\paragraph{Proof of Proposition~\ref{prop:fin4fixedci}.}
The preceding lemma allows us to apply Theorem~\ref{thm:BMZ}. As a consequence, 
the height of any point $(R_1,R_2,R_3) \in \CR_0(Y)$ that lies on an algebraic
subtorus is bounded. This applies in particular to the torus given by the
cross-ratio equation. More precisely, since the degree of $Y$ is independent
of the $c_i$  we deduce from \cite[Theorem~1]{Ha08} that there is a constant $C_1$ such that
\begin{equation} \label{eq:hR_i1}
h((R_1,R_2,R_3)) \leq C_1(1+h(c_1:c_2:c_3)).
\end{equation}
Moreover, the $R_i$ lie in a field of degree at most three over $F$ as can be
checked solving \eqref{eq:1stra4hyp} and \eqref{eq:2stra4hyp}. Consequently, 
by Northcott's theorem, there is only a finite number of possible $R_i$ lying
on $\CR(Y)$ and satisfying the cross-ratio equation.
\qed

\paragraph{Unlikely cancellations.}

We now show that if the finiteness conjecture fails, then there has to be a sequence
of \Teichmuller curves with the admissibility coefficients  $c_i$ becoming
more and more complicated simultaneously for all the directions on the
generating flat surface, but meanwhile there are miraculously enormous cancellations
making the cross-ratio exponents much smaller that the $c_i$. 
\par 
\begin{prop} \label{prop:unlikelycanc}
Suppose Conjecture~\ref{conj:4resfinite} fails for $\omoduli[3](4)^\hyp$. Then 
there exists a sequence of \Teichmuller
curves $\{C_n\}_{n \in \nats}$ generated by flat surfaces $(X_n,\omega_n)$ such that 
for every periodic direction $\theta$ on the $X_n$
\begin{itemize}
\item[i)]  the residues $r_{i,n,\theta}$ have admissibility coefficients 
$(c_{1,n,\theta},c_{2,n,\theta},c_{3,n,\theta})$ with the height lower bound
$$ h(c_{1,n,\theta},c_{2,n,\theta},c_{3,n,\theta})\geq n,$$
\item[ii)] and on the other hand the cross-ratio exponents have upper bound
$$ |a_i| \leq  C_2 (1+h(c_{1,n,\theta},c_{2,n,\theta},c_{3,n,\theta}))^2 $$
\end{itemize}
for some constant $C_2$ independent of $n$ and $\theta$.
\end{prop}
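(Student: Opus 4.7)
The plan is to obtain both parts as consequences of Proposition~\ref{prop:fin4fixedci} together with the height estimate \eqref{eq:hR_i1} used in its proof. If Conjecture~\ref{conj:4resfinite} fails, then the set $\mathfrak{T}$ of algebraically primitive \Teichmuller curves in $\Omega\moduli[3](4)^{\hyp}$ is infinite, since each curve has finitely many cusps and each cusp produces a single residue triple up to similarity.

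For part (i), I use that the admissibility coefficients $(c_{1,n,\theta}:c_{2,n,\theta}:c_{3,n,\theta})$ depend only on the $\SL(X_n,\omega_n)$-orbit of the direction $\theta$, being an invariant of the similarity class of the associated weighted boundary stratum. Each $C \in \mathfrak{T}$ thus gives rise to finitely many admissibility triples in $\proj^2(\ratls)$. By Proposition~\ref{prop:fin4fixedci}, each such triple arises from only finitely many elements of $\mathfrak{T}$; hence for any bound $B$, only finitely many $C \in \mathfrak{T}$ admit a cusp with $h(c_1:c_2:c_3) \leq B$. Extracting a subsequence of $\mathfrak{T}$ therefore yields $\{C_n\}$ with $\min_\theta h(c_{1,n,\theta}:c_{2,n,\theta}:c_{3,n,\theta}) \geq n$, which is (i).

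For part (ii), I start from the Weil-height bound $h(R_1:R_2:R_3) \leq C_1(1 + h(c_1:c_2:c_3))$ from \eqref{eq:hR_i1}. At an irreducible cusp of an algebraically primitive \Teichmuller curve, the point $P = (R_1,R_2,R_3) \in \Gm^3(\barratls)$ satisfies the primitive cross-ratio relation $R_1^{a_1}R_2^{a_2}R_3^{a_3} = \zeta$ for some root of unity $\zeta$; moreover the field of definition of $P$ has degree bounded solely in terms of $[F:\ratls]$ via the normalization \eqref{eq:35}. A geometry-of-numbers argument — Minkowski's theorem applied to the logarithmic image of $P$ at the Archimedean and non-Archimedean places of its field of definition — bounds the shortest non-trivial multiplicative relation of $P$ modulo roots of unity by a constant times $h(P)^{2}$. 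Composing with the linear bound on $h(P)$ gives the desired $\max_i|a_i| \leq C_2(1+h(c_1,c_2,c_3))^2$. The principal difficulty is precisely this geometry-of-numbers step: one must identify the short Minkowski vector with the \emph{primitive} cross-ratio exponent vector, which requires using that $\CR_0(Y_{(c_1,c_2,c_3)})$ is not contained in the translate of a codimension-one subtorus (Lemma~\ref{lemma:notinsubtorus}) to rule out higher-rank multiplicative dependences forcing a short vector non-proportional to $(a_1,a_2,a_3)$, while tracking the dependence of all implicit constants on $[F:\ratls]$ rather than on the individual triple.
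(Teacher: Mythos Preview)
Your argument for part~(i) is correct and matches the paper's. The gap is in part~(ii), in the step where you try to identify the short Minkowski vector with the primitive exponent vector $(a_1,a_2,a_3)$.

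You invoke the fact that $\CR_0(Y_{(c_1,c_2,c_3)})$ is not contained in a translate of a codimension-one subtorus in order to ``rule out higher-rank multiplicative dependences forcing a short vector non-proportional to $(a_1,a_2,a_3)$.'' But this inference is wrong: the curve not lying in a subtorus says nothing about whether an \emph{individual point} $P$ on it can satisfy two independent multiplicative relations. Geometry of numbers will hand you \emph{some} short relation $u$ with $P^u$ a root of unity, and there is no a~priori reason for $u$ to be proportional to $(a_1,a_2,a_3)$.

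The paper does not rule this case out; it handles it as a genuine dichotomy. If the short $u$ produced by Habegger's Lemmas~1 and~3 is proportional to $(a_1,a_2,a_3)$, the bound $|u|\le T$ with $T\asymp (1+h(\underline{c}))^2$ gives~(ii) directly. If not, then $P$ lies on a subgroup of codimension~two, and one applies Habegger's theorem a second time---now to the \emph{surface} $Z=\CR(\omoduli[3](4)^{\hyp})$, using Lemma~\ref{le:Zoa} (that $Z$ contains no translate of a subtorus) rather than the curve lemma you cite. Since $Z$ is fixed, independent of $\underline{c}$, this yields a uniform height bound on such $P$, and Northcott then shows the non-proportional case occurs for only finitely many cusps altogether. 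Your proposal is missing this second application and the role of Lemma~\ref{le:Zoa}; without it the argument does not close.
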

\par
Note that in ii) the height on the right is logarithmic in the the $c_i$, 
whereas on the left of the inequality we have the usual absolute value.
\par
As preparation we examine the image $Z \subset (\cx^*)^3$ of $\omoduli[3](4)^\hyp$ under $\CR$. 
\par
\begin{lemma} \label{le:Zoa}
There is no translate of an algebraic subtorus of $(\cx^*)^3$ contained in 
$Z$.
\end{lemma}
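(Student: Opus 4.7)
The plan is to exhibit $Z$ as the image of an explicit rational map and then pull back any would-be monomial constraint defining a subtorus translate. After normalizing $x_1 = 1$ and introducing
\[
u = \frac{x_3+1}{x_3-1}, \qquad v = \frac{1+x_2}{1-x_2},
\]
a direct calculation using the formulas in \eqref{CR6} gives $R_2 = u^2$, $R_3 = v^2$, and
\[
R_1 = \left(\frac{uv+1}{u+v}\right)^2,
\]
so that $Z$ is the image of the rational map $\phi(u,v) = \bigl(((uv+1)/(u+v))^2,\, u^2,\, v^2\bigr)$ restricted to the Zariski-open subset of $(\cx^*)^2$ corresponding to admissible configurations. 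The key auxiliary observation is that $R_i \neq 1$ on $Z$ for each $i$, because $R_i = 1$ forces a boundary condition $x_j \in \{0,\infty\}$ for some $j \neq i$, which is excluded from the parameter space.

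First I would rule out a $2$-dimensional subtorus translate $T \subset Z$. Since $Z$ is irreducible of dimension $2$, such a containment would force $Z = T$, so that $Z$ lies on a hypersurface $R_1^{a_1}R_2^{a_2}R_3^{a_3} = \zeta$ with $(a_1,a_2,a_3) \neq 0$. Pulling back via $\phi$ yields the identity
\[
(uv+1)^{2a_1}\, u^{2a_2}\, v^{2a_3}\, (u+v)^{-2a_1} = \zeta
\]
in $\cx(u,v)$. Since $uv+1$, $u$, $v$, and $u+v$ are pairwise non-associated irreducibles of $\cx[u,v]$ (one checks directly that $uv+1$ is irreducible), unique factorization forces the multiplicities of every irreducible on the left to vanish, giving $a_1 = a_2 = a_3 = 0$, a contradiction.

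Next I would rule out a $1$-dimensional subtorus translate $T = \{(c_1 s^{a_1}, c_2 s^{a_2}, c_3 s^{a_3}) : s \in \cx^*\}$ contained in $Z$, with $(a_1,a_2,a_3) \in \zed^3 \setminus \{0\}$. Replacing $s$ by a square root if necessary, I may assume the pull-backs along a component of $\phi^{-1}(T)$ take the monomial form $u = \alpha s^b$, $v = \beta s^d$ with integers $b,d$ and $\alpha^2 = c_2$, $\beta^2 = c_3$. Taking a square root of the $R_1$-equation produces the Laurent-polynomial identity
\[
\alpha\beta\, s^{b+d} + 1 \,=\, \gamma s^e\bigl(\alpha s^b + \beta s^d\bigr),
\qquad \gamma^2 = c_1,\ e \in \tfrac12\zed,
\]
and I would match the two monomials on each side. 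Each of the two generic bijections of exponent multisets $\{b+d, 0\} \leftrightarrow \{b+e, d+e\}$ gives $d = e = 0$ with $\gamma = \beta = \pm 1$ (whence $R_3 = v^2 = 1$ identically on $T$) or the symmetric conclusion $b = e = 0$ (whence $R_2 = 1$), both contradicting the observation $R_i \neq 1$ on $Z$. The two degenerate coincidences, $b+d = 0$ on the left and $b = d$ on the right, each collapse the identity and force $b = d = e = 0$, making $T$ a point and contradicting its dimension.

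The main obstacle is the exponent-matching bookkeeping in the $1$-dimensional case: although each sub-case is a one-line calculation, one must carefully enumerate both generic bijections together with the degenerate coincidences to see that there is no loophole. The $2$-dimensional case, by contrast, is an immediate application of unique factorization.
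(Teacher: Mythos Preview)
Your proof is correct and follows the same strategy as the paper's: reduce the question to a Laurent-polynomial identity coming from the defining relation of the surface, then rule it out by matching exponents. The paper passes to $Z_0$ (the image under $\CR_0$, before squaring), writes down its equation $R_1'R_2'+R_1'R_3'+R_2'R_3'+1=0$, and substitutes a monomial curve $R_i'=\alpha_i y^{n_i}$; your coordinates $(u,v)=(R_2',R_3')$ and identity $\alpha\beta s^{b+d}+1=\gamma s^e(\alpha s^b+\beta s^d)$ are literally the same equation after this change of variables, and your case split is the exponent-matching the paper leaves to the reader. Your explicit use of the constraint $R_i\neq 1$ on $Z$ (equivalently $R_i'\neq\pm1$ on $Z_0$, i.e.\ $x_j\neq0$) to dispose of the degenerate matchings is a genuine point of care: without it the substitution \emph{can} vanish, e.g.\ along $(\alpha y^{n},1,-1)$, so this exclusion is needed to finish either version of the argument.
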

\par
\begin{proof}
  It suffices to prove the claim for the image $Z_0$ of $Z$ under $\CR_0$.
  The variety $Z_0$ is cut out by the equation
  \begin{equation}
    \label{eq:36}
    R_1'R_2' + R_1'R_3' + R_2'R_3' +1 =0.
  \end{equation}
  This variety does not contain the image of $y\mapsto (\alpha_1 y^{n_1}, \alpha_2 y^{n_2}, \alpha_3
  y^{n_3})$ for any nonzero $\alpha_i$ and integers $n_i$, as substituting the $\alpha_i y^{n_i}$
  into the left hand side of \eqref{eq:36} always yields a nonzero Laurent series in $y$.
\end{proof}
\par
\paragraph{Proof of Proposition~\ref{prop:unlikelycanc}.}
The existence of a sequence satisfying i) follows from
Proposition~\ref{prop:fin4fixedci}. That this sequence moreover satisfies ii)
follows from a close examination of the proof of \cite[Theorem~1]{Ha08}.
We fix $\theta$ and $n$ and drop these indices. We write $\underline{c} = (c_1: c_2: c_3)$.
We follow the notation in loc.~cit. The idea of Habegger is to use the geometry
of numbers to construct a subtorus $H_u$ of $(\cx^*)^3$ determined by a triple
$u=(u_1,u_2,u_3)$ of integers depending on a parameter $T$ such that for
a point $p=(R_1,R_2,R_3)$ in the intersection of $W = \CR_0(Y_{\underline{c}})$ and a torus of
codimension, one the following holds:
$$h(pH_u) \leq C_3(T^{-1/2}(h(p)+1) + T ) \quad \text{and} \quad \deg(pH_u) \leq C_4 T$$
for some constants $C_i$ (Lemma~5 of  loc.~cit.).
An application of the arithmetic \Bezout theorem yields
$$h(p) \leq C_5h(pH_u) + C_6 \deg(H_u)h(W) + C_7\deg(H_u)$$
where moreover we have a bound  $\deg(H_u) \leq C_8 T$.
Choosing $T$ large enough, controlled by $\deg(W)$ and the constants $C_i$ (i.e.\
independently of $h(W)$)
makes the contribution of $T^{-1/2}h(p)$ to the right hand side become inessential
and proves the height bound
$$ h(p) \leq C_9 (1+ h(w) \leq C_{10} (1+h(\underline{c}))$$
\par
We need more precisely Lemma~1 and Lemma~3 of loc.~cit. which construct the
$u$. Together they show that there exists $u$ with $|u| \leq T$ and
$h(p^u) \leq C_{11} T^{-1/2} h(p)$. Together with the previous estimate
this yields
$$ h(p^u) \leq C_{12} T^{-1/2} (1+h(\underline{c})),$$
where $C_{11}$ and $C_{12}$ depend only on the dimensions of the varieties in question,
not on $h(\underline{c})$. Since $p$ lies in a field of bounded degree over $F$,
choosing $T >  C_{13} (1+h(\underline{c})))^2$, with $C_{13}$ independent of $h(\underline{c})$, suffices
by Northcott's theorem to conclude that $h(p^u) =0$.
\par
We now have two cases. Either $u$ and the cross-ratio exponents
$(a_1,a_2,a_3)$ are proportional. In this case, ii) holds by  $|u| \leq T$
and the primitivity of the triple $(a_1,a_2,a_3)$. Or they are not proportional,
i.e.\ $p$ lies on a torus of codimension two. Then we can
apply \cite[Theorem~1]{Ha08} to $Z$ since the hypotheses are met by
Lemma~\ref{le:Zoa}. The conclusion of this theorem together with Northcott's
Theorem is that the second case can happen only a finite number of times.
\qed

\paragraph{A computer search for \Teichmuller curves.}

We now describe the algorithm underlying Theorem~\ref{thm:numevidence} given in the
introduction.
\par
We first claim that for given discriminant $D$ it is possible to list all the admissible triples
$(r_1,r_2,r_3)$ for all lattices $\mathcal{I}$ with coefficient ring $\mathcal{O}_\mathcal{I}$ of
discriminant $D$.  To do so, one has to first list all orders of discriminant $D$.  Cubic number
fields of discriminant up to $D$ have been tabulated by Belabas \cite{belabas}.  Given a number
field $F$ of discriminant at most $D$, enumerating all orders in $F$ of discriminant $D$ is a finite
search through all sub-$\zed$-modules $\mathcal{O}$ of the maximal order $\mathcal{O}_F$ of bounded index. To list
all $\mathcal{O}$-ideals is a finite search through all $\zed$-modules containing $\mathcal{O}$ up
to an index bound depending on $D$. Such a bound appears in the usual proofs of the finiteness of
class numbers, e.g.\ \cite[Theorem~2.6.3]{BoShaf}.  (We do not claim that this is an efficient
algorithm).  Given a lattice $\mathcal{I}$ in a cubic field, an algorithm to find all admissible
bases of $\mathcal{I}$ is described in Appendix~\ref{sec:bdcounting}.  In practice we have
restricted the search to maximal orders, since maximal orders have been tabulated and representative
elements of the ideal classes are easily computed by Pari.
\par
Fixing a cubic order $\mathcal{O}$, if there is a \Teichmuller curve in
$\mathcal{E}_\mathcal{O}\cap\pom[3](4)^{\rm hyp}$,
then it has a cusp whose limiting stable form $\omega_\infty$ is of the form \eqref{eq:35},  with
the triple $(r_i)$ in the finite list constructed above.  Normalizing $x_1=1$, equations
\eqref{eq:1stra4hyp} and \eqref{eq:2stra4hyp} reduce to a single cubic polynomial in $x_2$.  Solving
this cubic polynomial for $x_2$ (for each triple of $r_i$) and verifying that none of the solutions
satisfies the cross-ratio equation allows us to verify that there are no \Teichmuller curves in
$\mathcal{E}_\mathcal{O}\cap\pom[3](4)^{\rm hyp}$.  Applying this algorithm to the $1778$ fields of
discriminant less than $40000$ yields Theorem~\ref{thm:numevidence}.
\par

\begin{appendix} 

\section{Boundary strata in genus three: Algorithms, examples, counting} \label{sec:bdcounting}

In this appendix, we describe an algorithm for enumerating all boundary strata of a given eigenform
locus $\E^\iota$, and we some examples and counts of admissible boundary strata obtained from this algorithm.

\paragraph{Enumerating admissible $\mathcal{I}$-weighted strata from one example.}

Given a lattice $\mathcal{I}$ in a totally real cubic field, define a graph
$\mathcal{G}(\mathcal{I})$ as follows.  The vertices of $\mathcal{G}(\mathcal{I})$ are the
two-dimensional admissible $\mathcal{I}$-weighted boundary strata, up to similarity.  Two vertices
are connected by an edge if the corresponding strata have a common one-dimensional degeneration.

\begin{prop}
  \label{prop:graph_connected}
  $\mathcal{G}(\mathcal{I})$ is connected.
\end{prop}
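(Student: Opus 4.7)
The plan is to identify $\mathcal{G}(\mathcal{I})$ with the dual graph of the top-dimensional boundary stratification of an irreducible closed subvariety of $\barmoduli[3](\mathcal{I})$, and then reduce connectivity to a local-at-the-cusp toric argument.

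First I would classify admissible $2$-dimensional $\mathcal{I}$-weighted boundary strata by topological type. A dimension count shows each such stratum represents a stable curve with two $\proj^1$-components and four nodes, and by Theorems~\ref{thm:boundary_suff} and \ref{thm:explCREQ} these strata are precisely the top-dimensional cells of the boundary of $\barRM(\mathcal{I}, T)\subset\barmoduli[3](\mathcal{I})$ for cusp packets $T\in\mathcal{C}(\mathcal{O})$. The possible topological types essentially reduce, after excluding cases with separating nodes forced by the geometric-genus-$0$ requirement, to $[4]\times^4[4]$, $[4]\times^2[4]$, and $[5]\times^3[3]$.

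Second, by Theorem~\ref{thm:boundary_suff}, $\barRM(\mathcal{I}, T)$ is the closure of an irreducible component of $\RM(\mathcal{I}, T)$, so its boundary polyhedral complex -- whose cells are the admissible $\mathcal{I}$-weighted strata -- is connected around each point. The graph $\mathcal{G}(\mathcal{I})$ is then obtained from the union of these boundary complexes (ranging over the cusp packets $T$ associated to $\mathcal{I}$) by identifying similar cells via the action of totally positive units of $F^*$; this identification only adds edges, so it preserves connectivity.

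Third, to verify connectivity locally at each cusp I would use the period coordinates of Proposition~\ref{prop:period_coordinates}: near a cusp, $\barRM(\mathcal{I}, T)$ is modeled, up to a finite quotient, by the closure of an algebraic torus in $\cx^m\times(\cx^*)^n$ as described in Theorem~\ref{thm:torus_closure}. The top-dimensional boundary cells of such a torus closure correspond to top-dimensional cones of the associated fan, which is connected in its $1$-skeleton. Transferring this local connectivity back through the period coordinates yields that the dual graph is locally connected around each cusp, and global connectivity follows from the irreducibility of $\barRM(\mathcal{I}, T)$.

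The main obstacle will be the third step: extracting from the toric picture exactly the edges of $\mathcal{G}(\mathcal{I})$ (which encode admissible common $1$-dimensional degenerations) and verifying that the fan of the torus closure captures precisely the cells singled out by the admissibility condition from Theorem~\ref{thm:boundary_nec} rather than a strictly larger set. A more combinatorial alternative would use Lemma~\ref{lemma:exitsRingDeg} to pass from $[4]\times^2[4]$-strata to fully-pinched chain strata that are degenerations of $[4]\times^4[4]$-strata, reducing the argument to connectivity within the $[4]\times^4[4]$-subgraph, which in turn can be handled via elementary modifications of admissible weight quadruples in $\mathcal{I}$.
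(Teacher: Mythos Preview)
Your approach has a genuine gap and also diverges from the paper's much shorter argument.

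First, the classification in your first step is off. The two-dimensional boundary components of $\barRM(\mathcal{I},T)$ are the admissible strata $\mathcal{S}$ with $\dim\mathcal{S}(T)=2$. By Lemma~\ref{lemma:conedims} and Theorem~\ref{thm:explCREQ}, these are the type $[6]$ strata (three-dimensional $\moduli[0,6]$ cut by one cross-ratio equation), the type $[5]\times^3[3]$ strata, and the type $[4]\times^4[4]$ strata with $\codim(\Span(\mathcal{S}))=0$. A $[4]\times^2[4]$ stratum has $\codim(\Span)=1$, so $\mathcal{S}(T)$ is a curve, not a surface; it should appear as an edge of $\mathcal{G}(\mathcal{I})$, not a vertex.

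Second, and more seriously, ``global connectivity follows from the irreducibility of $\barRM(\mathcal{I},T)$'' is a non-sequitur. An irreducible projective variety can certainly have disconnected boundary divisor; irreducibility of the total space says nothing about connectedness of a proper closed subset. What you actually need is a reason why the \emph{fiber over the cusp} is connected. Your local toric picture from Proposition~\ref{prop:period_coordinates} is only valid near \emph{nice} boundary strata and gives no single global fan; patching these local models into a statement about connectedness of the full boundary is essentially as hard as the original problem.

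The paper sidesteps all of this. It observes that the map $\E^\iota\to X_{\mathcal{O}}$ extends, after passing to the normalization $Y_{\mathcal{O}}^\iota$ of $\barE^\iota$, to a proper morphism $p\colon Y_{\mathcal{O}}^\iota\to \bX_{\mathcal{O}}$ to the Baily--Borel compactification. Each cusp of $X_{\mathcal{O}}$ is a single normal point $c$, so Zariski's Main Theorem gives that $p^{-1}(c)$ is connected; its image in $\barE^\iota$ is exactly the closure of the cusp, hence connected. This replaces the combinatorial analysis entirely with one application of a standard theorem.
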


\begin{proof}
  By Theorem~\ref{thm:boundary_suff}, the vertices of $\mathcal{G}(\mathcal{I})$ correspond to the
  two-dimensional boundary components of some cusp of some eigenform locus $\E^\iota$.
  Thus it suffices to show that the boundary in $\pobarmoduli[3]$ of each cusp of
  $\E^\iota$ is connected.

  Consider the normalization $Y_\mathcal{O}^\iota$ of $\barE^\iota$.  By normality, the canonical
  morphism $\E^\iota\to X_\mathcal{O}$ extends to a morphism
  $p\colon Y_\mathcal{O}^\iota\to\bX_\mathcal{O}$ (see \cite[Theorem~8.10]{bainbridge07}).  Since
  $\bX_\mathcal{O}$ is normal, $p^{-1}(c)$ is connected by Zariski's Main Theorem.  The image of
  $p^{-1}(c)$ in $\barE^\iota$ is then connected, as desired.
\end{proof}

It is a simple matter to enumerate all admissible $\mathcal{I}$-weighted boundary strata adjacent to a
given one: It suffices to perform all the (finitely many) possible degenerations (as defined
in Section~\ref{sec:suffg3}) of
the presently found boundary strata  and check which of them are admissible $\mathcal{I}$-weighted.
Then one tries all the possible undegenerations and so on, until this 
process adds no more admissible $\mathcal{I}$-weighted boundary strata to the known list. 
So Proposition~\ref{prop:graph_connected} allows us to enumerate all two dimensional
$\mathcal{I}$-weighted boundary strata starting from a single one.  Lower dimensional boundary
strata can be easily enumerated from the two-dimensional ones.

\paragraph{Producing {\em one} admissible $\mathcal{I}$-weighted boundary stratum. }

We now describe an algorithm which locates a single admissible $\mathcal{I}$-weighted boundary
stratum.  In practice this algorithm is fast and always succeeds, though we do not prove this.  The
algorithm of Proposition~\ref{prop:exist_admtriple} also works for lattices of the form $\langle1,
x, x^2\rangle$, but not every lattice is similar to one of this form.
\par
For an $\mathcal{I}$-weighted boundary stratum $\mathcal{S}$ let $\Cone(\mathcal{S}) \subset
\reals^3$ be the $\reals^+$-cone spanned by $\{Q(w): w\in \Weight(\mathcal{S})\}$,  considered
as a subset of $\reals^3$ via the three field embeddings of $F$. There are various possible shapes
of this cone, which we call its {\em type}.  It could be all of $\reals^3$, for short type $(A)$, it
could be a half-space $(H)$, a proper cone of dimension three strictly contained in a half-space
$(C)$, a two-dimensional subspace $(S)$, or a $2$-dimensional cone (``fan'') in a subspace $(F)$.
\par 
The idea of the algorithm is to simply start with any irreducible stratum $\mathcal{S}$ and then to
apply a sequence of degenerations and undegeneration to $\mathcal{S}$, at each stage trying to
increase, or at least not decrease, the size of $\Cone(\mathcal{S})$.

\begin{algo} \label{algo:start}
Given a lattice $\mathcal{I}$,  compute 
an  admissible $\mathcal{I}$-weighted boundary stratum $\mathcal{S}$.
\begin{compactitem}[(i)]
\item[(i)] Initialize $\mathcal{S}$ to be the irreducible boundary stratum  
with weights given by any $\zed$-basis of  $\mathcal{I}$.
\item[(ii)] While $\Cone(\mathcal{S})$ is neither of type $(A)$, $(H)$, nor $(S)$:
\begin{compactitem}[$\bullet$]
\item (Superfluous curves) If $\mathcal{S}$ has a node $n$ which lies on the boundary of two
  distinct irreducible components with $Q(\wt(n))$ in the interior of
  $\Cone(\mathcal{S})$, then  let 
  $\mathcal{S}_1$ be obtained from $S$ by undegenerating $n$.
\item (Try to degenerate) Else 
\begin{compactitem}[$\bullet$$\bullet$]
\item Loop through all degenerations $\mathcal{S}_1$ of  $\mathcal{S}$
and check if  $\mathcal{S}_1$ contains a node $n$ with  $Q(\wt(n)) \not \in \Cone(\mathcal{S})$.
\item (Got stuck) If no such degeneration was found, the algorithm is stuck.  Start again at {\it (i)}
  with a random new choice of initial basis.
\end{compactitem}
\item  Let $\mathcal{S} = \mathcal{S}_1$.
\end{compactitem}
\item [(iii)] If the type of $\Cone(\mathcal{S})$ is $(H)$, first undegenerate $\mathcal{S}$
until $\mathcal{S}$ contains only $4$ elements still spanning a half-space
and then undegenerate
the new $\mathcal{S}$ by removing the node $n$ with the
property that $Q(\wt(n))$ does not lie in the bounding hyperplane of $C$.
(The new $\mathcal{S}$  thus obtained is of type $(S)$).
\item [iv)] Return $\mathcal{S}$.
\end{compactitem}
\end{algo}
\par
As far as we know, it is possible for the algorithm to either get stuck with every choice of initial
basis, or to loop infinitely, 
 producing larger and larger
cones without ever giving a half-space or the full space.  We have never seen this happen, though
very rarely it gets stuck and must be restarted with a new initial basis.

Some counts of boundary strata obtained from this algorithm are shown in Figure~\ref{cap:countbdry}.

It would be interesting to give an algorithm in the spirit of  Algorithm~\ref{algo:start} which is
guaranteed to always find an admissible boundary stratum.

\newcounter{bdexamples}
\setcounter{bdexamples}{1}
\paragraph{Example~\arabic{bdexamples}: Discriminant $49$.}

Figure~\ref{cap:D49} presents the outcome of the preceding algorithm for the unique
ideal class of the maximal order in the field
$F = \ratls[x]/\langle x^3 + x^2 - 2x - 1 \rangle$ of discriminant $49$.
There are two two-dimensional boundary strata.  Dotted lines join each two-dimensional stratum to
its one-dimensional degenerations.
\begin{figure}[htb]
  \centering
  \includegraphics{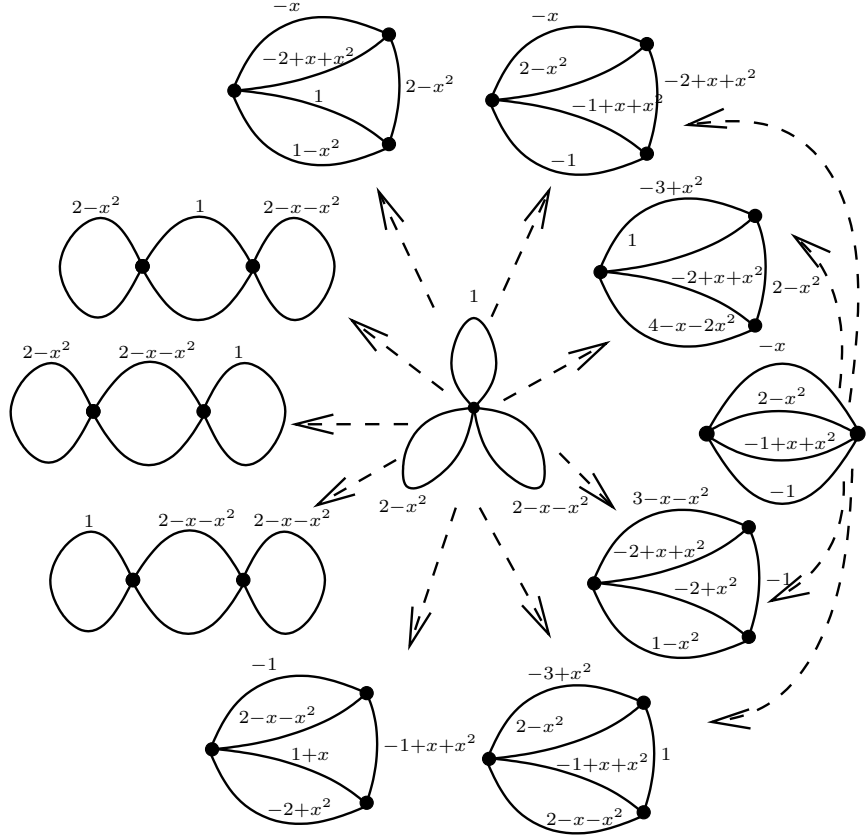}
  \caption{The boundary of the Hilbert modular threefold
    of discriminant $49$.}
\label{cap:D49}
\end{figure}
\par
\addtocounter{bdexamples}{1}

\paragraph{Example~\arabic{bdexamples}: All possible types of admissible strata
do occur.}

We give a list of examples showing that all possible types of boundary strata without separating
nodes  do occur.
\par
\begin{compactitem} 
\item If the stratum is of type $[6]$, then  $\dim(\Span) =2$ and
$D=49$ contains an example.
\item If the stratum is of type $[5] \times^3 [3]$ then $\dim(\Span) =3$.  Most cusps contain such an
  example, for example the unique cusp of the cubic field of discriminant $81$.
\item If the stratum is of type $[4] \times^4 [4]$ then 
$\dim(\Span) =2$ or $\dim(\Span) =3$. The second case frequently appears, 
e.g.\ for $D=49$. The first case rarely occurs, here is an example:
For the field $F=\ratls[x]/\langle x^3-x^2-10x+8\rangle$ with discriminant $961$, 
take the ideal $\mathcal{I} = \mathcal{O}_F$ and the weights $r_1 = 4 - x/2 - x^2/2$, 
$r_2 = 5 + x/2 - x^2/2$, $r_3=1$ and $r_4=-(r_1+r_2+r_3)$.
\item If the stratum is of type $[4] \times^2 [4]$ then $\dim(\Span) =2$.  These lie in the boundary
  of every irreducible stratum, for example in discriminant $49$.
\item All the remaining possible types of boundary strata without separating nodes have 
necessarily $\dim(\Span) =3$
and examples are easily obtained as degenerations of the preceding examples.
\end{compactitem}
\par
\addtocounter{bdexamples}{1}
\paragraph{Example~\arabic{bdexamples}: Ideal classes with no admissible bases.}

Consider one of the two fields of discriminant $3969$, namely $\ratls[x]/\langle x^3-21x-35\rangle$.
Its ideal class group is of order three.  According to a computer search, both of the ideal
classes $\mathcal{I}_1 = \langle 7, 7x, x^2-14\rangle$ and $\mathcal{I}_2 = \mathcal{I}_1^2 = \langle
7, x, x^2-3x-14\rangle$ do not admit any irreducible boundary
strata. But $\mathcal{I}_3 = \mathcal{O}_F = \langle 1, x, x^2-3x-14\rangle$ has a single
irreducible boundary stratum given by the weights $r_1 = 1$, $r_2= x+3$, $r_3 = x^2-2x-16$.

\begin{figure}[htbp]
\begin{tabular}{|c|c|c|c|c|} 
\hline
$D$    &  $h(D)$ &  regulator        &   $[6]$-components   & total $2$-dim components \\
\hline
    49    &  1    & 0.525454     &      1 & 2\\
    81    &  1    & 0.849287     &      1 & 6 \\
   148    &  1    & 1.662336     &      3 & 10\\
   169    &  1  &   1.365049     &      1 & 14\\
   229    &  1  &   2.355454    &       4 & 16 \\
   257    &  1  &   1.974593    &       2 & 19\\
   316    &  1  &   3.913458    &       7 & 26 \\
   321    &  1  &   2.569259    &       3 & 24 \\
......&&&&\\
   961    &  1  &  12.195781     &     19 & 104\\
   993    &  1  &   5.554643     &      5 & 69\\
......&&&&\\
  2597   &   3   &  4.795990    &       5+5+6 & 51+47+85 \\
......&&&&\\
  3969   &   3   &  4.201690    &       0+0+1 & 53+57+114 \\
  3969    &  3  &  12.594188   &       18+13+18 & 132 + 144+152\\
  8281    &  3  &  15.622299   &       12 +7+ 12& 259 + 224+ 266\\
  8281    &  3  &   7.949577   &        6+6+1 & 148 + 92+ 179 \\
......&&&&\\
 11884    &  1  &  72.746005   &       79 & 1008 \\
 ..... &&&&\\
20733 & 5       &  12.114993   & 12+21+8+8+12 & 250+222+138+143+281 \\
 .....&&&&\\
 22356 &     1  &  49.555997   &       31 & 967\\
 22356  &    1  &  32.935933   &       16 & 751\\
 22356   &   1  &  37.348523   &       23 & 787\\
 .....&&&&\\
28165 & 5       &   7.935079   & 4+2+2+4+6 & 174+125+121+152+337 \\
 46548    &  3  &  17.990764   &        6+6+10 & 289 + 306 + 719 \\
 46548  &    3  &  21.437334   &        9+9+16 & 324 + 337 + 741 \\

 .....&&&&\\
 84837 &     1  & 129.205864   &       73 & 2795 \\
 84872 &     3  &  60.681694   &       42+42+54 & 1121+1064+1373\\
 84889 &     1  &  77.482276   &       32 & 1913 \\
 84893 &     1  & 124.912555   &       85 & 2610 \\
 84905 &     1  &  73.229843   &       27 & 1723 \\
 84925 &     1  &  90.776953   &       37 & 2112 \\
 84945 &     1  &  82.760047   &       50 & 1879 \\
......&&&&\\
161249 &     1  &  65.942246   &     16 & 1882\\
161753 &     2  &  26.530548   &       10+10 & 641+ 1084\\
......&&&&\\
438492  &    1  &  504.944683   &      228 & 12265 \\
\hline
\end{tabular}
\caption{The number of  boundary components for given discriminant $D$}
 \label{cap:countbdry}
\end{figure}

\section{Components of the eigenform locus} \label{sec:HMvsRM}

In this section we show that, in contrast to the quadratic case, that the $\E^\iota\isom
X_\mathcal{O}$ is not necessarily connected for cubic orders $\mathcal{O}$.

Recall from \S\ref{sec:orders-number-fields} that the irreducible components of $X_\mathcal{O}$ correspond
bijectively to isomorphism classes of proper, rank-two, symplectic $\mathcal{O}$-modules.  One
example of such a module is $\mathcal{O}\oplus\mathcal{O}^\vee$.
We will show that there is such a module $M$ 
such that for {\em no} submodule $\mathcal{I}$ of $M$ the sequence
\begin{equation*}
  0 \to \mathcal{I} \to M \to \mathcal{I}^\vee\to 0,
\end{equation*}
is split, thus $M$ is not isomorphic to $\mathcal{O}\oplus\mathcal{O}^\vee$.
\par
We remark that such examples cannot exist for the ring of integer $\Ord_F$
since Dedekind domains are projective and that they can neither exist
for $[F:\ratls]=2$ e.g.\ by structure theorems for rings all whose ideals
are generated by two elements \cite{Bass62}.
\par
The calculations will be easier to do in the local situation, and if the above
sequence was split, it would be also split locally. Choose a 
totally real cubic number field $F$ and a prime $p$ different 
from $2$ and from $3$ such that the residue field $k$ is isomorphic 
to $\FF{p^3}$. Let $K$ be the completion of  $F$ at the prime $p$. Let $R_K$ be the ring of integers 
in $K$ and let $R$ be the preimage of the prime field under the
surjection $R_K \to k$. We will exhibit  an $R$-module
$M$ with the claimed properties. From there it is obvious
how to construct a module over $\Ord$, the preimage
of the prime field under $\Ord_F \to k$, that also has
the claimed properties.
\par
For simplicity we suppose moreover that $R_K$ is monogenetic, 
i.e.\ that $R_K = \zed_p[\theta]/f$ for some cubic polynomial $f$.
\par
\begin{lemma}
We have
$$ R_K = R_K^\vee \subset_{p^2} R^\vee \subset_{p} p^{-1} R_K,$$
where the subscripts denote the index. In fact, 
$$ R^\vee = \{r \in p^{-1} R_K| \trace(pr) \equiv 0 \mod (p)\}.$$
More precisely, there exists a $\zed_p$-basis $\{1,x,y\}$ of
$R_K$ which is orthogonal with respect to the trace pairing.
Then 
$$R = \left\langle 1,px,py \right\rangle_\zed, \quad 
R_K^\vee = \left\langle 1, \frac xp, \frac yp 
\right\rangle_\zed. $$
\end{lemma}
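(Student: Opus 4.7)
The first identification $R_K = R_K^\vee$ is immediate: since the residue field $k = \FF{p^3}$ is separable over $\FF{p}$, the extension $K/\ratls_p$ is unramified of degree three and its different is trivial. For the explicit description of $R^\vee$, I would observe that $R = \zed_p + p R_K$, so that the condition $\trace(rR) \subset \zed_p$ decouples into $\trace(r) \in \zed_p$ and $\trace(prR_K) \subset \zed_p$. The second condition says $pr \in R_K^\vee = R_K$, i.e.\ $r \in p^{-1}R_K$, and then the first condition is exactly $\trace(pr) \in p\zed_p$. This yields the formula $R^\vee = \{r \in p^{-1}R_K : \trace(pr) \equiv 0 \pmod p\}$. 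The two indices are then read off from $p^{-1}R_K/R_K \isom k$, since $R^\vee/R_K$ corresponds under this isomorphism to the kernel of the surjective $\FF{p}$-linear map $\trace \colon k \to \FF{p}$, which has index $p$ and order $p^2$.

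The heart of the lemma is the existence of an orthogonal $\zed_p$-basis $\{1, x, y\}$ of $R_K$ with respect to the trace pairing. I would proceed in three steps, using throughout that $p \neq 2, 3$. First, letting $V = \{z \in R_K : \trace(z) = 0\}$, the rank-two $\zed_p$-sublattice $V$ is a direct summand of $R_K$ complementary to $\zed_p \cdot 1$, because $\trace(1) = 3 \in \zed_p^\times$. Second, since $K/\ratls_p$ is unramified the discriminant of the trace form on $R_K$ is a unit; by the orthogonal splitting $R_K = \zed_p\cdot 1 \oplus V$, the restricted trace form on $V$ also has unit discriminant. Third, reduction mod $p$ gives a non-degenerate symmetric $\FF{p}$-bilinear form on $V/pV$, and since $p$ is odd one can pick a primitive $\bar x \in V/pV$ with $\trace(\bar x^2) \in \FF{p}^\times$; lift to $x \in V$. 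The $\zed_p$-orthogonal complement of $x$ in $V$ is a direct summand (because $\trace(x^2) \in \zed_p^\times$ lets us run integral Gram--Schmidt) and produces the desired $y$. Since the change of basis matrix from any lift of a $\zed_p$-basis is unimodular, $\{1,x,y\}$ is again a $\zed_p$-basis of $R_K$.

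Given such a basis, the equality $R = \langle 1, px, py\rangle_{\zed_p}$ follows directly from $R = \zed_p + pR_K$. For the description of $R^\vee$ (I read the last display as a typo for $R^\vee$, since $R_K^\vee = R_K$ has already been identified): the basis of $R^\vee$ dual to $\{1, px, py\}$ is
\[
\Bigl\{\tfrac{1}{3},\ \tfrac{x}{p\,\trace(x^2)},\ \tfrac{y}{p\,\trace(y^2)}\Bigr\}.
\]
Since $\trace(x^2)$ and $\trace(y^2)$ are units in $\zed_p$ (by the unit discriminant argument above) and $3 \in \zed_p^\times$, this dual basis generates the same $\zed_p$-module as $\{1, x/p, y/p\}$, yielding the claim. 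The main obstacle is the orthogonal diagonalization step; everything else is formal from the definitions and the fact that unramified extensions have trivial different.
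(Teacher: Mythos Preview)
Your proof is correct and in fact more complete than the paper's. The paper proves $R_K = R_K^\vee$ via the explicit dual basis $\theta^i/f'(\theta)$ of the monogenetic presentation (noting $f'(\theta)$ is a unit since the residue field extension is separable), whereas you invoke directly that an unramified extension has trivial different; both are fine, and yours does not rely on the monogenicity hypothesis. For the description of $R^\vee$ the two arguments are essentially the same: the paper observes that every $s\in R$ is congruent mod $p$ to an element of $\zed_p$, which is exactly your decomposition $R=\zed_p+pR_K$.

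The paper's proof actually stops there and does not justify the existence of the orthogonal basis $\{1,x,y\}$, so your Gram--Schmidt argument (using $p\neq 3$ to split off $\zed_p\cdot 1$, and $p\neq 2$ together with non-degeneracy mod $p$ to find an anisotropic $x$ in the complement) genuinely fills a gap. Your identification of the final display as a typo for $R^\vee$ rather than $R_K^\vee$ is also correct, and your computation of the dual basis confirms it.
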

\par
\begin{proof}
The ring $R_K^\vee$ is generated by $\theta^i/f'(\theta)$ for
$i=0,1,2$. Since $f'(\theta)$ is a unit in $R_K$ be the hypothesis
on the residue field, we obtain  $R_K = R_K^\vee$.
\par
Suppose $s \in p^{-1}R_K$. We use that by definition any  $y \in R$
is congruent mod $(p)$ to  $z \in \zed$. Thus since
$$ \trace(rs) \equiv z\trace(r) \mod (p)$$
we conclude that $r \in R^\vee$ if and only if $\trace(pr)=0$ (using
$p \neq 3$).   
\end{proof}
\par
\begin{lemma} \label{le:finRmod}
All of the quotients $R_K/pR_K$, $R^\vee/pR^\vee$ and $R/pR$ are three-dimensional as
$\FF{p}$ vector spaces but different as
$R$-modules:
\begin{itemize}
\item $R_K/pR_K$ splits into a direct sum of $\langle 1 \rangle$ 
and $\langle x,y \rangle$, orthogonal with respect to the trace pairing.
\item $R/pR$ has the irreducible $R$-submodule $\langle px,py \rangle$ 
and the corresponding sequence is not split.
\item $R^\vee/pR^\vee$, as the dual of the preceding module, has
the quotient $R$-module $\langle x/p,y/p \rangle$, and the 
corresponding sequence is not split.
\end{itemize} 
\end{lemma}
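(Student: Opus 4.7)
The dimension count is immediate from the preceding lemma: $R_K$, $R$, and $R^\vee$ are all free of rank $3$ over $\zed_p$ with the explicit bases $\{1, x, y\}$, $\{1, px, py\}$, and $\{1, x/p, y/p\}$, so each quotient modulo $p$ is $\FF{p}^3$. I will read off the three $R$-module structures by direct computation in these bases, systematically using the decomposition $R = \zed_p + pR_K$.

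For $R_K/pR_K$, observe that $pR_K$ annihilates the quotient, so the $R$-action factors through $R/(R\cap pR_K) = \zed_p$ and hence through $\FF{p}$. Thus $R$ acts by $\FF{p}$-scalars on $R_K/pR_K$; every $\FF{p}$-subspace is therefore an $R$-submodule, and in particular $\langle \bar 1\rangle \oplus \langle \bar x, \bar y\rangle$ is an $R$-module splitting, with orthogonality for the trace pairing inherited directly from the orthogonality of $\{1, x, y\}$ in $R_K$.

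For $R/pR$ the key local computation is that for any $r = a + p\alpha x + p\beta y \in R$ the product $r\cdot(px)$ equals $apx$ plus $\zed_p$-combinations of $p^2 x^2$ and $p^2 xy$; writing $x^2 = c_0 + c_1 x + c_2 y$ and $xy = d_0 + d_1 x + d_2 y$ in the $\zed_p$-basis of $R_K$, each remainder summand has the form $p^2 c$, $p^2 cx$, or $p^2 cy$ with $c\in \zed_p$, all of which lie in $pR = p\zed_p + p^2\zed_p\cdot x + p^2\zed_p\cdot y$. Hence $r\cdot \overline{px} = a\overline{px}$ and $r\cdot \overline{py} = a\overline{py}$, so $\langle \overline{px}, \overline{py}\rangle$ is an $R$-submodule on which $R$ acts through the reduction $R \twoheadrightarrow \FF{p}$, $a + p\gamma\mapsto a$. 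The corresponding sequence
$$0 \to \langle \overline{px},\overline{py}\rangle \to R/pR \to \FF{p} \to 0$$
does not split: any $R$-linear section would send $1\mapsto \bar 1 + \alpha\overline{px} + \beta\overline{py}$, and applying $r = 1 + px$ (which reduces to $1$ in the quotient) together with $r\cdot \bar 1 = \bar 1 + \overline{px}$ forces the contradiction $\alpha = 1 + \alpha$ in $\FF{p}$.

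For $R^\vee/pR^\vee$ I invoke trace-pairing duality, which identifies $R^\vee/pR^\vee$ with $\Hom_{\FF{p}}(R/pR, \FF{p})$ as $R$-modules; dualizing the sequence above produces the claimed non-split sequence
$$0 \to \langle \bar 1\rangle \to R^\vee/pR^\vee \to \langle \overline{x/p},\overline{y/p}\rangle \to 0$$
with trivial $R$-action on the quotient (a direct computation gives $(px)\cdot\overline{x/p} = c_0\bar 1$, confirming that $\bar 1$ lies in the socle while $\overline{x/p}$ does not, provided $c_0 = \trace(x^2)/3 \not\equiv 0 \bmod p$). To conclude that the three modules are pairwise non-isomorphic, note that $R_K/pR_K$ is semisimple while the other two are not, and the socle of $R/pR$ has $\FF{p}$-dimension $2$ whereas that of $R^\vee/pR^\vee$ has dimension $1$. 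The main bookkeeping obstacle is the verification that the quadratic products $p^2 x^i y^j$ all lie in $pR$, which pins down the precise submodule structure of $R/pR$; the rest is formal, either from duality or from analogous direct calculation.
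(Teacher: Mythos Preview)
Your proof is correct and follows the same approach as the paper: both arguments hinge on the computation that $(px)\cdot \bar 1 = \overline{px}$ in $R/pR$, which shows $\langle \overline{px},\overline{py}\rangle$ is a submodule and that the extension does not split (the paper phrases this as the nonexistence of a complementary one-dimensional submodule, you as the nonexistence of a section---same content), and both pass to $R^\vee/pR^\vee$ by duality. Your parenthetical caveat ``provided $c_0\not\equiv 0$'' is in fact automatic: $\{1,\bar x,\bar y\}$ is an orthogonal basis of $k$ for the non-degenerate trace form, so $\trace(\bar x^2)\neq 0$ in $\FF_p$.
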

\par
\begin{proof}
The structure of  $R_K/pR_K$ is obvious. Suppose 
$1+p(ax+by)$ generates an $R$-submodule of $R/pR$ of dimension
one over $\FF{p}$. Multiplying
by $px$ we see that this submodule contains also $px$,  We thus obtain a contradiction.
\end{proof}
\par
\begin{lemma} \label{le:extgroups}
We can calculate $\Ext$-groups as follows:
\begin{equation}
\begin{aligned}
&\Ext_R^1(R^\vee,R) = \Hom_R(R^\vee, R/pR)/\Hom_R(R^\vee,R) \cong \FF{p}\\
&\Ext_R^1(R^\vee,R_K) = \Hom_R(R^\vee, R_K/pR_K)/\Hom_R(R^\vee,R_K)  \cong \FF{p}\\
&\Ext_R^1(R^\vee,R^\vee) = 0 \\
\end{aligned}
\end{equation}
\end{lemma}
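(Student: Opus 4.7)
The plan is to derive each identity from the long exact sequence obtained by applying $\Hom_R(R^\vee, -)$ to the short exact sequence
\begin{equation*}
0 \to M \xrightarrow{\cdot p} M \to M/pM \to 0
\end{equation*}
for $M \in \{R, R_K, R^\vee\}$. The resulting six-term piece
\begin{equation*}
\Hom_R(R^\vee, M) \xrightarrow{\cdot p} \Hom_R(R^\vee, M) \to \Hom_R(R^\vee, M/pM) \to \Ext^1_R(R^\vee, M) \xrightarrow{\cdot p} \Ext^1_R(R^\vee, M)
\end{equation*}
identifies $\Hom_R(R^\vee, M/pM)/\mathrm{image}$ with the $p$-torsion of $\Ext^1_R(R^\vee, M)$. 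The stated identities are therefore equivalent to two claims: (i) the groups $\Ext^1_R(R^\vee, M)$ are $p$-torsion, and (ii) the quotient $\Hom_R(R^\vee, M/pM)/\mathrm{image}\,\Hom_R(R^\vee, M)$ is respectively $\FF{p}$, $\FF{p}$, and $0$.

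For (ii) I would use the description of $M/pM$ from Lemma~\ref{le:finRmod}. Any $\varphi \in \Hom_R(R^\vee, M/pM)$ is determined by $\varphi(1), \varphi(x/p), \varphi(y/p)$; the identities $x = p\cdot (x/p)$ and $y = p\cdot (y/p)$ in $R^\vee$ force $\varphi(x) = \varphi(y) = 0$ in $M/pM$, and the $R$-linearity constraints
\begin{equation*}
(px)\varphi(x/p) = \varphi(x^2),\quad (py)\varphi(y/p) = \varphi(y^2),\quad (px)\varphi(y/p) = \varphi(xy) = 0
\end{equation*}
(the last using $\trace(xy) = 0$ coming from the orthogonality of the basis $\{1,x,y\}$, together with $p \neq 3$ so that diagonal entries of the trace pairing are units modulo $p$) reduce $\varphi$ to a small collection of $\FF{p}$-parameters. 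A parallel but simpler analysis identifies the image of $\Hom_R(R^\vee, M)$ inside $\Hom_R(R^\vee, M/pM)$, yielding the stated one-dimensional cokernels for $M = R, R_K$ and vanishing cokernel for $M = R^\vee$.

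For (i), the $p$-torsion claim, I would use the filtration $R \subset R_K \subset R^\vee$: the inclusion $pR^\vee \subset R_K$ makes $R^\vee/R_K$ a $p$-torsion $R$-module, and $\mathfrak{m}R_K \subset R$ makes $R_K/R$ an $\mathfrak{m}$-annihilated (hence trivial) $R$-module. Together with $\Ext^1_R(R, -) = 0$, the long exact sequences for the two filtration steps show that $\Ext^1_R(R^\vee, M)$ fits as an extension of a $p$-torsion quotient by a $p$-torsion subgroup, hence is at worst $p^2$-torsion. The sharpening to $p$-torsion would come by matching this abstract upper bound against an explicit construction: in each case I would build a candidate non-split extension $0 \to M \to E \to R^\vee \to 0$ as the pushout of $0 \to R_K \to R^\vee \to R^\vee/R_K \to 0$ along a suitable $R$-homomorphism $R_K \to M$, and verify that these generate $\Ext^1_R(R^\vee, M)$.

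The main obstacle will be the $p$-torsion refinement: the purely homological filtration argument delivers only $p^2$-torsion, and passing to $p$-torsion requires either the explicit pushout construction sketched above or, equivalently, verifying that the natural map $\Hom_R(R_K, M/pM) \to \Ext^1_R(R_K, M)$ is surjective. Once this is in hand, the final counts of the quotients are $\FF{p}$-linear algebra steered by the explicit module descriptions in Lemma~\ref{le:finRmod}.
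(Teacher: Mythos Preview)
Your framework is exactly the paper's: apply $\Hom_R(R^\vee,-)$ to $0\to M\xrightarrow{p}M\to M/pM\to 0$ and identify the $p$-torsion of $\Ext^1$ with the cokernel of $\Hom_R(R^\vee,M)\to\Hom_R(R^\vee,M/pM)$. The difference lies in how the Hom groups are computed. The paper works structurally via Lemma~\ref{le:finRmod}: since $R^\vee/pR^\vee$ has a unique nontrivial quotient and $R/pR$ a unique nontrivial submodule, every $R$-map $R^\vee\to R/pR$ factors through $\Hom_{\FF{p}}(k/\FF{p},\Ker(\trace))$, a four-dimensional $\FF{p}$-space in which the image of $\Hom_R(R^\vee,R)=p^2R_K$ is a three-dimensional copy of $k$. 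Your generator-and-relation computation should reach the same count; one small correction is that $\trace(x^2)$ and $\trace(y^2)$ being units mod $p$ follows from the unimodularity $R_K=R_K^\vee$ of the trace form, not merely from $p\neq 3$.

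On the $p$-torsion step you are in fact more scrupulous than the paper, which simply asserts ``thus $\Ext^1(R^\vee,M)$ is $p$-torsion as well.'' Your filtration $R\subset R_K\subset R^\vee$ correctly yields only $p^2$-torsion (and note $R_K/R$ is not the zero module but rather $\FF{p}^2$; presumably by ``trivial'' you meant $\mathfrak m$-annihilated). However, your proposed sharpening via pushouts is circular as stated: to verify that a constructed extension \emph{generates} $\Ext^1_R(R^\vee,M)$ you would already need to know its order. A cleaner way to close the gap is to combine your two bounds. Since $\Ext^1$ is a finitely generated $\zed_p$-module annihilated by $p^2$ with $p$-torsion exactly $\FF{p}$, it is cyclic of order $p$ or $p^2$; to exclude $\zed/p^2$ it suffices to show that $\Ext^1(R^\vee,M)/p\,\Ext^1(R^\vee,M)$, which embeds in $\Ext^1_R(R^\vee,M/pM)$ by the long exact sequence, also has order at most $p$. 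That last group is amenable to the same finite-module analysis you and the paper both use.
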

\par
\begin{proof}

The short exact sequence of multiplication by $p$ gives a long exact
sequence
$$\Hom_R(R^\vee,M) \to   \Hom_R(R^\vee, M/pM) \to \Ext^1(R^\vee,M) \to \Ext^1(R^\vee,M),$$
where the last map is induced by multiplication by $p$. Under the second map
the image of $\Hom_R(R^\vee, M/pM)$  is  $p$-torsion and thus  
$\Ext^1(R^\vee,M)$ is $p$-torsion as well.
\par
We first deal with the case $M=R$. Obviously $p^2 R_K$ is contained 
in $\Hom(R^\vee,R)$ and we claim they are equal. If such a homomorphism
was given by multiplication with an element $s \not\in p^2 R_K$, take 
$t = x/p \in R$ where $x$ is as above. Then $ts \not\in pR_K$ and
its reduction is not in the prime field, since the reductions of $\{1,x,y\}$ are linearly
independent over $\FF{p}$. This contradiction proves the claim. 
\par
First we claim that 
$$\Hom_R(R^\vee, R/pR) \cong \Hom_{\FF{p}}( k/\FF{p},\Ker(\trace)),$$
where we consider $\Ker(\trace) \subset k$. A homomorphism 
from $R^\vee$ to $R/pR$ factors through $R^\vee/pR^\vee$. 
By Lemma~\ref{le:finRmod} there are no isomorphisms between them, 
in fact the classification of quotient resp.\ submodules in this 
lemma shows more precisely that such a homomorphism
factors through an element in $\Hom_{\FF{p}}( k/\FF{p},\Ker(\trace))$.
Both on  the  quotient module $\langle x/p,y/p \rangle \cong k/\FF{p}$
and on the submodule $\langle px,py \rangle \cong \Ker(\trace)$, 
the ring $R$ acts through its quotient
$\FF{p}$ so that indeed every  ${\FF{p}}$-homomorphism is 
and $R$-homomorphism. Multiplication by $p^2R$ defines a subspace
isomorphic to $k$ inside  $\Hom_{\FF{p}}( k/\FF{p},\Ker(\trace))$.
This concludes the second isomorphism of the second claim.
\par
Second we look at the case $M=R_K$. Now  $\Hom(R^\vee,R_K) ()\cong pR$
and elements in $\Hom_R(R^\vee, R_K/pR_K)$ factor through
$\Hom_{\FF{p}}( k/\FF{p},\Ker(\trace))$ using the submodule structure
of the finite $R$-modules determined in Lemma~\ref{le:finRmod}.
\par
The last statement follows by the same reasoning. 
\end{proof}
\par
\begin{prop}
Let $0\to R\to M \to R^\vee\to 0$ be a symplectic extension corresponding to
a non-trivial element in $\Ext^1_R(R^\vee,R)$. Then $M$ is a proper
$R$-module. Moreover, $M$ has a unimodular symplectic structure
and the $R$-action is by self-adjoint endomorphisms. $M$ is
not a direct sum of two $R$-modules of rank one.
\end{prop}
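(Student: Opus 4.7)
The plan is to verify the four claims in turn, drawing on the $\Ext$ computation of Lemma~\ref{le:extgroups}, the explicit local structure of $R$, $R^\vee$, and their reductions from Lemma~\ref{le:finRmod}, and Theorem~\ref{thm:symp-exts}. For \emph{properness} I argue by contradiction. Since the residue field extension $k/\FF{p}$ has prime degree three, the only order strictly between $R$ and $R_K$ is $R_K$ itself, so it suffices to rule out an extension of the given $R$-action on $M$ to $R_K$. If such an extension existed, then $M$ would be a finitely generated torsion-free $R_K$-module of generic rank two, hence $M\cong R_K^2$ (as $R_K$ is a discrete valuation ring). The inclusion $R\hookrightarrow M$ would then send $1$ to a vector $(a,b)\in R_K^2$ with, say, $a$ a unit of $R_K$; a direct computation of $R_K^2/R\cdot(a,b)$ inside $K^2/K\cdot(a,b)\cong K$ shows that the cokernel always acquires nontrivial torsion, because the strict inclusion $R\subsetneq R_K$ forces $R\cdot(a,b)$ to be properly contained in its saturation $K\cdot(a,b)\cap R_K^2 = R_K\cdot(a,b)$. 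This contradicts $M/R\cong R^\vee$ being torsion-free.

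For the \emph{symplectic structure and self-adjointness}, choose any $\zed_p$-module splitting of the sequence to identify $M\cong R\oplus R^\vee$ as $\zed_p$-modules, and equip this with the canonical pairing
\begin{equation*}
\langle(\alpha_1,\beta_1),(\alpha_2,\beta_2)\rangle=\Tr(\alpha_1\beta_2-\alpha_2\beta_1),
\end{equation*}
which is unimodular since $R$ and $R^\vee$ are dual lattices under the trace pairing. Writing the $R$-action in these coordinates as $\lambda\cdot(\alpha,\beta)=(\lambda\alpha+Q_\lambda(\beta),\lambda\beta)$, the calculation used in the proof of Theorem~\ref{thm:symp-exts} shows that the action is self-adjoint for this form precisely when each $Q_\lambda\colon R^\vee\to R$ is anti-self-adjoint for the trace pairing, which is the defining content of a symplectic extension. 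One further checks (as in the same theorem) that both the form and the self-adjointness are independent of the splitting chosen.

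For the \emph{non-direct-sum claim}, suppose $M=A\oplus B$ with $A,B$ rank-one torsion-free $R$-modules, and write the given inclusion $R\hookrightarrow M$ as $1\mapsto(\alpha,\beta)$ with $\alpha\in A$, $\beta\in B$. If $\alpha=0$ then $R$ sits inside $B$ and $M/R\cong A\oplus(B/R\beta)$; since $R^\vee$ is torsion-free and indecomposable, this forces $B\cong R$ and $A\cong R^\vee$, producing an explicit splitting of the sequence and contradicting non-triviality. The case $\beta=0$ is symmetric. In the remaining case $\alpha,\beta\neq 0$, fix embeddings of $A$ and $B$ into $K$ and set $A'=A/\alpha$, $B'=B/\beta$: both are fractional $R$-ideals containing $R$, and the torsion-freeness of $M/R\cong R^\vee$ translates into $A'\cap B'=R$ and $A'+B'\sim R^\vee$. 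The diamond isomorphism then yields a nontrivial direct-sum decomposition $(A'+B')/R=(A'/R)\oplus(B'/R)$, which via the similarity $A'+B'\sim R^\vee$ transports to a nontrivial decomposition of $R^\vee/R$ as an $R$-module. The heart of the argument, and the main obstacle, is therefore to show $R^\vee/R$ is \emph{indecomposable}. A direct computation using Lemma~\ref{le:finRmod} identifies $R^\vee/R\cong(\zed_p/p^2)^2$ as a $\zed_p$-module, with $\mathfrak{m}$-socle equal to $R_K/R$; the generators $px,py$ of $\mathfrak{m}$ act so that a hypothetical cyclic $\zed_p$-summand generated by $a(x/p)+b(y/p)$ is stable only if $(a,b)$ is a simultaneous eigenvector over $\FF{p}$ of two explicit $2\times 2$ matrices built from the expansions of $x^2,xy,y^2$ in the basis $\{1,x,y\}$. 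A generic choice of the orthogonal complement $\{x,y\}\subset R_K$ --- always possible since the trace form is nondegenerate --- makes these matrices share no common eigenvector over $\FF{p}$, so $R^\vee/R$ is indecomposable and the twisted case is ruled out.
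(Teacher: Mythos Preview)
Your arguments for properness and for the symplectic structure are correct, though your properness proof differs from the paper's. The paper argues instead that since $R\subset M$ is isotropic and the action of any larger ring would have to be self-adjoint, such a ring would preserve $R$; but $R$ is its own coefficient ring, so no extension exists. Your torsion-in-the-cokernel argument works equally well (the case where neither coordinate of $(a,b)$ is a unit is easily reduced to the unit case by extracting a power of $p$).

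The non-direct-sum argument, however, has a genuine gap. In the case $\alpha,\beta\neq 0$ you correctly obtain $A'\cap B'=R$ and an $R$-module isomorphism $A'+B'\cong R^\vee$, and hence a direct-sum decomposition $(A'+B')/R\cong (A'/R)\oplus(B'/R)$. But the isomorphism $A'+B'\cong R^\vee$ is multiplication by some $\mu\in K^*$, and there is no reason this should carry the copy of $R$ inside $A'+B'$ to the standard copy of $R$ inside $R^\vee$. What you actually get is $(A'+B')/R\cong R^\vee/\mu^{-1}R$, and to finish you would need indecomposability of $R^\vee/J$ for \emph{every} $J\sim R$ with $J\subset R^\vee$, not just for $J=R$. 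You do not address this, and the indecomposability of even $R^\vee/R$ itself is only sketched: the assertion that a ``generic choice'' of the orthogonal basis $\{x,y\}$ makes the relevant matrices share no eigenvector is not verified, and in any case indecomposability is intrinsic and should not depend on such a choice.

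The paper's route avoids all of this. First, the unimodular symplectic form with self-adjoint $R$-action forces any rank-one $R$-direct summand to be isotropic (over $K$ the form on a one-dimensional $K$-space satisfying $\langle\lambda a,a'\rangle=\langle a,\lambda a'\rangle$ must vanish), so a direct-sum decomposition is automatically of the shape $M\cong\mathfrak{a}\oplus\mathfrak{a}^\vee$. Then one applies $\Hom_R(R^\vee,-)$ to the defining sequence: the connecting map $\Hom_R(R^\vee,R^\vee)\to\Ext^1_R(R^\vee,R)\cong\FF{p}$ sends $\mathrm{id}$ to the (nonzero) class of $M$, so it is surjective, and since $\Ext^1_R(R^\vee,R^\vee)=0$ exactness gives $\Ext^1_R(R^\vee,M)=0$. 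On the other hand $\Ext^1_R(R^\vee,M)=\Ext^1_R(R^\vee,\mathfrak{a})\oplus\Ext^1_R(R^\vee,\mathfrak{a}^\vee)$, and the Ext computations of Lemma~\ref{le:extgroups} (together with the fact that, up to similarity, $\mathfrak{a}$ is one of $R$, $R_K$, $R^\vee$) show at least one summand is nonzero. This contradiction is what you should aim for; it uses Lemma~\ref{le:extgroups} directly rather than reproving a delicate indecomposability statement by hand.
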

\par
\begin{proof} The trace pairing $R$ and $R^\vee$ induces a
symplectic and unimodular pairing on $M$. The $R$-submodule
$R$ of $M$ is isotropic for this alternating pairing. Thus
if $M$ is an $\tilde{R}$-module for some ring $\tilde{R}$ 
containing $M$ and acting by self-adjoint endomorphisms, 
then $R$ is also an $\tilde{R}$-module. This implies
$\tilde{R}=R$, i.e.\ that $M$ is a proper $R$-module. 
\par
It remains to show that $M$ is not a direct sum. If it is, 
then $M \cong \fra \oplus \fra^\vee$. If we apply 
$\Hom(R^\vee,\cdot)$ to the extension defining $M$, we obtain 
an exact sequence
$$ \Hom_R(R^\vee,R^\vee) \to \Ext^1_R(R^\vee,R) \to \Ext^1_R(R^\vee,M) \to 
\Ext^1_R(R^\vee,R^\vee).$$
The first map is a non-zero map $R \to \FF{p}$ by the fact that
$M$ was constructed as a non-trivial extension. The hypothesis on $M$
implies that  
$$\Ext^1_R(R^\vee,M) = \Ext^1_R(R^\vee,\fra) \oplus  \Ext^1_R(R^\vee,\fra^\vee) 
\cong \FF{p}.$$
Since  $\Ext^1_R(R^\vee,R^\vee)=0$ it remains to show that
at least one of the two groups  $\Ext^1_R(R^\vee,\fra)$
and $\Ext^1_R(R^\vee,\fra^\vee)$ is non-zero. The $\Ext$-groups
don't change if we replace $\fra$ by $p\fra$. Under this equivalence 
the pair $(\fra,\fra^\vee)$ is either $(R,R^\vee)$, $(R^\vee,R)$
or $(R_K, R_K)$. Thus the claim follows from Lemma~\ref{le:extgroups}.
\end{proof}

\end{appendix}

\bibliography{my}

\end{document}